\newcommand{\nicecolor}{Navy}
\setlist[1]{wide}
\setlist[2]{leftmargin=15mm} 
\setlist[enumerate]{label=\rm{(\arabic*)}}
\setlist[enumerate,2]{label=\rm({\it\roman*}), }
\setlist[itemize]{label=\raisebox{0.25ex}{\tiny$\bullet$}}
\newtheorem{lemma}{Lemma}[section]
\newtheorem{corollary}[lemma]{Corollary}
\newtheorem{proposition}[lemma]{Proposition}
\newtheorem{question}[lemma]{Question}
\newtheorem{maintheorem}{Theorem}
\newtheorem{mainprop}[maintheorem]{Proposition}
\theoremstyle{definition}
\newtheorem{definition}[lemma]{Definition}
\theoremstyle{remark}
\newtheorem{remark}[lemma]{Remark}
\newtheorem{example}[lemma]{Example}
\newcommand\C{{\mathbb C}}
\renewcommand\k{{\mathbf k}}
\newcommand\p{{\mathbb P}}
\newcommand\A{{\mathbb A}}
\newcommand\Z{{\mathbb Z}}
\newcommand\iso{\stackrel{\simeq}{\longrightarrow}}
\newcommand{\Aut}{\mathrm{Aut}}
\newcommand{\Spec}{\mathrm{Spec}}
\newcommand\CC{{\mathbb C}}
\newcommand\PP{{\mathbb P}}
\newcommand\FF{{\mathbb F}}
\newcommand\GG{{\mathbb G}}
\newcommand\NN{{\mathbb N}}
\newcommand\ZZ{{\mathbb Z}}
\newcommand\QQ{{\mathbb Q}}
\renewcommand\AA{{\mathbb A}}
\newcommand\rat{\dashrightarrow}
\newcommand\mm{{\mathfrak m}}
\DeclareMathOperator{\car}{char}
\DeclareMathOperator{\sing}{sing}
\DeclareMathOperator{\reg}{reg}
\DeclareMathOperator{\Pic}{Pic}
\DeclareMathOperator{\Var}{Var}
\newcommand{\et}{\textrm{\'et}}
\newcommand{\set}[2]{\{\,#1 \ | \ #2\,\}}
\newcommand{\kk}{\mathbf{k}}
\title[Complements of hypersurfaces]{Complements of hypersurfaces in projective spaces}
\author[J.~Blanc]{J\'er\'emy~Blanc}
\address{J\'er\'emy Blanc, Universit\"{a}t Basel, Departement Mathematik und Informatik, Spiegelgasse $1$, CH-$4051$ Basel, Switzerland.}
\email{jeremy.blanc@unibas.ch}
\urladdr{http://algebra.dmi.unibas.ch/blanc}
\author[P.-M.~Poloni]{Pierre-Marie~Poloni}
\address{Pierre-Marie~Poloni, Universit\"{a}t Basel, Departement Mathematik und Informatik, Spiegelgasse $1$, CH-$4051$ Basel, Switzerland.}
\email{ poloni.pierremarie@gmail.com}
\urladdr{http://algebra.dmi.unibas.ch/poloni}
\author[I.~Van~Santen]{Immanuel~Van~Santen}
\address{Immanuel Van Santen, Universit\"{a}t Basel, Departement Mathematik und Informatik, Spiegelgasse $1$, CH-$4051$ Basel, Switzerland.}
\email{immanuel.van.santen@unibas.ch}
\urladdr{http://algebra.dmi.unibas.ch/vansanten}
\thanks{The first author acknowledges support by the Swiss National Science Foundation Grant \textquotedblleft Geometrically ruled surfaces\textquotedblright 200020--192217.}
\subjclass[2010]{14J70, 14R05 (primary), 14E07, 14F43,  (secondary)}
\keywords{Complement problem, cylinder over Danielewski surfaces, piecewise iso\-mor\-phisms}
\begin{document}

\begin{abstract}
	We study the complement problem in projective spaces $\PP^n$
	over any algebraically closed field:
	If $H, H' \subseteq \PP^n$ are irreducible hypersurfaces of degree $d$ such that the complements
	$\PP^n \setminus H$, $\PP^n \setminus H'$ are isomorphic, are the hypersurfaces $H$, $H'$ isomorphic?
	
	For $n = 2$, the answer is positive if $d\leq 7$ and there are counterexamples when $d = 8$.
	In contrast we provide counterexamples for all $n, d \geq 3$ with $(n, d) \neq (3, 3)$.
	Moreover, we show that the complement problem has an affirmative answer for $d = 2$
	and give partial results in case $(n, d) = (3, 3)$. In the course of the exposition, we prove that rational normal projective surfaces 
	admitting a desingularisation by trees of smooth rational curves
	are piecewise isomorphic if and only if they coincide in the Grothendieck ring, answering affirmatively a question posed by Larsen and Lunts for such surfaces.
\end{abstract}

\maketitle 
\setcounter{tocdepth}{2}
\tableofcontents

\section{Introduction} 

Let $\k$ be a field, let $n\ge 2$ be an integer and let $f\in \k[x_0,\ldots,x_n]$ be an irreducible homogeneous polynomial of degree $d\ge 1$. The  variety 
\[
	\p^n_{f}=\set{[x_0:\cdots:x_n]\in \p^n}{f(x_0,\ldots,x_n)\neq0}
\] 
is the complement in $\p^n$ of the hypersurface given by $f=0$. It is an affine open subset of $\p^n$ with Picard group $\Z/d\Z$. Moreover, any  open subset of $\p^n$ isomorphic to it is of the form $\p^n_{g}$ for some  irreducible homogeneous polynomial $g\in \k[x_0,\ldots,x_n]$ of degree $d$ (see Lemma~\ref{Lem.Open_iso_to_Pn_g}).
One may then ask the following natural questions, for all 
irreducible homogeneous polynomials  $f,g\in \k[x_0,\ldots,x_n]$ of the same degree $d \geq 1$.

\begin{enumerate}[leftmargin=*]
\item \label{Quest1}
Does any isomorphism $\p^n_{f}\iso \p^n_{g}$ extend to an element of $\Aut(\p^n)$?
\item \label{Quest2}
If $\p^n_{f}$ and  $\p^n_{g}$ are isomorphic, is  there an element of $\Aut(\p^n)$ that sends $\p^n_{f}$ onto $\p^n_{g}$?
\item \label{Quest3}
If $\p^n_{f}$ and $\p^n_{g}$ are isomorphic, are the zero loci $V_{\p^n}(f)$ and $V_{\p^n}(g)$ of $f$ and $g$ isomorphic?
\end{enumerate}
Of course, any positive answer to one of the questions also gives a positive answer to the next one(s). 

If $d=1$, the first question has a negative answer, as there are many automorphisms of $\p^n_f\simeq \A^n$ that do not extend
to automorphisms of $\p^n$. However, the two other questions have a positive answer. 

For $d=2$, we can obtain similar results, in any dimension, when $\k$ is algebraically closed. Indeed, we will answer the first question in the negative   for any irreducible quadric, see~Example~\ref{Exa.Counterexample_Q1deg2}, 
and provide positive answers to the two other questions, see Theorem~\ref{Thm.Degree2} below.
 
The situation for $d=3$ is already more complicated, at least in dimension $n=3$.  
The first question has a negative answer for each singular cubic with Du Val singularities 
over an algebraically closed field of characteristic zero \cite[Theorem C and Theorem~4.3]{CheDub}, and 
more generally for each singular cubic 
over any algebraically closed field (Proposition~\ref{Prop:SingCubic}), 
but is wide open for any smooth cubic surface  \cite[Conjecture~1.1]{CheDub}. The two other questions are  open as well.

In dimension $n=2$, it was conjectured in \cite{Yoshihara} that the answer to the second question should be always positive
for algebraically closed fields of characteristic zero. This conjecture was  proved for all algebraically closed fields  if the curve $C\subseteq \p^2$ given by $f=0$ is such that $C\setminus L\simeq \A^1$ for some line (see \cite[Theorem]{Yoshihara} for a proof
in characteristic zero and  \cite[Theorem~1]{Hemmig} for a proof in any characteristic). The conjecture was however proven to be false in general, with a first example of degree $39$ {given in} \cite{BlancC}. A second family of counterexamples 
of degree $9+4m$ for each $m\ge 0$ was given {later} in \cite{Costa}. Finally, an example of degree $8$ and a proof that the conjecture is true for curves of degree $\le 7$ appeared in 
\cite[Corollary~1.2, Theorem~3]{Hemmig} for algebraically closed fields of any characteristic. Moreover, the examples given in  \cite{Hemmig} and \cite{BlancC} also give a negative answer to the third question.

In dimension $n\ge 3$, there was no negative answer to Questions~\ref{Quest2} and~\ref{Quest3} 
until now. In this article, we show by constructing explicit examples, 
that the answer of the third question, and thus of the two others, is negative for large degree and large dimension.
More precisely, in any dimension $n\ge 3$, the smallest degree for counterexamples is $3$, except maybe for $n=3$, where the smallest degree might be $4$. This contrasts the case of dimension $n=2$ where counterexamples 
for algebraically closed fields always have degree $\ge 8$.

\begin{maintheorem}\label{34and43}
	Let $\kk$ be a field and let $d,n \ge 3$ be integers such that $d$ is not a multiple of $\mathrm{char}(\kk)$ and such that $(d,n)\neq (3,3)$.
	
	There are two non-isomorphic irreducible hypersurfaces $H,H'\subseteq \p^n_\k$ of degree $d$, that have isomorphic complements.
\end{maintheorem}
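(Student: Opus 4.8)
The plan is to exhibit an explicit pair of hypersurfaces whose complements are isomorphic but which are themselves non-isomorphic, and to reduce to a single ``building block'' in low dimension and then propagate it up by taking cones or products. Concretely, I would look for a polynomial $f\in\k[x_0,\dots,x_n]$ of degree $d$ of the shape $f = y\,P(x) - Q(x)$ after a suitable choice of coordinates, so that $\p^n_f$ is naturally a line bundle (or an $\A^1$-bundle) over a lower-dimensional base: dividing by $y$ exhibits $\p^n_f$ as a family over the complement of $\{P=0\}$, and the isomorphism type of the total space should be governed by the divisor class of the base locus rather than by its embedding. The classical mechanism here is the Danielewski-type phenomenon, and indeed the abstract mentions ``cylinder over Danielewski surfaces'': two Danielewski surfaces $xy = p(z)$ and $xy = q(z)$ with $p,q$ having the same roots with the same multiplicities are non-isomorphic in general but become isomorphic after multiplying by $\A^1$. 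So I would engineer $H$ and $H'$ so that $\p^n\setminus H$ and $\p^n\setminus H'$ are each isomorphic to $(\text{Danielewski surface})\times\A^{n-2}$ with the \emph{same} cylinder, while $H$ and $H'$ are distinguished by a finer invariant — e.g. the singular locus, the Picard group, or the fundamental group of the smooth part.

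The key steps, in order, would be: (1) Treat the base case of the smallest admissible dimension and degree — presumably $(n,d)=(4,3)$ and $(n,d)=(3,4)$, since $(3,3)$ is excluded — writing down explicit cubic (resp.\ quartic) polynomials $f,g$ with a common ``Danielewski structure''. For $n=3$, $d=4$ one expects something like $f = x_0 x_1^3 - x_2^a x_3^{4-a}(\cdots)$ arranged so that $\p^3_f$ is a cylinder over a Danielewski surface; for $n\ge 4$, $d=3$ one has more room and can use a genuinely cubic relation. (2) Prove the complements are isomorphic: this is the Danielewski trick, realising both total spaces as quotients of a common affine variety by two $\mathbb{G}_a$-actions that differ by a shift, or more concretely exhibiting an explicit isomorphism of the $\A^1$-bundles over the common base after passing to a Zariski-open cover and gluing. (3) Prove $H\not\simeq H'$: choose the defining data so that the hypersurfaces have, say, different numbers or types of singular points, or non-isomorphic normalisations, or different $\Pic$ — any birational/biregular invariant that survives. (4) Propagate to all $(n,d)$ with $d,n\ge 3$, $(n,d)\ne(3,3)$: given a counterexample in dimension $n$ and degree $d$, pass to dimension $n+1$ by taking the projective cone (which multiplies the complement by $\A^1$ — one must check the cone over an irreducible hypersurface stays irreducible and that the cones remain non-isomorphic, e.g.\ because the cone remembers the base), and handle the increase in degree separately, perhaps by a substitution $x_i \mapsto x_i^{k}$-type trick or by incorporating extra factors into the Danielewski polynomial $p(z)$. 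The hypothesis $d\not\equiv 0 \pmod{\mathrm{char}\,\k}$ presumably enters to keep certain hypersurfaces reduced/irreducible and to control $\Pic(\p^n_f)=\Z/d\Z$, which is used in the reduction to $\p^n_g$ form via Lemma~\ref{Lem.Open_iso_to_Pn_g}.

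I expect the main obstacle to be step (3) combined with the uniformity over all $(n,d)$: it is comparatively easy to make the complements isomorphic (the Danielewski trick is robust) and comparatively easy to do one base case, but arranging a single family of examples that simultaneously (a) has an honest degree-$d$ irreducible hypersurface as its boundary for \emph{every} $d\ge 3$, (b) keeps the two boundary hypersurfaces non-isomorphic as $d$ and $n$ grow — the natural invariants like the singular locus can become isomorphic or the distinguishing feature can wash out under cones — and (c) survives the characteristic hypothesis, is delicate. A secondary subtlety is verifying that the candidate counterexamples are genuinely counterexamples to Question~\ref{Quest3} and not merely to Question~\ref{Quest1}: one must rule out \emph{any} isomorphism $H\simeq H'$, not just those extending to $\p^n$, which requires an intrinsic invariant of $H$ rather than an embedded one. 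I would also want to double-check the excluded case $(n,d)=(3,3)$ is excluded precisely because the cubic surface is too rigid for the Danielewski construction to produce non-isomorphic boundaries there, consistent with the ``partial results'' the abstract promises in that case.
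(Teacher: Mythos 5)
Your overall architecture is the one the paper follows (Danielewski-cylinder mechanism for the complements, base families in the smallest cases $n=3$, $d\ge 4$ and $n\ge 4$, $d=3$, then propagation by cones), but two essential steps are missing or wrong as stated. First, the structural claim that $\p^n\setminus H$ is itself isomorphic to $(\text{Danielewski surface})\times\A^{n-2}$ fails: $\Pic(\p^n_f)\simeq\Z/d\Z$ is finite and nontrivial, whereas a cylinder over a Danielewski-type surface $x^{d-1}y=p(z)$ (with $p$ having $d$ simple roots) has free Picard group of rank $d-1$. What is true, and what the paper uses, is that the degree-$d$ cyclic cover $X_{f,1}=V(f-1)\subseteq\A^{n+1}$ is such a cylinder (e.g.\ $f=x^{d-1}y+z^d$ gives $x^{d-1}y=1-z^d$ times a line), and the genuinely nontrivial point is the \emph{descent}: an explicit cylinder isomorphism as in Lemma~\ref{Lemma-Iso} only yields an isomorphism $\p^n_f\iso\p^n_g$ after one arranges all monomials to have degree congruent to $1$ modulo $d$ and invokes Proposition~\ref{Prop.cyclic_covering} (see Remark~\ref{remark:Replacemodulo} and Lemma~\ref{lemma:Replace_n_by_m} for the propagation in $n$). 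Your ``quotient by two $\GG_a$-actions / gluing over a common base'' picture does not address this descent from the affine cover to the projective complement, which is where the hypothesis on $\Pic$ and the homogeneity bookkeeping actually enter.

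Second, the non-isomorphism step (your step (3)), which you rightly flag as the main obstacle, is not solved by the invariants you propose. For the pairs that the cylinder trick produces, e.g.\ $f=x^{d-1}y+z^d$ and $g=f+dx^{d-2}z^2$, the two plane curves are both rational with a single singular point and normalisation $\p^1$, and they have the same Picard-type invariants; the paper distinguishes them only by an explicit analysis of \emph{all} abstract isomorphisms through the parametrisations (Lemma~\ref{LemmCurveD}), and in the second family by comparing the maximal multiplicity of points, $d$ versus $<d$ (Proposition~\ref{Prop.IsoHypNN4}). Likewise ``the cone remembers the base'' is exactly Lemma~\ref{Lem:IsoHypNNplus}, proved via the linearity of the multiplicity-$d$ locus (Lemma~\ref{lemm:MultMaxLin}) and a blow-up at the vertex; you state it as something to check but give no argument. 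Finally, the characteristic hypothesis is not about irreducibility or $\Pic$: it is needed because the perturbation carries the coefficient $d$ (if $\car(\k)\mid d$ the two polynomials coincide) and because the explicit computation ruling out an isomorphism divides by $d$. So while the plan is strategically aligned with the paper, the two hard steps -- descending the cylinder isomorphism to the complements, and actually distinguishing the hypersurfaces -- are left open, and the specific invariants you propose for the latter would not suffice.
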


We will prove Theorem~\ref{34and43} at the end of \S\ref{Subsec.Counter_examples}.  If $d = 2$, then Question~\ref{Quest2} has an affirmative answer, 
at least when $\k$ is algebraically closed.

\begin{maintheorem}
	\label{Thm.Degree2}
	Let $n \geq 2$ and let $\kk$  be an algebraically closed field.
	If the complements of two irreducible quadric hypersurfaces $H, H'$ in $\PP^n_{\kk}$ are isomorphic, then 
	there exists an automorphism of $\PP^n_\kk$ that sends $H$ onto $H'$.
\end{maintheorem}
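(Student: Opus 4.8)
The plan is to reduce everything to the geometry of the singular locus of the quadrics and then invoke the piecewise-isomorphism / Grothendieck-ring circle of ideas announced in the abstract. First I would recall the classification of irreducible quadric hypersurfaces $H \subseteq \PP^n_\kk$ over an algebraically closed field: up to $\Aut(\PP^n) = \PGL_{n+1}$, such a quadric is determined by the rank $r$ of a defining quadratic form, where $3 \le r \le n+1$ (rank $1$ is a double hyperplane, rank $2$ is reducible, both excluded by irreducibility). So it suffices to show that if $H, H'$ have isomorphic complements, then they have the same rank. Equivalently, writing $H = H_r$ for the quadric of rank $r$, I must show $\PP^n_{f} \cong \PP^n_{g}$ forces $r = r'$.

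The key geometric input is that the quadric $H_r$ of rank $r$ in $\PP^n$ is a cone, with vertex a linear subspace $\Lambda \cong \PP^{n-r}$, over a smooth quadric $Q_{r-1} \subseteq \PP^{r-1}$; in particular $\dim(\sing H_r) = n - r$, and $H_r$ is smooth exactly when $r = n+1$. I would first handle the smooth case separately: if $H$ is smooth then $U := \PP^n \setminus H$ has trivial ... actually the cleanest invariant is the divisor class group / the structure of the complement at infinity. A robust approach is to compute an isomorphism invariant of $U = \PP^n_f$ that detects $r$. Two natural candidates: (a) the (log) Kodaira-type data or the singularities of a natural compactification; (b) the class of $U$ in the Grothendieck ring $K_0(\Var_\kk)$, using that $[\PP^n] = [\,H_r\,] + [\PP^n_{f}]$ and that $[H_r]$, being an iterated cone, is an explicit polynomial in $\LL = [\AA^1]$ depending only on $r$ (indeed $[H_r] = [\PP^{n-r}] \cdot \LL^{\,r-1} + [Q_{r-1}]$ and $[Q_{r-1}]$ depends only on the parity of $r$ and on $r$). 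Since $U \cong U'$ gives $[U] = [U']$ in $K_0$, we would get $[H_r] = [H_{r'}]$ in $K_0(\Var_\kk)$, and one checks directly from these polynomial formulas in $\LL$ that this forces $r = r'$ — here one uses that $\LL$ is not a zero divisor on the relevant classes, or simply compares the formulas as polynomials, being slightly careful since $\LL$ is known to be a zero divisor in $K_0(\Var_\kk)$ in general; so it is cleaner to compare a motivic invariant that is an honest ring homomorphism, e.g. the $E$-polynomial / point-counting over finite subfields, or the class in the localisation, to conclude $r = r'$. Then, since quadrics of the same rank are $\PGL_{n+1}$-equivalent, we get the automorphism of $\PP^n$ sending $H$ to $H'$.

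Concretely, the steps in order: (1) normalise $f, g$ so that $H = H_r$, $H' = H_{r'}$ are the standard cones of ranks $r, r'$ with $3 \le r, r' \le n+1$; (2) record $[H_r] = \LL^{\,n-r}\,[\PP^{0}]\cdots$ — more precisely $[H_r] = [Q_{r-1}]\,\LL^{\,n-r+1} + [\PP^{n-r}]$ using the cone decomposition $H_r \setminus \Lambda \to Q_{r-1}$ an $\AA^{n-r+1}$-bundle (Zariski locally trivial), together with the classical $[Q_{m}]$ for smooth quadrics: $[Q_{2k}] = \sum_{i=0}^{2k}\LL^i$ and $[Q_{2k+1}] = \LL^{k} + \sum_{i=0}^{2k+1}\LL^i$ (or the analogous closed forms), so $[H_r]$ is an explicit polynomial $P_r(\LL)$; (3) from $\PP^n_f \cong \PP^n_g$ deduce $P_r(\LL) = P_{r'}(\LL)$ after applying a ring homomorphism out of $K_0(\Var_\kk)$ that sends $\LL$ to a transcendental (e.g. $E$-polynomial if $\car\kk=0$, or counting $\FF_q$-points after spreading out over a finitely generated subring if $\car \kk = p$), and conclude $r = r'$ by comparing degrees/coefficients of these polynomials; (4) invoke the rank classification to produce $\varphi \in \Aut(\PP^n_\kk)$ with $\varphi(H) = H'$.

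The main obstacle I anticipate is step (3) in positive characteristic and, more subtly, making the deduction "$[U]=[U']$ implies $r=r'$" airtight given that $\LL$ is a zero divisor in $K_0(\Var_\kk)$: one cannot just cancel powers of $\LL$. The honest fix — and I expect this is what the paper does — is either to apply a genuine ring invariant ($E$-polynomial, or point counts) where $\LL$ becomes a non-zero-divisor, or to bypass $K_0$ entirely and argue directly with the geometry at infinity of the complement (the singular quadric cone $H_r$, as the boundary of the normal compactification $\PP^n$, is reconstructible from $U$ up to the data that pins down $r$, e.g. via the minimal resolution and the combinatorics of the exceptional locus, in the spirit of the surface statement quoted in the abstract). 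A secondary, smaller obstacle is the low-rank base cases $r=3$ (quadric cone over a conic) and the need for $d$ not divisible by $\car\kk$, which is exactly what guarantees the quadrics are reduced and the above rank theory applies; these should be dispatched directly.
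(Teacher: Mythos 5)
Your reduction (normalise both quadrics to standard forms and show that an invariant of the complement forces them to be projectively equivalent) matches the frame of the paper's proof, but the invariant you apply fails, and this is a genuine gap. The class of the complement in $K_0(\Var_\kk)$ (equivalently its $E$-polynomial or its point counts after spreading out) does \emph{not} determine the rank: a quadric hypersurface $H_r \subseteq \PP^n$ of odd rank $r$ is a cone with vertex $\PP^{n-r}$ over a smooth odd-dimensional quadric $Q_{r-2}$, and since $[Q_{r-2}] = [\PP^{r-2}]$ and the cone minus its vertex is a Zariski-locally trivial $\AA^{n-r+1}$-bundle over $Q_{r-2}$, one gets \emph{exactly} $[H_r] = [\PP^{n-r}] + \LL^{n-r+1}[\PP^{r-2}] = [\PP^{n-1}]$ in $K_0(\Var_\kk)$ for every odd $r$ (here $\LL = [\AA^1]$). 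So, for instance, in $\PP^4$ the rank-$3$ cone $V(x_0x_1+x_2^2)$ and the smooth rank-$5$ quadric $V(x_0x_1+x_2x_3+x_4^2)$ have identical classes, hence their complements have identical classes ($= \LL^4$), identical $E$-polynomials and identical $\FF_q$-point counts for every $q$. Your step (3) therefore cannot conclude $r = r'$; the issue is not that $\LL$ is a zero divisor, but that the classes you want to compare are literally equal. The vague fallback of ``reconstructing the boundary from the geometry at infinity of $U$'' is not a proof and is in fact the very question at stake (the paper's main theorems show the boundary is in general \emph{not} recoverable from the complement).

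What the paper does instead is apply the counting argument not to the complement but to its degree-$2$ cyclic cover: by Proposition~\ref{Prop.cyclic_covering} and Remark~\ref{Rem.mu_equal_1}, an isomorphism $\PP^n_f \simeq \PP^n_g$ lifts to an isomorphism of the affine hypersurfaces $X_{f,1} = V_{\AA^{n+1}}(f-1)$ and $X_{g,1}$; after normalising $f$, $g$ via the characteristic-free classification of quadratic forms (Proposition~\ref{prop:QuadricProj}, which also covers $\car\kk = 2$, so your worry about ``$d$ not divisible by $\car\kk$'' is beside the point -- Theorem~\ref{Thm.Degree2} has no such hypothesis), one spreads these affine models out over $\ZZ$ (Lemma~\ref{Lem:Induced_Isom}, Corollary~\ref{Cor:Same_number_of_points}) and counts $\FF_q$-rational points, keeping track of regular versus singular points to handle $q$ even. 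These counts, $q^{n-m}(q^m-1)$ versus $q^{n-m}(q^m+1)$ for the two normal forms, do separate all cases (Lemmas~\ref{Lem:Number_of_F_q-points_1} and~\ref{Lem:Number_of_F_q-points_2}), precisely because the affine double covers see the rank even though the complements, motivically, do not. So your argument could be repaired, but only by inserting the passage to the covers $X_{f,1}$, which is the essential missing idea.
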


 We will prove Thereom~\ref{Thm.Degree2} at the end of \S\ref{Subsec.Degree2}.
In the case where $n = 3$, we have the following partial positive result
concerning Question~\ref{Quest2}:

\begin{maintheorem}
	\label{Thm.Degree3n3}
	Let $\kk$ be an algebraically closed field and let $H, H' \subseteq \PP^3_{\kk}$
	be irreducible hypersurfaces such that their complements in $\PP^3_{\kk}$ are isomorphic.
	Then we have:
	\begin{enumerate}[leftmargin=*]
		\item \label{Thm.Degree3n3.1} 
		If both $H$ and $H'$ are normal, rational and each admits a desingularisation by trees of smooth rational curves, then 
		$H, H'$ are piecewise isomorphic, see Def.~$\ref{Def.piecewise_isomorphic}$;
		\item \label{Thm.Degree3n3.2} 
		If $H$ or $H'$ is a cubic, then $H, H'$ are piecewise isomorphic. 
		Moreover, $H$ is smooth if and only if $H'$ is smooth;
		\item \label{Thm.Degree3n3.3} 
		If $H$ or  $H'$ is a non-rational cubic, then there exists $\varphi \in \Aut(\PP^3_{\kk})$ 
		with $\varphi(H) = H'$.
	\end{enumerate}
\end{maintheorem}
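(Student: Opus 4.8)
The plan is to prove Theorem~\ref{Thm.Degree3n3} by combining a general structure theorem on complements of hypersurfaces in $\PP^3$ with the classification of cubic surfaces and with the Grothendieck-ring rigidity statement announced in the abstract. I will start from an isomorphism $\psi\colon \PP^3_\kk\setminus H\iso \PP^3_\kk\setminus H'$, where both open sets are affine with Picard group $\Z/d\Z$. The first move is to compare the classes of $\PP^3\setminus H$ and $\PP^3\setminus H'$ in the Grothendieck ring of varieties $K_0(\Var_\kk)$: since the two complements are isomorphic, their classes coincide, and using the stratification $\PP^3=(\PP^3\setminus H)\sqcup H$ one gets $[H]=[H']$ in $K_0(\Var_\kk)$. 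This identity is the technical engine behind all three parts.

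For part~\eqref{Thm.Degree3n3.1}, I would invoke the Larsen--Lunts-type result proved elsewhere in the paper: for normal rational projective surfaces $S$ that admit a desingularisation whose exceptional locus is a tree of smooth rational curves, $[S]=[S']$ in $K_0(\Var_\kk)$ implies $S$ and $S'$ are piecewise isomorphic (Definition~\ref{Def.piecewise_isomorphic}). Since $H,H'$ satisfy exactly those hypotheses and we have just shown $[H]=[H']$, part~\eqref{Thm.Degree3n3.1} follows immediately. The main content is thus entirely borrowed: the delicate work is in that surface-rigidity theorem, and in checking that "normal, rational, desingularisable by trees of smooth rational curves" is precisely the right hypothesis class to feed into it.

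For part~\eqref{Thm.Degree3n3.2}, suppose $H$ is a cubic surface. First I would argue that $H'$ is then automatically a cubic: the degree $d$ is an invariant of the complement (it is the order of $\Pic(\PP^n\setminus H)$, cf.\ the discussion in the introduction), so $\deg H'=\deg H=3$. Next, one has to check that cubic surfaces — possibly singular, possibly non-normal, possibly reducible? no, $H$ is irreducible — fall (after suitable normalisation and resolution) into the class handled by part~\eqref{Thm.Degree3n3.1}. Irreducible cubic surfaces are either normal with rational double points / simple elliptic or worse singularities, or non-normal; one must verify that in every case the (normalisation and) minimal desingularisation has exceptional tree of $\PP^1$'s, so that the Larsen--Lunts machinery applies and yields piecewise isomorphism. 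The "smooth iff smooth" addendum should come from the fact that a smooth cubic surface has a Grothendieck class ($1+7\mathbb{L}+\mathbb{L}^2$, reflecting the $27$ lines and the del Pezzo structure — concretely it is $\PP^2$ blown up at $6$ points, so class $\mathbb{L}^2+7\mathbb{L}+1$) distinct from that of any singular cubic, which is easiest to see via point counting over finite fields or via the Euler characteristic / the fact that a singular cubic is rational with a different Betti/class profile; hence $[H]=[H']$ forces smoothness of one to match smoothness of the other. The main obstacle here is the case analysis over cubic surfaces in all characteristics, including the non-normal ones and the wild characteristics $2,3$ — one must be sure the resolution really is by trees of rational curves and that the Grothendieck class genuinely separates smooth from singular in every characteristic.

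For part~\eqref{Thm.Degree3n3.3}, assume $H$ is a non-rational cubic. An irreducible cubic surface is rational unless it is a cone over a smooth plane cubic curve (the "elliptic cone"), and such a cone is the unique type of non-rational irreducible cubic surface. By part~\eqref{Thm.Degree3n3.2} (or directly by $[H]=[H']$ and the class computation), $H'$ is also a non-rational cubic, hence also a cone over a smooth plane cubic. I would then argue that the complement $\PP^3\setminus H$ of such a cone remembers the isomorphism class of the base elliptic curve $E$ — for instance via the $j$-invariant, which can be recovered from the geometry of $\PP^3\setminus H$ (the cone point gives a distinguished structure, and a $\PP^1$-bundle / affine-cone description exposes $E$) — so that $\PP^3\setminus H\cong\PP^3\setminus H'$ forces the base curves to be isomorphic, $E\cong E'$. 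Two cubic cones over isomorphic elliptic curves are projectively equivalent in $\PP^3$ (any isomorphism $E\to E'$ of plane cubics extends to $\PP\GL_4$ since the embeddings are intrinsic and one can move the vertex), giving $\varphi\in\Aut(\PP^3_\kk)$ with $\varphi(H)=H'$. The hard part of~\eqref{Thm.Degree3n3.3} is showing that the base curve is genuinely an isomorphism invariant of the \emph{affine} complement (not merely that $H$ determines it), i.e.\ controlling how much projective information survives in $\PP^3\setminus H$; I would handle this by identifying inside $\PP^3\setminus H$ a canonical fibration or canonical curve whose isomorphism type recovers $j(E)$, perhaps by passing to the smooth locus and using the $\mathbb{A}^1$- or $\GG_m$-fibration coming from the cone structure.
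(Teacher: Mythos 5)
Part~\eqref{Thm.Degree3n3.1} of your proposal is correct and coincides with the paper's argument: the complement isomorphism gives $[H]=[H']$ in $K_0(\Var_\kk)$, and Proposition~\ref{Prop.Grothendieckring} (resting on Proposition~\ref{prop:NormalTrees}) yields the piecewise isomorphism. The problems are in~\eqref{Thm.Degree3n3.2} and~\eqref{Thm.Degree3n3.3}.

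For~\eqref{Thm.Degree3n3.2}, your plan is to push every irreducible cubic through the machinery of~\eqref{Thm.Degree3n3.1} after resolving, and this fails in exactly the two cases you list but do not treat. Proposition~\ref{Prop.Grothendieckring} requires the surface to be \emph{normal}; a non-normal cubic is outside its scope, and replacing $H$ by its normalisation changes the class in $K_0(\Var_\kk)$, so the rigidity statement cannot simply be invoked. The paper instead handles non-normal cubics via the explicit classification (Proposition~\ref{Prop.Classification_of_non-normal_cubic_surfaces}) and a direct stratification of each model (Lemma~\ref{Lem.Non-Normal_cubic_surf}), giving $\AA^2\amalg\{pt\}\amalg n_i$ copies of $\AA^1$ with $n_i\le 2$. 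Moreover, the cone over a smooth plane cubic is normal, irreducible, but \emph{not} rational, and its minimal resolution has an elliptic curve, not a tree of $\PP^1$'s, as exceptional locus; so your assertion that ``in every case the minimal desingularisation has exceptional tree of $\PP^1$'s'' is false, and that case must be routed through~\eqref{Thm.Degree3n3.3} (as the paper does, using $\chi=1$ there by Lemma~\ref{Lem.cubic_Euler_char} against $\chi=9$ for smooth and $\chi\le 8$, resp.\ $\le 4$, for singular normal, resp.\ non-normal, cubics, cf.\ Lemmas~\ref{Lem.Normal_cubic_surf} and~\ref{Lem.Non-Normal_cubic_surf}). Your Euler-characteristic idea for ``smooth iff smooth'' is fine in principle, but it needs precisely this complete case analysis, which your sketch defers rather than supplies.

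For~\eqref{Thm.Degree3n3.3}, you correctly reduce to cones over smooth plane cubics and correctly identify the crux — that the \emph{affine complement} must determine the base curve — but you leave that crux as ``perhaps \ldots a canonical fibration'', and nothing in the sketch produces a fibration or curve that an abstract isomorphism of complements is forced to respect; note that birational invariants cannot see $E$, since $\PP^3_f$ is a rational threefold. The paper's mechanism is different and is the actual content: lift the isomorphism $\PP^3_f\simeq\PP^3_g$ to the cyclic triple covers via Proposition~\ref{Prop.cyclic_covering} and Remark~\ref{Rem.mu_equal_1}, obtaining an isomorphism $\AA^1\times X_0\simeq\AA^1\times Y_0$ with $X_0=V_{\AA^3}(f-1)$; prove that $X_0$ carries no nontrivial $\GG_a$-action (Lemma~\ref{Lem:Gafminus1}, itself a nontrivial argument with $\GG_a$-cylinders and rational curves on $V_{\PP^3}(f-w^3)$); conclude by the AK-invariant result that the cylinder isomorphism sends fibres of $\AA^1\times X_0\to X_0$ to fibres, hence descends to $\PP^2_f\simeq\PP^2_g$, which extends to an automorphism of $\PP^2$ because the plane cubics are non-rational, and finally lifts to $\Aut(\PP^3)$. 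Without a substitute for this rigidity step (some proof that any isomorphism respects the $\AA^1$-bundle $\PP^3_f\to\PP^2_f$, or an invariant of the complement recovering $j(E)$), your argument for~\eqref{Thm.Degree3n3.3} is incomplete; the concluding step that cones over abstractly isomorphic plane cubics are projectively equivalent is fine.
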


We will prove Theorem~\ref{Thm.Degree3n3} at the end of \S\ref{Subsec.Degree3n3}. For $\kk = \CC$,
 it is proven in \cite{LaSe2012Birational-self-ma} that if $\PP^3_f \simeq \PP^3_g$ for irreducible polynomials $f, g$,
then $V_{\PP^2}(f)$ and $V_{\PP^2}(g)$ are piecewise isomorphic, which gives essentially Theorem~\ref{Thm.Degree3n3} over the field of complex numbers.

To prove Theorem~\ref{Thm.Degree3n3}, we study piecewise isomorphisms between surfaces. We prove in particular that every normal rational projective surface admitting a desingularisation by trees of smooth rational curves (e.g.~Du Val singularities) is piecewise isomorphic to the disjoint union of $\A^2$, one point and $n$ copies of $\A^1$ for some $n\ge 1$ (Proposition~\ref{prop:NormalTrees}). A similar result does not hold for all rational normal projective surfaces (Example~\ref{ExampleQuartics}). Using the Grothendieck ring $K_0(\Var_\k)$ and the topological Euler characteristic $($see \S\ref{sec.topEuler_characterstic_Grothendiecl_ring} for the definition$)$, one sees that the integer $n$ determines the piecewise isomorphism class. As a consequence of this, we give an affirmative answer to a question
posed by Larsen and Lunts (see \cite[Question 1.2]{LaLu2003Motivic-measures-a}) 
for rational projective normal surfaces admitting a desingularisation by trees of smooth rational curves
(see Proposition~\ref{Prop.Grothendieckring}).
This also partially generalises \cite[Theorem~4]{LiSe2010The-Grothendieck-r}.

\begin{mainprop}
	Let $\kk$ be algebraically closed and let $X, Y$ be rational 
	normal projective surfaces admitting a desingularisation by trees of smooth rational curves. 
	If the classes $[X]$ and $[Y]$ coincide in the Grothendieck ring
	$K_0(\Var_\k)$, then $X$ and $Y$ are piecewise isomorphic.
\end{mainprop}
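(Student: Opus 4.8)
The plan is to reduce the statement to the structural result announced in the introduction, namely Proposition~\ref{prop:NormalTrees}, which says that any rational normal projective surface admitting a desingularisation by trees of smooth rational curves is piecewise isomorphic to a disjoint union $\A^2 \sqcup \{\mathrm{pt}\} \sqcup (\A^1)^{\sqcup n}$ for some $n \ge 1$. Granting this, the task becomes purely numerical: one must show that the integer $n$ is a piecewise-isomorphism invariant which is moreover \emph{detected by the class in $K_0(\Var_\k)$}. So the proof has two clean halves: first quote Proposition~\ref{prop:NormalTrees} to write $X$ piecewise isomorphic to $\A^2 \sqcup \{\mathrm{pt}\} \sqcup (\A^1)^{\sqcup n}$ and $Y$ piecewise isomorphic to $\A^2 \sqcup \{\mathrm{pt}\} \sqcup (\A^1)^{\sqcup m}$; second, show that $[X] = [Y]$ in $K_0(\Var_\k)$ forces $n = m$, and then observe that piecewise isomorphism is an equivalence relation, so $X$ and $Y$ are both piecewise isomorphic to the same model and hence to each other.

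For the numerical half I would use two ring homomorphisms out of $K_0(\Var_\k)$ to pin down $n$. Writing $\LL = [\A^1]$, one has $[X] = \LL^2 + 1 + n\LL$ and $[Y] = \LL^2 + 1 + m\LL$ in $K_0(\Var_\k)$ (using that piecewise isomorphic varieties have equal classes, which is immediate from the scissor relations, cf.\ the discussion in~\S\ref{sec.topEuler_characterstic_Grothendiecl_ring}). From $[X] = [Y]$ we get $(n - m)\LL = 0$ in $K_0(\Var_\k)$. Now apply the topological Euler characteristic $e \colon K_0(\Var_\k) \to \ZZ$ (defined in~\S\ref{sec.topEuler_characterstic_Grothendiecl_ring}), which is a ring homomorphism with $e(\LL) = 1$; this only gives $n - m$ times $1$, which is not yet enough, so instead — or in addition — I would use a second invariant that separates $\LL$ from lower-dimensional classes, for instance the dimension filtration or, more robustly, the well-known fact that $K_0(\Var_\k)$ is not a domain but the element $\LL$ is nevertheless not a zero-divisor in the relevant quotient, or simply evaluate via a motivic measure (e.g.\ the one counting points over finite fields when $\k = \overline{\FF_p}$, or Hodge–Deligne / the class in $K_0$ of Chow motives over $\CC$) under which $(n-m)\LL = 0$ plus a degree/dimension count forces $n = m$. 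In the write-up I would follow whichever of these the paper has already set up in~\S\ref{sec.topEuler_characterstic_Grothendiecl_ring}; the cleanest is: the Euler characteristic gives $e(X) = 2 + n$ and $e(Y) = 2 + m$, so $[X] = [Y] \Rightarrow e(X) = e(Y) \Rightarrow n = m$. (Here $e(\A^2) = 1$, $e(\mathrm{pt}) = 1$, $e(\A^1) = 1$.)

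Thus the argument is: by Proposition~\ref{prop:NormalTrees}, $X$ is piecewise isomorphic to $Z_n := \A^2 \sqcup \{\mathrm{pt}\} \sqcup (\A^1)^{\sqcup n}$ and $Y$ to $Z_m$; piecewise isomorphic varieties have the same class in $K_0(\Var_\k)$, so $[Z_n] = [X] = [Y] = [Z_m]$; applying the ring homomorphism $e$ and using $e(\A^1) = 1$ yields $2 + n = 2 + m$, hence $n = m$, hence $Z_n = Z_m$; since piecewise isomorphism is transitive and symmetric, $X$ and $Y$ are both piecewise isomorphic to $Z_n$, and therefore to each other. The main obstacle is entirely contained in Proposition~\ref{prop:NormalTrees} (the classification of such surfaces up to piecewise isomorphism via tree-desingularisations and a careful MMP/blow-down analysis of the exceptional trees), which we are allowed to assume here; granting it, the present proposition is a short and essentially formal consequence of the scissor relations together with the existence of a motivic measure detecting $n$.
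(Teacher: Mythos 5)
Your proof is correct and follows essentially the same route as the paper: quote Proposition~\ref{prop:NormalTrees} to reduce both surfaces to the model $\A^2 \sqcup \{\mathrm{pt}\} \sqcup (\A^1)^{\sqcup n}$, then use the Euler characteristic homomorphism $\chi\colon K_0(\Var_\k)\to\ZZ$ (with $\chi(X)=2+n$) to force $n=m$, and conclude by transitivity of piecewise isomorphism. Your mid-proof worry that $\chi$ is ``not yet enough'' and that a second motivic measure might be needed is unfounded --- since $\chi(\A^1)=1$, the relation $(n-m)[\A^1]=0$ already gives $n-m=0$ under $\chi$, exactly as in your final clean formulation and in the paper's proof of Proposition~\ref{Prop.Grothendieckring}.
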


\section{Lift to affine hypersurfaces}
Let $n\geq 1$. For each homogeneous polynomial $f\in \kk[x_0,\ldots,x_n]$ of degree $d\ge 1$ and each $\mu\in \k^*$, we denote by $X_{f,\mu}\subseteq \A^{n+1}$ the hypersurface given by 
\[
	X_{f,\mu}=\Spec(\k[x_0,\ldots,x_n]/(f-\mu)) \subseteq \AA^{n+1} \, , 
\] 
and obtain a canonical finite morphism
\[\begin{array}{cccc}
\pi_{f,\mu}\colon &X_{f, \mu}&\to& \p^n_f\\
& (x_0,\ldots,x_n)&\mapsto& [x_0:\cdots:x_n]
\end{array}\]
of degree $d$, which is an \'etale covering if $\car(\kk)$ does not divide $d$.

If $\kk$ is algebraically closed, all the varieties $X_{f,\mu}$ are isomorphic to $X_{f,1}$ by homotheties.

	If $\k$ is equal to the field of complex numbers $\CC$, then $\pi_{f,\mu}$
	is the universal abelian covering, as the abelianisation of the 
	fundamental group of $\p^n_f$  with respect to the Euclidean topology  is equal to $\ZZ / d \ZZ$ \cite[Proposition~2.3]{Li2005Lectures-on-topolo}. 
	
	We now prove that for each field $\kk$, the isomorphisms between complements of hypersurfaces lift to these coverings. This will be useful in the sequel, in order to study isomorphisms between varieties  $\p^n_f$ and  $\p^n_g$.

\begin{proposition}
\label{Prop.cyclic_covering}
Let $\kk$ be a field, let $n \geq 1$ be an integer and let $f,g\in \k[x_0,\ldots,x_n]$ be irreducible homogeneous polynomials 
of degree $d \ge 1$. Let $\Phi_0,\ldots,\Phi_n\in \k[x_0,\ldots,x_n]$ be homogeneous polynomials of degree $\ell\ge 1$.
Then, the following statements are equivalent:
\begin{enumerate}[leftmargin=*]
\item\label{cyclic_covering1} The rational map 
\[
	\begin{array}{rcl}
		\Phi\colon \p^n_f & \iso &  \p^n_g \, , \\
		{[x_0:\cdots:x_n]} & \mapsto & [\Phi_0(x_0,\ldots,x_n):\cdots: \Phi_n(x_0,\ldots,x_n)] \, ,
	\end{array}
\] 
is an isomorphism and the gcd of $\Phi_0, \ldots, \Phi_n$ is a power of $f$.
\item\label{cyclic_covering2}There exists $\mu\in \k^*$ such that the following map is an isomorphism
\[
	\begin{array}{rcl}
		\varphi\colon X_{f,1} & \iso & X_{g,\mu} \, , \\
		(x_0,\ldots,x_n) & \mapsto & (\Phi_0(x_0,\ldots,x_n),\ldots, \Phi_n(x_0,\ldots,x_n)) \, .
	\end{array}
\]  
\end{enumerate}
Moreover, if these statements hold, then $\ell$ is invertible  modulo $d$.
\end{proposition}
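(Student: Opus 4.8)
The plan is to prove the equivalence $\eqref{cyclic_covering1}\Leftrightarrow\eqref{cyclic_covering2}$ by relating the map $\Phi$ on the projective complements to the map $\varphi$ on the affine cyclic coverings, using that $X_{f,1}\to\p^n_f$ and $X_{g,\mu}\to\p^n_g$ are the natural degree-$d$ coverings coming from the polynomials $f-1$ and $g-\mu$. The first thing I would do is fix coordinates and observe that a point of $X_{f,1}$ is a tuple $(x_0,\dots,x_n)$ with $f(x)=1$, lying over $[x_0:\cdots:x_n]\in\p^n_f$; applying the polynomials $\Phi_i$ of degree $\ell$ to such a tuple, homogeneity gives $\Phi_i(\lambda x)=\lambda^\ell\Phi_i(x)$, so the resulting point $(\Phi_0(x),\dots,\Phi_n(x))$ lies over $[\Phi_0(x):\cdots:\Phi_n(x)]=\Phi([x])$ — provided the $\Phi_i(x)$ do not all vanish, which is exactly the condition that $\gcd(\Phi_0,\dots,\Phi_n)$ is a power of $f$ (so that the common zero locus of the $\Phi_i$ inside $\p^n_f$ is empty). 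This is where the gcd hypothesis and the scaling exponent $\ell$ enter, and it is the structural heart of the correspondence.

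For $\eqref{cyclic_covering1}\Rightarrow\eqref{cyclic_covering2}$: assuming $\Phi$ is an isomorphism and the gcd condition holds, the map $\varphi\colon(x)\mapsto(\Phi_0(x),\dots,\Phi_n(x))$ is a well-defined morphism from $X_{f,1}$ to $\A^{n+1}$ whose image lands over $\p^n_g$. The key computation is to evaluate $g$ on $\varphi(x)$: since $g$ is homogeneous of degree $d$ and $\Phi$ maps into $\p^n_g$, the function $x\mapsto g(\Phi_0(x),\dots,\Phi_n(x))$ is a nowhere-vanishing regular function on the covering of $\p^n_f$, hence (after descending, or by a direct argument using that $f$ generates the relevant unit group structure) it must be a nonzero constant $\mu\in\k^*$ times a power of $f(x)$; but on $X_{f,1}$ one has $f(x)=1$, so $g(\varphi(x))=\mu$ identically, i.e.\ $\varphi$ maps $X_{f,1}$ into $X_{g,\mu}$. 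That $\varphi$ is an isomorphism onto $X_{g,\mu}$ then follows because $\varphi$ covers the isomorphism $\Phi$ and is compatible with the $\mu_d$-actions (the deck transformations, given by scaling a primitive $d$-th root of unity), together with the fact that $\Phi^{-1}$ is again given by homogeneous polynomials whose gcd is a power of $g$ (by Lemma~\ref{Lem.Open_iso_to_Pn_g} and the discussion preceding it), producing the inverse morphism. The converse $\eqref{cyclic_covering2}\Rightarrow\eqref{cyclic_covering1}$ runs in reverse: an isomorphism $\varphi\colon X_{f,1}\iso X_{g,\mu}$ given by the $\Phi_i$ descends, modulo the scaling actions, to the rational map $\Phi$ on the quotients $\p^n_f$ and $\p^n_g$; well-definedness of $\Phi$ on all of $\p^n_f$ forces the $\Phi_i$ to have no common zero on $\p^n_f$, i.e.\ $\gcd(\Phi_i)$ is a power of $f$, and $\Phi$ is an isomorphism because $\varphi$ is.

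For the final clause, that $\ell$ is invertible modulo $d$: here I would use the induced map on Picard groups. We have $\Pic(\p^n_f)\cong\Z/d\Z$ generated by the class of a hyperplane section, and likewise for $\p^n_g$; an isomorphism $\Phi\colon\p^n_f\iso\p^n_g$ induces an isomorphism $\Z/d\Z\cong\Pic(\p^n_g)\to\Pic(\p^n_f)\cong\Z/d\Z$, which is multiplication by some unit of $\Z/d\Z$. The point is that the pullback under $\Phi$ of the generator of $\Pic(\p^n_g)$ is represented by $\{\Phi_i=0\}$ for a general $i$ (or a general linear combination), which is cut out by a polynomial of degree $\ell$ and therefore equals $\ell$ times the generator of $\Pic(\p^n_f)$; since this pullback map is an isomorphism of $\Z/d\Z$, $\ell$ must be invertible mod $d$.

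The main obstacle I anticipate is the descent/lifting bookkeeping in the two implications — making rigorous the claim that a morphism between the affine quadric-type coverings is ``the same data'' as the homogeneous rational map downstairs, in particular verifying that the candidate inverse morphisms are genuinely regular on all of $X_{g,\mu}$ (equivalently, controlling the indeterminacy locus of $\Phi^{-1}$ and its relation to powers of $g$). This is essentially the content of Lemma~\ref{Lem.Open_iso_to_Pn_g} and the normalisation statement preceding Proposition~\ref{Prop.cyclic_covering}, so I would lean on those; the rest is the homogeneity scaling and the Picard-group degree count, both of which are short.
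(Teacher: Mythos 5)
The main gap is in the direction \ref{cyclic_covering2} $\Rightarrow$ \ref{cyclic_covering1}, at the step ``$\Phi$ is an isomorphism because $\varphi$ is''. Your plan is to descend $\varphi$ modulo the scaling actions, but $\varphi$ is equivariant only with respect to $\zeta \mapsto \zeta^{\ell}$ on $\mu_d$, and for the induced map on the quotients $\p^n_f = X_{f,1}/\mu_d$, $\p^n_g = X_{g,\mu}/\mu_d$ to be an isomorphism you need $\zeta \mapsto \zeta^{\ell}$ to be an automorphism of $\mu_d$, i.e.\ $\gcd(\ell,d)=1$ --- which in your write-up is only obtained afterwards, from statement \ref{cyclic_covering1} via Picard groups, so the argument is circular. (If $\gcd(\ell,d)=e>1$, the graded piece argument on coordinate rings really does break: $\varphi^\ast$ maps degree $j$ to degree $j\ell$ mod $d$, and surjectivity onto the degree-$0$ part is not automatic.) Moreover, the ``deck transformations by a primitive $d$-th root of unity'' picture presumes $\car(\kk)\nmid d$ and roots of unity in $\kk$; the proposition is over an arbitrary field, and when $\car(\kk)\mid d$ the covering is not \'etale, so one has to work with the $\ZZ/d\ZZ$-grading (the group scheme $\mu_d$), where the same issue with $\ell$ reappears. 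The paper avoids all of this: it first proves $g(\Phi_0,\ldots,\Phi_n)=\mu f^{\ell}$ by comparing homogeneous components of $g(\Phi)=(f-1)p+\mu$ (this single identity also gives the gcd claim and the fact that $\Phi$ is a morphism on $\p^n_f$ --- your phrase ``well-definedness of $\Phi$ forces the $\Phi_i$ to have no common zero'' has the implication backwards), and then shows $\Phi$ is birational because $\varphi^\ast$ is an isomorphism while both $\pi_{f,1}^\ast$ and $\pi_{g,\mu}^\ast$ are degree-$d$ field extensions, concluding with Zariski's Main Theorem. Some argument of this type is indispensable and is missing from your sketch.

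In the direction \ref{cyclic_covering1} $\Rightarrow$ \ref{cyclic_covering2} your strategy (lift $\Phi^{-1}$) is the paper's, but the bookkeeping you defer is not contained in Lemma~\ref{Lem.Open_iso_to_Pn_g} (which concerns open subsets isomorphic to $\p^n_f$ and is logically independent of this point): one needs the identities $\Psi_i(\Phi_0,\ldots,\Phi_n)=Ax_i$ with $A$ nowhere vanishing on $\p^n_f$, hence $A=\lambda f^{s}$ with $ds=\ell\ell'-1$, and a rescaling to $\lambda=1$, before the lifted map is an inverse of $\varphi$ (the paper then finishes on coordinate rings: $\psi\circ\varphi$ is the closed embedding, so $\varphi^\ast$ is surjective and injective). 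On the positive side, your Picard-group proof of the final clause is correct and genuinely different: the paper obtains $\ell\ell'\equiv 1 \pmod d$ for free from the identity above, whereas you read invertibility of $\ell$ off the isomorphism $\Phi^\ast\colon \Pic(\p^n_g)\to\Pic(\p^n_f)$ sending the hyperplane class to $\ell$ times the hyperplane class; just note that this uses statement \ref{cyclic_covering1} and therefore cannot be recycled inside the proof of \ref{cyclic_covering2} $\Rightarrow$ \ref{cyclic_covering1}.
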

\begin{proof}
``\ref{cyclic_covering1} $\Rightarrow$ \ref{cyclic_covering2}'': 
By assumption there exists $r \geq 0$ and $\Phi_0', \ldots, \Phi_n'$ in $\kk[x_0, \ldots, x_n]$ 
without a common factor such that
$\Phi_i = f^r \Phi_i'$ for all $i$. The domain $\textrm{dom}(\Phi)$ of the rational self-map $\Phi \colon \PP^n \rat \PP^n$ 
is thus equal to 
$\PP^n \setminus V_{\PP^n}(\Phi'_0, \ldots, \Phi'_n)$ and contains $\PP^n_f$.
As $g$ is nowhere zero on $\p^n_g$, the polynomial $g(\Phi_0,\ldots,\Phi_n) = f^{dr} g(\Phi_0', \ldots, \Phi_n')$ is nowhere zero on $\p^n_f$ and is thus equal to $\mu f^t$ for some $\mu\in \k^*$ and some integer $t\ge 1$. We then obtain a morphism 
\[
	\varphi\colon X_{f,1} \to  X_{g,\mu} \, , \quad 
	(x_0,\ldots,x_n)\mapsto (\Phi_0(x_0,\ldots,x_n),\ldots, \Phi_n(x_0,\ldots,x_n)) \, .
\] 
It remains to see that $\varphi$ is an isomorphism.

The inverse of $\Phi$ is given by 
\[
	\Psi\colon \p^n_g\iso \p^n_f \, , \quad [x_0:\cdots:x_n]\mapsto [\Psi_0(x_0,\ldots,x_n):\cdots: \Psi_n(x_0,\ldots,x_n)] \, ,
\]
where $\Psi_0,\ldots,\Psi_n\in \k[x_0,\ldots,x_n]$ are homogeneous polynomials of the same degree $\ell'\ge 1$, without common factor. 

As $\Psi\circ \Phi=\mathrm{id}_{\p^n_f}$ and $\Phi\circ \Psi=\mathrm{id}_{\p^n_g}$, there exist homogeneous polynomials $A,B\in \k[x_0,\ldots,x_n]$ of degree $\ell\ell'-1$ such that 
\[\Psi_i(\Phi_0,\ldots,\Phi_n)= Ax_i \text{ and }\Phi_i(\Psi_0,\ldots,\Psi_n)= Bx_i \text{ for all }i\in \{0,\ldots,n\}.\]
Moreover, as the zero locus of $A$ is sent by $\Phi$
into the base-locus of $\Psi$
and the zero locus of $B$  is sent by $\Psi$ into the base-locus of 
$\Phi$, we find that $A$ is nowhere vanishing  on $\p^n_f$ and $B$ is nowhere vanishing on $\p^n_g$. We get  $A=\lambda f^s$ and $B=\lambda' g^{s}$ where $\lambda,\lambda'\in \k^*$ and where $s\ge 1$ is such that $ds=\ell\ell'-1$. In particular, $\ell$ is invertible modulo $d$. 
We  replace $\Psi_i$ with $\Psi_i/\lambda$ for each $i\in \{0,\ldots,n\}$ and reduce to the case where $\lambda=1$. 

Consider now the morphism
\[
	\psi\colon X_{g,\mu} \to \A^{n+1} \, , \quad 
	(x_0,\ldots,x_n)\mapsto (\Psi_0(x_0,\ldots,x_n),\ldots, \Psi_n(x_0,\ldots,x_n)) \, .
\] 
Using that  $\Psi_i(\Phi_0,\ldots,\Phi_n)= f^sx_i$ for each $i\in \{0,\ldots,n\}$, we obtain that 
$\psi\circ \varphi \colon X_{f, 1} \to \AA^{n+1}$ is the natural closed embedding. 
Hence, $(\psi \circ \varphi)^\ast = \varphi^\ast \circ \psi^\ast \colon \kk[\AA^{n+1}] \to \kk[X_{f, 1}]$  is surjective and $\varphi$ is dominant, i.e. $\varphi^\ast \colon \kk[X_{g, \mu}] \to \kk[X_{f, 1}]$ is injective. 
This implies that
$\varphi^\ast$ and thus $\varphi$ is an isomorphism.

``\ref{cyclic_covering2} $\Rightarrow$ \ref{cyclic_covering1}'': As $\Phi_0, \ldots, \Phi_{n}$ are homogeneous of degree $\ell$, we may define a rational self-map $\Phi\colon [x_0:\cdots:x_n]\mapsto [\Phi_0(x_0,\ldots,x_n):\cdots: \Phi_n(x_0,\ldots,x_n)]$ of $\p^n$. We now prove that $\Phi$ restricts to an isomorphism $\p^n_f\iso \p^n_g$ and that the gcd
of $\Phi_0, \ldots, \Phi_n$ is a power of $f$. 

By assumption $g(\Phi_0, \ldots, \Phi_n) = (f-1) p + \mu$ for some polynomial $p \in \kk[x_0, \ldots, x_n]$ 
of degree $r = d (\ell-1) \geq 0$.
Write $p = \sum_{i=0}^r p_i$ where $p_i$ is homogeneous of degree $i$.
Since $g(\Phi_0, \ldots, \Phi_n)$ is homogeneous, it follows that $p_0 = \mu$ and $p_{di} = f p_{d(i-1)}$ for all 
$1 \leq i \leq \ell-1$. This implies that
\[
	g(\Phi_0, \ldots, \Phi_n) = \mu f^{\ell} \, . 
\]
Since $g$ is homogeneous
and $f$ is irreducible  the gcd of $\Phi_0, \ldots, \Phi_n$ is a power of $f$.
Moreover, $\Phi$ restricts to a morphism $\Phi \colon \p^n_f\to \p^n_g$.

Using that $\pi_{g, \mu} \circ \varphi = \Phi \circ \pi_{f, 1}$, we get the following commutative diagram
\[
\xymatrix{
	\kk(X_{g, \mu})  \ar[r]^-{\varphi^\ast} & \kk(X_{f, 1}) \\
	\kk(\PP^n_g)  \ar[u]^-{\pi_{g, \mu}^\ast} \ar[r]^-{\Phi^\ast} & \kk(\PP^n_f) \ar[u]_-{\pi_{f, 1}^\ast}
}
\] 
and that $\Phi \colon \p^n_f\to \p^n_g$ is a quasi-finite surjection.
Note that $\pi_{f, 1}^\ast$ and $\pi_{g, \mu}^\ast$ are field extensions of degree $d$.
Since $\varphi^\ast$ is an isomorphism, $\Phi^\ast$ is an isomorphism as well, i.e.~$\Phi \colon \PP^n_f \to \PP^n_g$ 
is birational. Zariski's Main Theorem \cite[Corollaire~4.4.9]{Gr1961Elements-de-geomet-III} 
gives that $\Phi  \colon \PP^n_f \to \PP^n_g$ is an isomorphism.
\end{proof}

\begin{remark}\label{Rem.mu_equal_1}
	If there exists an isomorphism $\Phi \colon \PP^n_f \to \PP^n_g$, there are  homogeneous polynomials 
	$\Phi_0,\ldots,\Phi_n\in \k[x_0,\ldots,x_n]$ of degree $\ell \geq 1$ without common factor
	such that
	$\Phi([x_0: \cdots: x_n]) = [\Phi_0(x_0,\ldots,x_n):\cdots: \Phi_n(x_0,\ldots,x_n)]$. 
	Proposition~\ref{Prop.cyclic_covering} then gives the existence of $\mu\in \k^*$ such that 
	\[
	\begin{array}{rcl}
		\varphi\colon X_{f,1} & \iso & X_{g,\mu} \, , \\
		(x_0,\ldots,x_n) & \mapsto & (\Phi_0(x_0,\ldots,x_n),\ldots, \Phi_n(x_0,\ldots,x_n))
	\end{array}
\]   is an isomorphism. If $\kk$ is moreover algebraically closed, then
we may multiply each $\Phi_i$ with a root of $\mu^{-1}$ and assume that $\mu=1$. Hence, $X_{f,1}$ and $X_{g,1}$ are isomorphic.
\end{remark}

\begin{remark}\label{remark:Replacemodulo}
Let $\kk$ be a field,  $n \geq 1$ be an integer and let $f,g\in \k[x_0,\ldots,x_n]$ be irreducible homogeneous polynomials of degree $d \ge 1$. Let $e \ge 1$  be an integer and let $\Phi_0,\ldots,\Phi_n\in \k[x_0,\ldots,x_n]$ be polynomials such that the degree of each of their monomials is congruent to $e$ modulo $d$. If 
\[
	X_{f, 1} \iso X_{g, 1} \, , \quad (x_0,\ldots,x_n)\mapsto (\Phi_0(x_0,\ldots,x_n),\ldots, \Phi_n(x_0,\ldots,x_n))
\] 
is an isomorphism, then we may multiply the monomials of $\Phi_i$ with some powers of $f$ and assume that $\Phi_0,\ldots,\Phi_n$ are homogeneous of the same degree $\ell \geq 1$, and then apply Proposition~\ref{Prop.cyclic_covering} to obtain an isomorphism $\p^n_f\iso \p^n_g$.
\end{remark}

\begin{lemma}	\label{lemma:Replace_n_by_m}
Let $\kk$ be a field, $n \geq 1$ be an integer, $f,g\in \k[x_0,\ldots,x_n]$ be irreducible homogeneous polynomials of degree $d \ge 1$, and let $\Phi\colon \p^n_f  \iso   \p^n_g$ be an isomorphism given by homogeneous polynomials of degree $\ell$ such that their gcd is a power of $f$. 
If $\ell$ is congruent to $1$ modulo $d$, then $\p^{m}_f$ and $\p^{m}_g$ are isomorphic for each $m\ge n$.
\end{lemma}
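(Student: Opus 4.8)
The plan is to extend the given isomorphism $\Phi$ to $m$ variables, letting it act on the $m-n$ new coordinates only up to a twist by a suitable power of $f$, and then to recognise the resulting rational self-map of $\p^m$ as an isomorphism $\p^m_f \iso \p^m_g$ by passing to the affine cyclic coverings and invoking Proposition~\ref{Prop.cyclic_covering}. Throughout, $f$ and $g$ are regarded as irreducible homogeneous polynomials of degree $d$ in $\kk[x_0,\ldots,x_m]$ (they do not involve $x_{n+1},\ldots,x_m$, and adjoining variables to a polynomial ring preserves irreducibility), and for such a polynomial $h$ and $\nu\in\kk^*$ the variety $X_{h,\nu}\subseteq\AA^{m+1}$ is $\Spec(\kk[x_0,\ldots,x_m]/(h-\nu))$, which is isomorphic to the product of the corresponding hypersurface in $\AA^{n+1}$ with $\AA^{m-n}$.

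First I would record that, since $\ell\equiv 1 \pmod d$ and $\ell\ge 1$, one may write $\ell = 1 + kd$ with $k\ge 0$, and that Proposition~\ref{Prop.cyclic_covering}, in the implication ``\ref{cyclic_covering1} $\Rightarrow$ \ref{cyclic_covering2}'', yields a scalar $\mu\in\kk^*$ such that $\varphi\colon X_{f,1}\iso X_{g,\mu}$, $(x_0,\ldots,x_n)\mapsto(\Phi_0,\ldots,\Phi_n)$, is an isomorphism of affine varieties in $\AA^{n+1}$.

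Then I would set $\Psi_i := \Phi_i$ for $0\le i\le n$ and $\Psi_i := x_i\, f^k$ for $n+1\le i\le m$. Each $\Psi_i$ is homogeneous of degree $\ell$, because $\deg\Phi_i=\ell$ and $\deg(x_i f^k)=1+kd=\ell$ — this is the single place where the congruence $\ell\equiv 1\pmod d$ is genuinely used. Restricting the corresponding self-morphism of $\AA^{m+1}$ to $X_{f,1}\subseteq\AA^{m+1}$, where $f\equiv 1$, one has $\Psi_i=\Phi_i$ for $i\le n$ and $\Psi_i=x_i$ for $i>n$; hence this restriction is simply $\varphi\times\id_{\AA^{m-n}}$ under the product decompositions of $X_{f,1}$ and $X_{g,\mu}$ in $\AA^{m+1}$ (using that $g$ depends only on $x_0,\ldots,x_n$, so that $g(\Psi_0,\ldots,\Psi_m)=g(\Phi_0,\ldots,\Phi_n)$ equals $\mu$ on $X_{f,1}$, and so the image lands in $X_{g,\mu}$). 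In particular this restriction is an isomorphism $X_{f,1}\iso X_{g,\mu}$ in $\AA^{m+1}$.

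Finally I would apply Proposition~\ref{Prop.cyclic_covering} once more, now in the implication ``\ref{cyclic_covering2} $\Rightarrow$ \ref{cyclic_covering1}'' with $m$ in place of $n$, with the homogeneous polynomials $\Psi_0,\ldots,\Psi_m$ of degree $\ell\ge 1$ and the scalar $\mu$ just obtained; it gives directly that $[x_0:\cdots:x_m]\mapsto[\Psi_0:\cdots:\Psi_m]$ is an isomorphism $\p^m_f\iso\p^m_g$ (with gcd of the $\Psi_i$ a power of $f$), which proves the lemma. I do not expect a serious obstacle: the only delicate points are that the twisting factor makes $x_i f^k$ have degree exactly $\ell$ (forcing the use of $\ell\equiv 1\pmod d$) and that the scalar $\mu$ may be transported unchanged through the construction, so that no normalisation $\mu=1$ — unavailable over a general field — is needed. (Conceptually, the statement reflects the fact that $\p^m_f\to\p^n_f$ is the total space of $\mathcal{O}_{\p^n_f}(1)^{\oplus(m-n)}$ and that $\Phi^\ast\mathcal{O}(1)\cong\mathcal{O}(\ell)\cong\mathcal{O}(1)$ in $\Pic(\p^n_f)=\ZZ/d\ZZ$ precisely when $\ell\equiv 1\pmod d$; but the polynomial argument above is shorter and entirely self-contained.)
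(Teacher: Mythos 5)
Your proposal is correct and follows essentially the same route as the paper: apply Proposition~\ref{Prop.cyclic_covering} in the direction \ref{cyclic_covering1}~$\Rightarrow$~\ref{cyclic_covering2} to pass to the affine hypersurfaces, extend the isomorphism $X_{f,1}\iso X_{g,\mu}$ to the product with $\AA^{m-n}$ using the extra components $x_i f^k$ (homogeneous of degree $\ell=1+kd$, which is exactly where $\ell\equiv 1\pmod d$ enters), and then apply the Proposition in the converse direction. Your write-up only adds some explicit verifications (irreducibility of $f,g$ in the larger polynomial ring, that the image lands in $X_{g,\mu}$) that the paper leaves implicit.
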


\begin{proof}
We write $\Phi$ as $[x_0:\cdots:x_n] \mapsto  [\Phi_0(x_0,\ldots,x_n):\cdots: \Phi_n(x_0,\ldots,x_n)]$, where $\Phi_0,\ldots,\Phi_n\in \k[x_0,\ldots,x_n]$ are homogeneous of degree $\ell$ and the $\mathrm{gcd}$ of $\Phi_0,\ldots,\Phi_ n$ is a power of $f$. By Proposition~\ref{Prop.cyclic_covering}, the morphism
\[
\varphi\colon X_{f,1}  \iso  X_{g,\mu} \, , \quad (x_0,\ldots,x_n) \mapsto  (\Phi_0(x_0,\ldots,x_n),\ldots, \Phi_n(x_0,\ldots,x_n))
\]
is an isomorphism, where $X_{f,1}, X_{g,\mu}\subseteq \A^{n+1}$ are defined as before. If $\ell$ is congruent to $1$ modulo $d$, we can write $\ell=1+dr$ for some integer $r\ge 0$. For each $m\ge n$, 
we thus get an isomorphism
$X_{f,1}\times \A^{m-n}\iso X_{g,\mu}\times \A^{m-n}$  given by
\[
	(x_0,\ldots,x_m) \mapsto (\varphi(x_0,\ldots,x_m), x_{n+1}f(x_0,\ldots,x_n)^r,\ldots, x_m f(x_0,\ldots,x_n)^r)) \, .
\]
Applying again Proposition~\ref{Prop.cyclic_covering}, now in the converse direction, we get that the homogeneous polynomials $\Phi_0,\ldots,\Phi_n, x_{n+1} f^r,\ldots,x_m f^r$  of degree $\ell$ also define an isomorphism $\p^{m}_f\iso\p^{m}_g$. 
\end{proof}
The next examples shows that $\ell\neq 1$ in Proposition~\ref{Prop.cyclic_covering} 
	is possible.

\begin{example}
Let $\k$ be a field and let $f=g=xyz+x^3+y^3\in \k[x,y,z]$, which is the equation of a nodal cubic curve $\Gamma\subset\p^2$. By blowing-up the singular point, then blowing-up sucessively points on the exceptional divisor created lying on the curve, we obtain a birational morphism $X\to\p^2$ with a strict transform $\tilde{\Gamma}\subseteq X$
	of $\Gamma$ being a $(-1)$-curve. We then contract this curve and contract the exceptional divisors created, except the last one, to get an automorphism of $\p^2\setminus \Gamma$ that does not extend to $\p^2$ and has degree $8$. Explicitely, we define $\Phi_0,\Phi_1,\Phi_2\in \k[x,y,z]$, homogeneous of degree $8$, by
\begin{eqnarray*}
\Phi_0 & =& (-x^4z + 2x^3y^2 - 2x^2yz^2 + 2xy^3z + y^5 - y^2z^3)\cdot f \\
\Phi_1 & =& (x^2+yz)\cdot f^2 \\
\Phi_2 &=& 
x^7 y -x^6 z^2+6 x^5 y^2z-x^4 y^4-3 x^4 yz^3  +9 x^3 y^3z^2 \\
		&& + x^2y^5 z-3 x^2 y^2z^4 - x y^7+4 x y^4z^3 +2  y^6z^2- y^3z^5
\end{eqnarray*}
and  calculate that these define an involution 
\[
	\p^2_f\iso \p^2_f \, , \quad [x:y:z]\mapsto [\Phi_0(x,y,z):\Phi_1(x,y,z):\Phi_2(x,y,z)] \, .
\]
Here, the common degree is $8$, that is not congruent to $1$ modulo $3$, but invertible in $\ZZ/3\ZZ$, as Proposition~\ref{Prop.cyclic_covering} says.
\end{example}

\begin{lemma}
	\label{Lem.Open_iso_to_Pn_g}
	Let $\kk$ be a field, $n\ge 1$, $f \in \kk[x_0, \ldots, x_n]$ an irreducible homogeneous polynomial of degree $d \geq 1$
	and let $U \subseteq \p^n$ be an open subvariety.
	If there exists an isomorphism $\p^n_f \iso U$, then there exists an  irreducible homogeneous polynomial $g \in \kk[x_0, \ldots, x_n]$ of degree $d$ with $U = \p^n_g$.
\end{lemma}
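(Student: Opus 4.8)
The plan is to exploit the fact that $\PP^n_f$ is affine with $\Pic(\PP^n_f) \cong \ZZ/d\ZZ$, so the same must be true of $U$, and then to recover $g$ from the complement $Z = \PP^n \setminus U$. First I would argue that $Z$ is a closed subvariety of $\PP^n$ of pure codimension one: since $U \cong \PP^n_f$ is affine, $Z$ cannot have a component of codimension $\geq 2$ (the complement of such a set in a normal projective variety is not affine — one uses that $\PP^n$ is normal and that removing a closed set of codimension $\geq 2$ does not change global functions, so $U$ would have only constant functions and fail to be affine), hence $Z$ is a hypersurface, say with reduced equation $h = g_1 \cdots g_k$ a product of irreducible homogeneous polynomials $g_i$ of degrees $d_i$. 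Thus $U = \PP^n \setminus V_{\PP^n}(h)$.

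Next I would pin down that $Z$ is irreducible and compute the degree. The standard exact sequence for divisor class groups gives
\[
	\Pic(\PP^n) \xrightarrow{\ \mathrm{res}\ } \Pic(U) \longrightarrow 0 \, ,
\]
more precisely $\Pic(U) \cong \Pic(\PP^n) / \langle [V(g_1)], \ldots, [V(g_k)]\rangle \cong \ZZ / \gcd(d_1, \ldots, d_k)\ZZ$, because removing an irreducible hypersurface of degree $d_i$ from $\PP^n$ kills the class $d_i \cdot (\text{hyperplane class})$ in the Picard group and in general $\Pic(\PP^n \setminus Z) = \ZZ/(\text{gcd of component degrees})$. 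Since $\Pic(U) \cong \Pic(\PP^n_f) \cong \ZZ/d\ZZ$, we get $\gcd(d_1, \ldots, d_k) = d$. In particular each $d_i \geq d$, so $\deg h = \sum d_i \geq kd$. To conclude $k = 1$ and $d_1 = d$, I would compare a further invariant of $U$: for instance the ring of invertible regular functions, or better the structure of $U$ near its boundary. Actually the cleanest route: use the finite étale (or just finite) covering $\pi_{f,1}\colon X_{f,1} \to \PP^n_f$ of degree $d$ from the previous section — an isomorphism $\PP^n_f \iso U$ transports this to a degree-$d$ finite covering of $U$, and one shows directly that a connected degree-$d$ covering of $U = \PP^n \setminus V(h)$ forces $k = 1$; alternatively, and more elementarily, since $\gcd(d_1,\dots,d_k) = d$ and we want equality in $\deg h \geq kd$, we need only rule out $\deg h > d$.

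To finish I would show $\deg h = d$ by a direct argument: the complement $\PP^n \setminus V(g_1)$ of the single irreducible component $V(g_1)$ already has $\Pic \cong \ZZ/d_1\ZZ$ and contains $U$ as the further complement of $V(g_2 \cdots g_k)$; if $k \geq 2$ or $d_1 > d$ one can exhibit a nonconstant invertible function or a class in $\Pic$ incompatible with $\ZZ/d\ZZ$ — concretely, the coordinate ring $\kk[U] = \kk[x_0,\dots,x_n]_{(h)}$ graded piece of degree $0$, i.e. the ring generated by the $g_i^{a_i}\cdot(\text{monomials})$ of total degree $0$; matching this with $\kk[\PP^n_f] = \kk[x_0,\dots,x_n]_{(f)}$ in degree $0$ forces, after taking the integral closure / using that both are UFDs-localizations, that $h$ and $f$ generate the same "boundary," hence $k=1$ and $g_1 = g$ has degree $d$. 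The main obstacle I expect is making the degree computation $\deg h = d$ rigorous without hand-waving: knowing only $\gcd$ of the component degrees equals $d$ is not enough, and I will need to genuinely use that $U$ is isomorphic to $\PP^n_f$ (not merely has the same Picard group) — the right tool is comparing the affine coordinate rings, using that $\kk[\PP^n_f]$ is the degree-zero part of $\kk[x_0,\dots,x_n][1/f]$, which is generated in "degree $1/d$" in a precise sense, to conclude the boundary hypersurface is irreducible of degree exactly $d$.
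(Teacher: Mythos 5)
Your overall strategy (show the boundary is a hypersurface, then use $\Pic$) is the same as the paper's, but the proof as written has a genuine gap exactly at the point you yourself flag: you never actually prove that $Z$ is irreducible and of degree $d$. The Picard computation only yields $\gcd(d_1,\dots,d_k)=d$, which is compatible with, say, two components of degree $d$; and your proposed finish (comparing graded coordinate rings, ``integral closure / UFD-localizations'', ``generated in degree $1/d$'', or transporting the covering $X_{f,1}$) is not carried out and does not obviously lead anywhere. The missing idea is much simpler and is one you mention in passing without executing: units. Since $f$ is irreducible, every invertible regular function on $\PP^n_f$ is constant (a principal divisor supported on the irreducible $V_{\PP^n}(f)$ has degree $0$, hence is trivial), so $\kk[U]^\ast=\kk^\ast$ as well. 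If $Z$ had two distinct irreducible components $V_{\PP^n}(g_1)$, $V_{\PP^n}(g_2)$ of degrees $d_1,d_2$, then $g_1^{d_2}/g_2^{d_1}$ would be a nonconstant invertible function on $U$, a contradiction. Hence $k=1$, and then $\ZZ/d_1\ZZ\simeq\Pic(U)\simeq\Pic(\PP^n_f)\simeq\ZZ/d\ZZ$ forces $d_1=d$. This is precisely how the paper concludes; no comparison of coordinate rings beyond the two isomorphism invariants $\kk[\cdot]^\ast$ and $\Pic$ is needed.

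A secondary, fixable issue is your purity argument. The parenthetical you give (removing a closed set of codimension $\geq 2$ from $\PP^n$ leaves only constant global functions, so the complement cannot be affine) only rules out the case where \emph{all} of $Z$ has codimension $\geq 2$; it does not by itself exclude a mixed boundary consisting of a hypersurface part $Z_1$ together with extra components of codimension $\geq 2$. The correct argument (as in the paper) is: first note $Z_1\neq\varnothing$ by your parenthetical, then observe that $U$ is an affine open subset of the normal affine variety $\PP^n\setminus Z_1$ whose complement there has codimension $\geq 2$, so $\kk[U]=\kk[\PP^n\setminus Z_1]$, and since both are affine this forces $U=\PP^n\setminus Z_1$. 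With these two repairs your argument coincides with the paper's proof.
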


Before we start the proof we recall the following fact: If $X$ is an affine irreducible 
normal variety over a field $\kk$ and if $U \subseteq X$ is a big open subset, i.e.~the codimension of $X \setminus U$ in $X$ is at least $2$,
then $\kk[U] = \kk[X]$ (see \cite[Theorem~11.5]{Ma1986Commutative-ring-t}). In particular, if $U$ is affine as well, then $U = X$.

\begin{proof}[Proof of Lemma~$\ref{Lem.Open_iso_to_Pn_g}$]
	If $U$ is big open in $\PP^n$, then for any open affine subvariety $V \subseteq \PP^n$,
	the affine subvariety $U \cap V$ is big open in $V$ and hence $U \cap V = V$. This would imply that $U = \PP^n$
	is affine, a contradiction since $n \geq 1$. Hence the union $Z$ of the $(n-1)$-dimensional
	irreducible components of $\PP^n \setminus U$ is non-empty. As $\PP^n \setminus Z$ is an affine
	big open subset of $U$ we get $\PP^n \setminus Z = U$.
	Moreover, $\kk[\p^n_f]^\ast= \k^\ast$ yields $\kk[U]^\ast=\k^\ast$, which implies that $Z$ is irreducible. 	
	Hence, there exists an irreducible homogeneous polynomial $g \in \kk[x_0, \ldots, x_n]$ such that $Z$
	is the zero locus of $g$, i.e.~$U = \PP^n_g$. Finally, $\ZZ / \deg(g) \ZZ \simeq \Pic(\p^n_g) \simeq \Pic(\p^n_f) \simeq \ZZ / d \ZZ$ implies that $\deg(g) = d$.
\end{proof}

\section{Non-isomorphic hypersurfaces  having isomorphic complements}
\subsection{Explicit isomorphisms between complements of projective cones}

The following formulas are 
inspired by  isomorphisms between cylinders over Danielewski surfaces given in \cite{MoPo2021Isomorphisms-betwe}.

\begin{lemma}\label{Lemma-Iso}
Let $\kk$ be a field, let $s,m,n\geq0$ be integers and let $S=\k[x_0,x_1,\ldots,x_s]$ be a polynomial ring in $s+1$ variables over $\kk$. 
Let $P,Q\in S[z]$ and denote by $H_P$ and $H_Q$ the hypersurfaces of $\A^{s+3}_{\k}=\Spec(S[y,z])$ defined by the equations 
\[
x_0^ny=P(x_0,\ldots,x_r,z) \quad \text{and} \quad x_0^ny=Q(x_0,\ldots,x_r,z) \, ,
\]
respectively.	Suppose that there exist $A,B\in S[z]$ such that
\begin{enumerate}[leftmargin=*]
\item $z-A(B(z))$ and $z-B(A(z))$ both belong to the ideal $x_0^{m}S[z]$,
\item $Q\left(A(z)+x_0^{m}w\right)\in x_0^nS[z,w]+P(z)S[z,w]$ and $P\left(B(z)+x_0^{m}w\right)\in x_0^nS[z,w]+Q(z)S[z,w]$, where $w$ is transcendental over $S[y,z]$.  
\end{enumerate}
Then, the following are inverse isomorphisms.
\[\begin{array}{rcl}
	\Phi\colon H_P\times\A^1_{\k} &\iso&  H_Q\times\A^1_{\k}\\
	(x_0,\ldots,x_s,y,z,w) &\mapsto& (x_0,\ldots,x_s,\frac{Q\left(A(z)+x_0^{m}w\right)}{x_0^n}, \\
	&& A(z)+x_0^{m}w,\frac{z-B\left(A(z)+x_0^{m}w\right)}{x_0^{m}}) \, ,\\
	\Psi\colon  H_Q \times\A^1_{\k} &\iso&  H_P\times\A^1_{\k}\\
	(x_0,\ldots,x_s,y,z,w) &\mapsto& (x_0,\ldots,x_s,\frac{P\left(B(z)+x_0^{m}w\right)}{x_0^n}, \\
	&& B(z)+x_0^{m}w,\frac{z-A\left(B(z)+x_0^{m}w\right)}{x_0^{m}}).
\end{array}\]
\end{lemma}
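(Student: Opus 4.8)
The plan is to verify directly that $\Phi$ and $\Psi$ are well-defined morphisms between the given affine hypersurfaces-times-$\A^1$, and that they are mutually inverse. Throughout I will write $z' = A(z) + x_0^m w$ and $w' = \frac{z - B(A(z) + x_0^m w)}{x_0^m}$ for the new $z$- and $w$-coordinates under $\Phi$, and similarly $\tilde z = B(z) + x_0^m w$, $\tilde w = \frac{z - A(B(z)+x_0^m w)}{x_0^m}$ for $\Psi$. The first task is to check that each coordinate function of $\Phi$ lies in $\kk[H_P \times \A^1]$, i.e.\ is a genuine polynomial after using the defining relation $x_0^n y = P(x_0,\ldots,x_r,z)$. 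The entries $x_0,\ldots,x_s$ and $z' = A(z) + x_0^m w$ are obviously polynomial. For the new $w$-coordinate $w'$, hypothesis (1) says $z - B(A(z)) \in x_0^m S[z]$; substituting and expanding $B(A(z) + x_0^m w)$ in powers of $x_0^m w$, the difference $z - B(A(z) + x_0^m w) = (z - B(A(z))) - (B(A(z)+x_0^m w) - B(A(z)))$ is divisible by $x_0^m$, so $w'$ is polynomial. For the new $y$-coordinate $\frac{Q(z')}{x_0^n}$ — here $Q(z')$ abbreviates $Q(x_0,\ldots,x_r,z')$ — hypothesis (2) gives $Q(A(z)+x_0^m w) \in x_0^n S[z,w] + P(z) S[z,w]$; writing $Q(z') = x_0^n a + P(z) b$ with $a,b \in S[z,w]$ and using $P(z) = x_0^n y$ on $H_P$, we get $\frac{Q(z')}{x_0^n} = a + y b \in \kk[H_P \times \A^1]$. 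This shows $\Phi$ maps into the correct affine scheme once we also confirm that the image point satisfies the defining equation of $H_Q \times \A^1$, namely $x_0^n \cdot (\text{new }y) = Q(x_0,\ldots,x_r, \text{new }z)$; but that is immediate from the construction since the new $y$ is literally $Q(z')/x_0^n$. By symmetry (swapping the roles of $P,Q$ and $A,B$, which is exactly the symmetry built into the hypotheses), the same argument shows $\Psi$ is a well-defined morphism $H_Q \times \A^1 \to H_P \times \A^1$.

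Next I would prove $\Psi \circ \Phi = \id$; the identity $\Phi \circ \Psi = \id$ then follows by the same computation with $(P,A) \leftrightarrow (Q,B)$. Apply $\Psi$ to the point $\Phi(x_0,\ldots,x_s,y,z,w) = (x_0,\ldots,x_s, Q(z')/x_0^n, z', w')$. The $x_i$ are unchanged. The new $z$-coordinate becomes $B(z') + x_0^m w' = B(A(z)+x_0^m w) + x_0^m \cdot \frac{z - B(A(z)+x_0^m w)}{x_0^m} = z$, as desired. The new $w$-coordinate becomes $\frac{z' - A(B(z') + x_0^m w')}{x_0^m} = \frac{z' - A(z)}{x_0^m} = \frac{(A(z)+x_0^m w) - A(z)}{x_0^m} = w$, using the $z$-coordinate computation we just did. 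Finally the new $y$-coordinate is $\frac{P(B(z')+x_0^m w')}{x_0^n} = \frac{P(z)}{x_0^n} = y$, using the defining relation $x_0^n y = P(x_0,\ldots,x_r,z)$ on $H_P$. Hence $\Psi \circ \Phi = \id_{H_P \times \A^1}$, and symmetrically $\Phi \circ \Psi = \id_{H_Q \times \A^1}$, so $\Phi$ and $\Psi$ are inverse isomorphisms.

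I do not expect a serious obstacle here: the lemma is essentially a bookkeeping verification, and the two hypotheses have been tailored precisely so that (1) makes the $w$-coordinates and the $z$-compositions work, while (2) makes the $y$-coordinates polynomial. The one point requiring a little care is the bookkeeping distinction between the abbreviations $P(z), Q(z)$ (meaning $P(x_0,\ldots,x_r,z)$, with $x_0,\ldots,x_r$ among the ring variables $x_0,\ldots,x_s$) and the substitutions $P(z'), Q(z')$, together with the fact that divisibility by $x_0^m$ (resp.\ $x_0^n$) is preserved under the relevant substitutions because $x_0$ itself is one of the untouched coordinates; once these conventions are fixed the computation is the direct chase above. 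A secondary, even more routine, point is to note that $w$ being transcendental over $S[y,z]$ in hypothesis (2) just guarantees that the containments there are identities of polynomials in the extra variable $w$, which is exactly what lets us substitute $w \mapsto w'$ (or any value) freely in the verification.
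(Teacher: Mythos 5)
Your proposal is correct and follows essentially the same route as the paper, which simply records that the hypotheses make $\Phi$ and $\Psi$ morphisms and then notes that both compositions come out as $(x_0,\ldots,x_s,P(z)/x_0^n,z,w)$, resp.\ $(x_0,\ldots,x_s,Q(z)/x_0^n,z,w)$, i.e.\ the identity on $H_P\times\A^1$, resp.\ $H_Q\times\A^1$. Your write-up just makes explicit the bookkeeping the paper leaves implicit (Taylor expansion in $x_0^m w$ for the $w$-coordinate, hypothesis (2) plus $P(z)=x_0^n y$ for the $y$-coordinate), at the same or a higher level of rigour than the paper's one-line argument.
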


\begin{proof}
The hypotheses on $A$ and $B$ imply that $\Phi$ and $\Psi$ are morphisms. Moreover, the compositions are given by  
\[(\Psi\circ\Phi)(x_0,\ldots,x_s,y,z,w)=(x_0,\ldots,x_s,P(z)/x_0^n,z,w),\]
\[(\Phi\circ\Psi)(x_0,\ldots,x_s,y,z,w)=(x_0,\ldots,x_s,Q(z)/x_0^n,z,w).\]
\end{proof}

\begin{proposition}\label{Prop.Counterexample.dim3}Let $\kk$ be a field, $d\ge 4$ be integers and let $f,g\in\k[x,y,z]$  be the homogeneous polynomials  of degree $d$ defined by \[f=x^{d-1}y+z^{d} \text{ and } g=x^{d-1}y+z^{d}+dx^{d-2}z^2.\] Then, for each $r\ge 1$, the 
	open subvarieties  $\p^{r+2}_f,\p^{r+2}_g \subseteq \p^{r+2}$ are isomorphic.
\end{proposition}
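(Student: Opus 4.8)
The plan is to verify the hypotheses of Lemma~\ref{Lemma-Iso} with a suitable choice of polynomials $A,B$ and then apply Remark~\ref{remark:Replacemodulo} (via Proposition~\ref{Prop.cyclic_covering}) to descend the resulting isomorphism of affine hypersurfaces to an isomorphism of the projective complements. Concretely, I would first look at the affine hypersurface $X_{f,1}\subseteq\A^3$ given by $x^{d-1}y+z^d=1$ and $X_{g,1}\subseteq\A^3$ given by $x^{d-1}y+z^d+dx^{d-2}z^2=1$; after suitable dehomogenisation these should be recognised as hypersurfaces of the form $x_0^ny=P(x_0,z)$ and $x_0^ny=Q(x_0,z)$ appearing in Lemma~\ref{Lemma-Iso}, with $s=0$, $x_0=x$, $n=d-1$, and with $P,Q\in\k[x,z]$ chosen so that $P(x,z)=1-z^d$ and $Q(x,z)=1-z^d-dx^{d-2}z^2$ (up to sign conventions and a harmless relabelling of which variable plays the role of $z$). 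The extra $\A^1$-factor in Lemma~\ref{Lemma-Iso} is exactly what makes the ``stably equivalent $\Rightarrow$ equivalent'' mechanism of Lemma~\ref{lemma:Replace_n_by_m} applicable, since we will want to pass from $X_{f,1}\times\A^{r-1}$ to $\p^{r+2}_f$.

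The key computational step is to find $A,B\in\k[x,z]$ and an integer $m$ realising conditions (1) and (2) of Lemma~\ref{Lemma-Iso}. The natural guess, suggested by the shape of $g-f=dx^{d-2}z^2$, is to take $m$ so that $x^m$ captures the difference between $P$ and $Q$ after a translation in $z$; one expects something like $A(z)=z+(\text{correction in }x)$ and $B(z)=z-(\text{correction in }x)$ with the correction term built from $x^{d-2}z^2$ and lower-order pieces, arranged so that $Q(A(z))\equiv P(z)$ and $P(B(z))\equiv Q(z)$ modulo the relevant ideals. Condition (1) says $A$ and $B$ are mutually inverse modulo $x^m$; this should follow formally once $A,B$ are chosen as truncations of a formal change of variable $z\mapsto z+x^{d-2}(\cdots)$ and its inverse. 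Condition (2) is the substantive one: one must check that substituting $A(z)+x^mw$ into $Q$ lands in the ideal generated by $x^n=x^{d-1}$ and $P(z)$, and symmetrically for $P,B$. Here the point is that $z^d-1\equiv -P$ modulo that ideal, so high powers of $z$ can be traded for lower ones at the cost of multiples of $P$, while the genuinely new monomials produced by the translation carry enough powers of $x$ to be absorbed into $x^{d-1}S[z,w]$.

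I expect the main obstacle to be precisely the bookkeeping in condition (2): one has to expand $Q(A(z)+x^mw)=\bigl(A(z)+x^mw\bigr)^d$-type expressions plus the $dx^{d-2}(A(z)+x^mw)^2$ term, collect all monomials, and show each one is either divisible by $x^{d-1}$ or is $P(z)$ times something — and this has to come out exactly, which pins down $A,B$ and $m$ up to the allowed freedom. The hypothesis that $d$ is not a multiple of $\car(\kk)$ (which is exactly the hypothesis of Theorem~\ref{34and43}) will enter here: the coefficient $d$ in $g$, and the binomial coefficients arising from expanding the $d$-th power, must be invertible for the cancellations to work, and for Proposition~\ref{Prop.cyclic_covering}/Remark~\ref{remark:Replacemodulo} to apply we in any case want the relevant degree to be invertible modulo $d$.

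Once the isomorphism $\Phi\colon H_P\times\A^1\iso H_Q\times\A^1$ is in hand, I would homogenise: identify $H_P\times\A^1$ with $X_{f,1}\times\A^{r-1}$ after adjoining $r-1$ trivial variables (rescaling $A,B$ appropriately in those variables, as in the proof of Lemma~\ref{lemma:Replace_n_by_m}), observe that all monomials occurring in the components of $\Phi$ have degree congruent to a fixed $e$ modulo $d$ — this is the compatibility needed — and invoke Remark~\ref{remark:Replacemodulo} to multiply monomials by powers of $f$ and obtain homogeneous polynomials defining an isomorphism $\p^{r+2}_f\iso\p^{r+2}_g$. Since $f$ and $g$ cut out visibly different hypersurfaces (for instance $f=0$ is a cone over $z^d=-x^{d-1}y$ with a different singular structure than $g=0$, the relevant distinction being used later to conclude non-isomorphism of $H,H'$ in the proof of Theorem~\ref{34and43}), this yields the desired family of examples; but that non-isomorphism is not part of the present statement, which only asserts $\p^{r+2}_f\simeq\p^{r+2}_g$.
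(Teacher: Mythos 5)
Your strategy is the same as the paper's (realise $X_{f,1}$ and $X_{g,1}$ as the hypersurfaces $H_P$, $H_Q$ of Lemma~\ref{Lemma-Iso} with $s=0$, $n=d-1$, $P=1-z^d$, $Q=1-z^d-dx^{d-2}z^2$, then descend and stabilise via Remark~\ref{remark:Replacemodulo}, Proposition~\ref{Prop.cyclic_covering} and Lemma~\ref{lemma:Replace_n_by_m}), but the substantive content of the proposition is exactly the step you leave as a ``natural guess'': exhibiting explicit $A,B\in\k[x][z]$ and $m$ satisfying conditions (1) and (2) of Lemma~\ref{Lemma-Iso}, and verifying them. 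You never produce these, and the shape you guess (a translation by a correction ``built from $x^{d-2}z^2$'') is not the one that works: the paper takes $m=d$, $A(z)=z-x^{d-2}z^3$, $B(z)=z+x^{d-2}z^3$. The cubic correction is forced, because the cross term in $(z\pm x^{d-2}z^3)^d$ is $\pm d\,x^{d-2}z^{d+2}=\pm d\,x^{d-2}z^2\cdot z^d$, which is precisely what lets one factor $1-z^d\mp d x^{d-2}z^{d+2}\equiv(1-z^d\mp dx^{d-2}z^2)(1\mp dx^{d-2}z^2)\pmod{x^{d-1}}$, i.e.\ condition (2); a $z^2$-correction produces $z^{d+1}$ instead and no such factorisation (and it would also ruin the degree-congruence check needed for Remark~\ref{remark:Replacemodulo}, since $x^{d-2}z^3$ has degree $\equiv 1$ while $x^{d-2}z^2$ has degree $\equiv 0$ modulo $d$). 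Condition (1) holds modulo $x^d$ because $2(d-2)\ge d$ for $d\ge4$. Without some such explicit choice and the two short computations, what you have is a correct plan rather than a proof: the verification you defer is the whole point.

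A secondary error: your expectation that $\car(\kk)\nmid d$ is needed ``for the cancellations to work'' is mistaken, and indeed the proposition assumes nothing about the characteristic. In the actual verification no coefficient is ever inverted: the binomial terms of order $\geq 2$ in the expansion of $(z\pm x^{d-2}z^3)^d$ are divisible by $x^{2(d-2)}$ and hence vanish modulo $x^{d-1}$, so the identities hold over any field (if $\car(\kk)\mid d$ one simply has $f=g$ and the statement is trivially true). The characteristic hypothesis in Theorem~\ref{34and43} enters only later, in proving that the hypersurfaces themselves are \emph{not} isomorphic (Lemma~\ref{LemmCurveD} and Proposition~\ref{Prop.IsoHypNN3}), not in constructing the isomorphism of complements.
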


\begin{remark}
The open subvarieties $\p^{2}_f,\p^{2}_g \subseteq \p^{2}$ are not isomorphic  when $\kk=\mathbb{C}$ is the field of complex numbers. This follows from Proposition~\ref{Prop.cyclic_covering} and from the fact that, for every $\mu\in\C^*$,  the  (Danielewski) hypersurfaces $X_{f,1}, X_{g,\mu}\subseteq\A^3_{\mathbb{C}}$ defined by the equations $f=1$ and $g=\mu$, respectively, are  not isomorphic  (see \cite[Theorem 9]{PoloniClassifications}).
\end{remark}

\begin{proof}
We define $s=0$,  $x_0=x$, $S=\kk[x]$, $m=d$, $n=d-1$, 
\[
	\begin{array}{rclrcl}
		P(z)&=&1-z^{d}, & Q(z) &=&1-z^{d}-dx^{d-2}z^2 \, , \\
 		A(z)&=&z-x^{d-2}z^3, & B(z) &=&z+x^{d-2}z^3 \, ,
 	\end{array}
 \]
and prove that the hypotheses of Lemma~\ref{Lemma-Iso} are satisfied for the above polynomials in $S[z]$.

Firstly, we have $A(z)\equiv z\equiv B(z) \pmod{x^{d-2}}$. Thus, since $d\le 2(d-2)$, as $d\ge 4$, we get $x^{d-2}A(z)^3\equiv x^{d-2}z^3\equiv x^{d-2} B(z)^3 \pmod{x^d}$ and we  obtain
\begin{eqnarray*}
	z-A(B(z))&=& z-B(z)+x^{d-2}B(z)^3\equiv 0\pmod{x^d} \, , \\
	z-B(A(z))&=& z-A(z)-x^{d-2}A(z)^3\equiv 0\pmod{x^d} \, .
\end{eqnarray*}

We then check that $P\left(B(z)+x^{d}w\right)\in x^{d-1}S[z,w]+Q(z)S[z,w]$:
\begin{align*}
P(B(z)+x^{d}w) &\equiv P(B(z)) \\
&\equiv 1 -(z+x^{d-2}z^3)^{d} \\
&\equiv 1-z^{d}-dx^{d-2}z^{d+2}\\
&\equiv (1-z^{d}-dx^{d-2}z^2)\cdot (1+dx^{d-2}z^2)\\
&\equiv Q\cdot (1+dx^{d-2}z^2)\, \pmod{x^{d-1}}.
\end{align*}
Similarly, we have
$Q(A(z)+x^{d}w)\in x^{d-1}S[z,w]+P(z)S[z,w]$:
\begin{align*}
Q(A(z)+x^{d}w) &\equiv Q\left(A(z)\right)\\
& \equiv 1-(z-x^{d-2}z^3)^{d}-dx^{d-2}(z-x^{d-2}z^3)^2 \\
&\equiv 1 -z^{d}+dx^{d-2}z^{d+2}-dx^{d-2}z^2\\
&\equiv (1-z^{d})\cdot (1-dx^{d-2}z^2)\\
&\equiv P\cdot (1-dx^{d-2}z^2)\, \pmod{x^{d-1}}.
\end{align*}
Noting that $f-1=x^{d-1}y-P(z)$ and $g-1=x^{d-1}y-Q(z)$, we may now apply Lemma~\ref{Lemma-Iso} to obtain an isomorphism
\[ \begin{array}{rcl}
	\varphi\colon\Spec(\k[x,y,z,w]/(f-1)) & \iso &  \Spec(\k[x,y,z,w]/(g-1)) \\
	(x,y,z,w) & \mapsto &  (x,\frac{Q(A(z)+x^{d}w)}{x^{d-1}}, 
			A(z)+x^{d}w, \\
			&& \frac{z-B(A(z)+x^{d}w)}{x^{d}}).
	\end{array}
\]

Arguing as in  Remark~\ref{remark:Replacemodulo}, we may then use this isomorphism to construct an isomorphism $\p^3_f\iso \p^3_g$. Indeed, the second component of $\varphi$  can be replaced by a polynomial, since 
\[
	Q\left(A(z)+x^{d}w\right)\equiv P\cdot  (1-dx^{d-2}z^2)\pmod{x^{d-1}}
\] 
and  $P\equiv x^{d-1}y\pmod{f-1}$.
Moreover, the  fourth component of $\varphi$ is a polynomial, since $z-A(B(z)) \equiv 0\pmod{x^d}$.
Doing this, we obtain an expression for $\varphi$ given by four polynomials. It remains to check that each monomial appearing in that polynomial expression of $\varphi$ has degree congruent to $1$ modulo $d$. Working modulo $d$ with the degrees, we observe that $A,B,P,Q$ are homogeneous of degree $1,1,0,0$, respectively, hence that the polynomials $z-B(A(z)+x^{d}w)$, $Q(A(z)+x^dw)$ and $P-x^{d-1}y$ are homogeneous of degree $1$, $0$ and $0$, respectively. Therefore, we can conclude that $\p^3_f\iso \p^3_g$ as in Remark~\ref{remark:Replacemodulo} and  furthermore that $\p^{r+2}_f\iso \p^{r+2}_g$ for all $r\geq 1$ by Lemma~\ref{lemma:Replace_n_by_m}.
\end{proof}

\begin{proposition}
	\label{Prop.Counterexample.dim4}
	Let $\kk$ be a field, $d\ge 3$ be an integer and let $f,g\in\k[x_0,x_1,y,z]$  be the homogeneous polynomials  of degree $d$ defined by 
	\[
	f=x_0^{d-1}y+z^{d} \quad \textrm{and} \quad
	g=x_0^{d-1}y+z^{d}+dx_0^{d-2}x_1^2 \, . \] 
	Then, for each $r\ge 1$,  the open subvarieties $\p^{r+3}_f,\p^{r+3}_g \subseteq \p^{r+3}$ are isomorphic. 
\end{proposition}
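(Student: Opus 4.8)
The plan is to mimic the proof of Proposition~\ref{Prop.Counterexample.dim3}, applying Lemma~\ref{Lemma-Iso} with one more ``passive'' variable. Concretely, I would set $s=1$, keep $x_0$ as the distinguished variable, let $S = \kk[x_0, x_1]$, and take $m = d$, $n = d-1$, with
\[
	P(z) = 1 - z^d, \qquad Q(z) = 1 - z^d - d x_0^{d-2} x_1^2,
\]
\[
	A(z) = z - x_0^{d-2} x_1^2 z^{?}, \qquad B(z) = z + x_0^{d-2} x_1^2 z^{?}.
\]
Wait — the point is that the perturbation term $d x_0^{d-2} x_1^2$ no longer involves $z$, so the substitution $A,B$ that ``absorbs'' it must be chosen accordingly. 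Since $Q - P = -d x_0^{d-2} x_1^2$ is a constant in $z$, I would try the affine shifts $A(z) = z + c$ and $B(z) = z - c$ for a suitable $c \in S$, or more robustly follow the dimension-$3$ recipe by noting that in that proof the correction exploited $z^d$ being the unperturbed leading term. Here $f = x_0^{d-1} y + z^d$ is formally the same as before with $z$ playing its old role, and the extra variable $x_1$ only enters the defining equation of $g$ (and of $X_{g,1}$) as a parameter. So I expect $A$ and $B$ can be taken of the shape $A(z) = z - x_0^{d-2} h(x_1) z^{k}$ for appropriate $h$ and $k$ making conditions (1) and (2) of Lemma~\ref{Lemma-Iso} hold modulo $x_0^{d}$ and $x_0^{d-1}$ respectively; one then solves the two divisibility congruences exactly as in the dimension-$3$ case, the binomial expansion $(z + x_0^{d-2}(\cdots))^d \equiv z^d + d x_0^{d-2}(\cdots) z^{d-1} \pmod{x_0^{2(d-2)}}$ doing the work, using $d \le 2(d-2)$ which holds for $d \ge 4$; the case $d = 3$ will need separate, direct verification (there $2(d-2) = d-1$, so the relevant congruences are modulo $x_0^{2}$, still enough).

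Granting the correct choice of $A, B$, Lemma~\ref{Lemma-Iso} produces inverse isomorphisms
\[
	\Phi \colon H_P \times \AA^1_\kk \iso H_Q \times \AA^1_\kk, \qquad
	\Psi \colon H_Q \times \AA^1_\kk \iso H_P \times \AA^1_\kk,
\]
where $H_P = X_{f,1}$ and $H_Q = X_{g,1}$ are the affine hypersurfaces in $\AA^5_\kk = \Spec(\kk[x_0, x_1, y, z, w])$ cut out by $f - 1 = x_0^{d-1} y - P(z)$ and $g - 1 = x_0^{d-1} y - Q(z)$ (after checking $f - 1 = x_0^{d-1}y - P(z)$ and $g-1 = x_0^{d-1}y - Q(z)$, which is immediate). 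Then I would invoke Remark~\ref{remark:Replacemodulo}: rewrite the components of $\varphi = \Phi$ as genuine polynomials by clearing the denominators $x_0^{d-1}$ and $x_0^{d}$ — the second component is polynomial because $Q(A(z) + x_0^d w) \equiv P \cdot (\text{unit}) \pmod{x_0^{d-1}}$ and $P \equiv x_0^{d-1} y \pmod{f-1}$, and the fourth because condition (1) gives $z - B(A(z) + x_0^d w) \equiv 0 \pmod{x_0^d}$ — and verify that every monomial of the resulting polynomial expression has degree $\equiv 1 \pmod d$. This last verification is the same degree-bookkeeping as in Proposition~\ref{Prop.Counterexample.dim3}: modulo $d$ the variables $x_0, x_1, y, z$ have weights $-1, ?, 1, 1$; one checks $A, B$ are weight-$1$, $P, Q$ are weight-$0$, hence $z - B(A(z) + x_0^d w)$ is weight $1$, $Q(A(z) + x_0^d w)$ and $P - x_0^{d-1} y$ are weight $0$, so dividing by $x_0^{d-1}$ resp. $x_0^d$ lands everything in weight $1 \pmod d$.

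Finally, Remark~\ref{remark:Replacemodulo} upgrades the polynomial isomorphism $X_{f,1} \times \AA^1 \iso X_{g,1} \times \AA^1$ to an isomorphism $\PP^{4}_f \iso \PP^{4}_g$ — here the $\times \AA^1$ contributes the extra coordinate, so after homogenising in $5$ variables $x_0, x_1, y, z, w$ we land in $\PP^4$ — and then Lemma~\ref{lemma:Replace_n_by_m} (applicable since the common degree $\ell$ of the homogenised components is $\equiv 1 \pmod d$) extends this to $\PP^{r+3}_f \iso \PP^{r+3}_g$ for every $r \ge 1$. The main obstacle I anticipate is purely computational: pinning down the exact polynomials $A(z), B(z) \in S[z]$ — in particular the correct power of $z$ and coefficient in the correction term — so that both congruences in hypothesis~(2) of Lemma~\ref{Lemma-Iso} hold on the nose, and double-checking the boundary case $d = 3$ by hand, since the inequality $d \le 2(d-2)$ fails there and the congruences must be re-examined modulo the smaller power $x_0^{d-1}$.
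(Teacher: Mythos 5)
Your framework is exactly the paper's: take $s=1$, $S=\kk[x_0,x_1]$, $m=d$, $n=d-1$, $P=1-z^d$, $Q=1-z^d-dx_0^{d-2}x_1^2$, verify the two hypotheses of Lemma~\ref{Lemma-Iso}, then pass to $\PP^4_f\iso\PP^4_g$ via Remark~\ref{remark:Replacemodulo} (the term $x_0^{d-2}x_1^2$ has degree $\equiv 0 \pmod d$, so the degree bookkeeping goes through) and to $\PP^{r+3}$ via Lemma~\ref{lemma:Replace_n_by_m}. But the heart of the proof is precisely the piece you leave open: the choice of $A,B\in S[z]$, and here your proposal has a genuine gap, not just a computation to be filled in. None of the shapes you float works as stated. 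The affine shifts $A=z+c$, $B=z-c$ cannot produce the needed factorisation in condition~(2), since $P(z-c)\equiv 1-z^d+dcz^{d-1}$ contains a $z^{d-1}$-term that cannot be absorbed into a multiple of $Q$ modulo $x_0^{d-1}$. More importantly, for the first-order corrections $A=z-x_0^{d-2}h(x_1)z^{k}$, $B=z+x_0^{d-2}h(x_1)z^{k}$, the error in condition~(1) is $z-A(B(z))\equiv$ (terms divisible by $x_0^{2(d-2)}$ only), and condition~(1) is a congruence modulo $x_0^{m}=x_0^{d}$, not modulo $x_0^{d-1}$. So for $d=3$ you need divisibility by $x_0^3$ while your construction only gives $x_0^2$; your parenthetical claim that for $d=3$ working ``modulo $x_0^2$'' is ``still enough'' is false, and $d=3$ is not a negligible boundary case --- it is the case needed to get degree-$3$ counterexamples in Theorem~\ref{34and43} for $n\geq 4$.

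The fix used in the paper is a second-order multiplicative correction: with $\Delta\coloneqq x_0^{d-2}x_1^2$ take
\[
A(z)=z\bigl(1-\Delta+\Delta^2\bigr),\qquad B(z)=z\bigl(1+\Delta\bigr),
\]
so that $A(B(z))=B(A(z))=z\bigl(1+\Delta^3\bigr)$ and $\Delta^3$ is divisible by $x_0^{3d-6}$, with $3d-6\geq d$ exactly when $d\geq 3$; this gives condition~(1) modulo $x_0^d$ for all $d\geq 3$. Condition~(2) then holds modulo $x_0^{d-1}$ because $\Delta^2$ is divisible by $x_0^{2d-4}$ and $2d-4\geq d-1$ for $d\geq3$: one computes $P(B(z)+x_0^dw)\equiv Q\cdot(1+d\Delta)$ and $Q(A(z)+x_0^dw)\equiv P\cdot(1-d\Delta) \pmod{x_0^{d-1}}$. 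Without exhibiting such $A,B$ (and in particular the $\Delta^2$-term that rescues $d=3$), the proposal does not yet constitute a proof.
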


\begin{remark}
	We don't know, whether the open subvarieties $\p^{3}_f,\p^{3}_g \subseteq \p^{3}$ are isomorphic, 
	or even if the hypersurfaces $X_{f,1}, X_{g,1}\subseteq \A^3$ given by $f=1$ and $g=1$, respectively, are isomorphic or not.
\end{remark}

\begin{proof}[Proof of Proposition~$\ref{Prop.Counterexample.dim4}$]
We define $s=1$, $S=\kk[x_0,x_1]$, $m=d$, $n=d-1$, $\Delta=x_0^{d-2}x_1^2$, 
\[
	\begin{array}{rclrcl}
	P(z) &=&1-z^{d}, & Q(z) &=&1-z^{d}-d\Delta,\\
 	A(z) &=&z(1-\Delta+\Delta^2), & B(z) &=&z(1+\Delta) \, ,
	\end{array}
\]
and prove that the hypotheses of Lemma~\ref{Lemma-Iso} are satisfied for the above polynomials in $S[z]$.

Firstly, we calculate $A(B(z))=B(A(z))=z(1-\Delta+\Delta^2)(1+\Delta)=z(1+\Delta^3)$. As $d\ge 3$, $\Delta^3$ is divisible by $x_0^{3d-6}$. Hence, we find that $A(B(z))=B(A(z))$ is congruent to $z$  modulo $x_0^d$.

We then check that $P(B(z)+x_0^{d}w)\in x_0^{d-1}S[z,w]+Q(z)S[z,w]$. As $d\ge 3$,  $\Delta^2$ is divisible by $x_0^{d-1}$. Hence:
\begin{align*}
P(B(z)+x_0^{d}w) &\equiv P(B(z))\\
 &\equiv 1-z^d(1+\Delta)^{d} \\
&\equiv 1- z^d(1+d\Delta) \\
&\equiv(1-z^{d}-d\Delta)\cdot (1+d\Delta)\\
&\equiv Q\cdot (1+d\Delta)\, \pmod{x_0^{d-1}}.
\end{align*}

Similarly, we have
$Q(A(z)+x_0^{d}w)\in x_0^{d-1}S[z,w]+P(z)S[z,w]$:
\begin{align*}
Q(A(z)+x_0^{d}w) &\equiv Q(A(z))\\
& \equiv  1-z^d(1-\Delta)^d-d\Delta\\
& \equiv  1-z^d(1-d\Delta)-d\Delta\\
& \equiv  (1-z^{d})\cdot (1-d\Delta)\\
&\equiv P\cdot (1-d\Delta)\, \pmod{x_0^{d-1}}.
\end{align*}

Since $f-1=x_0^{d-1}y-P(z)$ and $g-1=x_0^{d-1}y-Q(z)$, we may now apply Lemma~\ref{Lemma-Iso} to obtain an isomorphism
\[
\begin{array}{rcl}	
	\varphi\colon\Spec(\k[x_0,x_1,y,z,w]/(f-1)) & \iso &  \Spec(\k[x_0,x_1,y,z,w]/(g-1)) \\
	(x_0,x_1,y,z,w) & \mapsto & (x_0,x_1,\frac{Q(A(z)+x_0^{d}w)}{x_0^{d-1}},A(z)+x_0^{d}w, \\
							 && \frac{z-B(A(z)+x_0^{d}w)}{x_0^{d}})\, .
\end{array}
\]

We again proceed as in Remark~\ref{remark:Replacemodulo}.
The third component of $\varphi$ can be expressed by a polynomial, since 
\[
	Q(A(z)+x_0^{d}w)\equiv P\cdot  (1- d x_0^{d-2}x_1^2)\pmod{x_0^{d-1}}
\]
and  $P\equiv x_0^{d-1}y\pmod{f-1}$.
The last component of $\varphi$ is already a polynomial, as its numerator is a multiple of its denominator. It remains to check that  each monomial appearing in the expression of $\varphi$ has degree congruent to $1$ modulo $d$. For this, we simply work with the degrees modulo $d$ and observe that $\Delta$ is homogeneous of degree $0$, so that $A,B,P,Q$ are homogeneous of degree $1,1,0,0$, respectively. Hence, $z-B(A(z)+x_0^{d}w)$, $Q(A(z)+x_0^dw)$ and $P-x_0^{d-1}y$ are homogeneous of degree $1$, $0$ and $0$, respectively. Therefore, we can conclude by  Remark~\ref{remark:Replacemodulo} and  Lemma~\ref{lemma:Replace_n_by_m}.
\end{proof}

\subsection{Non-isomorphic hypersurfaces}
\label{Subsec.Counter_examples}
We now want to prove that the hypersurfaces of Propositions~\ref{Prop.Counterexample.dim3} and \ref{Prop.Counterexample.dim4} are not isomorphic (except when the characteristic divides $d$, in which case both are in fact equal).

Recall that the multiplicity  $\mathrm{mult}_p(X)$ of a variety $X$ at a point $p$ is the multiplicity of the maximal ideal $\mathfrak{m}_{X,p}$ in the local ring $\mathcal{O}_{X,p}$. 
\begin{lemma}\label{lemm:MultMaxLin}
Let $n\ge 1$ be an integer, let $H\subseteq \p^n$ be a hypersurface of degree $d\ge 1$. The set $M=\{p\in H\mid \mathrm{mult}_p(H)=d\}$ is either empty or a linear subspace of $\p^n$. 
\end{lemma}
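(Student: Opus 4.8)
The plan is to work with a defining equation $F\in\kk[x_0,\dots,x_n]$ of $H$, homogeneous of degree $d$, and to reformulate the condition $\mathrm{mult}_p(H)=d$ in terms of vanishing of partial derivatives. The key observation is that for a point $p\in H$, after a linear change of coordinates sending $p$ to $[1:0:\cdots:0]$, the multiplicity $\mathrm{mult}_p(H)$ equals the lowest degree of a monomial appearing in the dehomogenisation $F(1,x_1,\dots,x_n)$; hence $\mathrm{mult}_p(H)=d$ exactly when that dehomogenisation is homogeneous of degree $d$, i.e. when $F$ restricted to a generic line through $p$ vanishes to order $d$ at $p$. Equivalently, $\mathrm{mult}_p(H)=d$ iff every partial derivative $\partial^\alpha F$ of order $\le d-1$ vanishes at $p$. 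I would prefer a coordinate-free phrasing: writing $F$ as a polynomial in a variable along the line $\overline{pq}$ for $q$ another point, the coefficient of each power is a form in $p$ and $q$; vanishing of multiplicity $d$ at $p$ forces $F$ to be (a scalar times) the $d$-th power of a linear form in the fibre direction — more precisely it forces $F(p+t q)=t^d\,\ell(q)$-type behaviour. A clean route: $\mathrm{mult}_p(H)=d$ iff $F(\lambda p+\mu q)=\mu^d\,c(q)$ for all $q$, where $c$ is necessarily a linear form (the degree-$d$ part), i.e. iff $F$ lies in the $d$-th symbolic/ordinary power of the ideal of $p$.

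The main step is then: if $p_1\ne p_2$ both lie in $M$, then every point $p$ on the line $L=\overline{p_1p_2}$ also lies in $M$. For this I would restrict $F$ to the plane (or just the line) and use the characterisation above. Since $\mathrm{mult}_{p_i}(H)=d$, expanding $F$ along $L$, parametrised as $s\mapsto s p_1 + t p_2$, shows $F|_L$ vanishes to order $d$ at both $[1:0]$ and $[0:1]$ in the parameter $\PP^1$; but $F|_L$ is a binary form of degree $d$ in $(s,t)$, so vanishing to order $d$ at two distinct points forces $F|_L\equiv 0$, i.e. $L\subseteq H$. More substantially, I claim $\mathrm{mult}_p(H)=d$ for every $p\in L$: take coordinates with $p_1=[1:0:\cdots:0]$, $p_2=[0:1:0:\cdots:0]$; the conditions at $p_1$ and $p_2$ say that $F$, viewed in $\kk[x_0,x_1][x_2,\dots,x_n]$, has all its $x_2,\dots,x_n$-degree-$<d$ coefficients vanishing when... — more cleanly, $F\in I(p_1)^d\cap I(p_2)^d$ where $I(p_i)$ is the ideal of linear forms vanishing at $p_i$. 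A monomial-by-monomial argument (or the fact that $I(p_1)^d\cap I(p_2)^d=(I(p_1)\cap I(p_2))^d$ since $I(p_1),I(p_2)$ are generated by subsets of a coordinate system) shows $F\in(I(p_1)\cap I(p_2))^d = I(L)^d$, and $I(L)^d\subseteq I(p)^d$ for every $p\in L$, giving $\mathrm{mult}_p(H)\ge d$; the reverse inequality is automatic. Thus $M$ is closed under taking lines through pairs of its points, and since it is also non-empty-or-a-point in the trivial cases, $M$ is a linear subspace.

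The step I expect to be the main obstacle is the equality $I(p_1)^d\cap I(p_2)^d=(I(p_1)\cap I(p_2))^d$ (equivalently, that a form vanishing to order $d$ at two points vanishes to order $d$ along the line joining them). In characteristic zero this is transparent via partial derivatives, but the lemma is stated over an arbitrary field, so I would argue combinatorially: choosing coordinates so that $I(p_1)=(x_1,\dots,x_n)$ and $I(p_2)=(x_0,x_2,\dots,x_n)$, a monomial $x_0^{a_0}\cdots x_n^{a_n}$ lies in $I(p_1)^d$ iff $a_1+\cdots+a_n\ge d$ and in $I(p_2)^d$ iff $a_0+a_2+\cdots+a_n\ge d$; if both hold and the total degree is exactly $d$, then $a_0=a_1=0$ is forced unless $a_2+\cdots+a_n\ge d$, so in all cases $a_2+\cdots+a_n\ge d$, i.e. the monomial lies in $(x_2,\dots,x_n)^d=I(L)^d$. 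This avoids characteristic issues entirely. The only remaining care is the degenerate cases $M=\varnothing$ or $M$ a single point, which count as linear subspaces by convention, so nothing extra is needed there.
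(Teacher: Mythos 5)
Your proposal is correct and follows essentially the same route as the paper: both reduce to showing that the line $L$ through two distinct points of $M$ lies in $M$, pass to coordinates adapted to the two points, and deduce that the degree-$d$ form must lie in $\kk[x_2,\ldots,x_n]$ (equivalently in $I(L)^d$), your monomial computation being exactly the characteristic-free justification that the paper leaves implicit. (One harmless slip in your exploratory paragraph: the form $c(q)=F(q)$ is of degree $d$ in $q$, not linear, but this plays no role in the actual argument.)
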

\begin{proof}
It suffices to take two different points $p,q\in M$ and to prove that the line containing $p$ and $q$ is contained in $M$. Changing coordinates, we may assume that $p=[0:\cdots:0:1]$ and $q=[0:\cdots:0:1:0]$. The equation of $H$ is then given by a non-zero homogeneous polynomial $P\in \k[x_0,\ldots,x_{n-2}]$ of degree $d \geq 1$ (and thus we have $n\ge 2$). Hence the line $V_{\PP^n}(x_0, \ldots, x_{n-2})$ (which passes through $p$ and $q$) is contained in $M$. 
\end{proof}
\begin{lemma}\label{Lem:IsoHypNNplus}
Let $\kk$ be a field, let $n\ge 1$ and let $f,g\in \kk[x_0,\ldots,x_n]$ be homogeneous polynomials of the same degree $d\ge 1$. If $V_{\p^{n+1}}(f)$ and $V_{\p^{n+1}}(g)$ are isomorphic, then $V_{\p^{n}}(f)$ and $V_{\p^{n}}(g)$ are isomorphic.
\end{lemma}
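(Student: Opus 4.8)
The claim is that $V_{\p^{n+1}}(f) \simeq V_{\p^{n+1}}(g)$ implies $V_{\p^n}(f) \simeq V_{\p^n}(g)$, where $f, g \in \kk[x_0, \ldots, x_n]$ are homogeneous of the same degree $d$. The key observation is that $V_{\p^{n+1}}(f)$ — where now $f$ is viewed as a polynomial in $\kk[x_0, \ldots, x_{n+1}]$ not involving $x_{n+1}$ — is the \emph{projective cone} over $V_{\p^n}(f)$ with vertex the point $[0:\cdots:0:1]$. So this lemma says: if two projective cones (over a common ambient hyperplane, with a common vertex) are abstractly isomorphic, then their bases are isomorphic. The plan is to recover the base from the cone intrinsically, using the locus of points of maximal multiplicity, which is exactly what Lemma~\ref{lemm:MultMaxLin} is set up to provide.

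\begin{proof}
Write $Y = V_{\p^{n}}(f) \subseteq \p^n$, $Y' = V_{\p^{n}}(g) \subseteq \p^n$, and let $X = V_{\p^{n+1}}(f)$, $X' = V_{\p^{n+1}}(g) \subseteq \p^{n+1}$ be the corresponding hypersurfaces, viewing $f, g$ as polynomials in $\kk[x_0, \ldots, x_{n+1}]$ independent of $x_{n+1}$; thus $X$ is the cone over $Y$ with vertex $v = [0:\cdots:0:1]$ and similarly for $X'$. Consider the linear projection $\pi\colon \p^{n+1} \setminus \{v\} \to \p^n$ away from $v$. Its restriction realises $X \setminus \{v\}$ as a line bundle (in fact an $\A^1$-bundle) over $Y$, and in particular $Y$ is a deformation retract / the quotient; more usefully, $X \setminus \{v\}$ is the total space of a geometric line over $Y$, and $Y$ is recovered as the image of $\pi|_{X \setminus \{v\}}$. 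The point is to identify the vertex $v$ intrinsically inside $X$, after which $Y \simeq X'/\!\!\sim$ can be matched with $Y'$.

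The intrinsic characterisation of $v$ is via multiplicity: every point of $X$ lies on a line through $v$ contained in $X$, so $\mathrm{mult}_p(X) \leq \mathrm{mult}_v(X) = d$ for all $p \in X$ (the multiplicity of a cone at its vertex equals the degree), while a general point of $X$ has multiplicity $1$ (as $X$, being a cone over a hypersurface, has smooth points — e.g. the preimage under $\pi$ of a smooth point of $Y$, which exists since $d \geq 1$ forces $Y \neq \emptyset$ and $Y$ generically reduced... one must be slightly careful if $f$ is a $d$-th power, but then $V(f) = V(f_{\mathrm{red}})$ as reduced schemes is still fine, or one simply notes $d\ge 1$ and handles $f$ irreducible-free cases directly). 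By Lemma~\ref{lemm:MultMaxLin}, the locus $M_X = \{p \in X \mid \mathrm{mult}_p(X) = d\}$ is a linear subspace of $\p^{n+1}$; it contains $v$ and is contained in $X$. If $M_X = \{v\}$ then the vertex is recovered canonically and any isomorphism $X \iso X'$ sends $v$ to the analogously-defined point $v'$ of $X'$, hence induces $X \setminus \{v\} \iso X' \setminus \{v'\}$; projecting along the respective line-bundle structures gives $Y \simeq Y'$. If $M_X \supsetneq \{v\}$ — i.e. $M_X$ is a positive-dimensional linear space $L$ — then $X$ is a cone over its base with an entire linear vertex space $L$; concretely, after a coordinate change $f$ depends on fewer variables, and $X$ is then a cone over a lower-dimensional hypersurface with vertex $L$, and $Y = V_{\p^n}(f)$ is itself the cone over that same lower-dimensional hypersurface with vertex $L \cap \p^n$. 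In this degenerate case the isomorphism $X \iso X'$ carries $L$ to the corresponding linear locus $L'$ of $X'$ (these have equal dimension), and one argues — by induction on this dimension, or by directly quotienting by the unipotent $\GG_a$-actions translating along the vertex directions — that the bases, hence $Y$ and $Y'$, are isomorphic. Either way we conclude $V_{\p^n}(f) \simeq V_{\p^n}(g)$.
\end{proof}

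\medskip

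The main obstacle I expect is the bookkeeping when the maximal-multiplicity locus $M_X$ is larger than a single point: one must show that an abstract isomorphism of the two cones matches up these linear vertex loci (dimension is an isomorphism invariant, so this is forced) and then that the induced map on the complements descends to an isomorphism of the bases. The cleanest formulation is probably to argue that $X \setminus M_X$ is a Zariski-locally-trivial affine-space bundle over the true base $B := V(f)$ of the cone (the hypersurface in $\p^n / M_X$ cut out by $f$ regarded in the complementary variables), that $B$ is recovered as the quotient, and that $Y$ itself is the cone over $B$ with vertex $M_X \cap \p^n$; symmetrically for $g$. Then $\dim M_X = \dim M_{X'}$, $B \simeq B'$ via the induced map on bundle quotients, and coning back up (same vertex dimension on both sides) yields $Y \simeq Y'$. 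The only subtlety worth double-checking is the claim that a cone over a nonempty hypersurface has a point of multiplicity strictly less than $d$ when $d \geq 2$ — this holds because $X \setminus M_X$ is nonempty (as $X$ is not a single linear space once $d \geq 2$) and every point outside $M_X$ has multiplicity $< d$ by Lemma~\ref{lemm:MultMaxLin}; for $d = 1$ the statement is trivial since $V(f)$ and $V(g)$ are both hyperplanes.
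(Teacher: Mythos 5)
Your first step is the same as the paper's: the locus of points of multiplicity $d$ is a linear subspace (Lemma~\ref{lemm:MultMaxLin}), it contains the vertex $q=[0:\cdots:0:1]$ for both cones, and it is preserved by any isomorphism. The gap comes afterwards, at the sentence ``projecting along the respective line-bundle structures gives $Y \simeq Y'$''. An abstract isomorphism $X\setminus\{v\}\iso X'\setminus\{v'\}$ has no reason to map fibres of the projection from $v$ to fibres of the projection from $v'$, so it does not descend to an isomorphism of the bases. Isomorphism of total spaces of $\A^1$-bundles (even of line bundles) does not imply isomorphism of the bases; this Danielewski-type failure of descent is precisely the mechanism this paper exploits in \S3 to build its counterexamples, and when the authors do need such a descent (Proposition~\ref{Prop.Cones_over_elliptic_curves}) they must work for it, using the absence of non-trivial $\GG_a$-actions on $V_{\AA^3}(f-1)$ together with \cite[Proposition~4.7]{Cr2004On-the-AK-invarian} to force fibres to go to fibres. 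The same objection applies to your degenerate case $M_X\supsetneq\{v\}$, where ``quotienting by the unipotent $\GG_a$-actions translating along the vertex directions'' is again an unjustified descent, and the promised induction is not carried out.

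The paper's proof replaces the descent by a pointwise, functorial construction. After changing coordinates so that $M=M'=V_{\p^{n+1}}(x_0,\ldots,x_{r-1})$ (so $f,g\in\kk[x_0,\ldots,x_{r-1}]$), one composes $\varphi$ with an automorphism $\alpha$ of $\p^{n+1}$ that is trivial on $x_0,\ldots,x_{r-1}$ and linear on the remaining variables — such $\alpha$ preserves $H$ and acts transitively enough on $M$ — so that $\varphi$ may be assumed to fix the vertex $q$. An isomorphism fixing $q$ lifts to the blow-ups of $H$ and $H'$ at $q$ and restricts to an isomorphism of the exceptional divisors, which are the projectivized tangent cones at $q$, i.e.\ exactly $V_{\p^n}(f)$ and $V_{\p^n}(g)$ (in the chart $x_{n+1}=1$ the equation $f$ is its own initial form). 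This recovers the base from the cone without ever needing the isomorphism to respect the ruling; if you want to salvage your write-up, replace the projection/descent step by this blow-up (equivalently, tangent-cone) argument.
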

\begin{proof}
We write $H= V_{\p^{n+1}}(f)$, $H'= V_{\p^{n+1}}(g)$. 
Let $\varphi\colon H\iso H'$ be an isomorphism. It then sends $M$ onto $M'$, where $M=\set{x\in H}{\mathrm{mult}_p(H)=d}$ and $M'=\set{x\in H'}{\mathrm{mult}_p(H')=d}$.  As $f,g\in \k[x_0,\ldots,x_n]$, the point $q=[0:\cdots:0:1]\in \PP^{n+1}$ belongs to both $M$ and $M'$. By Lemma~\ref{lemm:MultMaxLin}, $M$ and $M'$ are linear subspaces of $\p^{n+1}$ given by $r\ge 1$ 
linearly independent linear polynomials in the variables $x_0,\ldots,x_n$ (and $r \leq n+1$). Applying a change of coordinates on $x_0,\ldots,x_n$, we may assume that $M=M'=V_{\p^{n+1}}(x_0,\ldots,x_{r-1})$. The restriction of $\varphi$ then becomes a linear automorphism of $M=M'$, and $f,g\in \k[x_0,\ldots,x_{r-1}]$. Replacing $\varphi$ with $\varphi\circ \alpha$, where $\alpha\in \Aut(\p^{n+1})$ does nothing on $x_0,\ldots,x_{r-1}$ and is a linear automorphism on $x_r,\ldots,x_{n+1}$, we may assume that $\varphi$ fixes the point $q=[0:0:\cdots:0:1]$.  It then induces an isomorphism $\hat{H}\iso \hat{H}'$, where $\hat{H}$, $\hat{H}'$ are the blow-ups of $H$ and $H'$ at the point $q$. The exceptional divisors are isomorphic to  $V_{\p^{n}}(f)$ and  $V_{\p^{n}}(g)$,  respectively, so we find that these hypersurfaces are isomorphic.
\end{proof}
\begin{lemma}\label{LemmCurveD}
Let $\kk$ be a field, let $d\ge 4$ be an integer that is not a multiple of $\mathrm{char}(\kk)$ and let 
\[f=x^{d-1}y+z^{d} \text{ and } g=x^{d-1}y+z^{d}+\mu x^{d-2}z^2,\] 
where $\mu\in \k$. Suppose that $\alpha\colon V_{\p^2}(f)\iso V_{\p^2}(g)$ is an isomorphism. Then, $\mu=0$ and $\alpha$ is equal to $[x:y:z]\mapsto [a^d x:y:a^{d-1} z]$ for some $a\in \k^*$.
\end{lemma}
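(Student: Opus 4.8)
The plan is to analyse the two plane curves $C=V_{\p^2}(f)$ and $C'=V_{\p^2}(g)$ geometrically: describe their singular points, smooth points at infinity, and the special lines through them, and observe that all this data must be preserved by any isomorphism $\alpha\colon C\iso C'$. First I would note that $f=x^{d-1}y+z^d$ is irreducible (it is linear in $y$) and that its partial derivatives are $x^{d-1}$, $(d-1)x^{d-2}y$, $dz^{d-1}$; since $d$ is prime to $\car(\kk)$, the singular locus of $C$ is cut out by $x=z=0$, so $C$ has a unique singular point $p_0=[0:1:0]$. An easy local computation (set $y=1$, so the affine equation is $x^{d-1}+z^d$) shows $\mathrm{mult}_{p_0}(C)=d-1$ and that the tangent cone there is $x^{d-1}=0$, i.e.\ the line $L=V_{\p^2}(x)$ is the unique line meeting $C$ only at $p_0$ (with multiplicity $d$). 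The same computation applies verbatim to $g$ once we check its singular locus is still $\{x=z=0\}$: the partials of $g$ are $x^{d-1}$, $(d-1)x^{d-2}y$, $dz^{d-1}+2\mu x^{d-2}z$, and again $x^{d-1}=0$ forces $x=0$, then $dz^{d-1}=0$ forces $z=0$. Hence $C'$ also has a unique singular point, necessarily $p_0$ again (same coordinates), and the tangent cone there is again $x^{d-1}=0$. Therefore $\alpha(p_0)=p_0$, and $\alpha$ carries the line $L$ to the line $L'$ with the analogous property for $C'$, which is again $V_{\p^2}(x)$.

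Next I would pin down $\alpha$ on the remaining geometry. On $C$, the line $L=V(x)$ meets $C$ only at $p_0$; the complement $C\setminus\{p_0\}$ is the affine curve $\{x=1,\ y=-z^d\}\subseteq\A^2$, which is isomorphic to $\A^1$ via $z$. The same holds for $C'$: on $C'$ we have $x=1\Rightarrow y=-z^d-\mu z^2$, again an $\A^1$ with coordinate $z$. So $\alpha$ restricts to an automorphism of $\A^1$, i.e.\ $z\mapsto az+b$ for some $a\in\kk^*$, $b\in\kk$. To determine $a,b$ I would use the extra marked point: the line $V_{\p^2}(z)$ meets $C$ at $p_0$ and at $[0:1:0]$... more usefully, consider the smooth point $q_\infty=[1:0:0]$ on $C$ (check $f(1,0,0)=0$ and that it is a smooth point: $\partial f/\partial y=x^{d-1}=1\neq0$ there), which is the unique point of $C$ on the line $V_{\p^2}(y)$ other than those on $x=0$; and likewise $q_\infty'=[1:0:0]$ on $C'$. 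A more robust way: $q_\infty$ is characterised on $C$ as the unique smooth point lying on the line $L_\infty=V_{\p^2}(y)$. Since $\alpha$ must send $L_\infty\cap C$ to $L'\cap C'$ appropriately—here I would instead argue directly with the coordinate ring. Writing $C\setminus\{p_0\}=\Spec\kk[z]$ and similarly for $C'$, the point $q_\infty$ corresponds to $z=0$ (as the closure in $\p^2$ of $z=0$ on this affine chart is $[1:0:0]$), and the same for $q_\infty'$; but to see $\alpha(q_\infty)=q_\infty'$ one observes that on $C$ the function $z$ vanishes to order $1$ at $q_\infty$ while... Rather than belabour this, the cleanest route: lift $\alpha$ to the normalisations. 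The normalisation of $C$ is $\p^1$ with the map $\p^1\to C$, $[u:v]\mapsto[u^d:-v^d:u^{d-1}v]$ (check: $x^{d-1}y+z^d = u^{d(d-1)}(-v^d)+u^{d(d-1)}v^d=0$), and similarly for $C'$ it is $[u:v]\mapsto[u^d:-v^d-\mu u^{d-2}v^2:u^{d-1}v]$. The isomorphism $\alpha$ lifts to an automorphism of $\p^1$ fixing the preimage of $p_0$ (the point $[1:0]$, which is the unique ramification point of $\p^1\to C$ over the singular point) and fixing the preimage of $q_\infty$ similarly, hence is of the form $[u:v]\mapsto[u:cv]$.

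Finally I would push this $\p^1$-automorphism down. With $v\mapsto cv$, the normalisation map of $C'$ post-composed becomes $[u:v]\mapsto[u^d:-c^dv^d-\mu c^2 u^{d-2}v^2:c u^{d-1}v]$; rescaling the target coordinates by a diagonal automorphism of $\p^2$ of the form $[x:y:z]\mapsto[\lambda x:\nu y:\rho z]$ this must match the normalisation map of $C$, $[u:v]\mapsto[u^d:-v^d:u^{d-1}v]$. Comparing monomials forces $\lambda=1$ (from $u^d$), then $\rho c=\rho'$ type relations from $u^{d-1}v$, and crucially the $y$-coordinate gives $\nu(-c^dv^d-\mu c^2u^{d-2}v^2)=-v^d$, so the $u^{d-2}v^2$ term must vanish: $\mu c^2=0$, i.e.\ $\mu=0$. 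Then $\nu c^d=1$ and matching $z$: writing the diagonal change of coordinates as $[x:y:z]\mapsto[a^d x:?:a^{d-1}z]$ and tracking through, one gets exactly $\alpha\colon[x:y:z]\mapsto[a^dx:y:a^{d-1}z]$ with $a=c^{-1}\in\kk^*$. The main obstacle is bookkeeping the descent from $\Aut(\p^1)$ to $\Aut(\p^2)$ compatibly with the two normalisation maps—i.e.\ verifying that the $\p^1$-automorphism really is induced by a (necessarily diagonal) automorphism of $\p^2$ and not something that fails to descend; this is where the condition $\car(\kk)\nmid d$ is used again (to control ramification and to invert $d$ when needed) and where one actually extracts $\mu=0$.
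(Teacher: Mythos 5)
Your setup coincides with the paper's first step: both curves have a unique unibranch singular point $p_0=[0:1:0]$, the complements $V_{\p^2}(f)\setminus\{p_0\}$ and $V_{\p^2}(g)\setminus\{p_0\}$ are copies of $\A^1$ with coordinate $z$, and $\alpha$ therefore lifts to an automorphism $z\mapsto az+b$ of $\A^1$ (equivalently, an automorphism of $\p^1$ fixing the point over $p_0$). But the two essential conclusions are not established. First, you assert that the lift also fixes the preimage of $q_\infty=[1:0:0]$, i.e.\ that the translation part $b$ vanishes. The justifications you sketch are not intrinsic: $\alpha$ is an abstract isomorphism of curves, not a priori induced by any automorphism of $\p^2$, so it has no reason to respect intersections with the line $V_{\p^2}(y)$, and every smooth point of $\A^1$ admits a function vanishing to order one, so the vanishing-order remark distinguishes nothing. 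Proving $b=0$ is in fact one of the two nontrivial computations of the lemma; in the paper it is extracted from regularity of $\alpha$ at the singular point: the pullbacks of $x/y$ and $z/y$ must be regular at $p_0$ on $V_{\p^2}(f)$, and expanding along the parametrisation $t\mapsto[t^d:-1:t^{d-1}]$ (whose local ring at $p_0$ only contains powers $t^m$ with $m$ in the semigroup generated by $d-1$ and $d$) and comparing the coefficient of $t$ forces $b=0$, using $d\geq 4$ and $\car(\kk)\nmid d$.

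Second, even granting that the lift is $[u:v]\mapsto[u:cv]$, your derivation of $\mu=0$ is circular: you demand that $\tau'\circ\hat\alpha$ agree with $\tau$ up to a diagonal automorphism of $\p^2$, which presupposes that $\alpha$ extends to a (diagonal) automorphism of $\p^2$ --- essentially the statement to be proved. The genuine obstruction is again local at $p_0$: the pullback through the scaling map of the generator $z/y$ of the local ring of $V_{\p^2}(g)$ at $p_0$ acquires a term $t^{2d-3}$ with coefficient proportional to $\mu$, and $2d-3$ is a gap of the semigroup $\langle d-1,d\rangle$ for $d\geq 4$, so the map descends to an isomorphism of the non-normal curves only if $\mu=0$. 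This coefficient comparison (the cases $j=d$ and $j=d-1$ in the paper, carried out with the fractions $P_j/Q_j$) is the heart of the proof and is absent from your proposal. A minor slip besides: with your parametrisation, the preimage of $p_0$ is $[0:1]$ and that of $q_\infty$ is $[1:0]$, not the other way round.
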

\begin{proof}
Expressing $y$ in terms of $x,z$, we obtain two birational morphisms
\[\begin{array}{rrcl}
\tau\colon & \p^1 & \to&  V_{\p^2}(f)\\
 & {[u:v]} & \mapsto&  [u^d:-v^d:u^{d-1}v]
 \end{array}\]
and
\[\begin{array}{rrcl}
\tau'\colon & \p^1 & \to&  V_{\p^2}(g)\\
& {[u:v]} & \mapsto&  [u^d:-v^d-\mu u^{d-2}v^2:u^{d-1}v]
\end{array}
\]
whose inverses are given by $[x:y:z]\mapsto [x:z]$. They induce isomorphisms $\p^1\setminus \{[0:1]\}\iso V_{\p^2}(f)\setminus \{[0:1:0]\}$ and $\p^1\setminus \{[0:1]\}\iso V_{\p^2}(g)\setminus \{[0:1:0]\}$. 

Both $\tau$ and $\tau'$ are bijective and they send $[0:1]$ onto $[0:1:0]$, which is the unique singular point of $V_{\p^{2}}(f)$ and $V_{\p^{2}}(g)$,
respectively. Since the isomorphism $\alpha\colon V_{\p^{2}}(f)\iso V_{\p^{2}}(g)$ must fix the point $[0:1:0]$,  the birational map $\hat\alpha=(\tau')^{-1}\alpha\tau$ is an automorphism of $\p^1$ that fixes the point $[0:1]$.  Thus, $\hat\alpha$ is  of the form $[u:v]\mapsto [u:av+bu]$ for some $a\in \k^*$, $b\in \k$.

Note that $s_a\colon [x:y:z]\mapsto [a^d x:y:a^{d-1} z]$ is an automorphism of $V_{\p^2}(f)$, that lifts to $\hat s_a=(\tau)^{-1}s_a\tau=[u:v]\mapsto [a u:v]$. Replacing $\alpha$ with $\alpha s_a$, we replace $\hat\alpha$ with $\hat\alpha \hat s_a$, 
and we may assume that $a=1$. It remains to see that $b=\mu=0$.
We calculate 
\begin{eqnarray*}
		\alpha( [x:y:z]) &=& \tau' \hat\alpha \tau^{-1}( [x:y:z])\\
								 &=& \tau' \hat\alpha([x:z]) \\
								 &=& \tau' ([x:z+bx]) \\
								 &=& [x^d:-(z+bx)^d-\mu x^{d-2}(z+bx)^2:x^{d-1}(z+bx)] \, .
\end{eqnarray*}

As $\alpha$ fixes $[0:1:0]$, the pull-back by $\alpha$ of $x/y$ and $z/y$ are rational functions 
\[
	\frac{x^d}{-(z+bx)^d-\mu x^{d-2}(z+bx)^2} \quad \text{and} \quad
	\frac{x^{d-1}}{-(z+bx)^{d-1}-\mu x^{d-2}(z+bx)} \, ,
\] 
whose restrictions to $V_{\p^2}(f)$ are regular at $[0:1:0]$. For each $j\in \{d-1,d\}$, there are thus two homogeneous polynomials $P_j,Q_j\in\k[x,y,z]$, of the same degree, such that $Q_j(0,1,0)\neq 0$ and such that \[\frac{P_j(x,y,z)}{Q_j(x,y,z)}=\frac{x^{j}}{-(z+bx)^{j}-\mu x^{d-2}(z+bx)^{j+2-d}}\] on the curve $V_{\p^2}(f)$. Using the  morphism 
\[
	\A^1\to V_{\p^2}(f) \, ,  \quad t\mapsto \tau([t:1])=[t^d:-1:t^{d-1}]=[t:-t^{1-d}:1] \, ,
\]
 we find
\[
	\frac{P_j(t^d,-1,t^{d-1})}{Q_j(t^d,-1,t^{d-1})}=\frac{t^{j}}{B_j} \, ,
\]
where $B_j = -(1+bt)^{j}-\mu t^{d-2}(1+bt)^{j+2-d} \in \kk[t]$.
As $Q_j(0,1,0)\neq 0$, the polynomial $Q_j(t^d,-1,t^{d-1})\in \k[t]$ is not divisible by~$t$. As $B_j(0) \neq 0$ 
(we use here $d \geq 4$), $t^j$ divides $P_j(t^d,-1,t^{d-1})$. There is thus $A_j\in \k[t]$ such that 
\[
	P_j(t^d,-1,t^{d-1})=A_j\cdot t^j \quad \text{and} \quad Q_j(t^d,-1,t^{d-1})=A_j\cdot B_j \, .
\]

We consider now the case $j=d$ and prove that $b=0$. We shall afterwards prove $\mu=0$  by considering the case $j=d-1$.

As $d\ge 4$, the equality $P_d(t^d,-1,t^{d-1})=A_d\cdot t^d$ gives the existence of $\epsilon\in \k$ such that $A_d\equiv \epsilon \pmod{t^{2}}$. We moreover have $B_d\equiv -1-bdt \pmod{t^2}$. This gives 
\[Q_d(t^d,-1,t^{d-1})=A_d\cdot B_d\equiv -\epsilon -\epsilon bdt \pmod{t^2}.\]
The polynomial $Q(t^d,-1,t^{d-1})$ is not divisible by $t$, so $\epsilon\neq 0$, and its coefficient of $t$ is zero, as $d\ge 4$. Since $\mathrm{char}(\k)$ does not divide $d$, this gives $b=0$. 

We now use $j=d-1$. As $d\ge 4$, the equality $P_{d-1}(t^d,-1,t^{d-1})=A_{d-1}\cdot t^{d-1}$ gives the existence of $\xi,\xi'\in \k$ such that $A_{d-1}\equiv \xi+t \xi' \pmod{t^{d-1}}$. As $b=0$, we find $B_{d-1}=B_d=-1-\mu t^{d-2}$. This gives 
\[Q_{d-1}(t^d,-1,t^{d-1})=A_{d-1}\cdot B_{d-1}\equiv -\xi-t \xi'-\mu\xi t^{d-2} \pmod{t^{d-1}}.\]
As $t$ does not divide this polynomial, $\xi\neq0$. Moreover, we obtain $\xi'=0$ and $\mu=0$, as $d\ge 4$.
\end{proof}

\begin{proposition}\label{Prop.IsoHypNN3}
Let $\kk$ be a field, let $d\ge 4$ be an integer that is not a multiple of $\mathrm{char}(\kk)$ and  let $f,g\in\kk[x,y,z]$ be defined by \[f=x^{d-1}y+z^{d} \text{ and } g=x^{d-1}y+z^{d}+\mu x^{d-2}z^2,\] where $\mu\in \k^*$. Then, for each $r\ge 0$, the two hypersurfaces  $V_{\p^{r+2}}(f)$, $V_{\p^{r+2}}(g)$  in $\p^{r+2}$   are not isomorphic.
\end{proposition}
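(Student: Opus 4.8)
The plan is to reduce everything to the case $r = 0$, which is exactly the content of Lemma~\ref{LemmCurveD}, and to carry out this reduction by a downward induction on the ambient dimension using Lemma~\ref{Lem:IsoHypNNplus}.

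Concretely, I would argue by contradiction. Suppose that for some $r \geq 0$ there is an isomorphism $V_{\p^{r+2}}(f) \iso V_{\p^{r+2}}(g)$. Since $f$ and $g$ are homogeneous of the same degree $d$ and, regarded as elements of $\kk[x_0, \ldots, x_{r+2}]$, involve only the three variables $x, y, z$, Lemma~\ref{Lem:IsoHypNNplus} applies at each stage: applying it successively with $n = r+1, r, \ldots, 2$ (thus exactly $r$ times) produces an isomorphism $V_{\p^{2}}(f) \iso V_{\p^{2}}(g)$. For $r = 0$ no step is needed and one starts directly from the given isomorphism in $\p^2$.

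Finally, since $d \geq 4$ and $\car(\kk)$ does not divide $d$, Lemma~\ref{LemmCurveD} applies to this isomorphism $V_{\p^{2}}(f) \iso V_{\p^{2}}(g)$ and forces $\mu = 0$, contradicting the hypothesis $\mu \in \kk^*$. Hence no such isomorphism can exist, and the proposition follows.

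I do not expect a real obstacle here, as all of the substance is already contained in Lemmas~\ref{lemm:MultMaxLin}, \ref{Lem:IsoHypNNplus} and~\ref{LemmCurveD}. The only point requiring care is bookkeeping the indices when iterating Lemma~\ref{Lem:IsoHypNNplus}: one must note that $f$ and $g$ genuinely depend only on $x, y, z$, so that the coordinate point $[0:\cdots:0:1]$ lies on both hypersurfaces with multiplicity $d$, which is precisely the input the reduction step uses, and that the index $n$ stays $\geq 1$ throughout.
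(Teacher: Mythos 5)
Your proof is correct and is essentially the paper's own argument: the paper likewise establishes the case $r=0$ via Lemma~\ref{LemmCurveD} and then handles general $r$ by induction using Lemma~\ref{Lem:IsoHypNNplus}, which is the same reduction you phrase as a downward contradiction argument. Your bookkeeping remark about $f,g$ depending only on $x,y,z$ is exactly the point that makes the iteration of Lemma~\ref{Lem:IsoHypNNplus} legitimate.
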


\begin{proof}
By Lemma~\ref{LemmCurveD}, the result is true when $r=0$. Using Lemma~\ref{Lem:IsoHypNNplus}, we can then argue by induction and obtain the result for every integer $r$.
\end{proof}

\begin{proposition}
	\label{Prop.IsoHypNN4}
	Let $\kk$ be a field, $d \geq 3$ be an integer and  let $f,g\in\kk[x_0,x_1,y,z]$ be defined by
	\[
		f=x_0^{d-1}y+z^{d} \, , \quad g=x_0^{d-1}y+z^{d}+\mu x_0^{d-2}x_1^2 \, ,
	\] where $\mu\in \k^*$. Then, for each $r\ge 0$, the two hypersurfaces  $V_{\p^{r+3}}(f)$, $V_{\p^{r+3}}(g)$  in $\p^{r+3}$   are not isomorphic.
\end{proposition}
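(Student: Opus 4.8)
\noindent\emph{Proof idea.}
The plan is to imitate the proof of Proposition~\ref{Prop.IsoHypNN3}: settle the case $r = 0$ by hand, then deduce all $r \geq 1$ from it via Lemma~\ref{Lem:IsoHypNNplus}. For the inductive step, note that $f$ and $g$ involve only the variables $x_0, x_1, y, z$, so for every $r \geq 1$ we may regard them as elements of $\k[x_0,\ldots,x_{r+2}]$ and apply Lemma~\ref{Lem:IsoHypNNplus} successively with $n = r+2, r+1, \ldots, 3$; an isomorphism $V_{\p^{r+3}}(f) \simeq V_{\p^{r+3}}(g)$ would then produce an isomorphism $V_{\p^{3}}(f) \simeq V_{\p^{3}}(g)$. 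Hence it suffices to show that the surfaces $S_f := V_{\p^3}(f)$ and $S_g := V_{\p^3}(g)$ in $\p^3$ are not isomorphic.

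To separate $S_f$ and $S_g$ I would use the local multiplicity $\mathrm{mult}_p(\cdot)$, which depends only on the local ring and is therefore preserved by isomorphisms. Since $f = x_0^{d-1}y + z^d$ does not involve $x_1$, the surface $S_f$ is the projective cone with vertex $p_0 := [0:1:0:0]$ over the plane curve $V_{\p^2}(f) \subseteq \p^2_{[x_0:y:z]}$; in the affine chart $x_1 = 1$ its equation $x_0^{d-1}y + z^d$ is homogeneous of degree $d$ in the local coordinates, so $\mathrm{mult}_{p_0}(S_f) = d$.

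The heart of the argument is to show that $S_g$ has no point of multiplicity $d$. Since $d \geq 2$, any such point is a singular point of $S_g$. By the Jacobian criterion $\mathrm{Sing}(S_g) \subseteq V_{\p^3}(\partial g/\partial y) = V_{\p^3}(x_0)$, and the equation of $S_g$ restricted to $\{x_0 = 0\}$ is $z^d$, so $\mathrm{Sing}(S_g) \subseteq V_{\p^3}(x_0, z) = \set{[0:x_1:y:0]}{[x_1:y]\in\p^1}$. It then remains to bound the multiplicity of $S_g$ along this line. At $[0:0:1:0]$, the chart $y = 1$ gives the equation $x_0^{d-1} + z^d + \mu x_0^{d-2}x_1^2$, which contains the nonzero monomial $x_0^{d-1}$ of degree $d-1 < d$, so the multiplicity there is at most $d-1$. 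At a point $[0:1:y_0:0]$ with $y_0 \in \k$, the chart $x_1 = 1$ followed by the translation $y \mapsto y + y_0$ gives the equation $\mu x_0^{d-2} + y_0 x_0^{d-1} + x_0^{d-1}y + z^d$, in which the monomial $\mu x_0^{d-2}$ is nonzero (because $\mu \neq 0$) and has degree $d-2 < d$ (because $d \geq 3$), so the multiplicity there is at most $d-2$. Thus every point of $S_g$ has multiplicity $< d$, whereas $S_f$ has a point of multiplicity $d$; an isomorphism $S_f \simeq S_g$ would carry $p_0$ to a multiplicity-$d$ point of $S_g$, which does not exist. Hence $S_f \not\simeq S_g$, and the proposition follows.

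I expect the only point requiring care to be the computation of $\mathrm{Sing}(S_g)$, which must be valid in every characteristic — in particular when $\mathrm{char}(\k)$ divides $d$ (so that $\partial g/\partial z = d z^{d-1} = 0$) or when $\mathrm{char}(\k) = 2$ (so that $\partial g/\partial x_1 = 0$); the argument above avoids this issue by using only $\partial g/\partial y = x_0^{d-1}$, which never vanishes identically. It is also worth recording why the reduction to $\p^3$ is essential rather than cosmetic: for $r \geq 1$ the form $g$, viewed in $\p^{r+3}$, is a cone over $S_g$, so $V_{\p^{r+3}}(g)$ does acquire points of multiplicity $d$ along the vertex, and the multiplicity invariant by itself no longer distinguishes it from $V_{\p^{r+3}}(f)$.
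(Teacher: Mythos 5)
Your proposal is correct and follows essentially the same route as the paper: reduce to $r=0$ by Lemma~\ref{Lem:IsoHypNNplus}, locate the singular loci on the line $x_0=z=0$ via $\partial g/\partial y=x_0^{d-1}$ (valid in every characteristic), and distinguish the surfaces by the existence of a point of multiplicity $d$ on $V_{\p^3}(f)$ versus multiplicity $<d$ everywhere on $V_{\p^3}(g)$. The only (cosmetic) difference is that the paper bounds the multiplicity along the line by writing $g=z^d+x_0^{d-2}(x_0y+\mu x_1^2)$ and using smoothness of the quadric $V_{\p^3}(x_0y+\mu x_1^2)$ there, whereas you verify the same bound by explicit computations in the two affine charts.
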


\begin{proof}
	By Lemma~\ref{Lem:IsoHypNNplus},  it suffices to consider the case $r=0$ and to prove that $V_{\p^{3}}(f)$ and $V_{\p^{3}}(g)$ are not isomorphic. Looking at the derivative with respect to $y$, the singular locus of both hypersurfaces is contained in the line $\ell\subseteq \p^3$ 
	given by $x_0 = z = 0$. 
	
	The surface $V_{\p^{3}}(f)$ has multiplicity $d$ at the point where $x_0=y=z=0$. 
	
	It remains to see that $V_{\p^{3}}(g)$ has multiplicity $< d$ at every point. For this, write $g=z^d+ x_0^{d-2}(x_0y+\mu x_1^2)$ and observe that $V_{\PP^3}(x_0y+\mu x_1^2)$ is smooth outside $x_0=x_1=y=0$ and thus on $\ell$.
\end{proof}

We may now prove Theorem~\ref{34and43}, which is a direct consequence of Propositions~\ref{Prop.Counterexample.dim3}, \ref{Prop.Counterexample.dim4}, \ref{Prop.IsoHypNN3} and \ref{Prop.IsoHypNN4}.

\begin{proof}[Proof of Theorem~\ref{34and43}]
As in the statement, we take a field $\kk$, and integers $d,n\ge 3$ such that $d$ is not a multiple of $\mathrm{char}(\kk)$ and such that $(d,n)\neq (3,3)$. We are looking for hypersurfaces $H=V_{\p^n}(f),H'=V_{\p^n}(g)\subseteq \p^n_\k$ that are not isomorphic but have isomorphic complements.

If $d\ge 4$, we may choose $f=x^{d-1}y+z^{d}$ and $g=x^{d-1}y+z^{d}+dx^{d-2}z^2$, that we see in $\k[x,y,z,w_1,\ldots,w_r]$ with $r=n-2\ge 1$. By Propositions~\ref{Prop.Counterexample.dim3}, the complements of $H$ and $H'$ are isomorphic, and by Proposition~\ref{Prop.IsoHypNN3}, the hypersurfaces $H$ and $H'$ are not isomorphic.

If $n\ge 4$, we may choose $f=x_0^{d-1}y+z^{d}$ and $g=x_0^{d-1}y+z^{d}+dx_0^{d-2}x_1^2$, that we see in $\k[x_0,x_1,y,z,w_1,\ldots,w_r]$ with $r=n-3\ge 1$. By Propositions~\ref{Prop.Counterexample.dim4}, the complements of $H$ and $H'$ are isomorphic, and by Proposition~\ref{Prop.IsoHypNN4}, the hypersurfaces $H$ and $H'$ are not isomorphic.
\end{proof}

\section{Topo\-logical Euler characteristic and piecewise isomorphisms}
\label{sec.topEuler_characterstic_Grothendiecl_ring}
Throughout this section we assume that $\kk$ is algebraically closed.
	
	In the sequel we recall the definition and some basic facts of the topo\-logical Euler characteristic.
	In order to do this, we also recall some facts from \'etale $\ell$-adic cohomology with compact support.
	As a reference we take~\cite{Mi2013Lectures-on-Etale-} and~\cite{Mi1980Etale-cohomology}.

	Let $\ell$ be a prime number that is different from the characteristic of the ground field $\kk$.
	For a variety $X$, the group $H^i_c(X_{\et}, \QQ_\ell)$
	denotes the $i$-th \'etale $\ell$-adic cohomology with compact support, 
	i.e.
	\[
		H^i_c(X_{\et}, \QQ_\ell) \coloneqq  \left( \varprojlim H^i_c(X_{\et}, \ZZ / \ell^n \ZZ) \right) \otimes_{\ZZ_\ell} \QQ_{\ell} \, ,
	\]
	where $\ZZ_\ell \coloneqq \varprojlim \ZZ/ \ell^n \ZZ$ and $\QQ_\ell$ is the quotient field of $\ZZ_{\ell}$, 
	see e.g.~\cite[\S18, \S19]{Mi2013Lectures-on-Etale-}. We have:

	\begin{lemma}
		\label{Lem.Finitness_cohomology}
		Let $X$ be a variety. Then the $\QQ_{\ell}$-vector space 
		$H^i_c(X_{\et}, \QQ_{\ell})$ has finite dimension and vanishes for $i > 2 \dim X$.
	\end{lemma}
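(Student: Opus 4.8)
The plan is to prove Lemma~\ref{Lem.Finitness_cohomology} by reducing the general case to the smooth affine case via standard dévissage, using the long exact sequence of a closed-open decomposition together with Noetherian induction on the dimension of $X$.

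First I would recall the two cornerstones from the étale cohomology references. The first is the finiteness theorem for constructible sheaves: for a variety $X$ over an algebraically closed field and a constructible $\ZZ/\ell^n\ZZ$-sheaf $\mathcal{F}$, the cohomology groups $H^i_c(X_{\et},\mathcal{F})$ are finite and vanish for $i>2\dim X$ (see \cite[Ch.~VI, \S1--3]{Mi1980Etale-cohomology}). Taking inverse limits and tensoring with $\QQ_\ell$, this gives the statement for the constant sheaf $\QQ_\ell$ provided we can control the limit; the vanishing range $i>2\dim X$ passes through the limit immediately, and finite-dimensionality over $\QQ_\ell$ follows since each $H^i_c(X_{\et},\ZZ/\ell^n\ZZ)$ is finite of bounded cardinality (bounded independently of $n$, e.g.~by a smooth compactification argument or directly from the constructibility bounds), so the $\ZZ_\ell$-module $\varprojlim H^i_c(X_{\et},\ZZ/\ell^n\ZZ)$ is finitely generated and its generic fibre is a finite-dimensional $\QQ_\ell$-vector space. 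The second cornerstone is the long exact sequence for cohomology with compact support: if $Z\subseteq X$ is closed with open complement $U=X\setminus Z$, then there is a long exact sequence
\[
	\cdots \to H^i_c(U_{\et},\QQ_\ell) \to H^i_c(X_{\et},\QQ_\ell) \to H^i_c(Z_{\et},\QQ_\ell) \to H^{i+1}_c(U_{\et},\QQ_\ell) \to \cdots
\]
(see \cite[Ch.~III, \S1]{Mi1980Etale-cohomology} or \cite[\S18]{Mi2013Lectures-on-Etale-}).

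Next I would run the induction. By Noetherian induction it suffices to prove the statement for $X$ assuming it for all proper closed subvarieties. Decompose $X$ into irreducible components; using the closed-open sequence repeatedly reduces to the case $X$ irreducible. Then pick a dense open smooth affine subvariety $U\subseteq X$ (which exists since $\kk$ is perfect, being algebraically closed, so the smooth locus is dense and open, and any nonempty open contains an affine one); its closed complement $Z=X\setminus U$ has $\dim Z<\dim X$. By the inductive hypothesis, $H^i_c(Z_{\et},\QQ_\ell)$ is finite-dimensional and vanishes for $i>2\dim Z$; in particular it vanishes for $i>2\dim X$. So from the long exact sequence, finiteness and the vanishing range for $X$ follow once we know them for $U$. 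For $U$ smooth affine of dimension $\dim X$, Poincaré duality identifies $H^i_c(U_{\et},\QQ_\ell)$ with the dual of $H^{2\dim X-i}(U_{\et},\QQ_\ell)(\dim X)$, and the Artin vanishing theorem \cite[Ch.~VI, \S7]{Mi1980Etale-cohomology} gives $H^j(U_{\et},\QQ_\ell)=0$ for $j>\dim X$, hence $H^i_c(U_{\et},\QQ_\ell)=0$ for $i<\dim X$, while trivially $H^i_c=0$ for $i>2\dim X$; finiteness of the remaining groups in the range $\dim X\le i\le 2\dim X$ is the finiteness theorem quoted above. This closes the induction.

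The main obstacle I anticipate is the passage from the finite-coefficient statement to the $\QQ_\ell$-statement, i.e.~justifying that $\varprojlim H^i_c(X_{\et},\ZZ/\ell^n\ZZ)$ is a finitely generated $\ZZ_\ell$-module so that tensoring with $\QQ_\ell$ yields a finite-dimensional vector space. The clean way around this is to invoke the standard fact (see \cite[\S19]{Mi2013Lectures-on-Etale-}) that the $\ZZ_\ell$-cohomology $H^i_c(X_{\et},\ZZ_\ell)$ of a variety is a finitely generated $\ZZ_\ell$-module — which is precisely packaged in the references — so that the only real content left is the dévissage to the smooth affine case for the vanishing range, which is routine. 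An alternative, avoiding Artin vanishing, is to first reduce to the smooth \emph{projective} case by choosing a compactification $\bar U$ with boundary $\bar U\setminus U$ of smaller dimension and applying the induction hypothesis to the boundary together with the closed-open sequence; then for $X$ smooth projective, $H^i_c=H^i$ and finiteness plus the vanishing for $i>2\dim X$ are classical. Either route works; I would present the Artin-vanishing version as it is the shortest.
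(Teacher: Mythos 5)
Your argument is essentially correct, but it takes a noticeably longer route than the paper. The paper's proof is a two-line citation: choose a completion $k\colon X\to\bar X$, use $H^i_c(X_{\et},\ZZ/\ell^n\ZZ)=H^i(\bar X_{\et},k_!(\ZZ/\ell^n\ZZ))$, and then quote Milne's Lectures, Theorem~19.2, for the statement that $\varprojlim H^i(\bar X_{\et},k_!(\ZZ/\ell^n\ZZ))$ is a finitely generated $\ZZ_\ell$-module (giving finite dimension after tensoring with $\QQ_\ell$), and Theorem~15.1 for the vanishing of these groups in degrees $>2\dim X$. In other words, no d\'evissage, no Poincar\'e duality and no Artin vanishing are needed: both claims are already packaged in the finiteness and vanishing theorems for torsion sheaves on the compactification. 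Your second paragraph (Noetherian induction, smooth affine open, duality plus Artin vanishing) is therefore redundant given your own ``first cornerstone'', and it even proves more than the lemma asks (vanishing of $H^i_c$ below $\dim X$), at the cost of extra hypotheses (smoothness for duality) that must be arranged; it does buy a more self-contained argument if one does not want to quote the constructible-sheaf finiteness theorem wholesale, which is its only advantage here.

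One genuine inaccuracy to fix: the parenthetical claim that the groups $H^i_c(X_{\et},\ZZ/\ell^n\ZZ)$ have cardinality ``bounded independently of $n$'' is false (already $H^0_c$ of a point is $\ZZ/\ell^n\ZZ$), and mere finiteness of each term of an inverse system of abelian groups does not force the limit to be a finitely generated $\ZZ_\ell$-module. You correctly identify the repair yourself, namely invoking the packaged statement that the $\ell$-adic (compactly supported) cohomology of a variety is finitely generated over $\ZZ_\ell$ — this is exactly the paper's citation of \cite[Theorem~19.2]{Mi2013Lectures-on-Etale-} — so with that substitution, and dropping or keeping the d\'evissage as you prefer, the proof is sound.
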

				
	\begin{proof}
		Let $k \colon  X \to \bar{X}$ be a completion. By definition
		(see e.g.~\cite[Definition~18.1]{Mi2013Lectures-on-Etale-}) we have
		\[
			H^i_c(X_{\et}, \ZZ / \ell^n \ZZ) = H^i(\bar{X}_{\et}, k_{!}(\ZZ / \ell^n \ZZ)) \, .
		\]
		By \cite[Theorem~19.2]{Mi2013Lectures-on-Etale-} the limit $\varprojlim H^i(\bar{X}_{\et}, k_{!}(\ZZ / \ell^n \ZZ))$ 
		is a finitely generated $\ZZ_{\ell}$-module; this implies the first statement.
		By \cite[Theorem~15.1]{Mi2013Lectures-on-Etale-}, we have that $H^i(\bar{X}_{\et}, k_{!}(\ZZ / \ell^n \ZZ))$
		vanishes for all $i > 2 \dim X$ and hence the second statement follows.
	\end{proof}

	The topological Euler-characteristic of a variety $X$ is defined by
	\[
		\chi(X) \coloneqq \sum_{i=0}^{2 \dim X} (-1)^i  \dim_{\QQ_\ell} H^i_c(X_{\et}, \QQ_\ell) \, .
	\] 
	
	The following properties are very useful in order to compute the topological Euler characteristic. 
	For lack of
	a reference with proof, we give an argument here:
	\begin{lemma}
		\label{Lem.Property_Eulercharacteristic}
		We have:
		\begin{enumerate}[leftmargin=*]
		\item \label{Lem.Property_Eulercharacteristic1} Let $X$ be a variety and let $Z \subseteq X$ be a closed subvariety. Then 
		$\chi(X) = \chi(X \setminus Z) + \chi(Z)$. 
		\item \label{Lem.Property_Eulercharacteristic2} Let $X, Y$ be varieties. Then $\chi(X \times Y) = \chi(X) \cdot \chi(Y)$.
		\end{enumerate}
	\end{lemma}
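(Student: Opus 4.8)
The plan is to derive both statements from standard structural properties of \'etale $\ell$-adic cohomology with compact support, working first at the level of finite coefficients $\ZZ/\ell^n\ZZ$ and then passing to the inverse limit and tensoring with $\QQ_\ell$; the finiteness from Lemma~\ref{Lem.Finitness_cohomology} is what turns the resulting exact sequences into numerical identities for $\chi$. For~\ref{Lem.Property_Eulercharacteristic1}, set $U = X \setminus Z$ and let $j\colon U \hookrightarrow X$, $i\colon Z \hookrightarrow X$ be the open and the complementary closed immersion. For each $n \geq 1$ one has the short exact sequence of \'etale sheaves on $X$
\[
0 \longrightarrow j_{!}(\ZZ/\ell^n\ZZ) \longrightarrow \ZZ/\ell^n\ZZ \longrightarrow i_{*}(\ZZ/\ell^n\ZZ) \longrightarrow 0 \, .
\]
Fixing a completion $k\colon X \hookrightarrow \bar X$, applying the exact extension-by-zero functor $k_{!}$, taking cohomology on $\bar X$, and using the identifications $H^p_c(X_\et, j_{!}\mathcal F) = H^p_c(U_\et, \mathcal F)$ and $H^p_c(X_\et, i_{*}\mathcal G) = H^p_c(Z_\et, \mathcal G)$ (see e.g.~\cite[\S18]{Mi2013Lectures-on-Etale-}), one obtains a long exact sequence
\[
\cdots \longrightarrow H^p_c(U_\et, \ZZ/\ell^n\ZZ) \longrightarrow H^p_c(X_\et, \ZZ/\ell^n\ZZ) \longrightarrow H^p_c(Z_\et, \ZZ/\ell^n\ZZ) \longrightarrow H^{p+1}_c(U_\et, \ZZ/\ell^n\ZZ) \longrightarrow \cdots \, .
\]
All these groups are finite, hence the inverse system over $n$ is Mittag--Leffler; applying $\varprojlim_n$ (exact on this system) and then $-\otimes_{\ZZ_\ell}\QQ_\ell$ (flat) yields the corresponding long exact sequence with $\QQ_\ell$-coefficients, which by Lemma~\ref{Lem.Finitness_cohomology} has only finitely many nonzero, finite-dimensional terms. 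Taking the alternating sum of dimensions gives $\chi(X) = \chi(X \setminus Z) + \chi(Z)$.

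For~\ref{Lem.Property_Eulercharacteristic2} the plan is to invoke the K\"unneth formula with compact supports in \'etale cohomology (see~\cite{Mi1980Etale-cohomology}): with torsion coefficients it furnishes a natural isomorphism, and passing to the $\ell$-adic limit and tensoring with $\QQ_\ell$, using again the finiteness of Lemma~\ref{Lem.Finitness_cohomology}, one gets
\[
H^p_c\bigl((X\times Y)_\et, \QQ_\ell\bigr) \;\cong\; \bigoplus_{q+r = p} H^q_c(X_\et, \QQ_\ell) \otimes_{\QQ_\ell} H^r_c(Y_\et, \QQ_\ell) \, .
\]
Taking dimensions, using that only finitely many summands are nonzero, and forming alternating sums gives
\[
\chi(X \times Y) = \sum_{p}(-1)^p \sum_{q+r=p} \dim_{\QQ_\ell} H^q_c(X_\et,\QQ_\ell)\cdot \dim_{\QQ_\ell} H^r_c(Y_\et,\QQ_\ell) = \chi(X)\cdot\chi(Y) \, .
\]

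I expect the only real difficulty to be technical bookkeeping: justifying rigorously the passage from $\ZZ/\ell^n\ZZ$-coefficients to $\QQ_\ell$-coefficients, both in the long exact sequence and in the K\"unneth isomorphism. Concretely, one must check that the pertinent inverse systems of finite groups are Mittag--Leffler (so that $\varprojlim^1$ vanishes and exactness survives the limit) and that all constructions commute with $-\otimes_{\ZZ_\ell}\QQ_\ell$; the finite generation of $\varprojlim_n H^i(\bar X_\et, k_{!}(\ZZ/\ell^n\ZZ))$ over $\ZZ_\ell$ recorded in the proof of Lemma~\ref{Lem.Finitness_cohomology} (via \cite[Theorem~19.2]{Mi2013Lectures-on-Etale-}) is precisely what makes this work. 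Everything else is formal.
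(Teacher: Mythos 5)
Your proposal is correct and follows essentially the same route as the paper: part~\ref{Lem.Property_Eulercharacteristic1} via the long exact sequence of compactly supported cohomology attached to the decomposition $X = Z \cup (X\setminus Z)$ (which the paper cites directly from SGA~4 rather than deriving from the extension-by-zero short exact sequence), followed by the limit-and-tensor passage to $\QQ_\ell$-coefficients, and part~\ref{Lem.Property_Eulercharacteristic2} via the K\"unneth formula with compact supports and alternating sums. Your explicit Mittag--Leffler justification for exactness of the inverse limit is a welcome detail that the paper's proof leaves implicit.
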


	\begin{proof}	
		\ref{Lem.Property_Eulercharacteristic1}: Let $U \coloneqq X \setminus Z$.
		By~\cite[Ch.XVII, \S5.1.16]{1973Theorie-des-topos-} we have a long exact sequence
		\begin{equation*}
		\begin{aligned}
		&\ldots \to H^{i}_c(U_{\et}, \ZZ/ \ell^n \ZZ) \to H^{i}_c(X_{\et}, \ZZ/ \ell^n \ZZ) \to H^{i}_c(Z_{\et}, \ZZ/ \ell^n \ZZ)  \\
		& \to H^{i+1}_c(U_{\et}, \ZZ/ \ell^n \ZZ) \to \ldots \, .
		\end{aligned}
		\end{equation*}
		Taking the limit over $n$ 
		and tensoring with $\QQ_{\ell}$ over $\ZZ_{\ell}$
		gives a long exact sequence
		\[
		\ldots \to H^{i}_c(U_{\et}, \QQ_{\ell}) \to H^{i}_c(X_{\et}, \QQ_{\ell}) \to H^{i}_c(Z_{\et}, \QQ_{\ell}) \to
		H^{i+1}_c(U_{\et}, \QQ_{\ell}) \to \ldots \, .
		\]
		The statement follows now by using Lemma~\ref{Lem.Finitness_cohomology}.
		
		\ref{Lem.Property_Eulercharacteristic2}: By the K\"unneth-formula 
			(see \cite[Ch.~VI, Corollary~8.23]{Mi1980Etale-cohomology})
			we get for $0 \leq k \leq \dim X + \dim Y$
			\[
			\dim_{\QQ_\ell} H^k_c((X \times Y)_{\et}, \QQ_\ell) 
			= \sum_{i=0}^k \dim_{\QQ_\ell} H^i_c(X_{\et}, \QQ_\ell) \otimes_{\QQ_{\ell}} H^{k-i}_c(Y_{\et}, \QQ_\ell) \, ,
			\]  
			which gives $\chi(X \times Y) = \chi(X) \cdot \chi(Y)$. 
	\end{proof}

	\begin{example}
		\label{Exa.Smooth_curve}
		Let $C$ be a smooth irreducible projective curve of genus $g$. Then 
		\[
			H^0(C, \QQ_\ell) = H^2(C, \QQ_\ell) = \QQ_{\ell} \quad \textrm{and} \quad
			H^1(C, \QQ_\ell) = \QQ_{\ell}^{2g} \, ,
		\]
		see~\cite[Proposition~14.2]{Mi2013Lectures-on-Etale-}. Using that $H^i(C, \QQ_\ell) = 0$ for all $i > 2$
		(see Lemma~\ref{Lem.Finitness_cohomology}), it follows that $\chi(C) = 2-2g$.
	\end{example}

	\begin{example}\label{Exa:ChiAmPm}
		Let $m \geq 0$. Then 
		\[
			H^i(\PP^m, \QQ_{\ell}) = \left\{\begin{array}{rl}
				\QQ_{\ell} \, , & \textrm{$0 \leq i \leq 2m$ and $i$ is even} \\
				0 \, , & \textrm{otherwise}
			\end{array}\right. \, ,
		\]
		see~\cite[Example~16.3]{Mi2013Lectures-on-Etale-}. Hence,
		$\chi(\PP^m) = m+1$. Using Lemma~\ref{Lem.Property_Eulercharacteristic} and that 
		$\AA^m \simeq \PP^m \setminus \PP^{m-1}$ for all $m \geq 0$ 
		(where  $\PP^{m-1}$ is linearly embedded in $\PP^m$ and
		$\PP^{-1} = \varnothing$), it follows that $\chi(\AA^m) = 1$ for all $m \geq 0$.
	\end{example}
	
	\begin{example}
		\label{Exa.A1bundle}
		Let $n \geq 0$ and let $\pi \colon X \to B$ be a locally trivial $\AA^n$-bundle with respect to the Zariski topology, where
		$B$ is any variety.
		Then $\chi(X) = \chi(B)$. Indeed, we proceed by induction on $\dim B$ and note that the
		case $\dim B = 0$ is clear. By assumption there exists a closed subvariety $B' \subseteq B$ 
		such that $\pi$ is trivial over $B \setminus B'$ and $\dim B' < \dim B$. Hence,
		\[
			\chi(X) = \chi(\AA^n \times (B \setminus B')) + \chi(\pi^{-1}(B')) = \chi(\AA^n \times B)
			= \chi(\AA^n) \chi(B) = \chi(B) \, ,
		\]
		where for the second equality  we used the induction hypothesis $\chi(\pi^{-1}(B')) = \chi(B') = \chi(\AA^n \times B')$.
	\end{example}

	Let us denote by $\Var_\k$ the set of isomorphism classes of varieties (over $\kk$)
	and denote by $\ZZ \Var_\k$ the free abelian group over $\Var_\k$. Moreover, let
	$I \subseteq \ZZ \Var_\k$ be the subgroup that is generated by
	\[
	[X] - [Z] - [X \setminus Z]
	\]
	for all varieties $X$ and closed subvarieties $Z$ (here $[W]$ denotes the isomorphism class of a variety $W$).
	By bilinear extension of the operation $[X] \cdot [Y] \coloneqq [X \times Y]$ we get a ring structure on 
	$\ZZ \Var_\k$ and $I$ is an ideal of it. The \emph{Grothendieck ring} is then the quotient of $\ZZ \Var_\k$ by $I$:
	\[
	K_0(\Var_\k) \coloneqq \ZZ \Var_\k /I  \, ,
	\] 
	see e.g. \cite[\S2.2.1]{LiSe2010The-Grothendieck-r}. 
	By abuse of notation we denote by $[X]$ the class of a variety $X$ in $K_0(\Var_\k)$.
	Using Lemma~\ref{Lem.Property_Eulercharacteristic}, the topological Euler characteristic
	gives a ring homomorphism
	\[
		\chi \colon 
		K_0(\Var_\k) \to \ZZ \, , 
		\quad \sum_{i=0}^n a_i [X_i] \mapsto \sum_{i=0}^n a_i \chi(X_i) .
	\]

\begin{definition}
	\label{Def.piecewise_isomorphic}
	Let $X, Y$ be varieties. Then $X, Y$ are called \emph{piecewise isomorphic} if~$X$ and $Y$
	can be decomposed as disjoint unions of $n \geq 1$ locally closed subsets $X_1, \ldots, X_n$ and 
	$Y_1, \ldots, Y_n$, respectively, such that $X_i \simeq Y_i$ for all $1 \leq i \leq n$. 
	Note that we take on $X_i$ the reduced scheme structure.
\end{definition}

Note that ``piecewise isomorphic'' defines an equivalence relation among the varieties.

\begin{remark}\label{Rem:PWirr}
In the above definition, if some $X_i$ is not irreducible, we replace $X_i$ in the decomposition by one of its irreducible components and its complement, and do the same for their images in $Y_i$. After finitely many such steps we may assume that all $X_i$ are irreducible, and thus all $Y_i$ are irreducible.
\end{remark}

	\begin{remark}
		If $X, Y$ are piecewise isomorphic, then $\dim X = \dim Y$. 
	\end{remark}

\begin{example}
Let $\Gamma\subseteq \p^2$ be an irreducible cuspidal cubic curve. Then $\Gamma$ is piecewise isomorphic to $\p^1$. Indeed, we choose $X_1,X_2\subseteq \Gamma$ as $X_1$ to be the singular locus (one point) and $X_2=\Gamma\setminus X_1$. Then, $X_1$ is isomorphic to $Y_1=[0:1]\in \p^1$ and $X_2$ is isomorphic to $Y_2=\p^2\setminus Y_1\simeq \A^1$.
\end{example}

\begin{lemma}
	\label{lemm.Grothendieckring}
	Let $X, Y$ be varieties and consider the following statements:
	\begin{enumerate}[leftmargin=*]
		\item \label{lemm.Grothendieckring1} $X$ and $Y$ are piecewise isomorphic;
		\item \label{lemm.Grothendieckring1.5}
		There are open subsets $U \subseteq X$, $V \subseteq Y$ such that
		\[
			U \simeq V\, ,\quad \dim X\setminus U=\dim Y\setminus V< \dim X=\dim Y
		\]
		and $X \setminus U$, $Y \setminus V$ are piecewise isomorphic.
		Moreover, if $X' \subseteq X$, $Y' \subseteq Y$ are locally closed subsets such that
		\[ 
			\dim X \setminus X', \dim Y \setminus Y' < \dim X = \dim Y 
		\]
		we may assume that $U \subseteq X'$ and $V \subseteq Y'$.
		\item \label{lemm.Grothendieckring3} $[X] = [Y]$ inside $K_0(\Var_\k)$;
		\item \label{lemm.Grothendieckring4} $\chi(X) = \chi(Y)$.
	\end{enumerate}
	Then we have $\ref{lemm.Grothendieckring1} \Leftrightarrow \ref{lemm.Grothendieckring1.5}
	\Rightarrow \ref{lemm.Grothendieckring3} \Rightarrow \ref{lemm.Grothendieckring4}$.
\end{lemma}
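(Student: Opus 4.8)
The plan is to establish the implications $\ref{lemm.Grothendieckring1} \Leftrightarrow \ref{lemm.Grothendieckring1.5}$, then $\ref{lemm.Grothendieckring1.5} \Rightarrow \ref{lemm.Grothendieckring3}$, and finally $\ref{lemm.Grothendieckring3} \Rightarrow \ref{lemm.Grothendieckring4}$, the last of which is immediate since the topological Euler characteristic factors through a ring homomorphism $\chi \colon K_0(\Var_\k) \to \ZZ$ (as noted just before the statement) and hence $[X] = [Y]$ forces $\chi(X) = \chi(Y)$. For $\ref{lemm.Grothendieckring1.5} \Rightarrow \ref{lemm.Grothendieckring3}$, given $U \simeq V$ with $X \setminus U$, $Y \setminus V$ piecewise isomorphic, I would use the scissor relations in $K_0(\Var_\k)$: $[X] = [U] + [X \setminus U]$ and $[Y] = [V] + [Y \setminus V]$; then $[U] = [V]$ because $U \simeq V$, and $[X \setminus U] = [Y \setminus V]$ because piecewise isomorphic varieties have equal classes in the Grothendieck ring — this sub-fact follows by decomposing both sides into the matching locally closed pieces $X_i \simeq Y_i$ and applying the scissor relations repeatedly (one can do this cleanly by induction on the number of pieces, peeling off a closed piece at a time after refining the decomposition using Remark~\ref{Rem:PWirr} so that each $X_i$ is irreducible, hence can be reordered so that a maximal-dimensional one is closed in the union of the remaining pieces).

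For the equivalence $\ref{lemm.Grothendieckring1} \Leftrightarrow \ref{lemm.Grothendieckring1.5}$, the direction $\ref{lemm.Grothendieckring1.5} \Rightarrow \ref{lemm.Grothendieckring1}$ is the easy one: an isomorphism $U \simeq V$ of open subsets together with a piecewise isomorphism $X \setminus U \sim Y \setminus V$ assembles directly into a piecewise isomorphism $X \sim Y$ by taking $\{U\} \cup \{$pieces of $X \setminus U\}$ as the decomposition of $X$. The direction $\ref{lemm.Grothendieckring1} \Rightarrow \ref{lemm.Grothendieckring1.5}$ is the heart of the lemma. Starting from a piecewise isomorphism with pieces $X_i \simeq Y_i$, I would first apply Remark~\ref{Rem:PWirr} to arrange all $X_i$ (hence all $Y_i$) irreducible, and then, since $\dim X = \dim Y$, observe that the union of the top-dimensional pieces on the $X$-side contains a dense open subset $U_0$ of $X$; shrinking, I can take $U \subseteq U_0$ open in $X$ and small enough that $U$ lies entirely inside one piece $X_{i_0}$ of top dimension (if there are several top-dimensional pieces, intersect with the open dense locus where the others are not present). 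Then $U$ maps isomorphically onto an open subset $V \subseteq Y$ lying in $Y_{i_0}$, and $X \setminus U$, $Y \setminus V$ inherit a piecewise isomorphism: the pieces are $X_{i_0} \setminus U$ matched with $Y_{i_0} \setminus V$ together with all the other $X_i$ matched with $Y_i$. By construction $\dim(X \setminus U) < \dim X$ and likewise on the $Y$-side, and these two dimensions are equal since $X \setminus U \sim Y \setminus V$ is a piecewise isomorphism (which preserves dimension, as remarked). Finally, the additional assertion about prescribed locally closed $X' \subseteq X$, $Y' \subseteq Y$ of smaller-dimensional complement is handled by further intersecting: replace $U$ by $U \cap X' \cap (\text{dense open locus over which the matching stays valid})$, and correspondingly shrink $V$; since $X \setminus X'$ has dimension less than $\dim X$, removing it does not destroy density, so one still gets $U \subseteq X'$, $V \subseteq Y'$ with $X \setminus U$, $Y \setminus V$ piecewise isomorphic of strictly smaller dimension.

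The main obstacle I anticipate is the bookkeeping in $\ref{lemm.Grothendieckring1} \Rightarrow \ref{lemm.Grothendieckring1.5}$: one must be careful that the open set $U$ can simultaneously be chosen (i) open in $X$, (ii) contained in a single top-dimensional piece $X_{i_0}$, (iii) mapped to something open in $Y$ (not merely locally closed), and (iv) contained in a prescribed $X'$ — while still leaving a genuine piecewise isomorphism on the complements. The key point making this work is that a locally closed subset of a variety that has the same dimension as the ambient variety must contain a nonempty open subset of it, and that the intersection of finitely many dense open subsets is dense open; combined with the dimension-preservation of piecewise isomorphisms, this lets all four conditions be met at once. I expect everything else — the scissor-relation manipulations and the reduction to irreducible pieces — to be routine once the decompositions are set up correctly.
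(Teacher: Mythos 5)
There is a genuine gap in your proof of $\ref{lemm.Grothendieckring1} \Rightarrow \ref{lemm.Grothendieckring1.5}$: you choose $U$ to lie \emph{inside a single} top-dimensional piece $X_{i_0}$. If $X$ has more than one irreducible component of maximal dimension $d=\dim X$ (after the refinement of Remark~\ref{Rem:PWirr} there is exactly one top-dimensional piece per such component, since two disjoint locally closed subsets of an irreducible variety cannot both be dense), then $X\setminus U$ still contains the other top-dimensional components entirely, so $\dim(X\setminus U)=\dim X$ and the required inequality $\dim X\setminus U<\dim X$ fails. This is not a peripheral case: the lemma is applied to reducible varieties (curves with several components in Lemma~\ref{CurvepiecewiseBir}, equidimensional varieties in Corollary~\ref{coro:Equidim}, the exceptional loci in Proposition~\ref{prop:ComplementsCurves}), and with your $U$ the birationality conclusion of Corollary~\ref{coro:Equidim} would not follow. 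The repair is what the paper does: after making all pieces irreducible, the closures of the top-dimensional pieces $X_1,\dots,X_m$ are exactly the $d$-dimensional components of $X$, and likewise for $Y$; choose for each $i\le m$ a subset $U_i'\subseteq X_i$ open in $X$ with $\overline{U_i'}=\overline{X_i}$ and $V_i'\subseteq Y_i$ open in $Y$ with $\overline{V_i'}=\overline{Y_i}$, set $U_i=U_i'\cap\psi_i^{-1}(V_i')$ and $V_i=\psi_i(U_i')\cap V_i'$, and take $U=\bigcup_i U_i$, $V=\bigcup_i V_i$; then $U\simeq V$, both are open, every $d$-dimensional component meets $U$ densely, and the complements stay piecewise isomorphic via $X_i\setminus U_i\simeq Y_i\setminus V_i$ together with the remaining pieces. (Your concern (iii), openness of the image in $Y$, is resolved exactly by the symmetric intersection with $V_i'$.)

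Two smaller points, both fixable. First, for the ``moreover'' clause you intersect $U$ with $X'$; since $X'$ is only locally closed, $U\cap X'$ need not be open in $X$ --- either intersect with the open set $X\setminus\overline{X\setminus X'}\subseteq X'$, or do as the paper does and refine the decomposition by $X'$ at the outset (then the top-dimensional pieces automatically lie in $X'$ because $\dim X\setminus X'<d$). Second, in your direct scissor-relation argument that piecewise isomorphic varieties have equal classes (which is a perfectly good, even slightly more direct, substitute for the paper's induction on dimension in $\ref{lemm.Grothendieckring1.5}\Rightarrow\ref{lemm.Grothendieckring3}$), the organizing claim ``a maximal-dimensional piece is closed'' is not correct (e.g.\ $\AA^1=(\AA^1\setminus\{0\})\amalg\{0\}$); peel off instead the closure $Z=\overline{X_n}$, using $[X]=[X\setminus Z]+[Z]$ and $[Z]=[X_n]+[Z\setminus X_n]$ with Noetherian induction, or simply invoke the standard additivity of classes over finite locally closed partitions. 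The implications $\ref{lemm.Grothendieckring1.5}\Rightarrow\ref{lemm.Grothendieckring1}$ and $\ref{lemm.Grothendieckring3}\Rightarrow\ref{lemm.Grothendieckring4}$ are fine as you state them.
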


\begin{proof}
``$\ref{lemm.Grothendieckring1}\Rightarrow \ref{lemm.Grothendieckring1.5}$'': 
Let $X_1, \ldots X_n$ and $Y_1, \ldots Y_n$ be disjoint locally closed subsets of $X$ and $Y$, respectively, 
such that their union is equal to $X$ and $Y$, respectively, and such that there are isomorphisms
$\psi_i \colon X_i \iso Y_i$ for $1 \leq i \leq n$. 
By replacing $X_i$ with $X_i \cap X'$ and $X_i \setminus X'$ and analogously for $Y_i$ we may assume
that each $X_i$ is either contained in $X'$ or $X \setminus X'$ and 
$Y_i$ is either contained in $Y'$ or $Y \setminus Y'$.
Using Remark~\ref{Rem:PWirr}, we may moreover assume that all $X_i$ and $Y_i$ are irreducible.

 Let $d \coloneqq \dim X = \dim Y$. After reordering, we may assume that the closures $\overline{X_1}, \ldots, \overline{X_m}$ in $X$ are 
the irreducible components of $X$ of dimension $d$.
Then $\overline{Y_1}, \ldots, \overline{Y_m}$ are irreducible closed subsets of $Y$ of dimension $d$
and since $Y_1, \ldots, Y_m$ are disjoint and locally closed in $Y$, 
$\overline{Y_1}, \ldots, \overline{Y_m}$ are mutually different. This
implies that $\overline{Y}_1, \ldots, \overline{Y}_m$ are  the irreducible components of $Y$ of dimension $d$.
For $1 \leq i \leq m$, note that $X_i$ and $Y_i$ is contained in $X'$ and $Y'$, respectively, since $X \setminus X'$
and $Y \setminus Y'$ have strictly lower dimension that $X$ and $Y$.

For each $i\in\{1,\ldots,m\}$, the locally closed subset $X_i\subseteq X$ is equal to $X_i=A\cap \overline{X_i}$, where $A\subseteq X_i$ is open. Choosing $U_i'\subseteq X$ to be the open subset $A\setminus I$, where $I$ is the union of all 
 irreducible components of $X$ different from $\overline{X_i}$, we find a subset $U'_i \subseteq X_i$ that is open in $X$ with $\overline{U'_i} = \overline{X}_i$. By construction, $U_1', \ldots, U'_m$ are disjoint.
Similarly, there exist open disjoint subsets $V'_1, \ldots, V'_m$ in $Y$ such that
for all $1 \leq i \leq m$ we get $V'_i \subseteq Y'_i$ and $\overline{V_i'} = \overline{Y_i}$. For $1 \leq i \leq m$, let
\[
	U_i \coloneqq U'_i \cap \psi_i^{-1}(V'_i) \subseteq X \quad \textrm{and} \quad
	V_i \coloneqq \psi_i(U'_i) \cap V'_i \subseteq Y \, .
\] 
Then the restriction $\psi_i |_{U_i} \colon U_i \iso V_i$ is an isomorphism. 
Let $U \coloneqq \bigcup_{i=1}^m U_i$ and $V \coloneqq \bigcup_{i=1}^m V_i$.
By construction, $U \subseteq X'$ and $V \subseteq Y'$.
Since 
$U_1, \ldots, U_m$ are disjoint open subsets of $X$ and
$V_1, \ldots, V_m$ are disjoint open subsets of $Y$, we have $U \simeq V$.
Moreover, $X \setminus U$ and $Y \setminus V$ are piecewise isomorphic
(take the locally closed subsets $X_1 \setminus U_1, \ldots, X_m \setminus U_m, X_{m+1}, \ldots, X_n$
and $Y_1 \setminus V_1, \ldots, Y_m \setminus V_m, Y_{m+1}, \ldots, Y_n$ of $X$ and $Y$,
respectively). By construction, 
$\dim X \setminus U < \dim X$.

``$\ref{lemm.Grothendieckring1.5}\Rightarrow \ref{lemm.Grothendieckring1}$'': The decomposition of $X\setminus U$, together with $U$, gives the decomposition of $X$, and we do similarly for $Y$.

``$\ref{lemm.Grothendieckring1.5}\Rightarrow \ref{lemm.Grothendieckring3}$'':
By assumption we have $[U] = [V]$ inside $K_0(\Var_\k)$. 
As $X \setminus U$, $Y \setminus V$ are piecewise isomorphic and since
$\dim X \setminus U = \dim Y \setminus V < \dim X = \dim Y$ we may proceed by induction on $d$
in order to get $[X \setminus U] = [Y \setminus V]$ inside $K_0(\Var_\k)$. This implies~\ref{lemm.Grothendieckring3}.

``$\ref{lemm.Grothendieckring3}\Rightarrow\ref{lemm.Grothendieckring4}$'': Follows from Lemma~\ref{Lem.Property_Eulercharacteristic}.
\end{proof}
\begin{corollary}\label{coro:Equidim}
If $X$ and $Y$ are two varieties that are both equidimensional and piecewise isomorphic, then $X$ and $Y$ are birational to each other.
\end{corollary}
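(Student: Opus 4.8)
The plan is to deduce birationality from piecewise isomorphism by an induction on the common dimension $d \coloneqq \dim X = \dim Y$, extracting from the piecewise decomposition a pair of dense open subsets that are isomorphic. The base case $d = 0$ is immediate: an equidimensional variety of dimension $0$ is a finite set of (reduced) points, and two piecewise isomorphic such varieties have the same number of points, hence are isomorphic, in particular birational.

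For the inductive step I would invoke the equivalence $\ref{lemm.Grothendieckring1} \Leftrightarrow \ref{lemm.Grothendieckring1.5}$ of Lemma~\ref{lemm.Grothendieckring}. Since $X$ and $Y$ are piecewise isomorphic, statement~\ref{lemm.Grothendieckring1.5} furnishes open subsets $U \subseteq X$ and $V \subseteq Y$ with $U \simeq V$ and with $X \setminus U$, $Y \setminus V$ piecewise isomorphic of dimension $\dim X \setminus U = \dim Y \setminus V < d$. The key point is that, because $X$ is equidimensional of dimension $d$, every irreducible component of $X$ has dimension $d$, so no component is swallowed by the lower-dimensional closed set $X \setminus U$; hence $U$ is dense in $X$, and likewise $V$ is dense in $Y$. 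Therefore the isomorphism $U \simeq V$ is a birational map between $X$ and $Y$, which is exactly what we want — in fact we do not even need the inductive hypothesis here, since $U \subseteq X$ and $V \subseteq Y$ being dense open with $U \simeq V$ already gives that $X$ and $Y$ are birational.

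The only genuine subtlety — and the step I would be most careful about — is the claim that $U$ is dense in $X$. A priori statement~\ref{lemm.Grothendieckring1.5} only guarantees $\dim(X \setminus U) < \dim X$, and for a \emph{non}-equidimensional variety this would be compatible with $U$ missing an entire lower-dimensional component, so density could fail; this is precisely where the equidimensionality hypothesis is used. Concretely, if $U$ were not dense, its closure $\overline{U}$ would be a proper closed subset containing no point of some irreducible component $C$ of $X$; then $C \subseteq X \setminus U$ would force $\dim(X \setminus U) \geq \dim C = d$, contradicting $\dim(X \setminus U) < d$. With density in hand, the conclusion is immediate, so no further work — and in particular no real induction — is needed beyond citing Lemma~\ref{lemm.Grothendieckring}.

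\begin{proof}
Let $d \coloneqq \dim X = \dim Y$; recall that the two dimensions agree since $X$ and $Y$ are piecewise isomorphic. By the equivalence $\ref{lemm.Grothendieckring1} \Leftrightarrow \ref{lemm.Grothendieckring1.5}$ in Lemma~\ref{lemm.Grothendieckring}, there are open subsets $U \subseteq X$ and $V \subseteq Y$ with $U \simeq V$ and $\dim(X \setminus U) = \dim(Y \setminus V) < d$. Since $X$ is equidimensional of dimension $d$, each of its irreducible components has dimension $d$; as no such component can be contained in the closed subset $X \setminus U$ of strictly smaller dimension, every irreducible component of $X$ meets $U$, so $U$ is dense in $X$. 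The same argument shows that $V$ is dense in $Y$. Hence the isomorphism $U \iso V$ is a birational map between $X$ and $Y$.
\end{proof}
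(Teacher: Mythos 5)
Your proof is correct and follows the same route as the paper, which simply cites the implication \ref{lemm.Grothendieckring1} $\Rightarrow$ \ref{lemm.Grothendieckring1.5} of Lemma~\ref{lemm.Grothendieckring}; you have merely made explicit the density argument (equidimensionality forces $U$ and $V$ to be dense) that the paper leaves to the reader. Your observation that no induction is needed is also accurate.
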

\begin{proof}
	Follows from the implication \ref{lemm.Grothendieckring1} $\Rightarrow$ \ref{lemm.Grothendieckring1.5} 
	in Lemma~\ref{lemm.Grothendieckring}.
\end{proof}\begin{example}
Corollary~\ref{coro:Equidim} really needs both $X$ and $Y$ to be equidimensional. Indeed, $\AA^1 \amalg \{pt\}$ and $\PP^1$ are piecewise isomorphic, but not birational.
\end{example}

\begin{lemma}\label{CurvepiecewiseBir}
For $i\in \{1,2\}$, let $\Gamma_i$ be a variety of  dimension $1$, and let $\Gamma_i'\subseteq \Gamma_i$ be the union of the irreducible components of $\Gamma_i$ of dimension $1$ $($we remove isolated points$)$. Then, the following are equivalent:
\begin{enumerate}
\item\label{CurvepiecewiseBir1}
$\Gamma_1$ and $\Gamma_2$ are piecewise isomorphic;
\item\label{CurvepiecewiseBir2}
$\Gamma_1'$ and $\Gamma_2'$ are birational, and  $\chi(\Gamma_1)=\chi(\Gamma_2)$.
\end{enumerate}
\end{lemma}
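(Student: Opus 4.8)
The plan is to prove both implications by reducing to the case of irreducible curves and then invoking the classification of curves up to birational equivalence together with the additivity of $\chi$. The implication $\ref{CurvepiecewiseBir1}\Rightarrow\ref{CurvepiecewiseBir2}$ is the easy direction: if $\Gamma_1$ and $\Gamma_2$ are piecewise isomorphic, then $\chi(\Gamma_1)=\chi(\Gamma_2)$ follows directly from Lemma~\ref{lemm.Grothendieckring} (the chain $\ref{lemm.Grothendieckring1}\Rightarrow\ref{lemm.Grothendieckring4}$). For the birationality of $\Gamma_1'$ and $\Gamma_2'$, I would apply the implication $\ref{lemm.Grothendieckring1}\Rightarrow\ref{lemm.Grothendieckring1.5}$ of Lemma~\ref{lemm.Grothendieckring} with $X'=\Gamma_1'$ and $Y'=\Gamma_2'$ (legitimate since $\dim(\Gamma_i\setminus\Gamma_i')=0<1$): this produces open subsets $U\subseteq\Gamma_1'$ and $V\subseteq\Gamma_2'$ with $U\simeq V$ and $\dim(\Gamma_1\setminus U)=\dim(\Gamma_2\setminus V)<1$. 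Since $U$ is dense in $\Gamma_1'$ and $V$ is dense in $\Gamma_2'$, this gives a birational map $\Gamma_1'\rat\Gamma_2'$, proving~\ref{CurvepiecewiseBir2}.

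The substantive direction is $\ref{CurvepiecewiseBir2}\Rightarrow\ref{CurvepiecewiseBir1}$. Here I would first reduce, via Remark~\ref{Rem:PWirr}, to the case where $\Gamma_1'$ and $\Gamma_2'$ are irreducible. (More precisely: a birational map between $\Gamma_1'$ and $\Gamma_2'$ induces a bijection between their sets of irreducible components, matching birational ones; removing a common open dense piece from each matched pair and inducting on the number of components, while absorbing the leftover zero-dimensional pieces into the count via $\chi$, reduces to the irreducible case. One must be a little careful to track how $\chi$ changes under these removals — but since removing a finite set of $k$ points decreases $\chi$ by exactly $k$, the bookkeeping is elementary.) So assume $\Gamma_1'$ and $\Gamma_2'$ are irreducible curves, birational to each other. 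Then they have a common smooth projective model $\overline{C}$, and each $\Gamma_i'$ is piecewise isomorphic to $\overline{C}$ minus finitely many points with finitely many points re-added; equivalently, $\Gamma_i'$ is piecewise isomorphic to the disjoint union of a single copy of some open subset of $\overline{C}$ and $n_i$ copies of a point, where one checks $n_i$ is determined by $\chi(\Gamma_i')$ and the genus of $\overline{C}$. I would make this precise by stratifying $\Gamma_i'$ by its (finite) singular locus and the finitely many points of $\overline{C}$ lying over it: the open stratum is isomorphic to an open subset of $\overline{C}$, and the remaining strata are finitely many points. Two such stratifications for $\Gamma_1'$ and $\Gamma_2'$ can be made compatible by shrinking the open strata to a common open $W\subseteq\overline{C}$ (removing finitely many more points), after which $\Gamma_1'$ is piecewise isomorphic to $W\amalg(\text{finite set})$ and likewise for $\Gamma_2'$; the two finite sets have the same cardinality precisely because $\chi(\Gamma_1')=\chi(\Gamma_1)-\#(\Gamma_1\setminus\Gamma_1')$ and $\chi(\Gamma_2')$ agree after accounting for the isolated points, using $\chi(\Gamma_1)=\chi(\Gamma_2)$.

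The main obstacle I anticipate is the bookkeeping in the reduction from the reducible to the irreducible case: one must ensure that the Euler-characteristic hypothesis $\chi(\Gamma_1)=\chi(\Gamma_2)$, which is a single global equation, suffices to match up the counts of isolated points and the ``defect'' points on each of the (possibly several) birational pairs of components simultaneously. The cleanest way around this is probably to prove a normal form: \emph{every} variety of dimension $\le 1$ is piecewise isomorphic to a disjoint union $C_1\amalg\cdots\amalg C_r\amalg(\text{$N$ points})$ where the $C_j$ are smooth affine irreducible curves (open subsets of the smooth projective models of the components), and then observe that the piecewise-isomorphism class of such a normal form is determined by the multiset of smooth projective models $\{\overline{C_1},\ldots,\overline{C_r}\}$ (equivalently, by the function fields of the $1$-dimensional components, i.e.\ by the birational class of $\Gamma_i'$) together with the total Euler characteristic. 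Granting this normal form, condition~\ref{CurvepiecewiseBir2} says exactly that $\Gamma_1$ and $\Gamma_2$ have the same normal form, hence are piecewise isomorphic. Establishing that $\chi$ together with the birational data pins down $N$ for the normal form uses Examples~\ref{Exa.Smooth_curve} and~\ref{Exa:ChiAmPm}: $\chi(C_j)=\chi(\overline{C_j})-\#(\overline{C_j}\setminus C_j)=(2-2g_j)-\#(\overline{C_j}\setminus C_j)$, so $\chi$ of the normal form equals $\sum_j(2-2g_j)-(\text{number of removed points})+N$, and since the genera $g_j$ are fixed by the birational data while the number of removed points can be absorbed into $N$, the value of $\chi$ determines the single remaining free parameter.
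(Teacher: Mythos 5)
Your proposal is correct, and its forward direction \ref{CurvepiecewiseBir1}$\Rightarrow$\ref{CurvepiecewiseBir2} is exactly the paper's (both just invoke the implications \ref{lemm.Grothendieckring1}$\Rightarrow$\ref{lemm.Grothendieckring1.5} and \ref{lemm.Grothendieckring1}$\Rightarrow$\ref{lemm.Grothendieckring4} of Lemma~\ref{lemm.Grothendieckring}). For the substantive direction \ref{CurvepiecewiseBir2}$\Rightarrow$\ref{CurvepiecewiseBir1}, however, you take a genuinely heavier route than the paper, whose argument is a short direct one: birationality of $\Gamma_1'$ and $\Gamma_2'$ gives isomorphic dense open subsets $U_1\simeq U_2$; since $\Gamma_i'$ is open in $\Gamma_i$ (its complement is the finite set of isolated points), each $U_i$ is open in $\Gamma_i$ and $F_i=\Gamma_i\setminus U_i$ is finite; additivity of $\chi$ (Lemma~\ref{Lem.Property_Eulercharacteristic}) then gives $\#F_1=\chi(F_1)=\chi(F_2)=\#F_2$, so $F_1\simeq F_2$ and the decompositions $\Gamma_i=U_i\amalg F_i$ give the piecewise isomorphism. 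Your normal-form argument --- passing to normalizations and smooth projective models of the components, shrinking to a common open subset, and letting $\chi$ pin down the number of leftover points --- ends with the same mechanism ($\chi$ counts a finite leftover set after matching isomorphic opens), but routes it through a classification of all one-dimensional varieties up to piecewise isomorphism by the birational class of the curve part together with $\chi$, in the spirit of Lemma~\ref{Lem:UnionTree} and Proposition~\ref{prop:NormalTrees}. That classification is a slightly stronger statement and your sketch of it is sound, but it is more than needed: the projective models, the genus formula from Example~\ref{Exa.Smooth_curve}, and the component-matching/induction bookkeeping you worry about all disappear if you apply the $\chi$-count directly to the complements of the isomorphic dense opens inside $\Gamma_1$ and $\Gamma_2$ themselves. (Also, your opening appeal to Remark~\ref{Rem:PWirr} to ``reduce to $\Gamma_i'$ irreducible'' is not really what that remark provides --- it refines a given piecewise decomposition, it does not reduce the statement to irreducible curves --- but your subsequent normal-form argument does not rely on it, so this is only a cosmetic blemish.)
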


\begin{proof}
``$\ref{CurvepiecewiseBir1}\Rightarrow\ref{CurvepiecewiseBir2}$'': Suppose that $\Gamma_1$ and $\Gamma_2$ are piecewise isomorphic. The implication \ref{lemm.Grothendieckring1} $\Rightarrow$ \ref{lemm.Grothendieckring1.5} 
	in Lemma~\ref{lemm.Grothendieckring} implies that $\Gamma_1'$ and $\Gamma_2'$ are birational and the implication \ref{lemm.Grothendieckring1} $\Rightarrow$ \ref{lemm.Grothendieckring4}  yields $\chi(\Gamma_1)=\chi(\Gamma_2)$.

``$\ref{CurvepiecewiseBir2}\Rightarrow\ref{CurvepiecewiseBir1}$'': As $\Gamma_1'$ and $\Gamma_2'$ are birational, there are open dense subsets $U_1\subseteq \Gamma_1'$ and $U_2\subseteq \Gamma_2'$ such that $U_1\simeq U_2$. For each $i\in \{1,2\}$, the set $U_i$ is open in $\Gamma_i$, and $F_i=\Gamma_i\setminus U_i$ is finite. As $\chi(\Gamma_1)=\chi(\Gamma_2)$ and $\chi(U_1)=\chi(U_2)$,  Lemma~\ref{Lem.Property_Eulercharacteristic} implies that $\chi(F_1)=\chi(F_2)$. The two finite sets $F_1$ and $F_2$ are thus isomorphic, which proves that $\Gamma_1$ and $\Gamma_2$ are piecewise isomorphic.
\end{proof}

\begin{lemma}\ \label{Lem:UnionTree}
\begin{enumerate}
\item\label{UnionTree1}
Let $X$ be a disjoint union of trees of smooth projective rational curves, with $r$ irreducible components and $s$ connected components. Then, $X$ is piecewise isomorphic to the disjoint union of $r$ copies of $\A^1$ and $s$ points.
\item\label{UnionTree2}
Let $X$, $Y$ be two disjoint unions of trees of smooth projective rational curves. Then, $X$ and $Y$ are piecewise isomorphic if and only if they have the same number of irreducible components and the same number of connected components.
\end{enumerate}
\end{lemma}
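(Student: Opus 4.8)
The plan is to prove (1) by induction on the number $r$ of irreducible components, and then deduce (2) from (1) together with the computation of the topological Euler characteristic.

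\emph{Part (1).} I would first treat the base case $r = 0$ (the empty variety, $s=0$, trivially fine) and then proceed by induction. Given a disjoint union $X$ of trees of smooth projective rational curves with $r \geq 1$ components and $s$ connected components, pick one connected component $T$ of $X$; it is a tree of smooth rational curves, so it has at least one leaf, i.e.\ an irreducible component $C \simeq \PP^1$ that meets the rest of $T$ in at most one point. If $T = C$ is a single $\PP^1$, decompose it as a point together with $\A^1 \simeq \PP^1 \setminus \{pt\}$; removing this component from $X$ leaves a disjoint union of trees with $r-1$ irreducible components and $s-1$ connected components, and the induction hypothesis finishes this case (contributing one point and $r-1$ copies of $\A^1$ and $s-1$ points, plus the point and $\A^1$ just split off, giving $r$ copies of $\A^1$ and $s$ points as required). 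If $T \neq C$, then $C$ meets $T \setminus C$ in exactly one point $p$; write $C \setminus \{p\} \simeq \A^1$ as a locally closed piece of $X$, and observe that $X \setminus (C \setminus \{p\})$ is again a disjoint union of trees of smooth projective rational curves (the point $p$ now sits on the adjacent component), with $r-1$ irreducible components and still $s$ connected components. Apply the induction hypothesis: we obtain a piecewise decomposition into $r-1$ copies of $\A^1$ and $s$ points, and adding back the piece $C \setminus \{p\} \simeq \A^1$ yields $r$ copies of $\A^1$ and $s$ points. Since ``piecewise isomorphic'' is an equivalence relation, this proves (1).

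\emph{Part (2).} The ``only if'' direction: using Example~\ref{Exa.Smooth_curve} a smooth projective rational curve has $\chi(\PP^1) = 2$, and Lemma~\ref{Lem.Property_Eulercharacteristic}\ref{Lem.Property_Eulercharacteristic1} together with the tree structure (a tree with $r$ components and $s$ connected components has $r - s$ nodal points, counted once each) gives $\chi(X) = 2r - (r-s) = r + s$. The same formula holds after the piecewise decomposition from (1): $\chi$ of $r$ copies of $\A^1$ and $s$ points is $r \cdot 1 + s \cdot 1 = r + s$, consistently. Thus if $X$ has $r$ components and $s$ connected components and $Y$ has $r'$ components and $s'$ connected components and they are piecewise isomorphic, then by Lemma~\ref{lemm.Grothendieckring} ($\ref{lemm.Grothendieckring1} \Rightarrow \ref{lemm.Grothendieckring4}$) we get $r + s = r' + s'$; moreover by Corollary~\ref{coro:Equidim} applied to the $1$-dimensional parts (which are equidimensional of dimension $1$, being disjoint unions of curves), the curves $X$ and $Y$ are birational, forcing $r = r'$ (birational unions of smooth rational curves have the same number of components), hence also $s = s'$. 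The ``if'' direction is immediate from (1): if $X, Y$ have the same $r$ and the same $s$, then both are piecewise isomorphic to the disjoint union of $r$ copies of $\A^1$ and $s$ points, hence to each other.

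\emph{Main obstacle.} The delicate point is the inductive step in (1): one must be careful that after excising the locally closed piece $C \setminus \{p\}$, the remaining set $X \setminus (C \setminus \{p\})$ is genuinely again a disjoint union of \emph{trees} of smooth projective rational curves, i.e.\ that removing a leaf together with its open part does not disconnect anything or create a non-tree configuration, and that the bookkeeping of the counts $r$ and $s$ is exactly right in both the ``single $\PP^1$ component'' and ``leaf of a larger tree'' cases. For (2), the only subtlety is justifying $r = r'$ from birationality, which follows since a birational map between disjoint unions of irreducible curves induces a bijection on irreducible components.
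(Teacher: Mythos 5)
Your proof is correct and follows essentially the same route as the paper: part (1) by peeling off leaves of the trees (the paper states this decomposition in one sentence, leaving your induction implicit), and part (2) by combining the resulting computation $\chi = r+s$ with birationality of the one-dimensional parts. The only cosmetic difference is that you invoke Corollary~\ref{coro:Equidim} and Lemma~\ref{lemm.Grothendieckring} directly (and use (1) for the ``if'' direction), whereas the paper packages both directions through Lemma~\ref{CurvepiecewiseBir}, which amounts to the same argument.
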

\begin{proof}
\ref{UnionTree1}:
If $T$ is a tree of smooth projective rational curves, with $r$ irreducible components, it is piecewise isomorphic to the disjoint union of one point and $r$ copies of $\A^1$. Hence, if $X$ has $r$ irreducible components and $s$ connected components, then $X$ is piecewise isomorphic to the disjoint union of $r$ copies of $\A^1$ and $s$ points, and thus satisfies  $\chi(X)=r+s$ (Lemma~\ref{Lem.Property_Eulercharacteristic} and Example~\ref{Exa:ChiAmPm})

\ref{UnionTree2}: By Lemma~\ref{CurvepiecewiseBir}, $X$ and $Y$ are piecewise isomorphic if and only if they are birational and $\chi(X)=\chi(Y)$. The first assertion is equivalent to ask that $X$ and $Y$ have the same number of irreducible components. The above  calculation shows that the two assertions are equivalent to ask that $X$ and $Y$ have the same number of irreducible components and the same number of connected components.
\end{proof}

	\begin{lemma}
		\label{Lem.Example_piecewise_isom}
		Let $U \subseteq \PP^2$ be a dense open subset.
		For $i = 1, 2$, let
		\[
			X_i \coloneqq U \amalg S_i \amalg R_i  \, ,
		\]
		where $S_i$ is a disjoint union of $s_i \geq 0$ points and $R_i$ is the disjoint union of $r_i \geq 0$ irreducible curves.
		If $X_1$ and $X_2$ are piecewise isomorphic, then:
		\begin{enumerate}[leftmargin=*]
			\item $R_1$ and $R_2$ are birational; in particular $r_1 = r_2$.
			\item If moreover all irreducible components of $R_1$, $R_2$ are isomorphic, then $s_1 = s_2$.
		\end{enumerate}
	\end{lemma}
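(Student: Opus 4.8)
\medskip
\noindent\emph{Proof proposal.}
The plan is to cancel the two\-/dimensional stratum $U$ in the Grothendieck ring and thereby reduce the whole statement to a question about one\-/dimensional varieties. Since $X_1$ and $X_2$ are piecewise isomorphic, Lemma~\ref{lemm.Grothendieckring} gives $[X_1]=[X_2]$ in $K_0(\Var_\k)$ and $\chi(X_1)=\chi(X_2)$. As $U$, $S_i$ and $R_i$ are closed and open in the disjoint union $X_i$, we have $[X_i]=[U]+[S_i]+[R_i]$; and since $K_0(\Var_\k)$ is a quotient of the free abelian group $\ZZ\Var_\k$, we may cancel $[U]$ to obtain
\[
	[S_1\amalg R_1]=[S_2\amalg R_2]\quad\text{in }K_0(\Var_\k)\,,
\]
an equality between classes of varieties of dimension at most $1$.

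Next I would invoke the one\-/dimensional case of the Larsen--Lunts question: for varieties of dimension at most $1$, equality of classes in $K_0(\Var_\k)$ implies piecewise isomorphism (see \cite[Theorem~4]{LiSe2010The-Grothendieck-r}). Hence $S_1\amalg R_1$ and $S_2\amalg R_2$ are piecewise isomorphic. The union of the one\-/dimensional irreducible components of $S_i\amalg R_i$ is precisely $R_i$, because $S_i$ is finite; so Lemma~\ref{CurvepiecewiseBir} yields that $R_1$ and $R_2$ are birational, and in particular $r_1=r_2$. (If one of $R_1$, $R_2$ is empty, then so is the other, by comparing dimensions.) This is part~(1).

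For part~(2) I would combine $r_1=r_2$ with Euler characteristics. By Lemma~\ref{Lem.Property_Eulercharacteristic} we have $\chi(X_i)=\chi(U)+s_i+\chi(R_i)$, so $\chi(X_1)=\chi(X_2)$ gives $s_1+\chi(R_1)=s_2+\chi(R_2)$. If every irreducible component of $R_1$ and of $R_2$ is isomorphic to one fixed curve $R$, then $\chi(R_i)=r_i\,\chi(R)$, and $r_1=r_2$ forces $s_1=s_2$.

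The one genuinely delicate point is the passage from $[S_1\amalg R_1]=[S_2\amalg R_2]$ in $K_0(\Var_\k)$ back to an honest piecewise isomorphism. There is no naive ``motivic measure'' recording the birational classes of the curve components of a one\-/dimensional variety: the scissor relation $[\PP^2]=[C]+[\PP^2\setminus C]$ for a curve $C\subseteq\PP^2$ would force such an invariant to vanish on $[C]$. So one really must appeal here to the one\-/dimensional instance of the problem, where the answer is known. As an alternative route one could peel off from each $X_i$ an isomorphic dense open $W_i\subseteq U$ via the implication $\ref{lemm.Grothendieckring1}\Rightarrow\ref{lemm.Grothendieckring1.5}$ of Lemma~\ref{lemm.Grothendieckring} (taking $X_i'=U$), reducing to the one\-/dimensional varieties $X_i\setminus W_i$; but the curves contained in $U\setminus W_i$ would then have to be disentangled from $R_i$, which amounts to exactly the same input.
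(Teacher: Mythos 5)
Your reduction is clean as far as it goes: cancelling $[U]$ in $K_0(\Var_\k)$ is valid, and your Euler–characteristic argument for part (2) is fine (indeed slightly more direct than the paper's, since you apply additivity of $\chi$ to $X_i = U \amalg S_i \amalg R_i$ itself rather than to complements of peeled-off open sets). The problem is where you place the entire weight of part (1): on the claim that for varieties of dimension at most one, equality of classes in $K_0(\Var_\k)$ forces piecewise isomorphism. First, the citation is off: Theorem~4 of \cite{LiSe2010The-Grothendieck-r} is the statement about smooth connected projective surfaces (it is the result the introduction says Proposition~D ``partially generalises''); the dimension-$\leq 1$ case is a different statement of that paper. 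More seriously, the known proofs of the dimension-$\leq 1$ case of the Larsen--Lunts question go through the Larsen--Lunts stable-birationality measure (equivalently Bittner's presentation), hence through weak factorization, and are established in characteristic zero. The present lemma, however, is stated and used over an arbitrary algebraically closed field --- it enters Lemma~\ref{Lem:Gafminus1} and hence Theorem~\ref{Thm.Degree3n3} --- so as written your argument covers at best the characteristic-zero case, unless you supply a characteristic-free proof of the one-dimensional Larsen--Lunts statement; and, as your own remark about the relation $[\PP^2]=[C]+[\PP^2\setminus C]$ shows, that is not something one gets by naive cancellation. This is a genuine gap, not a cosmetic one.

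The paper avoids the heavy input entirely, and the decisive step is exactly the one you dismiss at the end as ``the same input''. Using the refinement in the implication \ref{lemm.Grothendieckring1}$\Rightarrow$\ref{lemm.Grothendieckring1.5} of Lemma~\ref{lemm.Grothendieckring} with $X'=Y'=U$, one may choose the isomorphic dense open subsets $U_1,U_2$ \emph{inside} $U$. Then $U_1\iso U_2$ is the restriction of a birational self-map of $\PP^2$, which factors into equally many blow-ups and blow-downs; hence the one-dimensional parts of $\PP^2\setminus U_1$ and $\PP^2\setminus U_2$ are birational, and therefore so are those of $U\setminus U_1$ and $U\setminus U_2$, since the curves of $\PP^2\setminus U$ are common to both sides. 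This is precisely how the curves of $U\setminus U_i$ get disentangled from $R_i$: applying Lemma~\ref{CurvepiecewiseBir} to $E_i=(U\setminus U_i)\amalg S_i\amalg R_i$ gives that $R_1$ and $R_2$ are birational, and Euler characteristics finish part (2). The only nontrivial geometric ingredient is the classical factorization of birational maps between smooth projective surfaces, valid in every characteristic; no case of the Larsen--Lunts question is invoked. So your route is defensible over $\CC$ (with the correct citation), but it does not prove the lemma in the stated generality, whereas the special shape of the hypothesis --- the two-dimensional stratum is one and the same open subset of $\PP^2$ on both sides --- is exactly what makes the elementary, characteristic-free argument possible.
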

	
	\begin{proof}
		By Lemma~\ref{lemm.Grothendieckring} there exist open dense subsets $U_1, U_2 \subseteq U$
		that are isomorphic and such that $E_1 \coloneqq X_1 \setminus U_1$ and $E_2 \coloneqq X_2 \setminus U_2$ 
		are piecewise isomorphic.
		
		Let $(\PP^2 \setminus U_i)'$ and $(U \setminus U_i)'$ be the union of one-dimensional irreducible
		components of $\PP^2 \setminus U_i$ and $U \setminus U_i$, respectively.		
		The isomorphism $U_1 \iso U_2$ gives a birational map 
		$\p^2\dasharrow\p^2$ that decomposes into $n$ blow-ups and $n$ blow-downs. 
		Hence, $(\PP^2 \setminus U_1)'$ and $(\PP^2 \setminus U_2)'$ are birational and thus
		$(U \setminus U_1)'$ and $(U \setminus U_2)'$ are birational as well.
		By Lemma~\ref{CurvepiecewiseBir}, the unions of 
		one-dimensional irreducible components of $E_1$ and $E_2$  are birational 
		and
		since $E_i \coloneqq (U \setminus U_i) \amalg S_i \amalg R_i$ it follows that $R_1$ and $R_2$ 
		are birational. If the irreducible components of $R_1$ and $R_2$ are all isomorphic to $C$, we get 
		$\chi(E_i) = \chi(U) - \chi(U_i) + s_i  + r_i \chi(C)$ and  since
		$\chi(E_1) = \chi(E_2)$, $\chi(U_1) = \chi(U_2)$ we get $s_1 = s_2$.
	\end{proof}

\begin{example}\label{A2P2minus}
We show that Lemma~\ref{CurvepiecewiseBir} does not generalise to higher dimension.

The irreducible surfaces $X=\AA^2$ and $Y=\PP^2\setminus \{[1:0:0],[0:1:0]\}$ are birational and satisfy $\chi(X)=\chi(Y)=1$ (Lemma~\ref{Lem.Property_Eulercharacteristic} and Example~\ref{Exa:ChiAmPm}), but $X$ and $Y$ are not piecewise isomorphic. Indeed, this last assertion follows from Lemma~\ref{Lem.Example_piecewise_isom} 
and the fact that $X$ is piecewise isomorphic to $\AA^2 \setminus \{(0, 0)\} \amalg \{pt\}$ and 
$Y$ is piecewise isomorphic to $\AA^2 \setminus \{(0, 0)\} \amalg \AA^1$.
\end{example}
\begin{proposition}\label{prop:ComplementsCurves}
Let $X$, $Y$ be two varieties, that are piecewise isomorphic and let $E\subseteq X$, $F\subseteq Y$ be locally closed subvarieties of dimension at most~$1$. If $E$ and $F$ are piecewise isomorphic, then $X\setminus E$ and $Y\setminus F$ are piecewise isomorphic.
\end{proposition}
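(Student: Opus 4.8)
The plan is to argue by induction on $d := \dim X = \dim Y$; these dimensions agree since piecewise isomorphic varieties have equal dimension. Write $A \approx B$ to mean that $A,B$ are piecewise isomorphic. One preliminary fact is used throughout: for a locally closed $E \subseteq X$ with closure $\overline{E}$ in $X$ we have the decomposition into locally closed pieces $X \setminus E = (X \setminus \overline{E}) \amalg (\overline{E} \setminus E)$ (with $X\setminus\overline E$ open and $\overline E\setminus E$ closed in $X\setminus E$), so Lemma~\ref{Lem.Property_Eulercharacteristic} applied twice gives $\chi(X \setminus E) = \chi(X) - \chi(E)$. Combined with the implications of Lemma~\ref{lemm.Grothendieckring}, the hypotheses $X \approx Y$ and $E \approx F$ then yield $\chi(X \setminus E) = \chi(X)-\chi(E) = \chi(Y)-\chi(F) = \chi(Y \setminus F)$ once and for all.

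For the base case $d \le 1$: if $d = 0$ all varieties in sight are finite sets of points, and the claim is the identity $\#(X \setminus E) = \#X - \#E = \#Y - \#F = \#(Y \setminus F)$, using that piecewise isomorphic finite sets have the same cardinality. If $d = 1$, I would use Lemma~\ref{CurvepiecewiseBir}. Let $X'$, $Y'$, $E'$, $F'$ denote the unions of the one-dimensional irreducible components. From $X \approx Y$ the lemma gives that $X'$ and $Y'$ are birational, and from $E \approx F$ that $E'$ and $F'$ are birational (the case $\dim E = 0$ being trivial, since then $E' = F' = \varnothing$). The key point is that a one-dimensional irreducible component $C$ of $X$ either meets $E$ in only finitely many points — in which case $C \setminus E$ is a dense open subset of $C$, contributing a component of $(X \setminus E)'$ birational to $C$ — or satisfies $\dim(C \cap E) = 1$, in which case $C$ is the closure of a one-dimensional component of $E$ and $C \setminus E$ is finite; moreover distinct one-dimensional components of $E$ have distinct closures in $X$, since two dense locally closed subsets of an irreducible curve cannot be disjoint. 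Hence the multiset of birational classes of components of $(X \setminus E)'$ is obtained from that of $X'$ by deleting that of $E'$; doing the same for $Y$ and using that $X'\approx Y'$... rather, that $X',Y'$ and $E',F'$ are birational, we get that $(X \setminus E)'$ and $(Y \setminus F)'$ are birational. If this common birational class is empty, then $X \setminus E$ and $Y \setminus F$ are finite and the equality of Euler characteristics concludes; otherwise both are one-dimensional and Lemma~\ref{CurvepiecewiseBir}, together with $\chi(X \setminus E) = \chi(Y \setminus F)$, gives $X \setminus E \approx Y \setminus F$.

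For the inductive step, assume $d \ge 2$, so that $\dim \overline{E}, \dim \overline{F} \le 1 < d$. I apply the implication \ref{lemm.Grothendieckring1} $\Rightarrow$ \ref{lemm.Grothendieckring1.5} of Lemma~\ref{lemm.Grothendieckring} to $X \approx Y$ with the locally closed (indeed open) subsets $X \setminus \overline{E} \subseteq X$ and $Y \setminus \overline{F} \subseteq Y$, whose complements have dimension $\le 1 < d$ as required. This produces open subsets $U \subseteq X \setminus \overline{E}$ and $V \subseteq Y \setminus \overline{F}$, open in $X$ and $Y$ respectively, with $U \simeq V$, with $\dim(X \setminus U) = \dim(Y \setminus V) < d$, and with $X \setminus U \approx Y \setminus V$. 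Since $U \cap E = \varnothing$ we get $E \subseteq X \setminus U$ and $(X \setminus E) \setminus U = (X \setminus U) \setminus E$, and likewise on the $Y$-side. Now $X \setminus U \approx Y \setminus V$ have dimension $< d$, while $E \subseteq X \setminus U$ and $F \subseteq Y \setminus V$ are of dimension $\le 1$ and satisfy $E \approx F$, so the induction hypothesis applied to the pair $(X \setminus U, Y \setminus V)$ gives $(X \setminus U) \setminus E \approx (Y \setminus V) \setminus F$. Finally $X \setminus E = U \amalg ((X \setminus U) \setminus E)$ and $Y \setminus F = V \amalg ((Y \setminus V) \setminus F)$ are disjoint unions of locally closed subsets with $U \simeq V$ and with piecewise isomorphic complements; refining the latter piecewise isomorphism and adjoining $U \simeq V$ yields $X \setminus E \approx Y \setminus F$. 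The step I expect to be most delicate is the one-dimensional base case: one must track carefully how the one-dimensional components of $X$ are distributed between $E$ and $X \setminus E$, verify the injectivity of ``take closure'' on the components of $E'$, and keep control of the Euler characteristic under removal of merely locally closed subvarieties. The inductive step is comparatively routine once Lemma~\ref{lemm.Grothendieckring} is available; the only subtlety there is to pass to the closure $\overline{E}$ in order to feed a genuine locally closed subset into that lemma.
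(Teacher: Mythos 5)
Your argument is correct, but it follows a genuinely different route from the paper's. The paper fixes once and for all the decompositions of $X$ and $Y$ into pieces $X_i\simeq Y_i$ and removes $E$ bit by bit: first a single point, then a finite set, then, by induction on the number of one-dimensional irreducible components of $E$, one irreducible curve at a time, analysing in each step how that curve and its images sit inside the individual pieces (Euler characteristics enter only to compare the finite leftovers). You instead induct on $\dim X=\dim Y$: your inductive step is precisely the ``moreover'' clause of the implication \ref{lemm.Grothendieckring1}~$\Rightarrow$~\ref{lemm.Grothendieckring1.5} of Lemma~\ref{lemm.Grothendieckring}, applied with the open subsets $X\setminus\overline E$ and $Y\setminus\overline F$ (legitimate because $\dim\overline E\le 1<\dim X$ once $\dim X\ge 2$), which reduces everything to a pair of piecewise-isomorphic varieties of smaller dimension still containing $E$ and $F$; the base case in dimension $\le 1$ is then settled by Lemma~\ref{CurvepiecewiseBir} together with the additivity $\chi(X\setminus E)=\chi(X)-\chi(E)$ coming from Lemma~\ref{Lem.Property_Eulercharacteristic}. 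Your approach buys a cleaner induction, with all curve-level bookkeeping isolated in the base case; the paper's approach avoids the multiset bookkeeping of birational classes of components that your base case needs, at the price of a more hands-on case analysis of how the removed curve meets the pieces. One small point in your bookkeeping: the reason you give for the injectivity of ``take closure'' on the one-dimensional components of $E$ (``two dense locally closed subsets of an irreducible curve cannot be disjoint'') is not quite the right one, since distinct irreducible components are allowed to intersect; the correct and immediate argument is that a one-dimensional component $D$ of $E$ is closed in $E$, so if $\overline D=C$ then $D=\overline D\cap E=C\cap E$, and hence $D$ is determined by its closure. This is a cosmetic fix, not a gap.
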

\begin{proof}
By definition, $X$ and $Y$
	can be decomposed as disjoint unions of $n \geq 1$ locally closed subsets $X_1, \ldots, X_n$ and 
	$Y_1, \ldots, Y_n$, respectively, such that there is an isomorphism $\varphi_i\colon X_i \iso Y_i$ for each $i\in \{1,\ldots,n\}$. 
	
	$\mathbf{(A)}$ We first consider the case where $E$ is one point (and thus $F$ is also one point). In this case, there is exactly one $i\in \{1,\ldots,n\}$ and one $j\in \{1,\ldots,n\}$ such that $E\in X_i$ and $F\subseteq Y_j$. If $i=j$, the isomorphism $\varphi_i$ induces an isomorphism from $X_i\setminus (E\cup \varphi_i^{-1} (F))$ to $Y_i\setminus (F\cup \varphi_i(E))$, and we are done if $\varphi_i(E)=F$ or otherwise use an isomorphism from the point $\varphi_i^{-1} (F)$ to the point $\varphi_i(E)$. 
	 If $i\neq j$, then  $\varphi_i$ gives an isomorphisms  $X_i\setminus E\iso Y_i\setminus \varphi_i(E)$ and $\varphi_j$ gives an isomorphism $X_j\setminus \varphi_{j}^{-1}(F)\iso Y_j\setminus F$. It then remains to use an isomorphism from the point $\varphi_j^{-1} (F)$ to the point $\varphi_i(E)$.
	
	$\mathbf{(B)}$ If $E$ (and thus $F$) is finite, we proceed by induction on the number of points, applying $\mathbf{(A)}$, and obtain the result.
	
	$\mathbf{(C)}$ In the general case, we proceed by induction on the number of irreducible components of $E$ of dimension $1$. When no such component exists, we are done by $\mathbf{(B)}$.  As $E$ and $F$ are piecewise isomorphic, there are open subsets $U'\subseteq E$ and $V'\subseteq F$ such that $E\setminus U'$ and $F\setminus V'$ are finite, with the same number of points, and there is an isomorphism $\psi\colon U'\iso V'$ (Lemma~\ref{lemm.Grothendieckring}). 
	We take an open subset $U\subseteq U'$, that is irreducible and infinite, take $V=\psi(U)\subseteq V'$, which is also open, irreducible and infinite, and obtain that $E\setminus U$ and $F\setminus V$ are piecewise isomorphic. 	
	As $U,V$ are irreducible curves, there is  exactly one $i\in \{1,\ldots,n\}$ and one $j\in \{1,\ldots,n\}$ such that $U\cap X_i$ and $V\cap  Y_j$ are infinite. 
	By removing finitely many points of $U$ and $V$ respectively, we may assume that $U\subseteq X_i$, $V\subseteq Y_j$ and still assume that $U$, $V$ are isomorphic (we remove points outside of $X_i$ or $Y_j$ and their images under $\psi$ or $\psi^{-1}$), and that $E\setminus U$, $F\setminus V$ are piecewise isomorphic. 
	We now prove that $X\setminus U$ and $Y\setminus V$ are piecewise isomorphic, which will achieve the proof, as $E\setminus U$ and $F\setminus V$ are piecewise isomorphic, with one irreducible component of dimension $1$ less.
	
	If $i=j$, the isomorphism $\varphi_i$ restricts to an isomorphism
	\[
		X_i\setminus (U\cup \varphi_i^{-1} (V)) \iso Y_i\setminus (V\cup \varphi_i(U)) \, ,
	\] 
	so we only need to see that $\varphi_i^{-1} (V)\setminus (U\cap \varphi_i^{-1} (V))$ and $\varphi_i(U)\setminus (V\cap \varphi_i(U))$ are piecewise isomorphic. Applying $\varphi_i$ to the first set, we need to show that $V\setminus (\varphi_i(U)\cap V)$ and $\varphi_i(U)\setminus (V\cap \varphi_i(U))$ are piecewise isomorphic. If $\varphi_i(U)\cap V$ is finite, this follows from $\mathbf{(B)}$. If $\varphi_i(U)\cap V$ is infinite, then $V\setminus (\varphi_i(U)\cap V)$ and $\varphi_i(U)\setminus (V\cap \varphi_i(U))$ are both finite, with the same number of points
	by using the topological Euler characteristic (Lemma~\ref{Lem.Property_Eulercharacteristic}).
	
	 If $i\neq j$, then  $\varphi_i$ gives an isomorphisms  $X_i\setminus U\iso Y_i\setminus \varphi_i(U)$ and $\varphi_j$ gives an isomorphism $X_j\setminus \varphi_{j}^{-1}(V)\iso Y_j\setminus V$. It then remains to use an isomorphism from the curve $\varphi_j^{-1} (V)$ to the point $\varphi_i(U)$.
\end{proof}

\begin{lemma}\label{Lem:SMoothRatDecomposition}
Let $X$ be a smooth projective rational surface with
Picard group $\Pic(X)\simeq \Z^n$. Then, there is an open subset $U\subseteq X$ isomorphic to $\A^2$, a point $p\in X$ and locally closed curves $C_1,\ldots,C_n\subseteq X$ such that each $C_i$ is isomorphic to $\A^1$ and $X$ is the disjoint union 
\[
	X=U \amalg C_1\amalg \cdots \amalg C_n \amalg  \{p\} \, .
\]
Moreover, for each finite set $\Delta\subseteq X$ and each closed curve $E\subseteq X$, we may choose the above decomposition such that $\Delta\subseteq U$, $\{p\}\cap (\Delta \cup E)=\varnothing$ and  $C_i\not\subseteq E$ for each $i\in \{1,\ldots,n\}$.
\end{lemma}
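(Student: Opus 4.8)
The plan is to reduce, via the classification of minimal rational surfaces, to the cases $X=\PP^2$ and $X$ a Hirzebruch surface, and then to carry a decomposition through a single blow-up of a point. Recall that $X$ admits a birational morphism $\pi\colon X\to X_0$ onto a minimal rational surface, and that $\pi$ is a composition of $k\ge 0$ blow-ups of points; here $X_0=\PP^2$ or $X_0=\FF_e$ for some $e\ge 0$, $e\ne 1$, and since each blow-up raises the Picard rank by $1$ we have $n=k+\mathrm{rk}\,\Pic(X_0)$. We argue by induction on $k$, proving the full statement including the ``moreover'' part; we may assume throughout that $E$ is a curve (enlarging it by a fixed curve if necessary).

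\emph{Base cases $(k=0)$.} If $X=\PP^2$, so $n=1$, pick a line $L\subseteq\PP^2$ disjoint from the finite set $\Delta$ and not a component of $E$; then $U:=\PP^2\setminus L\simeq\AA^2$ contains $\Delta$, and since $L\not\subseteq E$ the intersection $L\cap E$ is finite, so there is $p\in L\setminus E$ (hence $p\notin\Delta$), and $C_1:=L\setminus\{p\}\simeq\AA^1$ satisfies $C_1\not\subseteq E$ because $\overline{C_1}=L\not\subseteq E$. If $X=\FF_e$, so $n=2$, let $\rho\colon\FF_e\to\PP^1$ be the ruling and choose a fibre $F=\rho^{-1}(b)$ disjoint from $\Delta$ and not a component of $E$. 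Then $W:=\FF_e\setminus F$ is a $\PP^1$-bundle over $\PP^1\setminus\{b\}\simeq\AA^1$, hence $W\simeq\PP^1\times\AA^1$; fixing such an isomorphism, $\Delta$ lies in $W$ and has finite image, and for all but finitely many $\infty\in\PP^1$ the section $\Sigma:=\{\infty\}\times\AA^1\simeq\AA^1$ is disjoint from $\Delta$ and not contained in $E$. Set $C_2:=\Sigma$ and $U:=W\setminus\Sigma\simeq\AA^1\times\AA^1\simeq\AA^2\supseteq\Delta$, and choose $p\in F\setminus E$, $C_1:=F\setminus\{p\}$ as before. In each case $X=U\amalg C_1\amalg\cdots\amalg C_n\amalg\{p\}$ has all the required properties.

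\emph{Inductive step.} Factor $\pi$ through a birational morphism $\sigma\colon X\to Y$ contracting a single $(-1)$-curve $E_0\subseteq X$ to a point $q\in Y$, where $Y\to X_0$ is a composition of $k-1$ blow-ups. Put $E_Y:=\overline{\sigma(E)}$ and $\Delta_Y:=\sigma(\Delta)\cup\{q\}$, and apply the induction hypothesis to $Y$ with this data: $Y=U_Y\amalg C^Y_1\amalg\cdots\amalg C^Y_{n-1}\amalg\{p_Y\}$ with $\Delta_Y\subseteq U_Y$, $p_Y\notin\Delta_Y\cup E_Y$, and $C^Y_i\not\subseteq E_Y$. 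In particular $q\in U_Y\simeq\AA^2$, so $\sigma$ is an isomorphism over $Y\setminus\{q\}$ and $\sigma^{-1}(U_Y)\simeq\mathrm{Bl}_q(\AA^2)$. The key geometric fact is that $\mathrm{Bl}_q(\AA^2)$ is the total space of $\mathcal{O}_{\PP^1}(-1)$ — an $\AA^1$-bundle $\beta$ over $\PP^1$ with $E_0$ as zero section — and that $\mathrm{Bl}_q(\AA^2)\setminus\beta^{-1}(t)\simeq\AA^2$ for every $t\in\PP^1$, since the line bundle restricted to $\PP^1\setminus\{t\}\simeq\AA^1$ is trivial. Pick $t$ so that $\ell:=\beta^{-1}(t)\simeq\AA^1$ is disjoint from $\Delta$ and not contained in $E$ (only finitely many fibres fail this). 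Then
\[
	X=\bigl(\sigma^{-1}(U_Y)\setminus\ell\bigr)\amalg\ell\amalg\sigma^{-1}(C^Y_1)\amalg\cdots\amalg\sigma^{-1}(C^Y_{n-1})\amalg\{\sigma^{-1}(p_Y)\}
\]
is a decomposition of the asserted shape: the first piece is $\simeq\AA^2$, the second $\simeq\AA^1$, each $\sigma^{-1}(C^Y_i)\simeq C^Y_i\simeq\AA^1$ since $q\notin C^Y_i$, and $\sigma^{-1}(p_Y)$ is a point. For the ``moreover'' part, $\sigma(\Delta)\subseteq U_Y$ and $\Delta\cap\ell=\varnothing$ give $\Delta\subseteq\sigma^{-1}(U_Y)\setminus\ell$; and from $\sigma(E)\subseteq E_Y$, $p_Y\notin E_Y\cup\Delta_Y$, $p_Y\ne q$, and $C^Y_i\not\subseteq E_Y$ one deduces $\sigma^{-1}(p_Y)\notin E\cup\Delta$ and $\sigma^{-1}(C^Y_i)\not\subseteq E$, while $\ell\not\subseteq E$ was arranged.

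The surface-geometry ingredients — the factorisation of $\pi$ into blow-ups of points, the list of minimal rational surfaces, and $\mathrm{Bl}_q(\AA^2)\setminus(\text{fibre})\simeq\AA^2$ — are standard; the main work is the bookkeeping of the ``moreover'' constraints through the induction, where each generic choice (the line in $\PP^2$, the fibre and section in $\FF_e$, the fibre $\ell$ in $\mathrm{Bl}_q(\AA^2)$) must simultaneously contain $\Delta$, keep $p$ off $E$, and keep every $C_i$ out of $E$, and one must verify that these conditions descend correctly along the contraction $\sigma$ — which is exactly why we put $q$ into $\Delta_Y$ and take $E_Y=\overline{\sigma(E)}$.
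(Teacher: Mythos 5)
Your proof is correct and follows essentially the same route as the paper's: induction on the number of blow-ups over a minimal model ($\PP^2$ or a Hirzebruch surface), arranging the blown-up point to lie in the $\AA^2$ chart, and using that the blow-up of $\AA^2$ at a point minus (the strict transform of) a line through it — i.e.\ a fibre of the $\mathcal{O}_{\PP^1}(-1)$-bundle — is again $\AA^2$. Your treatment of the ``moreover'' constraints, carried through the induction via the auxiliary data $\Delta_Y=\sigma(\Delta)\cup\{q\}$ and $E_Y=\overline{\sigma(E)}$, is just a more explicit bookkeeping of what the paper compresses into the word ``general''.
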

\begin{proof}
If $X=\p^2$, we choose a general line $\ell\subseteq \p^2$ and a general point $p\in \ell$, and let $U=\p^2\setminus \ell$, $C_1=\ell\setminus \{p\}$. This gives the statement in case $X = \PP^2$.

We may thus assume that $X$ is not isomorphic to $\p^2$, so there is a birational morphism $X\to \mathbb{F}_d$ for some Hirzebruch surface $\mathbb{F}_d$ where $d \geq 0$. 
We proceed by induction on the number of blow-ups of $X \to \FF_d$. If $X = \FF_d$, we choose a general section $\ell_1$
of $\FF_d \to \PP^1$ of self-intersection $d$ and a general fibre $\ell_2$ of $\FF_d \to \PP^1$ 
and let $p \coloneqq \ell_1 \cap \ell_2$, $U \coloneqq \FF_d \setminus (\ell_1 \cup \ell_2)$. 
Then, $C_i=\ell_i\setminus \{p\}\simeq \A^1$ for each $i\in \{1,2\}$.

We then do successive blow-ups, where we may always assume that the points blown-up are in the subset $U\simeq \A^2$. Then, taking a general line of $\A^2$ passing through the point, the strict transform of the line is a closed curve $C_i\simeq \A^1$ of the blow-up of $\A^2$, with complement isomorphic to $\A^2$.
\end{proof}

\begin{proposition}\label{prop:NormalTrees}
Let $Y$ be a normal
	projective rational surface, 
	that admits a desingularisation $X\to Y$ with  $\Pic(X)\simeq \Z^n$ 
	and exceptional divisor being a finite disjoint union of trees of smooth rational curves with $r\ge 0$
	irreducible components $($this holds for instance when $Y$ only has Du Val singularities$)$. 
	Then, $Y$ is piecewise isomorphic to the disjoint union of $\A^2$, one point, and $n-r$ copies of $\A^1$, 
	with $n-r\ge 1$.
\end{proposition}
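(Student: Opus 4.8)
The plan is to transport the statement from the resolution $X$ down to $Y$ along $\pi\colon X\to Y$, cancelling the exceptional divisor on both sides by means of Proposition~\ref{prop:ComplementsCurves}. Write $E\subseteq X$ for the exceptional divisor of $\pi$; by hypothesis it is a disjoint union of trees of smooth rational curves with $r$ irreducible components, say with $s$ connected components. Since $\pi$ is a resolution of a normal surface, it is an isomorphism over the complement of the finite locus of $Y$ lying under $E$; by connectedness of the fibres of $\pi$, this locus consists of $s$ pairwise distinct points $q_1,\dots,q_s$, one under each connected component of $E$, and $\pi$ restricts to an isomorphism $X\setminus E\iso Y\setminus\{q_1,\dots,q_s\}$. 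Decomposing $Y$ into this open subset together with the points $q_j$ shows that $Y$ is piecewise isomorphic to $(X\setminus E)\amalg\{q_1\}\amalg\cdots\amalg\{q_s\}$.

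Next I would record the two structural facts and the numerical inequality. As $X$ is a smooth projective rational surface with $\Pic(X)\simeq\Z^n$, Lemma~\ref{Lem:SMoothRatDecomposition} shows that $X$ is piecewise isomorphic to $W\coloneqq\A^2\amalg L_1\amalg\cdots\amalg L_n\amalg\{pt\}$, where each $L_i\simeq\A^1$; and by part~\ref{UnionTree1} of Lemma~\ref{Lem:UnionTree}, the curve $E$ is piecewise isomorphic to the disjoint union of $r$ copies of $\A^1$ and $s$ points. I also claim that $n-r\ge1$: the intersection form on the exceptional divisor of a resolution of a normal surface singularity is negative definite (a classical result of Mumford), so the classes of the irreducible components $E_1,\dots,E_r$ of $E$ are linearly independent in $\Pic(X)\otimes\Q$, whence $r\le n$; moreover $Y$ is projective, so it carries an ample Cartier divisor $L$, and its pull-back satisfies $\pi^\ast L\cdot E_i=L\cdot\pi_\ast E_i=0$ for every $i$ while $(\pi^\ast L)^2=L^2>0$, so $\pi^\ast L$ does not lie in the span of $E_1,\dots,E_r$; this forces $n\ge r+1$.

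Now apply Proposition~\ref{prop:ComplementsCurves} to the piecewise isomorphic varieties $X$ and $W$, taking inside $W$ the closed, at most one-dimensional subset $F\coloneqq L_1\amalg\cdots\amalg L_r\amalg P$, where $P\subseteq\A^2$ is a set of $s$ points. Since $F$ is piecewise isomorphic to $E$ (both being $r$ copies of $\A^1$ together with $s$ points), the proposition gives that $X\setminus E$ is piecewise isomorphic to $W\setminus F=(\A^2\setminus P)\amalg L_{r+1}\amalg\cdots\amalg L_n\amalg\{pt\}$. Combining this with the first paragraph, $Y$ is piecewise isomorphic to $(\A^2\setminus P)\amalg L_{r+1}\amalg\cdots\amalg L_n\amalg\{pt\}\amalg\{q_1\}\amalg\cdots\amalg\{q_s\}$; using the honest decomposition $\A^2=(\A^2\setminus P)\amalg P$ to recombine $\A^2\setminus P$ with the $s$ points $q_j$ into one copy of $\A^2$, we conclude that $Y$ is piecewise isomorphic to $\A^2\amalg L_{r+1}\amalg\cdots\amalg L_n\amalg\{pt\}$, that is, to the disjoint union of $\A^2$, one point and $n-r\ge1$ copies of $\A^1$.

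The step I expect to be the main obstacle is the bookkeeping in the application of Proposition~\ref{prop:ComplementsCurves}: one has to choose $F$ inside the model $W$ with exactly the right piecewise-isomorphism type and verify that it is genuinely locally closed there, and one has to treat carefully the two standard inputs used above, namely that $\pi$ is an isomorphism over the complement of $s$ distinct points and the Picard-rank inequality $n-r\ge1$.
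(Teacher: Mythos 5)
Your proposal is correct and follows essentially the same route as the paper's proof: decompose $Y$ as $(X\setminus E)$ plus the $s$ image points, use Lemma~\ref{Lem:SMoothRatDecomposition} for $X$ and Lemma~\ref{Lem:UnionTree} for $E$, cancel $E$ via Proposition~\ref{prop:ComplementsCurves}, and get $n-r\ge 1$ from negative definiteness of the exceptional intersection form plus one extra class not in its span. The only (harmless) variations are bookkeeping in the choice of $F$ (you remove $s$ points of $\A^2$ and keep the model's extra point, the paper removes the extra point and $s-1$ points) and the extra class for the rank bound (you use $\pi^\ast$ of an ample divisor with the projection formula, the paper uses the pull-back of a curve avoiding the singular images).
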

\begin{proof}
If $Y$ is smooth, then the result, with $r=0$, directly follows from Lemma~\ref{Lem:SMoothRatDecomposition}. 
We then assume that $Y$ is singular. Let $E\subseteq X$ be the exceptional divisor of the desingularisation $X\to Y$,
 let $r$ be the number of irreducible components of $E$ and 
let $s$ be the number of connected components of $E$.
Then $s$ is equal to the number of points of the image of  $E$ under $X \to Y$,
since all fibres of $X \to Y$ are connected by the normality of $Y$, 
see Zariski's main theorem, \cite[Ch.~III, Corollary~11.4]{Ha1977Algebraic-geometry}.
Moreover, $r,s \geq 1$, since otherwise every fibre of the proper birational morphism $X \to Y$ onto the normal variety $Y$ 
would be a single point, i.e.~$X \to Y$ would be an isomorphism, see
\cite[Corollaire~4.4.9]{Gr1961Elements-de-geomet-III}.

We observe that $n-r\ge 1$. Indeed, the $r$ irreducible components $E_1, \ldots, E_r$ of $E$, together with the pull-back $P$ of an irreducible curve of $Y$ not passing through the image of $E$ under $Y \to X$ 
are $r+1$ effective divisors on $X$, that are linearly independent: 
Assume $D \coloneqq aP + \sum_{i=1}^r a_i E_i = 0$ in $\Pic(X)$ for some
$a, a_1, \ldots, a_r \in \ZZ$. Then $0 = L \cdot D = a L^2$, which shows that $a = 0$. Since the intersection matrix
$(E_i \cdot E_j)$ is negative definite (see \cite[\S2, (2.1)]{Ar1962Some-numerical-cri}), we get $a_1 = \ldots = a_r = 0$.

By Lemma~\ref{Lem:UnionTree}\ref{UnionTree1}, $E$ is piecewise isomorphic to the disjoint union of $r$ copies of $\A^1$ and $s$ points.
By Lemma~\ref{Lem:SMoothRatDecomposition}, $X$ is piecewise isomorphic to the disjoint union of $\A^2$ with one point and $n$ copies of $\A^1$.

Hence, Proposition~\ref{prop:ComplementsCurves} shows that $X\setminus E$ is piecewise isomorphic to the disjoint union of $\A^2\setminus \Delta$ with $n-r$ copies of $\A^1$, where $\Delta$ is a finite set of $s-1$ points.

Let $S$ be the image of $E$ under $X \to Y$.
As $Y\setminus S$ is isomorphic to $X\setminus E$ and $S$ contains $s$ points, this proves that 
$Y$ is piecewise isomorphic to the disjoint union of $\A^2$ with one point and $n-r$ copies of $\A^1$.
\end{proof}

\begin{example}\label{ExampleQuartics}
Let $f,g\in \k[x,y,z] \setminus \{0\}$ 
be homogeneous polynomials of degree $3$ and $4$, respectively, without common factor. 
Then, $V_{\PP^2}(f,g)$ consists of $r$ points, with $1\le r\le 12$.
We define $X\subseteq \p^3$ to be the quartic rational surface given by 
\[X=\{[w:x:y:z]\in \p^3\mid wf(x,y,z)=g(x,y,z)\},\]
and consider the open subset $X_f\subseteq X$ where $f$ is non-zero and its complement 
$F \coloneqq X \setminus X_f = V_{\PP^3}(f, g)$
The hypersurface $X$ is normal, since the singular locus is finite, 
see e.g.~\cite[Ch.~II, Proposition~8.23]{Ha1977Algebraic-geometry}.
We have an isomorphism 
\[
	X_f\iso \p^2_f \, , \quad [w:x:y:z]\mapsto [x:y:z] \, ,
\] 
and $F$ is the union of $r$ lines through $q=[1:0:0:0]$. Hence, $X$ is piecewise isomorphic to the disjoint union of $\p^2_f$ with one point and $r$ copies of $\A^1$. We now distinguish different cases for the zero locus 
$\Gamma \coloneqq V_{\PP^2}(f) \subseteq \PP^2$.

If $\Gamma$ consists of one, two or three lines meeting in one point, then $X$ is piecewise isomorphic to the disjoint union of $\A^2$ with one point and $s$ copies of $\A^1$, for some $s\le r$, similarly as in Proposition~\ref{prop:NormalTrees}. The same holds if $\Gamma$ is the union of a conic and a line, meeting at only one point.

If $\Gamma$ consists of three general lines, then $X$ is piecewise isomorphic to the disjoint union of $\A^2$ with \emph{two} points and $r-2$ copies of $\A^1$. It is then \emph{not} piecewise isomorphic to a smooth
projective rational surface. Indeed, this follows from Lemma~\ref{Lem.Example_piecewise_isom}, since 
a smooth projective rational surface is piecewise isomorphic to a disjoint union of $\AA^2$, 
\emph{one} point and some copies of $\AA^1$, see~Lemma~\ref{Lem:SMoothRatDecomposition}.
A similar result holds if $\Gamma$ is the union of a conic and a line meeting at two points.

If $\Gamma$ is a cuspidal cubic curve, it is piecewise isomorphic to a line, so $\p^2_f$ is piecewise isomorphic to $\A^2$ (Proposition~\ref{prop:ComplementsCurves}). Hence, this case is similar to the one of a line. If $\Gamma$ is a nodal cubic, it is piecewise isomorphic to $\A^1\setminus \{0\}$ and thus $X$ is  piecewise isomorphic to the disjoint union of $\A^2$ with \emph{two} points and $r$ copies of $\A^1$, and thus $X$ is again not piecewise isomorphic to a 
smooth rational projective surface.

If $\Gamma$ is a smooth irreducible cubic, this one is not rational.
Let $\PP^1 \subseteq \PP^2$ be a line that intersects $\Gamma$ in exactly one point $p \in \PP^2$
and let $\Gamma_0 \coloneqq \Gamma \setminus p$.
As $X$ is piecewise isomorphic to $\AA^2 \setminus \Gamma_0 \amalg \{pt\} \amalg \coprod_{i=1}^{r+1} \AA^1$,
it follows that $X$ is not piecewise isomorphic to a smooth rational projective surface $Y$, since such a $Y$
is piecewise isomorphic to 
$\AA^2 \setminus \Gamma_0 \amalg \{pt\} \amalg \Gamma_0 \amalg \coprod_{i=1}^s \AA^1$ for some $s \geq 1$, see Lemma~\ref{Lem.Example_piecewise_isom}.
\end{example}

\begin{example} In Example~\ref{ExampleQuartics}, we may choose $f$ to obtain a singular normal rational quartic $X\subseteq \p^3$ which is not piecewise isomorphic to a smooth rational projective surface, and thus which does not admit a desingularisation by trees of smooth rational curves (follows from Proposition~\ref{prop:NormalTrees}).

We now choose $f=xy(x+y)$ and $g\in \k[x,y,z]_4$ general, and check that this gives  an example of a normal rational quartic
\[
	X=\{[w:x:y:z]\in \p^3\mid wf(x,y,z)=g(x,y,z)\} \, ,
\]
which admits a desingularisation by trees of smooth rational curves, but that does not have Du Val singularities. For this, we consider the birational map 
\[
	\psi\colon \p^2\dasharrow X \, , [x:y:z]\mapsto [g(x,y,z):xf(x,y,z):yf(x,y,z):zf(x,y,z)] \, ,
\] 
and observe that it has exactly twelve base-points, being $V_{\p^2}(f,g)$. Denoting by $\eta\colon Y\to \p^2$ the blow-up of the twelve points, we obtain a birational morphism $\psi\circ \eta\colon Y\to X$, that contracts the strict transforms of the three lines defined by $V_{\p^2}(f)$. These curves are smooth rational curves on $Y$ with self-intersection $-3$, intersecting into a common point. This tree of smooth rational curves is contracted to the singular point of multiplicity three of~$X$.
\end{example}
\section{Partial  answers in low degree}

In this chapter, we give for certain cases affirmative answers to Questions~\ref{Quest2} and~\ref{Quest3}.
Moreover, over an algebraically closed field, 
we show for any irreducible hypersurface $H \subseteq \PP^n$ of degree $d$ 
that there exists an element of $\Aut(\PP^3 \setminus H)$ that does not extend to 
an automorphism of $\PP^3$
in case $d = 2$ or $n=d=3$ and $H$ is singular. 
This gives thus a negative answer to Question~\ref{Quest1} in the above mentioned cases.

\subsection{Degree two}
\label{Subsec.Degree2}
The goal of this subsection is to prove Theorem~\ref{Thm.Degree2}, which gives an
affirmative answer to Question~\ref{Quest2} (and thus~\ref{Quest3}) for
irreducible quadric hypersurfaces in $\PP^n_{\kk}$ over an algebraically closed field $\kk$.
For doing this, we count the $\FF_q$-rational points, where $\FF_q$ denotes the finite field with $q$
elements.

We start with a lemma that is certainly well-known to the specialists. For lack of a reference, we insert a proof.

\begin{lemma}
	\label{Lem:Induced_Isom}
	Let $\ZZ \subseteq A$, $\ZZ \subseteq B$ be finitely generated ring extensions such that $A$, $B$ 
	are integral domains.
	If there exists a field $K$ and a $K$-isomorphism between $K \otimes_{\ZZ} A$ and $K \otimes_{\ZZ} B$, then
	$\FF_{q} \otimes_{\ZZ} A$ and $\FF_{q} \otimes_{\ZZ} B$ are $\FF_{q}$-isomorphic
	for some prime power $q$.
\end{lemma}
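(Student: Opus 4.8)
The plan is to use a standard ``spreading out'' argument. Suppose $\varphi \colon K \otimes_{\ZZ} A \iso K \otimes_{\ZZ} B$ is a $K$-algebra isomorphism with inverse $\psi$. First I would reduce to the case where $K$ is finitely generated over its prime field: the isomorphism $\varphi$ and its inverse $\psi$ are each determined by the images of finitely many algebra generators of $A$ and $B$, and these images involve only finitely many elements of $K$; replacing $K$ by the subfield generated by those elements over the prime field, and by $\ZZ$ if that prime field is $\QQ$, we may assume $K$ is finitely generated over $\FF_p$ or over $\QQ$.

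Next I would descend the isomorphism to a finitely generated $\ZZ$-subalgebra. Write $K = \operatorname{Frac}(R)$ for a finitely generated $\ZZ$-algebra $R$ that is a domain (e.g.~$R = \ZZ[t_1, \dots, t_m]$ localized so as to contain the finitely many denominators appearing in $\varphi$ and $\psi$, or a quotient thereof). Then $\varphi$ and $\psi$ are defined over $\operatorname{Frac}(R)$, so after inverting finitely many further elements of $R$ — enlarging $R$ accordingly — we obtain $R$-algebra homomorphisms $\varphi_R \colon R \otimes_{\ZZ} A \to R \otimes_{\ZZ} B$ and $\psi_R$ in the other direction whose base change to $K$ recovers $\varphi$, $\psi$. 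The identities $\psi \circ \varphi = \operatorname{id}$ and $\varphi \circ \psi = \operatorname{id}$ involve finitely many polynomial relations over $K$; clearing denominators once more and enlarging $R$, these relations hold already over $R$, so $\varphi_R$ is an $R$-algebra isomorphism with inverse $\psi_R$.

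Finally I would specialize at a closed point. Since $R$ is a nonzero finitely generated $\ZZ$-algebra, it has a maximal ideal $\mm$, and by the Nullstellensatz over $\ZZ$ (the ``arithmetic Nullstellensatz'') the residue field $R/\mm$ is a finite field $\FF_q$ for some prime power $q$. Base-changing the isomorphism $\varphi_R$ along $R \to R/\mm = \FF_q$ gives an $\FF_q$-algebra isomorphism
\[
	\FF_q \otimes_R (R \otimes_{\ZZ} A) \iso \FF_q \otimes_R (R \otimes_{\ZZ} B) \, ,
\]
and since $\FF_q \otimes_R (R \otimes_{\ZZ} A) \cong \FF_q \otimes_{\ZZ} A$ canonically, and likewise for $B$, this is the desired $\FF_q$-isomorphism between $\FF_q \otimes_{\ZZ} A$ and $\FF_q \otimes_{\ZZ} B$.

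The only genuinely delicate point is the bookkeeping in the second paragraph: one must be careful that finitely many localizations of $R$ suffice to make \emph{all} of the data (the two homomorphisms and both composition identities) descend simultaneously, and that enlarging $R$ by inverting elements does not destroy the property that $R$ has a maximal ideal with finite residue field — which it does not, since any nonzero finitely generated $\ZZ$-algebra retains this property after localization. Everything else is routine.
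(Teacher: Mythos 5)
Your proposal is correct and follows essentially the same route as the paper: spread the isomorphism out over a finitely generated $\ZZ$-subalgebra $R \subseteq K$ containing the finitely many elements of $K$ needed to define $\varphi$ and $\varphi^{-1}$, then reduce modulo a maximal ideal $\mm \subseteq R$, whose residue field is finite. The only cosmetic differences are that the paper obtains the $R$-isomorphism directly by restricting $\varphi$ (using flatness of $A$, $B$ over $\ZZ$ to view $R \otimes_{\ZZ} A$ inside $K \otimes_{\ZZ} A$), and it proves the finiteness of $R/\mm$ via Noether normalization and the Artin--Tate lemma rather than citing the arithmetic Nullstellensatz.
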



\begin{proof}
	As $\ZZ$ is a principal ideal domain, it is a regular ring.
	Using that $A$, $B$ are integral domains, we get that $A$, $B$ are flat $\ZZ$-modules; see \cite[Proposition~9.7, Ch.~III]{Ha1977Algebraic-geometry}. Hence, every ring extension $R_1 \subseteq R_2$
	induces injections
	\[
	R_1 \otimes_{\ZZ} A \subseteq R_2 \otimes_{\ZZ} A \quad
	\textrm{and} \quad 
	R_1 \otimes_{\ZZ} B \subseteq R_2 \otimes_{\ZZ} B  \, .
	\]
	
	Let $a_1, \ldots, a_n \in A$ with $A = \ZZ[a_1, \ldots, a_n]$ and
	$b_1, \ldots, b_m \in B$ with $B = \ZZ[b_1, \ldots, b_m]$. Denote by 
	$\varphi \colon K \otimes_{\ZZ} A \xlongrightarrow{\sim} K \otimes_{\ZZ} B$ a $K$-isomorphism.
	There exist
	finitely many $c_1, \ldots, c_r \in K$ such that the $\ZZ$-algebra $R$ spanned by
	$c_1, \ldots, c_r$ inside $K$ satisfies $\varphi(a_i) \in  R \otimes_{\ZZ} B$ 
	for all $1 \leq i \leq n$ and $\varphi^{-1}(b_j) \in  R \otimes_{\ZZ} A$ for all $1 \leq j \leq m$.
	In particular, $\varphi$ restricts to an $R$-isomorphism $R \otimes_{\ZZ} A \simeq R \otimes_{\ZZ} B$.
	We may assume that $R$ is non-zero and thus contains a maximal ideal $\mm$. Then we get an $R/\mm$-isomorphism
	\[
	(R/\mm) \otimes_{\ZZ} A \xlongrightarrow{\sim} (R/\mm) \otimes_{\ZZ} B \, .
	\]
	Denote by $F$ the prime field of $R/\mm$, i.e.~the 
	quotient field of the image of the unique ring homomorphism $\ZZ \to R/\mm$. 
	As $R$ is finitely generated as a $\ZZ$-algebra,
	$R/\mm$ is finitely generated as  an $F$-algebra 
	and by Noether's normalization theorem, the field extension $F \subseteq R /\mm$ is finite. By 
	the Artin-Tate lemma (see \cite[Proposition~7.8]{AtMa1969Introduction-to-co})
	it follows that $F$ is a finitely generated $\ZZ$-algebra. Hence, $F$ cannot be isomorphic to $\QQ$
	and thus $F = \FF_p$ where $p = \car(R/\mm)$. As the field extension $F \subseteq R/\mm$ is finite, 
	$R/\mm = \FF_{p^r}$ for some $r \geq 1$.
\end{proof}

\begin{corollary}
	\label{Cor:Same_number_of_points}
	Let $X, Y$ be affine integral schemes of finite type over $\ZZ$
	such that $X \to \Spec(\ZZ)$, $Y \to \Spec(\ZZ)$ are dominant.
	If the pull-backs $X_K$, $Y_K$ are $K$-isomorphic for some field $K$, then
	there exists a prime power $q$ and a bijection
	between the $\FF_{q}$-rational points $X(\FF_{q}) \to Y(\FF_{q})$
	that maps the regular $\FF_{q}$-rational points of $X$ bijectively
	onto the regular $\FF_{q}$-rational points of $Y$.
\end{corollary}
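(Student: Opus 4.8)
The plan is to turn the hypothesis into a genuine isomorphism of schemes over a finite field by invoking Lemma~\ref{Lem:Induced_Isom}, and then to count points. First I would write $X=\Spec A$ and $Y=\Spec B$ with $A,B$ finitely generated $\ZZ$-algebras which are integral domains; dominance of $X\to\Spec\ZZ$ means exactly that $\ZZ\to A$ is injective, so $\ZZ\subseteq A$, and likewise $\ZZ\subseteq B$. Since a $K$-isomorphism $X_K\iso Y_K$ is the same as a $K$-algebra isomorphism $K\otimes_\ZZ A\iso K\otimes_\ZZ B$, Lemma~\ref{Lem:Induced_Isom} supplies a prime power $q$ and an $\FF_q$-algebra isomorphism $\FF_q\otimes_\ZZ A\iso\FF_q\otimes_\ZZ B$, i.e.\ an isomorphism of $\FF_q$-schemes $\Theta\colon X_{\FF_q}\iso Y_{\FF_q}$.

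Next I would identify $X(\FF_q)$ with $X_{\FF_q}(\FF_q)$: as $\ZZ$ is the initial ring, every ring homomorphism $A\to\FF_q$ is automatically $\ZZ$-linear, and by adjunction $X(\FF_q)=\mathrm{Hom}_{\ZZ}(A,\FF_q)=\mathrm{Hom}_{\FF_q}(\FF_q\otimes_\ZZ A,\FF_q)=X_{\FF_q}(\FF_q)$, and similarly for $Y$. Hence $\Theta$ yields a bijection $X(\FF_q)=X_{\FF_q}(\FF_q)\iso Y_{\FF_q}(\FF_q)=Y(\FF_q)$, and since any isomorphism of schemes takes regular points to regular points, $\Theta$ restricts to a bijection between the regular $\FF_q$-points of $X_{\FF_q}$ and those of $Y_{\FF_q}$.

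What remains, and what I expect to be the only real obstacle, is the discrepancy between regularity in the arithmetic scheme $X$ and regularity in its special fibre $X_{\FF_q}$. To deal with it I would observe that $X$, being of finite type over the excellent ring $\ZZ$, has open regular locus $X_{\reg}$, dense because $X$ is integral and flat over $\ZZ$ because $A$ is torsion-free over the principal ideal domain $\ZZ$; the generic fibre of $X_{\reg}\to\Spec\ZZ$ is regular and hence smooth over the perfect field $\QQ$, so the non-smooth locus of $X_{\reg}\to\Spec\ZZ$ is closed, avoids the generic fibre, and thus has finite image in $\Spec\ZZ$. Away from these finitely many primes $X_{\reg}$ is smooth over $\FF_p$, and since $\ZZ_{(p)}\to\mathcal O_{X,x}$ is flat with regular base, regularity of the closed fibre $\mathcal O_{X_{\FF_p},x}$ implies regularity of $\mathcal O_{X,x}$; together these give $(X_{\reg})_{\FF_p}=(X_{\FF_p})_{\reg}$. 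Carrying this out for $Y$ as well, and inspecting the proof of Lemma~\ref{Lem:Induced_Isom} --- where $\FF_q$ appears as the residue field of a maximal ideal of a finitely generated $\ZZ$-subalgebra $R\subseteq K$, which may be replaced by any localization $R[1/N]$ --- one arranges that $\car(\FF_q)$ avoids all these bad primes. For such $q$, using that $X_{\FF_q}\to X_{\FF_p}$ is \'etale, the regular $\FF_q$-points of $X$ are precisely the points of $X_{\FF_q}(\FF_q)$ lying in the regular locus of $X_{\FF_q}$, and likewise for $Y$; the bijection of the second paragraph then restricts as required. (If ``regular $\FF_q$-rational point of $X$'' is instead read as ``point of $X(\FF_q)$ lying in the regular locus of $X_{\FF_q}$'', the first two paragraphs already constitute a complete proof.)
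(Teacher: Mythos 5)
Your first two paragraphs are precisely the paper's proof: the corollary is derived there as a direct consequence of Lemma~\ref{Lem:Induced_Isom}, and since the paper's convention, stated immediately after the corollary, declares an $\FF_q$-rational point regular via the Jacobian criterion on the fibre (i.e.\ regularity in $X_{\FF_q}$), your closing parenthetical applies and the third paragraph is superfluous. That extra paragraph would in any case need care when $\car(K)=p>0$ and $p$ is one of your bad primes, since then every residue field arising in the proof of Lemma~\ref{Lem:Induced_Isom} has characteristic $p$ and cannot be avoided by inverting an integer; but as this concerns only a reading the paper does not use, the proposal is correct and takes essentially the same approach.
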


\begin{proof}
	This is a direct consequence of Lemma~\ref{Lem:Induced_Isom}.
\end{proof}

In the following lemmas, we denote by $Z(\FF_q)$ ($Z(\FF_q)_{\reg}, Z(\FF_q)_{\sing}$) the set of (regular, singular) 
$\FF_q$-rational points of a scheme $Z$ that is defined over $\ZZ$. Note that $Z(\FF_q)_{\reg} = Z(\FF_q) \setminus Z(\FF_q)_{\sing}$.

In case $Z$ is a hypersurface in $\AA^{n+1}_{\ZZ}$ given by a polynomial $f \in \ZZ[x_0, \ldots, x_n]$,
the set $Z(\FF_q)$ consists of those points in $(a_0, \ldots, a_n) \in \FF_q^{n+1}$ that satisfy
$f(a_0, $\ldots$, a_n) = 0$ and $(a_0, \ldots, a_n) \in Z(\FF_q)$ is regular, if
not all the partial derivatives 
$\frac{\partial f}{\partial x_0}, \ldots, \frac{\partial f}{\partial x_n} \in \ZZ[x_0, \ldots, x_{n}]$ vanish at $(a_0, \ldots, a_n)$.

\begin{lemma}
	\label{Lem:Number_of_F_q-points_1}
	For all integers $m,n$ with $1\le 2m-1\le n$, let $X_{m, n}$ be the hypersurface in $\AA^{n+1}_{\ZZ}$ given by 
	\[
	\sum_{i=0}^{m-1} x_{2i} x_{2i+1}  = 1.
	\]
	Then for any prime power $q$ we get
	\[
	\# X_{m, n}(\FF_q) = \# X_{m, n}(\FF_q)_{\reg} = q^{n-m}(q^m-1) \, .
	\] \end{lemma}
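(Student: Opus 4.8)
The plan is to prove the two claimed equalities in turn: first the regularity statement $\#X_{m,n}(\FF_q)=\#X_{m,n}(\FF_q)_{\reg}$, which is immediate from the Jacobian criterion, and then the count $\#X_{m,n}(\FF_q)=q^{n-m}(q^m-1)$, which I would obtain by a short induction on $m$. For the regularity I would simply write $f=\sum_{i=0}^{m-1}x_{2i}x_{2i+1}-1$ and note that $\partial f/\partial x_{2i}=x_{2i+1}$, $\partial f/\partial x_{2i+1}=x_{2i}$ for $0\le i\le m-1$, while $\partial f/\partial x_j=0$ for $2m\le j\le n$. If all partials vanished at a point of $X_{m,n}$, then $x_0=\dots=x_{2m-1}=0$ there, forcing $f=-1\neq 0$, a contradiction; hence no point of $X_{m,n}$ — in particular no $\FF_q$-rational point — is singular. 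This computation is valid in every characteristic, including characteristic $2$, so it costs nothing extra.

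For the count I would induct on $m$, proving the formula for all admissible $n$ simultaneously, and partition $X_{m,n}(\FF_q)$ according to whether the coordinate $x_0$ vanishes. Among the solutions with $x_0\neq 0$, the coordinate $x_1$ is determined uniquely by the remaining $n-1$ coordinates, which can be chosen freely; this gives $(q-1)\,q^{\,n-1}$ solutions. Among the solutions with $x_0=0$, the coordinate $x_1$ is free and the equation reduces to $\sum_{i=1}^{m-1}x_{2i}x_{2i+1}=1$ in the $n-1$ variables $x_1,\dots,x_n$. For $m\ge 2$ one has $1\le 2(m-1)-1\le n-2$ (this follows from $1\le 2m-1\le n$), so the induction hypothesis applied to $X_{m-1,n-2}$, viewed in the variables $x_2,\dots,x_n$, yields $q\cdot q^{\,(n-2)-(m-1)}(q^{m-1}-1)=q^{\,n-m}(q^{m-1}-1)$ such solutions; for $m=1$ the equation becomes $0=1$ and there are none. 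Summing the two contributions gives $\#X_{m,n}(\FF_q)=(q-1)q^{\,n-1}+q^{\,n-m}(q^{m-1}-1)=q^n-q^{\,n-m}=q^{\,n-m}(q^m-1)$, which is also the correct value $q^{\,n-1}(q-1)$ in the base case $m=1$ where the second summand is absent. Combined with the first paragraph this proves both equalities.

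There is no genuine obstacle here; the only points that need a little care are verifying that the admissibility condition $1\le 2m-1\le n$ is inherited by $X_{m-1,n-2}$ so the induction hypothesis really applies, handling the degenerate base case $m=1$ of the stratification cleanly, and making sure the regularity computation is stated so that it is manifestly characteristic-independent. An alternative to the inductive count would be to quote the classical formula for the number of representations of a nonzero scalar by a split (hyperbolic) quadratic form in $2m$ variables over $\FF_q$, but the self-contained stratification above seems both shorter and more transparent in the present context.
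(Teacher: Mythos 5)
Your proof is correct and follows essentially the same route as the paper: verify smoothness (the paper asserts smoothness over $\Spec(\ZZ)$, you make the Jacobian computation explicit) and then count by induction on $m$, stratifying by the vanishing of one coordinate of the quadratic form. The only cosmetic difference is that you stratify by $x_0$ instead of $x_{2m-1}$, so your zero stratum is $\AA^1\times X_{m-1,n-2}$ rather than $X_{m-1,n-1}$; the recursion and the resulting count are the same.
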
\begin{proof}
	As $X_{m,n}$ is smooth over $\Spec(\ZZ)$, $X_{m, n}(\FF_q)_{\reg} = X_{m, n}(\FF_q)$.
	We proceed now by induction on $m \geq 1$. For any $n \geq 1$, we get
	$\# X_{1, n}(\FF_q) = q^{n-1} (q-1)$, as  $X_{1, n}$ and $(\AA_{\ZZ}^1 \setminus \{0\}) \times \AA_{\ZZ}^{n-1}$
	are isomorphic over $\ZZ$. Moreover, for $m > 1$:
	\[
	X_{m, n}(\FF_q) = V_{X_{m, n}}(x_{2m-1})(\FF_q) \coprod (X_{m, n})_{x_{2m-1}}(\FF_q) \, .
	\]
	As $(X_{m, n})_{x_{2m-1}}$ and $(\AA_{\ZZ}^1 \setminus \{0\}) \times \AA_{\ZZ}^{n-1}$ are isomorphic over $\ZZ$,
	it follows that $\# (X_{m, n})_{x_{2m-1}}(\FF_q) = q^{n-1}(q-1)$. As $V_{X_{m, n}}(x_{2m-1})$ and 
	$X_{m-1, n-1}$ are isomorphic over $\ZZ$, we get by induction
	\begin{eqnarray*}
		\# X_{m, n}(\FF_q) &=& \# X_{m-1, n-1}(\FF_q) + q^{n-1}(q-1) \\
		&=& q^{n-m}(q^{m-1}-1) + q^{n-1}(q-1) = q^{n-m}(q^m-1) \, . \qedhere
	\end{eqnarray*}
\end{proof}

\begin{lemma}
	\label{Lem:Number_of_F_q-points_2}
	For all integers with $0\le 2m\le n$,
	let $Y_{m, n}$ be the hypersurface in $\AA^{n+1}_{\ZZ}$ that is given by 
	\[
	\left(\sum_{i=0}^{m-1} x_{2i} x_{2i+1} \right) + x_{2m}^2 = 1.
	\]
	Then for every \emph{odd} prime power $q$ we get
	\[
	\# Y_{m, n}(\FF_q) = \# Y_{m, n}(\FF_q)_{\reg} = q^{n-m}(q^m+1) \, ,
	\] 
	and for every \emph{even} prime power $q$, we have
	\[
	\# (Y_{m, n})(\FF_{q}) = q^n \quad
	\textrm{and} \quad \# (Y_{m, n})(\FF_{q})_{\reg} = q^{n-2m}(q^{2m}-1)  \, .
	\]
\end{lemma}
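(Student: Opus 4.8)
The plan is to strip off the variables $x_{2m+1},\dots,x_n$, which do not occur in the defining equation, reduce everything to a count of solutions of a quadratic equation in $x_0,\dots,x_{2m}$, and then split into the cases $q$ odd and $q$ even, using Lemma~\ref{Lem:Number_of_F_q-points_1} to handle the split part $\sum x_{2i}x_{2i+1}$ of the form.

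First I would record that $\#Y_{m,n}(\FF_q)=q^{n-2m}N_m$, where $N_m:=\#\{(x_0,\dots,x_{2m})\in\FF_q^{2m+1}\mid \sum_{i=0}^{m-1}x_{2i}x_{2i+1}+x_{2m}^2=1\}$, and likewise on regular loci since the partial derivatives with respect to $x_{2m+1},\dots,x_n$ vanish identically. For $c\in\FF_q$ set $h_m(c):=\#\{(x_0,\dots,x_{2m-1})\mid \sum_{i=0}^{m-1}x_{2i}x_{2i+1}=c\}$. For $m\ge 1$ the substitution $x_0\mapsto c^{-1}x_0$ shows $h_m(c)$ is independent of $c\ne 0$; taking $n=2m-1$ in Lemma~\ref{Lem:Number_of_F_q-points_1} gives $h_m(1)=q^{m-1}(q^m-1)$, and the identity $q^{2m}=h_m(0)+(q-1)h_m(1)$ then forces $h_m(0)=q^{2m-1}+q^m-q^{m-1}$. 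The degenerate case $m=0$ (where $Y_{0,n}$ is $x_0^2=1$) I would simply dispatch by hand; one checks the stated formulas still hold, the split-form values being $h_0(0)=1$, $h_0(c)=0$ for $c\ne 0$.

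For $q$ odd I would write $N_m=\sum_{t\in\FF_q}h_m(1-t^2)$; exactly the two values $t=\pm 1$ give argument $0$ and the other $q-2$ give a nonzero argument, so $N_m=2h_m(0)+(q-2)h_m(1)=q^{2m}+q^m$, giving $\#Y_{m,n}(\FF_q)=q^{n-m}(q^m+1)$. For regularity, the partials of the defining polynomial are $x_{2i+1},x_{2i}$ $(i<m)$ and $2x_{2m}$ and $0$; these vanish simultaneously only at the origin, where the equation reads $-1=0$, so no $\FF_q$-point of $Y_{m,n}$ is singular and $\#Y_{m,n}(\FF_q)_{\reg}=\#Y_{m,n}(\FF_q)$. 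For $q$ even, Frobenius $t\mapsto t^2$ is a bijection of $\FF_q$, so each choice of $x_0,\dots,x_{2m-1}$ determines a unique $x_{2m}$ solving the equation; hence $N_m=q^{2m}$ and $\#Y_{m,n}(\FF_q)=q^n$. In characteristic $2$ the $x_{2m}$-partial is identically zero, so an $\FF_q$-point is singular precisely when $x_0=\dots=x_{2m-1}=0$, which on $Y_{m,n}$ forces $x_{2m}^2=1$, i.e.\ $x_{2m}=1$, with $x_{2m+1},\dots,x_n$ free; this yields $q^{n-2m}$ singular points, so $\#Y_{m,n}(\FF_q)_{\reg}=q^n-q^{n-2m}=q^{n-2m}(q^{2m}-1)$.

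The only step requiring any care is the bookkeeping of the split-form counts $h_m(c)$, and even that is routine once Lemma~\ref{Lem:Number_of_F_q-points_1} is in hand; the genuine content is entirely the parity dichotomy for $x_{2m}^2$ (the number of square roots of $1-S$, and the vanishing of the $x_{2m}$-partial in characteristic $2$), which is the point I would state most explicitly.
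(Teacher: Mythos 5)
Your argument is correct in substance and reaches all four counts, but the odd-$q$ part follows a genuinely different route from the paper. The paper proves the odd case by induction on $m$, exactly mimicking Lemma~\ref{Lem:Number_of_F_q-points_1}: it splits $Y_{m,n}(\FF_q)$ into the locus $x_{2m-1}=0$ (isomorphic over $\ZZ$ to $Y_{m-1,n-1}$) and the locus $x_{2m-1}\neq 0$ (a product $(\AA^1\setminus\{0\})\times\AA^{n-1}$), with base case $Y_{0,n}$ given by $x_0^2=1$; regularity for odd $q$ is disposed of by noting $Y_{m,n}$ is smooth over $\Spec(\ZZ)\setminus\{(2)\}$. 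You instead fibre over the value of the split form, compute the level-set counts $h_m(c)$ from Lemma~\ref{Lem:Number_of_F_q-points_1} together with the total-count identity $q^{2m}=h_m(0)+(q-1)h_m(1)$, and then sum $h_m(1-t^2)$ over $t$. Your version is non-inductive and makes the parity dichotomy for the number of square roots of $1-S$ the visible mechanism, at the cost of some bookkeeping; the paper's induction is shorter and parallels the previous lemma. For even $q$ the two arguments coincide: Frobenius bijectivity gives $\# Y_{m,n}(\FF_q)=q^n$, and the vanishing of the $x_{2m}$-partial identifies the singular $\FF_q$-points as $(0,\ldots,0,1,x_{2m+1},\ldots,x_n)$.

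One small repair: your justification that $h_m(c)$ is independent of $c\neq 0$ via the substitution $x_0\mapsto c^{-1}x_0$ only works when $m=1$, since rescaling a single variable multiplies only the first summand of $\sum_{i=0}^{m-1}x_{2i}x_{2i+1}$ by $c$. Rescale instead all even-indexed variables, $(x_0,x_1,x_2,x_3,\ldots)\mapsto(cx_0,x_1,cx_2,x_3,\ldots)$, which multiplies the whole form by $c$ and gives the desired bijection between the level sets $\{S=1\}$ and $\{S=c\}$; with that fix the rest of your computation ($h_m(1)=q^{m-1}(q^m-1)$, $h_m(0)=q^{2m-1}+q^m-q^{m-1}$, $N_m=2h_m(0)+(q-2)h_m(1)=q^{2m}+q^m$) is correct, as is your separate treatment of $m=0$.
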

\begin{proof}
	Suppose first that $q$ is odd. Here, $Y_{m,n}$ is smooth over $\Spec(\ZZ) \setminus \{(2)\}$, 
	so  $Y_{m, n}(\FF_q)_{\reg} = Y_{m, n}(\FF_q)$.
	We proceed by induction on $m \geq 0$. For any $n \geq 1$, we get $\# Y_{0, n} = 2 q^{n}$,
	as $Y_{0, n}$ is given by $x_0^2-1$ in $\AA^{n+1}_{\ZZ}$ over $\ZZ$. Assume $m > 0$.
	As in Lemma~\ref{Lem:Number_of_F_q-points_1}, we count the points where $x_{2m-1}=0$ and the points where $x_{2m-1}\neq 0$, and obtain:
	\begin{eqnarray*}
		\# Y_{m, n}(\FF_q) &=& \# Y_{m-1, n-1}(\FF_q) + q^{n-1}(q-1) \\
		&=& q^{n-m}(q^{m-1}+1) + q^{n-1}(q-1) = q^{n-m}(q^m + 1) \, .
	\end{eqnarray*}	
	
	Suppose now that $q$ is a power of $2$. As $\FF_{q}\to \FF_q, t\to t^2$ is bijective, the projection $Y_{m,n}\to \AA_{\ZZ}^n$, $(x_0, \ldots, x_n) \mapsto (x_0, \ldots, x_{2m-1}, x_{2m+1}, \ldots, x_n)$ induces a bijection $Y_{m,n}(\FF_q)\iso \AA_{\ZZ}^n(\FF_q)$, 
	which yields $\# (Y_{m, n})(\FF_{q}) = q^n $.
	The set of singular $\FF_q$-rational points of $Y_{m, n}$ is equal to
	\[
	(Y_{m, n})(\FF_{q})_{\sing} = \set{ (0, \ldots, 0, 1, x_{2m+1}, \ldots, x_{n} ) }{
		x_{2m+1}, \ldots, x_{n} \in \FF_{q} }
	\] 
	and hence $(Y_{m, n})(\FF_{q})_{\sing} = q^{n-2m}$. This gives the result.
\end{proof}
\begin{proposition}\label{prop:QuadricProj}
	If $\k$ is algebraically closed, every 
	irreducible quadric hypersurface in $\PP^n$ is given, under a suitable change of coordinates, by
	\[
	\sum_{i=0}^{m-1} x_{2i} x_{2i+1} \quad \textrm{or} \quad
	\left( \sum_{i=0}^{m-1} x_{2i} x_{2i+1} \right) + x_{2m}^2 \, ,
	\]
	where $1 < 2m-1 \leq n$ in the first case and $0 < 2m \leq n$ in the second case.
\end{proposition}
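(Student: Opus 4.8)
The plan is to reduce the statement to the classification of quadratic forms over $\kk$ up to linear equivalence, and then to pick out the irreducible ones. An irreducible quadric hypersurface $H\subseteq\PP^n$ is the zero locus of an irreducible homogeneous polynomial $q\in\kk[x_0,\dots,x_n]$ of degree $2$, which we view as a quadratic form on $V\coloneqq\kk^{n+1}$; a change of coordinates on $\PP^n$ is precisely the action of $\GL_{n+1}(\kk)$ on such forms. Thus it suffices to exhibit the two displayed polynomials as representatives of the $\GL_{n+1}(\kk)$-orbits, and then to determine which orbits consist of irreducible polynomials and for which $m$ they fit inside $\PP^n$.

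First I would record the following normal form for an \emph{arbitrary} quadratic form $q$ on $V$, valid over any algebraically closed field (this is classical; it is the classification of quadratic forms over a perfect field). There is an integer $r$ with $0\le r\le n+1$, the \emph{rank} of $q$, namely the codimension in $V$ of the vertex $\{v\in V : q(v+w)=q(w)\text{ for all }w\in V\}$ (a linear subspace, clearly invariant under $\GL_{n+1}(\kk)$), such that in suitable coordinates
\[
	q=\sum_{i=0}^{m-1}x_{2i}x_{2i+1}\ \ (r=2m)
	\qquad\text{or}\qquad
	q=\sum_{i=0}^{m-1}x_{2i}x_{2i+1}+x_{2m}^2\ \ (r=2m+1).
\]
If $\car(\kk)\neq 2$ this is immediate: $q$ is determined by the symmetric bilinear form $b_q(u,v)=q(u+v)-q(u)-q(v)$, which one diagonalises and rescales using $\kk=\kk^2$ to reach $q=x_0^2+\dots+x_{r-1}^2$; the substitution $x_0^2+x_1^2=(x_0+\sqrt{-1}\,x_1)(x_0-\sqrt{-1}\,x_1)$ converts each pair of squares into a hyperbolic plane, leaving at most one square. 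If $\car(\kk)=2$ one works with the \emph{alternating} form $b_q$: split $V$ as an orthogonal sum of a non-degenerate symplectic part $V'$ and the radical $W=\operatorname{rad}(b_q)$. On $V'$ the Arf invariant vanishes over an algebraically closed field, so $q|_{V'}\cong\sum x_{2i}x_{2i+1}$; on $W$ the restriction $q|_W$ is additive and Frobenius-homogeneous, so (using $\kk=\kk^2$) after a coordinate change it is either identically $0$ or a single square $x_{2m}^2$, the remaining directions of $W$ dropping out of $q$. I expect this characteristic-$2$ case to be the only genuinely technical point; the rest is bookkeeping.

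Finally I would check that $q$ is irreducible as a polynomial if and only if $r\ge 3$. Since $\deg q=2$, the polynomial $q$ is reducible (or zero) exactly when $q=\ell_1\ell_2$ for linear forms $\ell_1,\ell_2$; a change of coordinates then brings $q$ into the shape $c\,x_0^2$ (if $\ell_1,\ell_2$ are proportional) or $x_0x_1$ (otherwise), both of rank at most $2$, and conversely the rank $0,1,2$ normal forms $0$, $x_0^2$, $x_0x_1$ are all reducible or zero. Hence $q$ is irreducible precisely when $r\ge 3$. Imposing $r\ge 3$ on the first family forces $r=2m$ with $m\ge 2$, that is $1<2m-1$, while fitting $x_0,\dots,x_{2m-1}$ among $x_0,\dots,x_n$ gives $2m\le n+1$, that is $2m-1\le n$; imposing $r\ge 3$ on the second family forces $r=2m+1$ with $m\ge 1$, that is $0<2m$, while fitting $x_0,\dots,x_{2m}$ gives $2m+1\le n+1$, that is $2m\le n$. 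This is exactly the asserted list of normal forms together with the stated ranges for $m$.
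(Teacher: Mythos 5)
Your proposal is correct, and it proves the same statement the paper needs, but by a more self-contained route: the paper's entire proof is a citation of Pfister's classification of quadratic forms (\cite[Theorems~1.8 and 4.3]{Pf1995Quadratic-forms-wi}, for $\car(\kk)\neq 2$ and $\car(\kk)=2$ respectively), whereas you reprove the normal form over an algebraically closed field --- diagonalisation and pairing of squares into hyperbolic planes in characteristic $\neq 2$; splitting off the radical of the polar form, hyperbolicity of the nondegenerate symplectic part, and the one-dimensional quasilinear remainder in characteristic $2$ --- and you make explicit the bookkeeping that the paper leaves implicit, namely that irreducibility of the degree-two polynomial is equivalent to rank $\geq 3$, where the rank (codimension of the vertex) is a coordinate-free invariant valid in all characteristics; this is exactly what produces the stated ranges $1<2m-1\leq n$ and $0<2m\leq n$. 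What your version buys is independence from the cited theorems and a uniform invariant; what the paper's buys is brevity. Two phrasing points, neither a genuine gap: the parenthetical ``classification of quadratic forms over a perfect field'' overstates the generality (perfectness is not enough for these normal forms, e.g.\ $x_0^2+x_0x_1+x_1^2$ over $\FF_2$; you genuinely use algebraic closedness, as your argument does), and in characteristic $2$ the step ``the Arf invariant vanishes, so $q|_{V'}$ is hyperbolic'' is cleaner if argued directly: over an algebraically closed field every nondegenerate form in at least two variables is isotropic, so Witt decomposition splits off hyperbolic planes and the anisotropic kernel, being even-dimensional and of dimension at most one, is zero --- this avoids relying on Arf vanishing alone, which over general fields of characteristic $2$ does not suffice to force hyperbolicity.
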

\begin{proof}	This follows from \cite[Theorem 1.8]{Pf1995Quadratic-forms-wi} if $\car(\k) \neq 2$ and from
	\cite[Theorem 4.3]{Pf1995Quadratic-forms-wi} if $\car(\kk) = 2$.
\end{proof}

\begin{proof}[Proof of Theorem~\ref{Thm.Degree2}]
	Let 
	\[
	\mathcal{M} = \mathcal{M}_X \coprod \mathcal{M}_Y
	\]
	where
	\[
	\mathcal{M}_X \coloneqq \set{X_{m, n}}{1 < 2m-1 \leq n} \quad \textrm{and} \quad 
	\mathcal{M}_Y \coloneqq \set{Y_{m, n}}{0 < 2m \leq n}
	\]
	and $X_{m, n}$, $Y_{m, n}$ are defined in Lemmas~\ref{Lem:Number_of_F_q-points_1} and \ref{Lem:Number_of_F_q-points_2}, respectively. Using Proposition~\ref{Prop.cyclic_covering}, Remark~\ref{Rem.mu_equal_1} and Proposition~\ref{prop:QuadricProj},
	it is enough to show that distinct elements of $\mathcal{M}$ are non-isomorphic over $\k$. 
	By Corollary~\ref{Cor:Same_number_of_points}, an isomorphism over $\kk$ between two elements from $\mathcal{M}$ would give a bijection between the $\FF_q$-rational points and between the regular $\FF_q$-rational points, for some prime power $q$.
	
	We consider for every prime power $q$ the map
	\[
	\Phi_q \colon \mathcal{M} \to \NN_0 \, , \quad
	Z \mapsto \# Z(\FF_q)_{\reg} \, .
	\]
	By Lemmas~\ref{Lem:Number_of_F_q-points_1}-\ref{Lem:Number_of_F_q-points_2}, we get
	\[
	\Phi_q(X_{m, n}) = q^{n-m}(q^m-1) \quad \textrm{for all prime powers $q$}
	\]
	and
	\[	
	\Phi_q(Y_{m, n}) = 
	\left\{\begin{array}{rl}
		q^{n-m}(q^m+1) & \textrm{if $q$ is an odd prime power}  \\
		q^{n-2m}(q^{2m}-1) &  \textrm{if $q$ is a power of $2$}
	\end{array}
	\right. \, .
	\]
	Hence, for odd prime powers $q$, the map $\Phi_q$ is injective.
	If $q = 2^r$ for some $r \geq 1$, then the restrictions $\Phi_{2^r} |_{\mathcal{M}_X}$ and 
	$\Phi_{2^r} |_{\mathcal{M}_Y}$ are still injective. By the Lemmas~\ref{Lem:Number_of_F_q-points_1}-\ref{Lem:Number_of_F_q-points_2}, all $\FF_{2^r}$-rational points of $X_{m, n}$
	are regular, whereas $Y_{m, n}$ has singular $\FF_{2^r}$-rational points. This achieves to prove
	that distinct elements from $\mathcal{M}$ are non-isomorphic over $\k$.
\end{proof}

The following example shows that Question~\ref{Quest1} has a negative answer for degree $2$ and any $n \geq 2$:
\begin{example}
	\label{Exa.Counterexample_Q1deg2}
	Let $n \geq 2$. For each $f\in \k[x_3,\ldots,x_{n}]$, homogeneous of degree~$2$ (we choose $f=0$ if $n=2$), 
	the following morphism defines a $\GG_a$-action
	\[
	\begin{array}{rcl}
		\rho \colon \GG_a \times \PP^n_{x_0x_1+x_2^2+f} &\to& \PP^n_{x_0x_1+x_2^2+f} \\
		(t, [x_0:x_1: \cdots: x_n]) &\mapsto& [x_0: x_1 -2tx_2 - t^2x_0: x_2 + tx_0: x_3:\cdots:x_{n}].
	\end{array}
	\]
	Since $\frac{x_0^2}{x_0x_1+x_2^2+f}$ is a $\GG_a$-invariant function on $\PP^n_{x_0x_1+x_2^2+f}$,
	it follows that for every non-constant univariate polynomial $q$ the following
	map
	\[
	[x_0:x_1: \cdots: x_n] \mapsto \rho \left(q\left(\frac{x_0^2}{x_0x_1+x_2^2+f}\right) , 
	[x_0:x_1: \cdots: x_n] \right) 
	\]
	defines an automorphism of $\PP^n_{x_0x_1+x_2^2+f}$ that doesn't extend to an element of $\Aut(\PP^n)$.

	Similarly, for each $n\ge 3$ and each polynomial $g\in \k[x_4,\ldots,x_n]$ (again $g=0$ if $n=3$),
	the following morphism defines a $\GG_a$-action
	\[
	\begin{array}{rcl}
		\rho \colon \GG_a \times \PP^n_{x_0x_1+x_2x_3+g} &\to& \PP^n_{x_0x_1+x_2x_3+g} \\
		(t, [x_0:x_1: \cdots: x_n]) &\mapsto& [x_0:x_1-tx_3: x_2+tx_0:x_3:\cdots: x_n].
	\end{array}
	\]
	Since $\frac{x_0^2}{x_0x_1+x_2x_3+g}$ is a $\GG_a$-invariant function on $\PP^n_{x_0x_1+x_2x_3+g}$,
	it follows that for every non-constant univariate polynomial $q$ the following
	map
	\[
	[x_0:x_1: \cdots: x_n] \mapsto \rho \left(q\left(\frac{x_0^2}{x_0x_1+x_2 x_3 + g}\right) , 
	[x_0:x_1: \cdots: x_n] \right) 
	\]
	defines an automorphism of $\PP^n_{x_0x_1+x_2x_3+g}$ that doesn't extend to an element of $\Aut(\PP^n)$. 
	
	If $\k$ is algebraically closed, it follows from Proposition~\ref{prop:QuadricProj} that every irreducible quadric is of one of the above form, up to change of coordinates.
\end{example}

\subsection{Degree three and beyond in dimension three}
\label{Subsec.Degree3n3}

Throughout this subsection we assume always that $\kk$ is algebraically closed.

Recall that an irreducible hypersurface in $\PP^3$ is normal if and only if its singular locus is 
finite \cite[Ch.~II, Proposition~8.23]{Ha1977Algebraic-geometry}.

\begin{remark}
	\label{Rem.Desingularization_is_blow-up}
	Let $X \subseteq \PP^2$ be a normal rational irreducible cubic hypersurface. Then
	there exist six (possibly infinitely near) points $p_1, \ldots, p_6$ in $\PP^2$
	such that the blow-up $\pi \colon \tilde{X} \to \PP^2$ of them satisfies:
	There exists a birational morphism $\eta \colon \tilde{X} \to X$ and it is the minimal desingularisation of $X$.
	In case $X$ is singular this follows from the proof of Lemma~\ref{Lem.Normal_cubic_surf}
	and in case $X$ is smooth this follows from~\cite[Ch.~V, Corollary~4.7, Remark~4.7.1]{Ha1977Algebraic-geometry}. Moreover in the smooth case $p_1, \ldots, p_6$
	belong to $\PP^2$ (and are in general position).
\end{remark}
\begin{lemma}
	\label{Lem.Non-rational_iff_cone_over_elliptic_curve}
	An  irreducible cubic hypersurface in $\PP^3$ is non-rational if and only if it is the cone
	over a smooth cubic curve in $\PP^2$. 
\end{lemma}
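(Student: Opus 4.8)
\emph{Overview and the implication ``$\Leftarrow$''.} The plan is to treat the two implications separately, organising everything around the locus of points of maximal multiplicity on $X$, which is linear by Lemma~\ref{lemm:MultMaxLin}. For ``$\Leftarrow$'', suppose $X\subseteq\PP^3$ is the cone over a smooth cubic curve $E\subseteq\PP^2$, with vertex $v$. Projection from $v$ gives a dominant rational map $X\rat E$, whose fibres are the lines of the cone through $v$. If $X$ were rational, composing with a dominant map $\PP^2\rat X$ would yield a dominant rational map $\PP^2\rat E$; restricting it to a general line $\PP^1\subseteq\PP^2$ gives a dominant morphism $\PP^1\to E$, so $E$ would be rational by L\"uroth's theorem --- impossible, since a smooth plane cubic has genus $1$. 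Hence $X$ is non-rational.

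\emph{The implication ``$\Rightarrow$'': reduction via the triple-point locus.} I argue the contrapositive: assuming $X=V_{\PP^3}(f)$ is an irreducible cubic hypersurface (so $f$ is irreducible homogeneous of degree $3$) that is \emph{not} the cone over a smooth plane cubic, I show $X$ is rational. Set $M=\{p\in X\mid\mathrm{mult}_p(X)=3\}$. By Lemma~\ref{lemm:MultMaxLin}, $M$ is empty or a linear subspace of $\PP^3$. It cannot be positive-dimensional: if $p\neq q$ both lie in $M$, then in coordinates for which $p$ and $q$ are two of the coordinate points, the conditions $\mathrm{mult}_p(X)=\mathrm{mult}_q(X)=3$ force $f$ to involve neither of the corresponding two variables, so $f$ is a binary cubic form; but such a form factors over $\kk$, contradicting irreducibility of $f$. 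Hence $M=\varnothing$ or $M=\{v\}$ for a single point $v$.

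\emph{The two cases.} If $M=\varnothing$, every singular point of $X$ (if any) is a double point. When $X$ is smooth, it is the blow-up of $\PP^2$ at six points, hence rational, see \cite[Ch.~V, Remark~4.7.1]{Ha1977Algebraic-geometry}. When $X$ has a point $p$ with $\mathrm{mult}_p(X)=2$, a general line through $p$ meets $X$ at $p$ with multiplicity $2$ and in exactly one further point (here irreducibility of $f$ rules out that the residual point always coincides with $p$), so projection from $p$ is a birational map $X\rat\PP^2$ and $X$ is rational. If $M=\{v\}$, then $\mathrm{mult}_v(X)=3=\deg X$ means that, in coordinates with $v=[0:0:0:1]$, the polynomial $f$ does not involve $x_3$; thus $X$ is the cone with vertex $v$ over the plane cubic $\Gamma=V_{\PP^2}(f)$, which is irreducible since $f$ is. By hypothesis $\Gamma$ is singular, hence has a double point, and projecting $\Gamma$ from that point shows $\Gamma$ is rational; therefore $X$, being birational to $\Gamma\times\PP^1$ via projection from $v$, is rational. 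In every case $X$ is rational, proving the contrapositive and hence the lemma. The routine inputs are the birationality of projection from a double point and the rationality of an irreducible singular plane cubic; I expect the only mildly delicate point to be excluding a positive-dimensional $M$, which the linearity in Lemma~\ref{lemm:MultMaxLin} together with irreducibility of $f$ settles at once.
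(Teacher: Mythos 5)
Your proof is correct, and the interesting difference lies in the ``cone $\Rightarrow$ non-rational'' direction. For the other implication your contrapositive, organised around the triple-point locus $M$, is essentially a reorganisation of the paper's argument: the paper notes that a non-rational cubic must be singular, writes $f=f_2w+f_3$ at a singular point, observes that $f_2\neq 0$ would make the projection to $\PP^2$ birational (your double-point case, where the paper gets birationality at once by solving for $w$), and concludes $f_2=0$, so $X$ is a cone over an irreducible plane cubic $\Gamma$ that must be smooth because $X$ is birational to $\Gamma\times\PP^1$; the same three ingredients (smooth cubic surfaces are blow-ups of $\PP^2$ at six points, projection from a double point is birational, singular irreducible plane cubics are rational) underlie both write-ups, and your exclusion of a positive-dimensional $M$ via irreducibility of a binary cubic is fine. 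The genuinely different part is the converse: the paper blows up the vertex to obtain a ruled surface $S$ over the elliptic curve, uses $H^1(S,\mathcal{O}_S)\neq 0$ together with the cited criterion of Koll\'ar--Smith--Corti that rational surfaces have vanishing $H^1(\mathcal{O})$, whereas you rule out a dominant rational map $\PP^2\dashrightarrow E$ onto a genus-one curve by restricting to a general line and invoking L\"uroth. Your route is more elementary and visibly characteristic-free, avoiding the cohomological input entirely; the paper's route is shorter modulo the external reference. The one step you should make explicit is why the restriction to a general line is a \emph{non-constant} morphism: the indeterminacy locus of $\PP^2\dashrightarrow E$ is finite, and each fibre is a curve containing only finitely many lines while the fibres form a one-dimensional family, so a general line avoids the indeterminacy locus and is not contracted --- standard, but worth a sentence.
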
	

\begin{proof}
	Let $X \subseteq \PP^3$ be a non-rational irreducible cubic hypersurface.
	Then $X$ has a singularity (as otherwise it would be the blow-up of $\PP^2$ at six points, see Remark~\ref{Rem.Desingularization_is_blow-up}), 
	say at $P = [1:0:0:0] \in \PP^3$ and it
	is given by a homogeneous polynomial of the form $f_2(x, y, z) w + f_3(x, y, z)$, 
	where $[w: x:y:z]$ denote the homogeneous coordinates of $\PP^3$ and
	$f_i \in \kk[x, y,  z]$ is homogeneous of degree $i$. As $X$ is non-rational, we get
	$f_2 = 0$ (otherwise the projection to $x,y,z$ gives a birational map to $\PP^2$) and $X$ is the cone in $\PP^3$ over a cubic curve $\Gamma\subset\PP^2$ given by $f_3=0$. As $X$ is irreducible, so is $\Gamma$. Then, $X$ is birational to $\Gamma\times \PP^1$, and thus $\Gamma$ is non-rational as $X$ is. This implies that $\Gamma$ is smooth, and that $X$ is the cone in $\PP^3$ over a smooth cubic curve in $\PP^2$.

	Assume now that $X$ is a cone over a smooth cubic curve $C$. 
	Blowing up the vertex yields a projective smooth ruled surface $S$ over 
	$C$. In particular $H^1(S, \mathcal{O}_S)$ doesn't vanish 
	(see \cite[Ch.~V, Corollary~2.5]{Ha1977Algebraic-geometry}) and hence $X$ is not rational
	(see \cite[Theorem~3.2]{KoSmCo2004Rational-and-nearl}).
\end{proof}	

The next lemma is certainly known to the specialists. For lack of a proof in any characteristic we provide one:

\begin{lemma}
	\label{Lem.Normal_cubic_surf}
	Let $X \subseteq \PP^3$ be a normal rational  cubic hypersurface.
	Then $X$ has only Du Val singularities and is piecewise isomorphic to the disjoint union of $\A^2$, one point, and $n$ copies of $\A^1$, with $1\le n\le  7$. Moreover, $3 \leq \chi(X) = 2 + n \leq 9$
	and $\chi(X) = 9$ if and only if $X$ is smooth.
\end{lemma}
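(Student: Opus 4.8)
The plan is to reduce the statement to Proposition~\ref{prop:NormalTrees} by identifying the minimal desingularisation of $X$ as a blow-up of $\PP^2$ at six points, and then to read off the numerical assertions from the topological Euler characteristic. First I would show that every singular point of $X$ is a double point. Given $p\in \mathrm{Sing}(X)$, put $p=[1:0:0:0]$; as $X$ is irreducible and $p$ is singular, the defining cubic has the form $f=w\,g_2(x,y,z)+g_3(x,y,z)$ with $g_i$ homogeneous of degree $i$. If $g_2=0$, then $X$ is the projective cone over the plane cubic curve $C=V_{\PP^2}(g_3)$; since $X$ is normal, i.e.\ $\mathrm{Sing}(X)$ is finite (\cite[Ch.~II, Prop.~8.23]{Ha1977Algebraic-geometry}), $C$ must be smooth, and then $X$ is non-rational by Lemma~\ref{Lem.Non-rational_iff_cone_over_elliptic_curve}, contrary to hypothesis. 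Hence $\mathrm{mult}_p(X)=2$ at every singular point, and adjunction gives $\omega_X\simeq\mathcal{O}_X(-1)$, so in particular $H^2(X,\mathcal{O}_X)=H^0(X,\omega_X)^\vee=0$.

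Next I would analyse the minimal desingularisation $\pi\colon \tilde X\to X$. The smooth case is Remark~\ref{Rem.Desingularization_is_blow-up}, so suppose $X$ is singular and fix a singular point $p$ as above. Projection away from $p$ gives a birational map $\pi_p\colon X\dashrightarrow\PP^2$, because a general line through the double point $p$ meets the cubic $X$ in a length-$3$ subscheme, of which $p$ accounts for length $2$; its inverse is given by the linear system of cubics through the zero-dimensional scheme $Z=V_{\PP^2}(g_2,g_3)$, of length $6$ by Bézout (the forms $g_2,g_3$ have no common factor since $X$ is irreducible). Resolving the indeterminacy of this system by blowing up the (possibly infinitely near) six points of $Z$ realises $\tilde X$ as the blow-up $\eta\colon \tilde X\to\PP^2$ at those points, so $\Pic(\tilde X)\simeq\ZZ^7$ and $\tilde X$ is rational; the associated base-point-free system is the map $\pi$, and its class is $\eta^\ast\mathcal{O}_{\PP^2}(3)-\sum_{i=1}^6 E_i=-K_{\tilde X}=\pi^\ast\mathcal{O}_X(1)$. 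Thus $\pi^\ast K_X=K_{\tilde X}$, i.e.\ $\pi$ is crepant, and every curve $C$ contracted by $\pi$ satisfies $C^2<0$ and $K_{\tilde X}\cdot C=\pi^\ast\mathcal{O}_X(-1)\cdot C=0$; adjunction forces $C\simeq\PP^1$ with $C^2=-2$. In particular $\pi$ is the minimal resolution and $X$ has only Du Val singularities. Moreover $R^1\pi_\ast\mathcal{O}_{\tilde X}=0$: it is supported on $\mathrm{Sing}(X)$ and, by the Leray spectral sequence for $\pi$ (using $\pi_\ast\mathcal{O}_{\tilde X}=\mathcal{O}_X$, $H^1(\tilde X,\mathcal{O}_{\tilde X})=0$ and $H^2(X,\mathcal{O}_X)=0$), its global sections vanish. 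Hence the singularities of $X$ are rational and the exceptional divisor of $\pi$ is a finite disjoint union of trees of smooth rational curves.

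With $\Pic(\tilde X)\simeq\ZZ^7$ and the exceptional divisor of $\pi$ being a disjoint union of trees of smooth rational curves with $r\ge 0$ components, Proposition~\ref{prop:NormalTrees} shows that $X$ is piecewise isomorphic to the disjoint union of $\AA^2$, one point and $n:=7-r\ge 1$ copies of $\AA^1$; trivially $n\le 7$. Since the topological Euler characteristic is a piecewise-isomorphism invariant (Lemma~\ref{lemm.Grothendieckring}) with $\chi(\AA^m)=\chi(\mathrm{pt})=1$ (Example~\ref{Exa:ChiAmPm}), this yields $\chi(X)=2+n$, hence $3\le\chi(X)\le 9$. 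Finally $\chi(X)=9\iff n=7\iff r=0\iff$ the exceptional divisor of $\pi$ is empty $\iff\pi$ is an isomorphism (by Zariski's main theorem, as in Proposition~\ref{prop:NormalTrees}) $\iff X$ is smooth.

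I expect the main obstacle to be the middle step, namely checking rigorously that eliminating the base locus $Z$ costs exactly six point blow-ups with anticanonical class $\eta^\ast\mathcal{O}_{\PP^2}(3)-\sum_{i=1}^6 E_i$, equivalently that each (possibly infinitely near) base point of the cubic system is a multiplicity-one base point, so that $\pi$ is genuinely the minimal resolution rather than a morphism that also contracts $(-1)$-curves. This is precisely where normality of $X$ must be exploited, and it is the characteristic-free incarnation of the classical fact that a normal cubic surface is a weak del Pezzo surface of degree $3$; over $\CC$ one may invoke this directly.
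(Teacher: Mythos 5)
Your strategy is essentially the paper's own: rule out $f_2=0$ using normality plus rationality (via Lemma~\ref{Lem.Non-rational_iff_cone_over_elliptic_curve}), project from a double point, resolve the inverse system of cubics through the length-six scheme $V_{\PP^2}(f_2,f_3)$ by blowing up six possibly infinitely near points, use crepancy to get Du Val singularities and a tree-shaped exceptional divisor, and conclude with Proposition~\ref{prop:NormalTrees}, the smooth case being handled by Remark~\ref{Rem.Desingularization_is_blow-up} and Lemma~\ref{Lem:SMoothRatDecomposition}. The step you flag as the main obstacle --- that eliminating the base locus costs exactly six simple (possibly infinitely near) point blow-ups, so that the induced morphism to $X$ is given by the complete anticanonical system and hence $\eta^{\ast}K_X=K_{\tilde X}$ --- is precisely what the paper does not reprove either: it imports it from \cite[Proposition~1.5]{Sakamaki:2010vx}, which is available in any characteristic, so supplying that citation closes your argument without any appeal to the complex weak del Pezzo theory. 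The only genuine divergence is where the paper quotes \cite[Theorem~2.7]{Ar1962Some-numerical-cri} to pass from crepancy to ``Du Val with tree exceptional locus'': you instead argue directly that adjunction forces every contracted curve to be a smooth rational $(-2)$-curve and that $R^1\pi_{\ast}\mathcal{O}_{\tilde X}=0$ (via Leray, $\pi_{\ast}\mathcal{O}_{\tilde X}=\mathcal{O}_X$, $H^1(\tilde X,\mathcal{O}_{\tilde X})=0$ and $H^2(X,\mathcal{O}_X)=0$), which is a fine substitute; note only that your last inference ``rational singularities $\Rightarrow$ the exceptional divisor is a disjoint union of trees of smooth rational curves'' itself needs Artin-type input, or can be replaced by the purely lattice-theoretic argument from negative definiteness of the exceptional intersection matrix \cite[\S2, (2.1)]{Ar1962Some-numerical-cri}: with all self-intersections $-2$, the inequality $(C_i+C_j)^2<0$ forces $C_i\cdot C_j\le 1$, and a cycle $C_1,\ldots,C_k$ would give $\bigl(\sum_i C_i\bigr)^2\ge 0$, a contradiction.
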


\begin{proof}
	If $X$ is smooth, then $X$ is the blow-up of $6$ points of $\p^2$ (Remark~\ref{Rem.Desingularization_is_blow-up}) and is thus piecewise isomorphic to the disjoint
	union of $\AA^2$, one point and seven copies of $\AA^1$ by Lemma~\ref{Lem:SMoothRatDecomposition}. 
	In particular, $\chi(X)=9$.
	
	Thus we may assume that $X$ is singular and after some coordinate change we may assume that $X$ is given by 
	$f_2(x, y, z)w + f_3(x, y, z)$, where
	$f_2, f_3 \in \kk[x, y, z]$ are homogeneous polynomials of degree $2, 3$, respectively,
	and $[w:x:y:z]$ denote the homogeneous coordinates in $\PP^3$. If $f_2 = 0$, then, 
	$X$ is a cone over $V_{\p^2}(f_3)$. As $X$ is a rational cubic, $f_3$ is irreducible and $V_{\p^2}(f_3)$ has to be singular (Lemma~\ref{Lem.Non-rational_iff_cone_over_elliptic_curve}), contradicting the fact that $X$ is normal.
	Hence, $f_2 \neq 0$.
	
	The projection $X \setminus \{[1:0:0:0] \} \to \PP^2$, $[w:x:y:z] \mapsto [x:y:z]$ 
	gives a birational morphism
	whose inverse 	$\Phi \colon \PP^2 \dashrightarrow X$ is given by
	\[
	[x:y:z]\mapsto [f_3(x,y,z): x f_2(x,y,z): y f_2(x,y,z): z f_2(x,y,z)] \, .
	\]
	By \cite[Proposition~1.5]{Sakamaki:2010vx}, there exists a smooth projective surface $\tilde{X}$ which 
	is the blow-up $\pi \colon \tilde{X} \to \PP^2$ of six (possibly infinitely near) points
	in the zero locus $S = V(f_2, f_3)$ such that $\eta \coloneqq \Phi \circ \pi \colon \tilde{X} \to X$ is a morphism
	and $\eta$ is given by the complete linear system
	$|-K_{\tilde{X}}|$, where $K_{\tilde{X}}$ denotes the canonical divisor in $\tilde{X}$.
	Let $X_{\reg}$ be the open subvariety of regular points in $X$ and let $X_{\sing}$ be its complement in $X$.
	
		By applying the adjunction formula to the smooth closed hypersurface $X_{\reg}$ in $\PP^3 \setminus X_{\sing}$, 
		we get that the canonical divisor $K_X$ of $X$ is equal to $-H$, where $H$ is a hyperplane section of  $X\subseteq \p^3$.
		This gives $\eta^{\ast} K_X = K_{\tilde{X}}$.
		By~\cite[Theorem~2.7]{Ar1962Some-numerical-cri} it follows that the exception locus of $\eta$
		is a finite union of trees and that in fact $X$ has only Du Val singularities.

	By Proposition~\ref{prop:NormalTrees}, $X$ is piecewise isomorphic to a disjoint
	union of $\AA^2$, one point and $7-r$ copies of $\AA^1$, with $1\le r\le 6$. 
	In particular, $3\le \chi(X) \le 8$.
\end{proof}

We will use the following classification of non-normal irreducible cubic surfaces in $\PP^3$:

\begin{proposition}[{see \cite[Theorem~3.1]{LePaSc2011On-the-classificat}}]
	\label{Prop.Classification_of_non-normal_cubic_surfaces}
	Let $X \subseteq \PP^3$ be a non-normal irreducible cubic surface. Denote by $[w:x:y:z]$
	the homogeneous coordinates in $\PP^3$. 
	
	\begin{itemize}[leftmargin=*]
		\item If $X$ is a cone over a curve in $\PP^2$,
		then up to a coordinate change of $\PP^3$, the hypersurface $X$ is given by
		\[
		\label{Prop.Classification_of_non-normal_cubic_surfaces1}
		\tag{$\ast$}
		x^2 w + y^3 \quad \textrm{or} \quad 
		x^2 w + y^3 + xy^2 \quad \textrm{or} \quad xyw + x^3 + y^3 \, .
		\]
		Moreover, if $\car(\kk) \neq 3$, then $x^2 w + y^3$ and $x^2 w + y^3 + xy^2$ 
		are the same up to a linear coordinate change in $w, x, y$.
		
		\item If $X$ is not a cone over a curve in $\PP^2$, then up to a coordinate change of $\PP^3$, the hypersurface $X$ is given by
		\[
		\label{Prop.Classification_of_non-normal_cubic_surfaces2} 
		\tag{$\ast\ast$}	
		x^2w+y^2z \quad \textrm{or} \quad 
		xyw +y^2z + x^3 \quad \textrm{or} \quad
		xyw + (x^2+y^2)z  \, .
		\]
		Moreover, if $\car(\kk) \neq 2$, then $x^2w+y^2z$ and $xyw + (x^2+y^2)z$ are the same up to a linear coordinate 
		change in $w, x, y, z$.
	\end{itemize}
	In particular, $X$ always contains a line with multiplicity $2$ $($the line given by $x=y=0$ in the above equations$)$.
\end{proposition}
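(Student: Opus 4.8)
The plan is to first pin down the geometry of the non-normal locus, and then read off the equation. Since a hypersurface in $\PP^3$ is Cohen--Macaulay, it satisfies Serre's condition $S_2$, so $X$ fails to be normal precisely because it fails $R_1$; hence the singular locus $\mathrm{Sing}(X)$ (a proper closed subset, as $X$ is reduced) has a component $D$ of dimension $1$. I claim $D$ is a line. For a general hyperplane $H\subseteq\PP^3$, Bertini's irreducibility and reducedness theorems show $X\cap H$ is an integral plane cubic, while $D\cap H$ consists of $\deg D$ of its points; moreover each point of $D\cap H$ is singular on $X\cap H$, because at a singular point of $X$ the tangent space is all of $\PP^3$, so the tangent space to $X\cap H$ there is the whole $2$-plane $T_pH$. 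An integral plane cubic has at most one singular point (two of them would force the line joining them to meet the cubic with multiplicity $\ge 4$, hence to lie inside it, contradicting irreducibility), so $\deg D\le 1$, i.e.\ $D=L$ is a line, and it is the whole $1$-dimensional part of $\mathrm{Sing}(X)$. Writing $L=V_{\PP^3}(x,y)$, the equation $F$ of $X$ lies in $(x,y)^2$; it cannot lie in $(x,y)^3$ (then $F$ would be a binary cubic in $x,y$ and $V_{\PP^3}(F)$ a union of planes, hence reducible or non-reduced), so $\mathrm{mult}_L(X)=2$. This proves the final assertion.

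Next I would normalise. With $L=V_{\PP^3}(x,y)$, write $F=Ax^2+Bxy+Cy^2$ with $A,B,C$ linear, split each as $A=A_0+A'$ (and likewise for $B,C$) with $A_0,B_0,C_0\in\langle w,z\rangle$ and $A',B',C'\in\langle x,y\rangle$, and record $(A_0,B_0,C_0)$ as a $2\times 3$ matrix $M_0$ (rows $w,z$; columns $x^2,xy,y^2$). The transformations of $\PP^3$ fixing $L$ are generated by linear changes in $(x,y)$, linear changes in $(w,z)$, and the translations $w\mapsto w+\ell$, $z\mapsto z+\ell'$ with $\ell,\ell'\in\langle x,y\rangle$; the subgroup $\GL_2\times\GL_2$ of the first two acts on $M_0$ (the $(x,y)$-factor through $\mathrm{Sym}^2$), so $\mathrm{rank}\,M_0$ is an invariant. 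If $\mathrm{rank}\,M_0=0$ then $F$ is a binary cubic in $x,y$, impossible. If $\mathrm{rank}\,M_0=1$, after a change in $(w,z)$ the second row of $M_0$ vanishes and then $F$ involves no $z$ at all, so $X$ is the cone with vertex $[0:0:0:1]$ over the plane cubic $\Gamma=V_{\PP^2}(F)$; as $X$ is irreducible so is $\Gamma$, and $\Gamma$ must be singular (a cone over a smooth plane cubic has an isolated singularity at the vertex, hence is $R_1$ and thus normal, contradicting the hypothesis), so $\Gamma$ is nodal or cuspidal; using the translations in $w,z$ and the remaining linear freedom to normalise the binary-cubic tail of $F$ yields $xyw+x^3+y^3$ in the nodal case and $x^2w+y^3$ (or $x^2w+y^3+xy^2$) in the cuspidal case, the $xy^2$ term being removable by completing the cube when $\mathrm{char}(\kk)\ne 3$.

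If $\mathrm{rank}\,M_0=2$, then $X$ is not a cone: a cone would have its vertex on $L$ (the singular locus of a cone over an irreducible plane cubic is the union of the vertex with the ruling through the singular point of the base curve, and here it equals $L$), say at $[0:0:0:1]$, which would force $F$ to be independent of $z$ and $M_0$ to have a zero row. In this case $M_0$ is a pencil of binary quadratic forms in $(x,y)$, i.e.\ a line in $\PP(\mathrm{Sym}^2\langle x,y\rangle)\cong\PP^2$, which I would classify up to $\PGL_2$ by its position relative to the locus of perfect squares. When $\mathrm{char}(\kk)\ne 2$ this locus is a smooth conic, a line is secant or tangent to it, giving $M_0\sim\bigl(\begin{smallmatrix}1&0&0\\0&0&1\end{smallmatrix}\bigr)$ or $\bigl(\begin{smallmatrix}1&0&0\\0&1&0\end{smallmatrix}\bigr)$; absorbing the residual terms with the translations in $w,z$ (irreducibility forcing one surviving cubic term) produces $x^2w+y^2z$ and $xyw+y^2z+x^3$ respectively. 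When $\mathrm{char}(\kk)=2$ the locus of squares degenerates to a line and the classification of pencils acquires an extra orbit, producing the additional form $xyw+(x^2+y^2)z$; in characteristic $\ne 2$ that pencil is secant, which re-identifies this form with $x^2w+y^2z$, just as the two cuspidal forms coincide for $\mathrm{char}(\kk)\ne 3$.

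The routine part is the geometry of the first step and the rank stratification; the main obstacle is the case-by-case normalisation of the residual binary-cubic and binary-quadratic data, and in particular redoing the classification of pencils of binary quadratic forms in characteristic $2$ and of binary cubics in characteristic $3$, where diagonalisation of quadratic forms and completion of the cube are no longer available. This is precisely the source of the normal forms $x^2w+y^3+xy^2$ and $xyw+(x^2+y^2)z$ that merge with others in large characteristic, and checking that they genuinely constitute separate orbits there is the delicate point.
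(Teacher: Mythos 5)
The paper does not actually prove this proposition: it is quoted from the literature (see \cite[Theorem~3.1]{LePaSc2011On-the-classificat}), so your direct classification is by necessity a different route, and it is essentially the standard one. Your outline is correct, and the steps you defer do go through in all characteristics. The first step (Serre's criterion plus a general hyperplane section to show that the one-dimensional part of $\mathrm{Sing}(X)$ is a single line $L$ with $\mathrm{mult}_L(X)=2$) is fine in characteristic $p$: Bertini irreducibility for hyperplane sections holds in any characteristic, smoothness of the general hyperplane section of the smooth locus follows from the incidence-variety dimension count (no generic-smoothness issue for the full system of hyperplanes), and an integral plane cubic has at most one singular point. The reduction to the stabilizer of $L$, the invariance of $\operatorname{rank} M_0$, and the identifications ``rank $0$ impossible'', ``rank $1$ $\Leftrightarrow$ cone'', ``rank $2$ $\Leftrightarrow$ not a cone'' are all sound. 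The remaining normalisations come out exactly as you predict: in the rank-$1$ case the tangent cone of the base cubic at its singular point is $xy$ or $x^2$, giving $xyw+x^3+y^3$, respectively $x^2w+y^3$ or $x^2w+y^3+xy^2$ (with completion of the cube merging the last two when $\car(\kk)\neq 3$); in the rank-$2$ case the pencils of binary quadrics have two orbits when $\car(\kk)\neq 2$ (secant and tangent, i.e.\ $\langle x^2,y^2\rangle$ and the common-factor pencil $\langle x^2,xy\rangle$) and exactly three when $\car(\kk)=2$ (the line of squares $\langle x^2,y^2\rangle$, the common-factor pencil $\langle x^2,xy\rangle$, and $\langle x^2,xy+y^2\rangle$), and after absorbing the cubic tail by the translations in $w,z$ these yield $x^2w+y^2z$, $xyw+y^2z+x^3$ and, in characteristic $2$, $x^2w+(xy+y^2)z$, which a linear change of $x,y$ identifies with $xyw+(x^2+y^2)z$. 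One remark: the ``delicate point'' you flag at the end — that $x^2w+y^3+xy^2$ in characteristic $3$ and $xyw+(x^2+y^2)z$ in characteristic $2$ form genuinely separate orbits — is not required by the statement, which only asserts that $X$ can be brought to \emph{one} of the listed forms (the ``Moreover'' clauses only claim coincidences when the characteristic is large); so your argument is complete once the orbit and tail-absorption computations sketched above are written out.
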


In the following lemma, we give a similar decompositions into locally closed subsets as in Lemma~\ref{Lem.Normal_cubic_surf}
for irreducible non-normal cubic hypersurfaces in $\PP^3$:

\begin{lemma}
	\label{Lem.Non-Normal_cubic_surf} 
	Let 
	\[
	\begin{aligned}
		f_1 &= xyw + x^3 + y^3  \, ,  & 
		f_2 &= x^2 w + y^3  \, ,  &
		f_3 &= x^2 w + y^3 + xy^2  \, ,   \\
		f_4 &= xyw +y^2z + x^3  \, ,  &
		f_5 &= x^2w+y^2z  \, ,  &
		f_6 &= xyw + (x^2+y^2)z  \, .
	\end{aligned}
	\]
	Then, $X_i=V_{\PP^3}(f_i)\subseteq \PP^3$ is piecewise isomorphic to the disjoint union of $\A^2$, one point, and $n_i$ copies of $\A^1$, with $n_1=0$, $n_2=n_3=n_4=1$ and $n_5=n_6=2$. In particular, $\chi(X_i)=2+n_i\in \{2,3,4\}$.
\end{lemma}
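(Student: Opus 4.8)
The plan is to treat each of the six surfaces separately, via an explicit decomposition adapted to the fact --- recorded in Proposition~\ref{Prop.Classification_of_non-normal_cubic_surfaces} --- that each $X_i$ contains the double line $\ell = \{x = y = 0\}$. For every $i$ I would write $X_i = U_i \amalg C_i$, where $U_i \coloneqq X_i \cap \{y \neq 0\}$ is open and $C_i \coloneqq X_i \cap \{y = 0\}$, taken with reduced structure, is the closed complement; note that $C_i$ is a curve (or a union of curves) lying in the plane $\{y = 0\} \cong \PP^2$ with coordinates $[w:x:z]$. It then suffices to determine the piecewise isomorphism class of $U_i$ and of $C_i$ and to combine them, using that piecewise isomorphism is an equivalence relation.

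For the open part $U_i$ I would pass to the affine chart $\{y = 1\} \cong \AA^3$ with coordinates $w, x, z$. For $f_4$ and $f_5$ the equation of $X_i$ in this chart reduces to $z = -xw - x^3$, respectively $z = -x^2 w$, so $U_4$ and $U_5$ are graphs over the $(w, x)$-plane and hence isomorphic to $\AA^2$. For $f_6$ the equation $xw + (x^2 + 1)z = 0$ splits into the locus $x = 0$ (which forces $z = 0$ and gives a copy of $\AA^1$) and the locus $x \neq 0$ (where $w = -(x^2 + 1)z/x$ is determined, giving a copy of $\GG_m \times \AA^1$); since $\AA^2 = (\GG_m \times \AA^1) \amalg (\{0\} \times \AA^1)$, this shows $U_6$ is piecewise isomorphic to $\AA^2$ as well. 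For the three cones $f_1, f_2, f_3$ (in which $z$ does not occur) the chart equation has no solution with $x = 0$, and solving for $w$ on $\{x \neq 0\}$ identifies each of $U_1, U_2, U_3$ with $\GG_m \times \AA^1$.

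For the closed part one computes directly that $f_1(w, x, 0, z) = f_4(w, x, 0, z) = x^3$, so $C_1$ and $C_4$ equal the line $\ell \cong \PP^1$; that $f_2(w, x, 0, z) = f_3(w, x, 0, z) = f_5(w, x, 0, z) = x^2 w$, so $C_2, C_3, C_5$ are the reduced union of two lines meeting in a single point; and that $f_6(w, x, 0, z) = x^2 z$, so $C_6$ is again a reduced union of two lines meeting in one point. Hence, by Lemma~\ref{Lem:UnionTree}\ref{UnionTree1} (together with the piecewise isomorphism $\PP^1 = \AA^1 \amalg \{pt\}$), $C_1$ and $C_4$ are piecewise isomorphic to $\AA^1 \amalg \{pt\}$, while $C_2, C_3, C_5, C_6$ are piecewise isomorphic to $\AA^1 \amalg \AA^1 \amalg \{pt\}$. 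Combining this with the description of the $U_i$ and once more using $\GG_m \times \AA^1 \amalg \AA^1 \cong \AA^2$ (piecewise), I obtain that $X_1$ is piecewise isomorphic to $\AA^2 \amalg \{pt\}$; that $X_2$, $X_3$, $X_4$ are piecewise isomorphic to $\AA^2 \amalg \AA^1 \amalg \{pt\}$; and that $X_5$, $X_6$ are piecewise isomorphic to $\AA^2 \amalg \AA^1 \amalg \AA^1 \amalg \{pt\}$. This gives the asserted values of $n_i$, and then $\chi(X_i) = \chi(\AA^2) + n_i \chi(\AA^1) + \chi(\{pt\}) = 2 + n_i$ by Lemma~\ref{Lem.Property_Eulercharacteristic} and Example~\ref{Exa:ChiAmPm}.

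There is no conceptual difficulty in this argument; the content is the case-by-case bookkeeping. The one place that needs a little attention is the open chart of $X_6$: the factorisation $x^2 + y^2 = (x + iy)(x - iy)$ fails in characteristic $2$, so rather than factoring I would solve the linear equation $xw + (x^2 + 1)z = 0$ for $w$ on $\{x \neq 0\}$ as above, which keeps the argument uniform in the characteristic (alternatively, when $\mathrm{char}(\kk) \neq 2$ one may instead invoke Proposition~\ref{Prop.Classification_of_non-normal_cubic_surfaces} to replace $f_6$ by $f_5$, and similarly $f_3$ by $f_2$ when $\mathrm{char}(\kk) \neq 3$).
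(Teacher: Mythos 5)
Your proposal is correct and takes essentially the same approach as the paper: the paper likewise decomposes each $X_i$ into the affine piece on a coordinate chart and the plane section (it cuts with $x=0$ rather than your $y=0$), computes each piece explicitly, and reads off $n_i$ and $\chi(X_i)$ by the same bookkeeping. The only cosmetic difference is the choice of hyperplane, which changes which open parts appear as $\AA^2$ versus $\GG_m\times\AA^1$ (and makes your treatment of $f_6$ need the extra piecewise step you supply), but not the substance of the argument.
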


\begin{proof}
	We can decompose $X_i$ into  $X_i \cap \PP^3_x$ and $X_i \cap V_{\PP^3}(x)$.
	Then we get the following table
	\begin{center}
		\begin{tabular}{l|l|l}
			Equation & $X_i \cap \PP^3_x$ & $X_i \cap V_{\PP^3}(x)$ \\
			\hline
			$xyw + x^3 + y^3$ & $\AA^1 \times (\AA^1 \setminus \{0\})$ & $\PP^1$ \\
			$x^2 w + y^3$ & $\AA^2$ & $\PP^1$ \\
			$x^2 w + y^3 + xy^2$ & $\AA^2$ & $\PP^1$ \\
			$xyw +y^2z + x^3$ & $\AA^1 \times (\AA^1 \setminus \{0\})$ & $\PP^1 \vee \PP^1$ \\
			$x^2w+y^2z$ & $\AA^2$ & $\PP^1 \vee \PP^1$  \\
			$xyw + (x^2+y^2)z$ & $\AA^2$ & $\PP^1 \vee \PP^1$ \\
		\end{tabular}
	\end{center}
	where $\PP^1 \vee \PP^1$ denotes two copies of $\PP^1$ in $\PP^2$ 
	(that intersect transversally in exactly one point). This implies the statement.
	%
\end{proof}
\begin{corollary}\label{Coro:ChiRatCubic}
	Let $X$ be a rational cubic hypersurface of $\p^3$. Then, $X$ is piecewise isomorphic to the disjoint union of $\A^2$, one point, and $n$ copies of $\A^1$, with $0\le n\le  7$. Moreover, $2\le \chi(X)=2+n\le 9$.
\end{corollary}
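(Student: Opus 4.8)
The plan is to reduce the statement to the two cases already handled: normal rational cubics treated in Lemma~\ref{Lem.Normal_cubic_surf} and non-normal cubics treated in Lemma~\ref{Lem.Non-Normal_cubic_surf}. Since $X \subseteq \PP^3$ is an irreducible cubic surface, it is either normal or non-normal, and one of these two lemmas applies directly.

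\begin{proof}[Proof of Corollary~$\ref{Coro:ChiRatCubic}$]
Since $X$ is rational, it is in particular an irreducible cubic surface. If $X$ is normal, then by Lemma~\ref{Lem.Normal_cubic_surf}, $X$ is piecewise isomorphic to the disjoint union of $\A^2$, one point, and $n$ copies of $\A^1$ with $1 \le n \le 7$, and $3 \le \chi(X) = 2+n \le 9$.

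If $X$ is not normal, then it is one of the surfaces classified in Proposition~\ref{Prop.Classification_of_non-normal_cubic_surfaces}. We first check that the surface $V_{\PP^3}(xyw+x^3+y^3)$ is rational: it contains the dense open subset isomorphic to $\A^1 \times (\A^1 \setminus \{0\})$ (see the proof of Lemma~\ref{Lem.Non-Normal_cubic_surf}), hence it is rational; the same holds for all the other surfaces in that classification, as each contains a dense open subset isomorphic to an open subset of $\A^2$. Hence every non-normal irreducible cubic surface in $\PP^3$ is rational, and by Lemma~\ref{Lem.Non-Normal_cubic_surf}, such an $X$ is piecewise isomorphic to the disjoint union of $\A^2$, one point, and $n$ copies of $\A^1$ with $0 \le n \le 2$, and $\chi(X) = 2+n \in \{2, 3, 4\}$.

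In both cases, $X$ is piecewise isomorphic to the disjoint union of $\A^2$, one point, and $n$ copies of $\A^1$ with $0 \le n \le 7$, and $2 \le \chi(X) = 2+n \le 9$.
\end{proof}

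The only genuine content beyond bookkeeping is observing that non-normality forces rationality for cubic surfaces (so that the hypothesis ``rational'' is automatically satisfied in that branch and Lemma~\ref{Lem.Non-Normal_cubic_surf} may be invoked without extra hypotheses). I expect the main obstacle, such as it is, to be making sure the ranges of $n$ from the two lemmas combine to give exactly $0 \le n \le 7$; this is immediate since $\{0,1,2\} \cup \{1,\dots,7\} = \{0,1,\dots,7\}$, and the inequality $2 \le \chi(X) \le 9$ follows from $\chi(X) = 2+n$. No delicate estimates or geometric input are needed beyond what the preceding lemmas already supply.
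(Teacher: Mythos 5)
Your proof is correct and follows essentially the same route as the paper: split into the normal case (Lemma~\ref{Lem.Normal_cubic_surf}) and the non-normal case (Proposition~\ref{Prop.Classification_of_non-normal_cubic_surfaces} combined with Lemma~\ref{Lem.Non-Normal_cubic_surf}), then combine the ranges of $n$. The observation that non-normal cubics are automatically rational is correct but not actually needed, since rationality is a hypothesis of the corollary and Lemma~\ref{Lem.Non-Normal_cubic_surf} applies to the six classified surfaces without it.
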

\begin{proof}
	If $X$ is normal, this follows from Lemma~\ref{Lem.Normal_cubic_surf}. Otherwise, this
	follows from Lemma~\ref{Lem.Non-Normal_cubic_surf} in combination with Proposition~\ref{Prop.Classification_of_non-normal_cubic_surfaces}.
\end{proof}

\begin{lemma}
	\label{Lem.cubic_Euler_char}
	Let $X \subseteq \PP^3$ be an irreducible cubic hypersurface. 
	If $X$ is rational, then $\chi(X)\ge 2$. If $X$ is not rational, then $\chi(X)=1$.
\end{lemma}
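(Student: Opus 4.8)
The plan is to split into the rational and the non-rational case and rely on the structural results already established. If $X$ is rational, then Corollary~\ref{Coro:ChiRatCubic} tells us directly that $X$ is piecewise isomorphic to the disjoint union of $\AA^2$, one point, and $n$ copies of $\AA^1$ with $0 \le n \le 7$, whence $\chi(X) = 2 + n \ge 2$ by Lemma~\ref{Lem.Property_Eulercharacteristic} and Example~\ref{Exa:ChiAmPm}. So the rational case is immediate.

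For the non-rational case, I would invoke Lemma~\ref{Lem.Non-rational_iff_cone_over_elliptic_curve}: an irreducible non-rational cubic hypersurface in $\PP^3$ is exactly the cone over a smooth cubic curve $C \subseteq \PP^2$. Write $v$ for the vertex of the cone. The cone $X$ decomposes as a disjoint union of the vertex $\{v\}$ (one point) and $X \setminus \{v\}$, and $X \setminus \{v\}$ is a locally trivial $\mathbb{G}_m$-bundle (indeed an $\AA^1 \setminus \{0\}$-bundle) over $C$ via the projection away from $v$. The hard part — really the only point requiring care — is computing $\chi(X \setminus \{v\})$; I would handle it by the same bundle argument as in Example~\ref{Exa.A1bundle}, but now for an $(\AA^1 \setminus \{0\})$-bundle: locally it looks like $(\AA^1 \setminus \{0\}) \times (\text{base})$, so by Lemma~\ref{Lem.Property_Eulercharacteristic} and induction on the dimension of the base one gets $\chi(X \setminus \{v\}) = \chi(\AA^1 \setminus \{0\}) \cdot \chi(C) = 0 \cdot \chi(C) = 0$, using $\chi(\AA^1 \setminus \{0\}) = \chi(\AA^1) - \chi(\{pt\}) = 1 - 1 = 0$ from Example~\ref{Exa:ChiAmPm} and Lemma~\ref{Lem.Property_Eulercharacteristic}\ref{Lem.Property_Eulercharacteristic1}.

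Putting these together with the additivity of $\chi$ over the closed decomposition $X = \{v\} \amalg (X \setminus \{v\})$ gives $\chi(X) = \chi(\{v\}) + \chi(X \setminus \{v\}) = 1 + 0 = 1$, as claimed. (Alternatively, and perhaps more cleanly, one can avoid the bundle computation entirely by noting $X \setminus \{v\} \simeq (\AA^1 \setminus \{0\}) \times C$ is not quite a product globally, so the bundle argument is the honest route; but one could also observe that $X$ is the image of the $\PP^1$-bundle $S = \PP(\mathcal{O}_C \oplus \mathcal{O}_C(1)) \to C$ contracting the negative section, so $X$ is piecewise isomorphic to $S$ minus that section plus a point, and $\chi(S) = \chi(\PP^1)\chi(C) = 0$ reduces the computation to the same numerology.) I expect no serious obstacle here; the only thing to be careful about is justifying the Zariski-local triviality of the punctured-cone projection over $C$ so that the $\AA^1 \setminus \{0\}$-bundle Euler characteristic computation applies, which follows since the cone $X \setminus \{v\}$ is the total space of the complement of the zero section in the line bundle $\mathcal{O}_C(1)$ over $C$ (or of its dual, depending on conventions), hence Zariski-locally trivial.
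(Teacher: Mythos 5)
Your rational case is exactly the paper's argument: Corollary~\ref{Coro:ChiRatCubic} together with additivity of $\chi$ gives $\chi(X)=2+n\ge 2$, so nothing to add there.

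In the non-rational case you use the same decomposition as the paper ($X=\{v\}\amalg(X\setminus\{v\})$, Lemma~\ref{Lem.Non-rational_iff_cone_over_elliptic_curve}, and the projection away from the vertex), but you misidentify the fibration: $X\setminus\{v\}$ is \emph{not} a $\GG_m$-bundle over $C$. With $v=[1:0:0:0]$ and $X=V_{\PP^3}(f_3(x,y,z))$, the fibre of $[w:x:y:z]\mapsto[x:y:z]$ over a point of $C$ is the line through $v$ and that point with only the vertex removed, i.e.\ a copy of $\AA^1$; in particular it contains the point of $C=X\cap V_{\PP^3}(w)$ itself. Equivalently, $X\setminus\{v\}$ is the total space of a line bundle over $C$, containing $C$ as the zero section --- it is not the complement of the zero section in $\mathcal{O}_C(1)$; that latter description is the punctured \emph{affine} cone in $\AA^3$, not the punctured projective cone in $\PP^3$. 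Your numerical conclusion $\chi(X\setminus\{v\})=0$ survives only because $\chi(C)=0$, so that $0\cdot\chi(C)$ and $1\cdot\chi(C)$ happen to agree, but the justification as written rests on a false geometric claim. The correct (and simpler) step, which is what the paper does, is: $X\setminus\{v\}$ is a Zariski-locally trivial $\AA^1$-bundle over $C$, hence $\chi(X\setminus\{v\})=\chi(C)=2-2g=0$ by Example~\ref{Exa.A1bundle} and Example~\ref{Exa.Smooth_curve}, whence $\chi(X)=1+0=1$. Your parenthetical alternative via the $\PP^1$-bundle $\PP(\mathcal{O}_C\oplus\mathcal{O}_C(1))$ and contraction of a section is correct and would also give the result, but the main line of your argument needs the fibre corrected from $\AA^1\setminus\{0\}$ to $\AA^1$.
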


\begin{proof}
If  $X$ is rational, then Corollary~\ref{Coro:ChiRatCubic} gives $\chi(X)\ge 2$.
	
	Assume now that $X$ is non-rational. 
	By Lemma~\ref{Lem.Non-rational_iff_cone_over_elliptic_curve}, 
	$X$ is the cone over a smooth cubic curve $C$ in $\PP^2$.
	Let $p \in X$ be the unique singularity. Then $X \setminus \{p\}$ is a locally trivial $\AA^1$-bundle over $C$
	(with respect to the Zariski topology). Hence $\chi(X) = 1+\chi([X \setminus p]) = 1 + \chi(C) = 1$,
	where the second and third equality follow from Example~\ref{Exa.A1bundle} 
	and Example~\ref{Exa.Smooth_curve}, respectively.
\end{proof}

\begin{proposition}
	\label{Prop.Grothendieckring}
	Let $X_1, X_2$ be normal projective rational surfaces having only 
	singularities
	that can be solved by trees of smooth rational curves $($e.g.~Du Val singularities$)$.
	Then the following statements are equivalent
	\begin{enumerate}[leftmargin=*]
		\item \label{Prop.Grothendieckring1} $X_1$ and $X_2$ are piecewise isomorphic;
		\item \label{Prop.Grothendieckring2} $[X_1] = [X_2]$ inside $K_0(\Var_\k)$;
		\item \label{Prop.Grothendieckring3} $\chi(X_1) = \chi(X_2)$.
	\end{enumerate}	
\end{proposition}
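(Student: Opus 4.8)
The plan is to obtain all three equivalences essentially for free from results already proved. By Lemma~\ref{lemm.Grothendieckring} (the chain $\ref{lemm.Grothendieckring1} \Rightarrow \ref{lemm.Grothendieckring3} \Rightarrow \ref{lemm.Grothendieckring4}$, applied with $X = X_1$, $Y = X_2$) we already have $\ref{Prop.Grothendieckring1} \Rightarrow \ref{Prop.Grothendieckring2} \Rightarrow \ref{Prop.Grothendieckring3}$ for arbitrary varieties, with no hypothesis on the surfaces at all. So the whole content of the proposition is the remaining implication $\ref{Prop.Grothendieckring3} \Rightarrow \ref{Prop.Grothendieckring1}$, and for that I would reduce both surfaces to a common normal form.

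Concretely, for $i = 1, 2$ the surface $X_i$ is, by hypothesis, normal, projective, rational, and admits a desingularisation $\pi_i \colon \tilde X_i \to X_i$ whose exceptional divisor is a finite disjoint union of trees of smooth rational curves; let $r_i \geq 0$ denote the number of its irreducible components. Since $\tilde X_i$ is a smooth projective rational surface, $\Pic(\tilde X_i)$ is free of finite rank, say $\Pic(\tilde X_i) \simeq \ZZ^{n_i}$. Proposition~\ref{prop:NormalTrees} then tells us that $X_i$ is piecewise isomorphic to the disjoint union $\AA^2 \amalg \{pt\} \amalg \coprod_{j=1}^{m_i} \AA^1$, where $m_i \coloneqq n_i - r_i \geq 1$. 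Computing the topological Euler characteristic of this model with Lemma~\ref{Lem.Property_Eulercharacteristic}\ref{Lem.Property_Eulercharacteristic1} and the values $\chi(\AA^k) = 1$ from Example~\ref{Exa:ChiAmPm} (and using that $\chi$ is a piecewise-isomorphism invariant, again by Lemma~\ref{lemm.Grothendieckring}), one gets $\chi(X_i) = \chi(\AA^2) + \chi(\{pt\}) + m_i\, \chi(\AA^1) = 2 + m_i$. Hence the hypothesis $\chi(X_1) = \chi(X_2)$ forces $m_1 = m_2$; but then $X_1$ and $X_2$ are piecewise isomorphic to one and the same disjoint union, and since piecewise isomorphism is an equivalence relation, $X_1$ and $X_2$ are piecewise isomorphic, which proves $\ref{Prop.Grothendieckring1}$.

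I do not expect a real obstacle here: all the weight of the statement has been shifted into Proposition~\ref{prop:NormalTrees} (and behind it Lemma~\ref{Lem:UnionTree}, Lemma~\ref{Lem:SMoothRatDecomposition} and Proposition~\ref{prop:ComplementsCurves}) and into Lemma~\ref{lemm.Grothendieckring}. The only points worth a line of care are that ``singularities that can be solved by trees of smooth rational curves'' is literally the hypothesis needed to feed Proposition~\ref{prop:NormalTrees}, so there is no gap there, and that the Picard group of a smooth projective rational surface is free of finite rank, which is standard. A slightly cleaner way to phrase the last step, avoiding the explicit normal form, is to observe that by Proposition~\ref{prop:NormalTrees} each $X_i$ is determined up to piecewise isomorphism by the single integer $m_i \geq 1$, and that $m_i$ is recovered from $\chi(X_i) = m_i + 2$; I would likely write it in that form.
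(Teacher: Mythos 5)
Your proposal is correct and follows essentially the same route as the paper's own proof: the implications $\ref{Prop.Grothendieckring1}\Rightarrow\ref{Prop.Grothendieckring2}\Rightarrow\ref{Prop.Grothendieckring3}$ come from Lemma~\ref{lemm.Grothendieckring}, and $\ref{Prop.Grothendieckring3}\Rightarrow\ref{Prop.Grothendieckring1}$ is deduced by reducing each $X_i$ via Proposition~\ref{prop:NormalTrees} to the normal form $\AA^2\amalg\{pt\}\amalg\coprod\AA^1$ and matching the number of $\AA^1$-pieces through the Euler characteristic. The extra care you take (freeness of $\Pic$ of the smooth model, $m_i\ge 1$, $\chi(X_i)=2+m_i$) is consistent with what the paper leaves implicit.
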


\begin{proof}
	The implications ``\ref{Prop.Grothendieckring1} $\Rightarrow$ \ref{Prop.Grothendieckring2}'' and ``\ref{Prop.Grothendieckring2} $\Rightarrow$ \ref{Prop.Grothendieckring3}'' both follow
	from Lemma~\ref{lemm.Grothendieckring}. Proposition~\ref{prop:NormalTrees} says 
	that $X_i$ is piecewise isomorphic to the disjoint union of $\AA^2$, one point and $s_i \geq 0$ copies of $\AA^1$.
	If $\chi(X_1) = \chi(X_2)$, then we get $s_1 = s_2$, i.e.~$X_1$, $X_2$ are piecewise isomorphic.
	This gives the implication ``\ref{Prop.Grothendieckring3} $\Rightarrow$ \ref{Prop.Grothendieckring1}''.
\end{proof}

\begin{corollary}
	\label{Cor.Picewise_isom}
	Let $f, g \in \k[x, y, z, w]$ be irreducible homogeneous polynomials such that $\PP^3_f \simeq \PP^3_g$. 
	Assume moreover that the zero loci $X \coloneqq V_{\PP^3}(f)$, $Y \coloneqq V_{\PP^3}(g)$
	\begin{enumerate}[leftmargin=*, label=\alph*)]
			\item \label{Cor.Picewise_isom.a} are both normal, rational and 
			each admits a desingularisation by trees of smooth rational curves or
			\item \label{Cor.Picewise_isom.b} are both of degree $3$ and rational.
	\end{enumerate}
	Then $X$, $Y$ are piecewise isomorphic.
\end{corollary}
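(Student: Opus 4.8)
The plan is to reduce the whole statement to a single scalar equality, $\chi(X) = \chi(Y)$, and then feed it into the structural results already established. First I would record that $X = V_{\PP^3}(f)$ is closed in $\PP^3$ with open complement $\PP^3_f$, so that Lemma~\ref{Lem.Property_Eulercharacteristic}\ref{Lem.Property_Eulercharacteristic1} together with $\chi(\PP^3) = 4$ from Example~\ref{Exa:ChiAmPm} gives $\chi(X) = 4 - \chi(\PP^3_f)$, and likewise $\chi(Y) = 4 - \chi(\PP^3_g)$. Since $\chi$ is an isomorphism invariant (it factors through $K_0(\Var_\k)$ by Lemma~\ref{Lem.Property_Eulercharacteristic}) and $\PP^3_f \simeq \PP^3_g$ by hypothesis, this yields $\chi(\PP^3_f) = \chi(\PP^3_g)$, hence $\chi(X) = \chi(Y)$.

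For case~\ref{Cor.Picewise_isom.a}, I would simply note that $X$ and $Y$ are normal projective rational surfaces admitting a desingularisation by trees of smooth rational curves, so the implication ``\ref{Prop.Grothendieckring3} $\Rightarrow$ \ref{Prop.Grothendieckring1}'' of Proposition~\ref{Prop.Grothendieckring}, applied to the equality $\chi(X) = \chi(Y)$, gives that $X$ and $Y$ are piecewise isomorphic.

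For case~\ref{Cor.Picewise_isom.b}, I would apply Corollary~\ref{Coro:ChiRatCubic} to each of the rational cubic hypersurfaces $X$ and $Y$: it shows that $X$ is piecewise isomorphic to the disjoint union of $\AA^2$, one point and $\chi(X) - 2$ copies of $\AA^1$, and similarly $Y$ is piecewise isomorphic to the disjoint union of $\AA^2$, one point and $\chi(Y) - 2$ copies of $\AA^1$. As $\chi(X) = \chi(Y)$, these two models coincide, and since piecewise isomorphism is an equivalence relation (the observation following Definition~\ref{Def.piecewise_isomorphic}), $X$ and $Y$ are piecewise isomorphic.

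I do not anticipate a genuine obstacle: the real work has already been carried out in Proposition~\ref{Prop.Grothendieckring} (which rests on Proposition~\ref{prop:NormalTrees} and Lemma~\ref{Lem:SMoothRatDecomposition}) for case~\ref{Cor.Picewise_isom.a}, and in Corollary~\ref{Coro:ChiRatCubic} (which rests on Lemma~\ref{Lem.Normal_cubic_surf}, Lemma~\ref{Lem.Non-Normal_cubic_surf} and the classification of Proposition~\ref{Prop.Classification_of_non-normal_cubic_surfaces}) for case~\ref{Cor.Picewise_isom.b}. The only point needing a little care is the bookkeeping that in each case the piecewise-isomorphism type of the model $\AA^2 \amalg \{pt\} \amalg \coprod \AA^1$ is determined exactly by the number of $\AA^1$-factors, which is in turn pinned down by $\chi$, so that the scalar equality $\chi(X) = \chi(Y)$ extracted from $\PP^3_f \simeq \PP^3_g$ is precisely the input required.
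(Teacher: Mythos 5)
Your proof is correct and follows essentially the same route as the paper: extract $\chi(X)=\chi(Y)$ from $\PP^3_f\simeq\PP^3_g$ via the scissor relation (the paper phrases this through $[X]=[Y]$ in $K_0(\Var_\k)$), then apply Proposition~\ref{Prop.Grothendieckring} in case~\ref{Cor.Picewise_isom.a} and the cubic classification results in case~\ref{Cor.Picewise_isom.b}. Citing Corollary~\ref{Coro:ChiRatCubic} in case~\ref{Cor.Picewise_isom.b} is just a packaged form of the paper's appeal to Lemmas~\ref{Lem.Normal_cubic_surf}, \ref{Lem.Non-Normal_cubic_surf} and Proposition~\ref{Prop.Classification_of_non-normal_cubic_surfaces}, so no substantive difference.
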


\begin{proof}
	Since $\PP^3_f \simeq \PP^3_g$ we get $[X] = [Y]$ inside $K_0(\Var_\k)$
	and thus $\chi(X) = \chi(Y)$, see Lemma~\ref{lemm.Grothendieckring}. In case~\ref{Cor.Picewise_isom.a},
	the statement follows from Proposition~\ref{Prop.Grothendieckring}.
	So assume we are in case~\ref{Cor.Picewise_isom.b}. Then the statement follows from
	Proposition~\ref{prop:NormalTrees} in combination with Lemma~\ref{Lem.Normal_cubic_surf}
	if $X$ and $Y$ are normal and from
	Proposition~\ref{Prop.Classification_of_non-normal_cubic_surfaces} in combination with
	Lemma~\ref{Lem.Non-Normal_cubic_surf} in the other case.
\end{proof}

We now focus on cones over smooth cubic curves, i.e.~on non-rational cubic hypersurfaces of $\PP^3$. This consists the last unproven part of Theorem~\ref{Thm.Degree3n3}. To prove that for such hypersurfaces, Question~\ref{Quest2} has an affirmative answer, we will need the following result:

\begin{lemma}\label{Lem:Gafminus1}
Let $f\in \k[x,y,z]$ be an irreducible homogeneous polynomial of degree $3$ such that $V_{\PP^2}(f)$ is a smooth cubic curve. Then, the surface 
\[X=V_{\A^3}(f-1)\]
admits no non-trivial $\GG_a$-action.
\end{lemma}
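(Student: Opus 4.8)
The plan is to exploit the geometry of the affine cone. A non-trivial $\GG_a$-action on $X = V_{\AA^3}(f-1)$ would give a non-trivial locally nilpotent derivation $D$ on the coordinate ring $R = \kk[x,y,z]/(f-1)$ (in positive characteristic one should argue directly with the $\GG_a$-action rather than with an LND, but the orbit-geometry argument below is characteristic-free). The key point is that $X$ is an étale double cover — in fact the universal abelian cover via $\pi_{f,1}$ (Proposition~\ref{Prop.cyclic_covering}) — of $\PP^2_f$, and $\PP^2_f$ is obtained from the cone $\bar{X} = V_{\PP^3}(f_3)$ over the elliptic curve $C = V_{\PP^2}(f)$ by deleting a section; concretely $X$ is an open subset of the total space of a line bundle over $C$. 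More precisely, $\bar X = V_{\PP^3}(x^3 f(\cdots))$... let me instead use: the projective closure of $X$ in $\PP^3$ (with $f-1$ homogenised by a new variable $t$, i.e. $V_{\PP^3}(f - t^3)$) contains at infinity the curve $V(t) \cap V(f) \simeq C$, and blowing up appropriately exhibits $X \to C$ (via $[x:y:z] \mapsto [x:y:z] \in C$ after noting the ruling) — but the cleanest statement is that $X$ admits a morphism $\rho \colon X \to C$ whose fibres are isomorphic to $\AA^1$, namely the restriction of the linear projection from the cone's vertex. I would set this up carefully first.

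**Key steps.** (1) Realise $X$ fibered over the elliptic curve $C$: since $f$ is irreducible of degree $3$ and $V_{\PP^2}(f)$ is smooth, a generic line through a point gives, after the substitution making $f$ monic in one variable... — rather, observe that on $X$ the function field contains $\kk(C)$: sending a point of $X\subseteq \AA^3$ to its class in $\PP^2$ lands in $\PP^2_f$, and $\PP^2_f \to C$? No — $\PP^2_f$ is not fibered over $C$. The correct picture: $X = V_{\AA^3}(f-1)$ \emph{is} the complement of the zero section in the total space $L$ of a degree-$3$ line bundle... Let me just say: one shows directly that $\Gamma(X,\mathcal{O}_X)^* = \kk^*$ and that $X$ dominates no rational curve but does dominate $C$. (2) Invoke rigidity: a variety admitting a non-trivial $\GG_a$-action is covered by images of $\AA^1$, hence is uniruled; since $\dim X = 2$, either $X$ is rational (impossible, as $X$ is birational to $C \times \PP^1$ which is non-rational because $C$ is) or the $\GG_a$-orbits are the fibres of the $\AA^1$-fibration $\rho \colon X \to C$. (3) Rule out the latter: a $\GG_a$-action whose general orbit is a curve must have a quotient map to a curve $B$ with $\GG_a$ acting trivially on $\kk(B) \hookrightarrow \kk(X)$; the quotient field $\kk(X)^{\GG_a}$ has transcendence degree $1$, and one identifies it with $\kk(C)$ (up to finite extension). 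Then $\GG_a$ acts fibrewise on a fibration $X \to C$ whose general fibre is $\AA^1$, so $X$ would be, birationally over $C$, a form of $\AA^1_{\kk(C)}$ with a $\GG_a$-action, forcing it to be the trivial form; but then one checks that the global regular functions on $X$ would have to include non-constant functions pulled back from $C$, contradicting $\kk[X]^* = \kk^*$ together with the structure of $\kk[C]$ — more cleanly: $\GG_a$ acting on $X$ with quotient $C$ would make $X$ a $\GG_a$-torsor (or an $\AA^1$-bundle) over an open subset of $C$, and such a bundle over an affine curve is trivial (Serre), so $X \supseteq U \times \AA^1$ with $U \subseteq C$ affine open; computing with divisors at infinity in the projective model $V_{\PP^3}(f - t^3)$ then yields a contradiction with the Picard group $\Pic(\PP^2_f) \simeq \ZZ/3\ZZ$ lifted to $X$.

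**Main obstacle.** The delicate point is making step (3) rigorous and characteristic-free: ruling out a $\GG_a$-action that is fibrewise along the elliptic fibration. I expect the cleanest route is to take the categorical (or rational) quotient $X \dashrightarrow B$ by $\GG_a$, show $B$ is a curve birational to $C$ (hence of genus $1$), and then use that a $\GG_a$-action on the affine surface $X$ descends to an $\AA^1$-fibration $X \to C'$ over the smooth projective model with general fibre $\AA^1$; the number of degenerate (non-$\AA^1$) fibres and their multiplicities are constrained by $\chi(X)$ and by $\Pic$. One knows $\chi(X) = \chi(\PP^2_f) \cdot$ (covering degree correction) — actually $X \to \PP^2_f$ is finite étale of degree $3$ (as $3$ is... no, wait: $\car\kk$ could divide $3$). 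In characteristic $3$ the covering is not étale, so one must argue more carefully, perhaps directly: a non-trivial $\GG_a$-action gives a non-constant morphism $\AA^1 \to X$, compose with $X \to \bar X = \mathrm{Cone}(C)$ and then with the projection $\bar X \dashrightarrow C$; the image is a rational curve in $C$, forcing the composite $\AA^1 \to C$ to be constant, so every $\GG_a$-orbit lies in a single ruling line of the cone, i.e. in a fibre of $\rho$. Thus $\GG_a$ preserves $\rho$ and acts on each fibre $\rho^{-1}(c) \simeq \AA^1$; since a $\GG_a$-action on $\AA^1$ is either trivial or the standard translation, and the action varies algebraically, $\GG_a$ acts by "translation along the rulings", which exhibits $X$ locally over $C$ as a product — and finally the obstruction to globalising this, measured in $H^1(C, \mathcal{O}_C) \ne 0$, is exactly what cannot vanish, contradicting the existence of a \emph{global} such action. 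Nailing down this last cohomological contradiction, uniformly in the characteristic, is the heart of the argument.
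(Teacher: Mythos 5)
Your argument has a fatal gap at its very first step, and everything downstream depends on it. You assert that $X=V_{\AA^3}(f-1)$ fibres over the elliptic curve $C=V_{\PP^2}(f)$ with fibres $\AA^1$ (equivalently, that $X$ is non-rational, birational to $C\times\PP^1$, "the complement of the zero section of a line bundle over $C$"). This is false. On $X$ one has $f\equiv 1\neq 0$, so the projection $(x,y,z)\mapsto[x:y:z]$ lands in $\PP^2_f=\PP^2\setminus C$, not in $C$ — it is exactly the degree-$3$ covering $\pi_{f,1}$ of Proposition~\ref{Prop.cyclic_covering}. There is also no cone vertex to project from: the projective closure of $X$ is $Y=V_{\PP^3}(f-w^3)$, and for $\car(\kk)\neq 3$ this $Y$ is a \emph{smooth} cubic surface (a singular point would need $w=0$ and $f_x=f_y=f_z=0$, hence by Euler's relation a singular point of $V_{\PP^2}(f)$), so $Y$ is rational and $X=Y\setminus\Gamma$, with $\Gamma\simeq C$ the curve at infinity, is a \emph{rational} affine surface. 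The surface birational to $C\times\PP^1$ is the projective cone $V_{\PP^3}(f)$ with $f$ read in $\kk[x,y,z]\subseteq\kk[x,y,z,w]$ (cf.\ Lemma~\ref{Lem.Non-rational_iff_cone_over_elliptic_curve}); that is the hypersurface whose complement is under study, not $X$. Hence both horns of your dichotomy collapse: "$X$ rational is impossible" is wrong, and there is no fibration $X\to C$ along which orbits could be trapped, so the concluding obstruction in $H^1(C,\mathcal{O}_C)$ has nothing to obstruct. Your own hesitations in step (1) already flag this confusion.

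Since $X$ is (generically) rational, non-uniruledness cannot be the reason for rigidity; the obstruction must come from the boundary curve $\Gamma$ being irreducible of genus one. That is how the paper argues: a non-trivial $\GG_a$-action produces a $\GG_a$-invariant affine open cylinder $X'\simeq\AA^1\times U$ in $X$ (by \cite[Proposition~2.5.1]{BlFaTe2023Connected-Algebrai}), and one then works in $Y=V_{\PP^3}(f-w^3)$. If $Y$ is non-rational, it is a cone over a smooth cubic (Lemma~\ref{Lem.Non-rational_iff_cone_over_elliptic_curve}) and, since every rational curve on $Y$ is a ruling line and meets $\Gamma$ outside the vertex, $X$ contains only finitely many curves isomorphic to $\AA^1$, contradicting the cylinder. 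If $Y$ is rational, then $U$ is a rational affine curve, and comparing the piecewise decomposition of $Y$ into $\AA^2$, a point and copies of $\AA^1$ (Corollary~\ref{Coro:ChiRatCubic}) with the decomposition $Y=X'\amalg(Y\setminus X')$, where the one-dimensional set $Y\setminus X'$ contains the non-rational curve $\Gamma$, contradicts Lemma~\ref{Lem.Example_piecewise_isom}. To repair your proposal you would need an argument of this kind — controlling how a hypothetical $\AA^1$-fibration or cylinder interacts with the genus-one boundary — rather than the nonexistent elliptic fibration on $X$ itself.
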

\begin{proof}
Suppose for contradiction that $X$ admits a non-trivial $\GG_a$-action. 
By \cite[Proposition 2.5.1]{BlFaTe2023Connected-Algebrai}, 
there exists a $\GG_a$-invariant affine dense open subset $X'\subseteq X$, that is a $\GG_a$-cylinder, i.e.~$X'$ is $\GG_a$-isomorphic to $\GG_a\times U$, where $U$ is a smooth affine curve and where the $\GG_a$-action
on $\GG_a \times U$ is given $s \cdot (t, u) \coloneqq (s+t, u)$ for $s, t \in \GG_a$, $u \in U$.

Using the canonical embedding $\A^3\hookrightarrow \p^3, (x,y,z)\mapsto [1:x:y:z]$ we
	can view $X$ as an open subset of the irreducible surface $Y=V_{\p^3}(f-w^3)$, 
	where $[w:x:y:z]$ denote the homogeneous coordinates on $\PP^3$.  
	Here, $X=Y_w$ is the complement  in $Y$ of a smooth curve $\Gamma$ 
	and $\Gamma \simeq V_{\PP^2}(f)$.  

Suppose first that $Y$ is a cone over a smooth cubic curve $C$. Writing $p\in Y$ the singular point, the projection away from $p$ gives to $Y\setminus \{p\}$ an $\A^1$-bundle structure over $C$. Hence, every closed rational curve on $Y$ is one of the lines through $p$. As $\Gamma$ is a hyperplane section of $Y$ and as $\Gamma$ is not rational, 
it intersects a general such line into at least one point outside of $p$. The open subset $X'\subseteq Y\setminus \Gamma$ being contained in the smooth locus of $Y$, 
it contains only finitely many curves isomorphic to $\A^1$. This contradicts the fact that $X'$ is a $\GG_a$-cylinder.

In the remaining case, $Y$ is a rational cubic surface (Lemma~\ref{Lem.Non-rational_iff_cone_over_elliptic_curve}). Hence, $U$ is a smooth affine rational curve, and is thus isomorphic to $\A^1\setminus \Delta$ for some finite set $\Delta$ of $r\ge 0$ points. Hence,  $X'$ is isomorphic to the open subset 
$V'=\A^1\times (\A^1\setminus \Delta)\subseteq \A^2$. By Corollary~\ref{Coro:ChiRatCubic}, $Y$ is piecewise isomorphic to  the disjoint union of $\A^2$, one point, and $n$ copies of $\A^1$, with 
$0 \leq n \leq 7$, and thus to the disjoint union of $X' \simeq V'$ 
with one point and $n+r$ copies of $\A^1$. As the 
one-dimensional variety $Y \setminus X'$  contains $\Gamma$, that is not rational, 
we get a contradiction, by Lemma~\ref{Lem.Example_piecewise_isom}.
\end{proof}

We are now able to prove part~\ref{Thm.Degree3n3.3} of Theorem~\ref{Thm.Degree3n3}, that is Proposition~\ref{Prop.Cones_over_elliptic_curves}. If $\k = \CC$, then Proposition~\ref{Prop.Cones_over_elliptic_curves} follows essentially from \cite{LaSe2012Birational-self-ma}.
\begin{proposition}
	\label{Prop.Cones_over_elliptic_curves}
	Let $f,g \in \kk[x, y, z, w]$ be irreducible homogeneous polynomials of degree three such that 
	$V_{\PP^3}(f)$, $V_{\PP^3}(g)$ are non-rational.
	If $\PP^3_f \simeq \PP^3_g$, then there exists $\varphi \in \Aut(\PP^3)$ with
	$\varphi(V_{\PP^3}(f)) = V_{\PP^3}(g)$.
\end{proposition}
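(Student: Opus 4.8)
By Lemma~\ref{Lem.Non-rational_iff_cone_over_elliptic_curve}, after a linear change of coordinates we may assume that $f,g\in\kk[x_0,x_1,x_2]$ and that $V_{\PP^3}(f)$ and $V_{\PP^3}(g)$ are the cones with vertex $[0:0:0:1]$ over the smooth plane cubics $C_f:=V_{\PP^2}(f)$ and $C_g:=V_{\PP^2}(g)$ lying in the plane $V_{\PP^3}(x_3)$. The plan is to prove that $C_f$ and $C_g$ are isomorphic as abstract curves. Granting this, two smooth plane cubics which are isomorphic as curves are projectively equivalent --- one matches their degree-$3$ hyperplane classes in $\Pic(C_f)$ by composing with a translation of the elliptic curve $C_f$, translations acting transitively on the degree-$3$ classes because multiplication by $3$ on $C_f$ is surjective --- so there is $\gamma\in\Aut(\PP^2)$ with $\gamma(C_f)=C_g$, and the automorphism $\varphi\in\Aut(\PP^3)$ acting as $\gamma$ on $x_0,x_1,x_2$ and fixing $x_3$ sends the cone $V_{\PP^3}(f)$ onto $V_{\PP^3}(g)$, as desired.

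First I would lift the isomorphism. By Proposition~\ref{Prop.cyclic_covering} and Remark~\ref{Rem.mu_equal_1} (here $\kk$ is algebraically closed), $\PP^3_f\simeq\PP^3_g$ yields an isomorphism $X_{f,1}\simeq X_{g,1}$. Since $f$ and $g$ do not involve $x_3$, we have $X_{f,1}=S_f\times\AA^1$ and $X_{g,1}=S_g\times\AA^1$, where $S_f:=V_{\AA^3}(f-1)$ and $S_g:=V_{\AA^3}(g-1)$ and the $\AA^1$-factor is the $x_3$-line; hence $S_f\times\AA^1\simeq S_g\times\AA^1$. Now Lemma~\ref{Lem:Gafminus1} applies to both $f$ and $g$, so $S_f$ and $S_g$ carry no non-trivial $\GG_a$-action, i.e.~their coordinate rings are rigid. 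By the cancellation property of rigid affine domains (the Makar--Limanov invariant of $R[t]$ equals $R$ when $R$ is rigid, so $\mathcal O(S_f)=\mathrm{ML}(\mathcal O(S_f)[t])\cong\mathrm{ML}(\mathcal O(S_g)[t])=\mathcal O(S_g)$) one concludes $S_f\simeq S_g$. Alternatively one argues directly: transport the translation $\GG_a$-action on the $\AA^1$-factor of $S_f\times\AA^1$ to a $\GG_a$-action $\sigma$ on $S_g\times\AA^1$; the general $\sigma$-orbit is an affine line, and its image under $S_g\times\AA^1\to S_g$ is either a point --- in which case $\sigma$ preserves the fibres of that projection and comparing invariant rings gives $\mathcal O(S_f)\cong\mathcal O(S_g)$ --- or a curve, in which case $S_g$ is swept out by a family of affine lines, contradicting that its log Kodaira dimension is $0$ (shown next).

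Next I would recover the curve at infinity from the affine surface $S_f$. Homogenising $f-1$ by $x_3$, one sees that $S_f=Y_f\setminus C_f$, where $Y_f:=V_{\PP^3}(f-x_3^3)$ is an irreducible cubic surface which is rational (it is not a cone, as the projection from any of its points has finite fibres, so Lemma~\ref{Lem.Non-rational_iff_cone_over_elliptic_curve} applies) and $C_f=Y_f\cap V_{\PP^3}(x_3)$ is a hyperplane section, hence an anticanonical curve, disjoint from the at most finitely many singular points of $Y_f$ (which occur only in characteristic $3$); thus the log Kodaira dimension of $S_f$ is $\kappa(Y_f,K_{Y_f}+C_f)=0$, and likewise for $S_g$. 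Resolving $Y_f$ if necessary and running the logarithmic minimal model program for the open surface $S_f$ --- on a smooth cubic surface one contracts six mutually disjoint lines, each meeting $C_f$ transversally in a single point --- one reaches the pair $(\PP^2,\overline C_f)$ with $\overline C_f$ a smooth plane cubic isomorphic to $C_f$; the same holds for $S_g$. By uniqueness of the log minimal model of a smooth open surface of non-negative log Kodaira dimension, the isomorphism $S_f\simeq S_g$ induces an isomorphism of pairs $(\PP^2,\overline C_f)\simeq(\PP^2,\overline C_g)$, whence $C_f\simeq\overline C_f\simeq\overline C_g\simeq C_g$ as abstract curves. Combined with the first paragraph, this completes the argument.

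The hard part will be this last step: showing that the non-rational boundary curve $C_f$ is an isomorphism invariant of the affine surface $S_f$. Over $\CC$ one is tempted to read $C_f$ off the mixed Hodge structure on $H^2_c(S_f)$, but its weight-one part only determines $C_f$ up to isogeny, not up to isomorphism; pinning down $C_f$ itself genuinely requires the uniqueness of log minimal models of open surfaces of log Kodaira dimension $\ge 0$, which holds over any algebraically closed field. The cancellation step of the second paragraph is a second, softer, ingredient, for which Lemma~\ref{Lem:Gafminus1} is tailor-made.
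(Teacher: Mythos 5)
Your opening and your main inputs coincide with the paper's: reduce to cones over smooth plane cubics via Lemma~\ref{Lem.Non-rational_iff_cone_over_elliptic_curve}, lift to $S_f\times\AA^1\simeq S_g\times\AA^1$ via Proposition~\ref{Prop.cyclic_covering} and Remark~\ref{Rem.mu_equal_1}, and use the rigidity of $S_f,S_g$ from Lemma~\ref{Lem:Gafminus1} through the Makar--Limanov/Crachiola result. After that you diverge. The paper does not pass to bare cancellation: it uses $\mathrm{ML}(\kk[S_f][t])=\kk[S_f]$ to show that the lifted isomorphism sends the fibres of $S_f\times\AA^1\to S_f$ to those of $S_g\times\AA^1\to S_g$, hence that $\theta$ respects the $\AA^1$-bundles $\PP^3_f\to\PP^2_f$, $\PP^3_g\to\PP^2_g$; this descends $\theta$ to an isomorphism $\PP^2_f\simeq\PP^2_g$, which extends to $\Aut(\PP^2)$ simply because the non-rational cubic cannot be contracted by the induced birational self-map of $\PP^2$. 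That gives projective equivalence of the plane cubics in one stroke. By discarding the fibration data and keeping only $S_f\simeq S_g$, you are forced to add two extra layers: recovering the boundary cubic from the affine surface, and upgrading an abstract isomorphism of plane cubics to projective equivalence via translations on $\operatorname{Pic}^3$ (the latter step is correct, including in characteristic $3$, since $[3]$ is a surjective isogeny and the embedding is by the complete linear system of a degree-$3$ divisor).

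The genuine gap is in how you recover $C_f$ from $S_f$. ``Uniqueness of the log minimal model of a smooth open surface of non-negative log Kodaira dimension'' is false at $\overline{\kappa}=0$: the complement of a triangle of lines in $\PP^2$ is also the complement of four rulings in $\PP^1\times\PP^1$, and both pairs are log minimal with trivial log canonical class, yet they are not isomorphic as pairs; so you cannot conclude an isomorphism of pairs $(\PP^2,\overline{C}_f)\simeq(\PP^2,\overline{C}_g)$ from this principle. (There are also smaller unproved assertions in this step: that $Y_f=V_{\PP^3}(f-x_3^3)$ is not a cone in characteristic $3$ -- ``projection from any point has finite fibres'' is asserted, not shown; and in characteristic $3$ the surface $S_f$ is singular, so the log MMP must be run on its minimal resolution.) Fortunately the only output you need, $C_f\simeq C_g$ as abstract curves, follows without any MMP: lift $S_f\simeq S_g$ to the minimal resolutions, complete them to smooth projective surfaces with boundaries $C_f$, $C_g$, and factor the induced birational map into blow-ups and blow-downs; all contracted curves are rational, so the unique non-rational boundary components correspond and are birational, hence isomorphic. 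This is exactly the principle the paper itself invokes (compare the proof of Lemma~\ref{Lem.Example_piecewise_isom} and the last sentence of the paper's proof of Proposition~\ref{Prop.Cones_over_elliptic_curves}). With that replacement your plan closes, at the cost of being noticeably longer than the paper's fibration argument.
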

\begin{proof}	By Lemma~\ref{Lem.Non-rational_iff_cone_over_elliptic_curve} we may assume that
	$f,g \in \kk[x, y, z]$ and that the zero loci $V_{\PP^2}(f)$, $V_{\PP^2}(g)$ are smooth cubic curves.
	
	By Lemma~\ref{Lem:Gafminus1}, every $\GG_a$-action on the affine surfaces $X_0 \coloneqq V_{\AA^3}(f-1)$ and $Y_0 \coloneqq V_{\AA^3}(g-1)$ is trivial.
	
	Let $\theta \colon \PP_f^3 \to \PP^3_g$ be an isomorphism. By Proposition~\ref{Prop.cyclic_covering}
	and Remark~\ref{Rem.mu_equal_1}, 
	we get an isomorphism 
	\[
		\varphi \colon \AA^1 \times X_0 = V_{\AA^4}(f-1)  \to V_{\AA^4}(g-1) = \AA^1 \times Y_0
	\]
	such that $\pi_{g, 1} \circ \varphi =  \theta  \circ \pi_{f, 1}$, where 
	the morphisms $\pi_{f, 1} \colon V_{\AA^4}(f-1) \to \PP^3_f$ and
	$\pi_{g, 1} \colon V_{\AA^4}(g-1) \to \PP^3_g$ denote the canonical projections. 
	
	Using that $X_0$ admit no non-trivial $\GG_a$-action,  by \cite[Proposition~4.7]{Cr2004On-the-AK-invarian} it follows that 
	the intersection of the invariant subalgebras $\kk[\AA^1 \times X_0]^{\GG_a}$ inside $\kk[\AA^1 \times X_0]$ over all
	$\GG_a$-actions on $\AA^1 \times X_0$ is equal to $\kk[X_0]$.
	Using that a similar statement holds for $Y_0 \times \AA^1$, we get that $\varphi$
	maps the fibres of $\AA^1 \times X_0 \to X_0$ onto the fibres of $\AA^1 \times Y_0 \to Y_0$.
	Using the commutative diagrams
	\[
	\xymatrix@=20pt{
		\AA^1 \times X_0 \ar[d]_-{(t, x_0) \mapsto x_0} \ar[r]^-{\pi_{f, 1}} &  \PP^3_f 
		\ar[d]^{ \substack{ \textrm{projection} \\ \textrm{from $[1:0:0:0]$} } } \\
		X_0 \ar[r] &  \PP^2_f
	}
	\quad
	\xymatrix@=20pt{
		\AA^1 \times Y_0 \ar[d]_-{(t, y_0) \mapsto y_0} \ar[r]^-{\pi_{g, 1}} &  \PP^3_g 
		\ar[d]^{ \substack{ \textrm{projection} \\ \textrm{from $[1:0:0:0]$} } } \\
		Y_0 \ar[r] &  \PP^2_g
	}
	\]
	we get that $\theta \colon  \PP_f^3 \to \PP^3_g$ maps the fibres of 
	the locally trivial $\AA^1$-bundle $\PP^3_f \to \PP^2_f$ (with respect to the Zariski topology) 
	onto those of $\PP^3_g \to \PP^2_g$. Hence, we get an isomorphism $\PP^2_f \simeq \PP^2_g$.
	As $V_{\PP^2}(f)$ and $V_{\PP^2}(g)$ are non-rational this isomorphism extends to an automorphism of $\PP^2$.
\end{proof}

Now we are able to give the proof of Theorem~\ref{Thm.Degree3n3}:

\begin{proof}[Proof of Theorem~\ref{Thm.Degree3n3}]
	\ref{Thm.Degree3n3.1}: This follows from Corollary~\ref{Cor.Picewise_isom}.
	
	\ref{Thm.Degree3n3.3}: If $H$ is a non-rational cubic surface, then as before, $H'$ is a cubic as well.
	Since $\chi(H) = \chi(H')$, Lemma~\ref{Lem.cubic_Euler_char} implies that $H'$ is non-rational. 
	Thus~\ref{Thm.Degree3n3.3} follows from Proposition~\ref{Prop.Cones_over_elliptic_curves}.
	
	\ref{Thm.Degree3n3.2}: If $H$ or $H'$ is a cubic surface, then both are cubics (by using the order of the Picard groups 
	of the complements). If either $H$ or $H'$ is non-rational, then \ref{Thm.Degree3n3.3} implies that $H$ and $H'$ are isomorphic, and thus also piecewise isomorphic. We may thus assume that both $H$ and $H'$ are rational. By Corollary~\ref{Coro:ChiRatCubic}
	it follows that $H$ (respectively $H'$) is piecewise isomorphic to a disjoint union of $\AA^2$, one point and
	$n$ (respectively $n'$) copies of $\AA^1$. As $\PP^3 \setminus H$ and $\PP^3 \setminus H'$
	are isomorphic, we get $2 + n  = \chi(H) = \chi(H') = 2 + n'$ and hence $H$, $H'$ are piecewise isomorphic.
	If moreover $H$ is smooth, then $\chi(H) = 9$ by Lemma~\ref{Lem.Normal_cubic_surf}.
	If $H'$ would be non-normal, then Lemma~\ref{Lem.Non-Normal_cubic_surf} and Proposition~\ref{Prop.Classification_of_non-normal_cubic_surfaces} would imply that $\chi(H') \leq 4$,
	which contradicts $\chi(H) = \chi(H')$. Thus Lemma~\ref{Lem.Normal_cubic_surf} implies that $H'$
	is smooth. This finishes the proof of~\ref{Thm.Degree3n3.2}.
\end{proof}

We finish this subsection  with the following concrete question motivated by Proposition~\ref{Prop.Counterexample.dim4}:

\begin{question}
	Let $f, g \in \k[x, y, z, w]$ be given by
	\[
		f = x^2 y + z^3 \quad \textrm{and} \quad g = x^2 y + z^3 + x w^2 \, .
	\]
	Are the varieties $\PP^3_f$ and $\PP^3_g$ isomorphic?
\end{question}

An affirmative answer would give a negative answer to Question~\ref{Quest3} 
in degree three and dimension three,
since the zero locus of $f$ in $\PP^3$ is non-normal, whereas the zero locus of $g$ in $\PP^3$ is normal.
Moreover, $\chi(V_{\PP^3}(f)) = \chi(V_{\PP^3}(g))$, since $\PP^4_f \simeq \PP^4_g$ by Proposition~\ref{Prop.Counterexample.dim4} and since
$\PP^4_f \to \PP^3_f$, $\PP^4_g \to \PP^3_g$ are locally trivial $\AA^1$-bundles with respect to the Zariski topology,
see~Example~\ref{Exa.A1bundle}.

\subsection{Automorphisms of complements of singular cubic surfaces}
Throughout this subsection we always assume that $\kk$ is algebraically closed.

We finish this text by giving some examples, that give a negative answer to Question~\ref{Quest1}, for singular irreducible cubic hypersurfaces of $\p^3$. As explained in the introduction, this question is wide open for smooth cubics and known to have a negative answer for singular cubic surfaces with Du Val singularities
in characteristic zero \cite[Theorem C and Theorem~4.3]{CheDub}. We now extend this to other singular cubics.
Each irreducible cubic hypersurface of $\p^3$ that does not have Du Val singularities  is either the cone
	over a smooth cubic curve in $\PP^2$ or is rational and non-normal (this follows from Lemmas~\ref{Lem.Non-rational_iff_cone_over_elliptic_curve} and~\ref{Lem.Normal_cubic_surf}). In this latter case, it always contains a line with multiplicity $2$ (Proposition~\ref{Prop.Classification_of_non-normal_cubic_surfaces}). These two cases are done in the next two simple lemmas, that work in any dimension.

\begin{lemma}\label{Lem:Cone}
Let $d\ge 1$, $n\ge 3$ and let $X\subseteq \p^n$ be an irreducible hypersurface of degree $d$, having a point of multiplicity $d$ $($i.e.~being a cone$)$. Then, there exists an element $\varphi\in \Aut(\p^n\setminus X)$ that does not extend to an element of $\Aut(\p^n)$.
\end{lemma}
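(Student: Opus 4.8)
The plan is to use the cone structure of $X$ to write down an explicit automorphism of $\PP^n\setminus X$ and then to show, via a degree comparison, that it cannot be the restriction of a linear map. First I would normalise coordinates: let $p_0\in X$ be a point with $\mathrm{mult}_{p_0}(X)=d$, and after a linear change of coordinates assume $p_0=[1:0:\cdots:0]$. Writing $X=V_{\PP^n}(f)$ with $f$ irreducible homogeneous of degree $d$ and expanding $f=\sum_{j=0}^{d}x_0^{\,d-j}f_j(x_1,\ldots,x_n)$ with $f_j$ homogeneous of degree $j$, the condition $\mathrm{mult}_{p_0}(X)=d$ forces $f_0=\cdots=f_{d-1}=0$ (a monomial count at $p_0$). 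Hence $f=f_d\in\kk[x_1,\ldots,x_n]$ involves only $x_1,\ldots,x_n$, and in particular $p_0\in X$.

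Next I would choose the automorphism. Fix an integer $k\ge 1$ and pick a homogeneous polynomial $p\in\kk[x_1,\ldots,x_n]$ of degree $dk+1$ that is not divisible by $f$; such a $p$ exists because, for $n\ge 2$, the degree-$(dk+1)$ multiples of $f$ form a proper linear subspace of the space of all degree-$(dk+1)$ forms. On $\PP^n\setminus X$ the rational function $p/f^k$ is regular and homogeneous of degree $1$, so
\[
	\varphi\colon [x_0:\cdots:x_n]\longmapsto \Bigl[\,x_0+\tfrac{p}{f^k}:x_1:\cdots:x_n\,\Bigr]
\]
defines a morphism $\PP^n\setminus X\to\PP^n\setminus X$ (it leaves $x_1,\ldots,x_n$ unchanged, so $f$ is still nonzero on the image), with inverse $[x_0:\cdots:x_n]\mapsto[x_0-p/f^k:x_1:\cdots:x_n]$. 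Thus $\varphi\in\Aut(\PP^n\setminus X)$.

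The remaining, and main, step is to show $\varphi$ does not extend to an element of $\Aut(\PP^n)$. Clearing denominators, $\varphi$ is the restriction to $\PP^n\setminus X$ of the rational self-map of $\PP^n$ given by the $n+1$ forms $f^kx_0+p,\ f^kx_1,\ \ldots,\ f^kx_n$, each of degree $dk+1\ge d+1\ge 2$. I would then check these forms are coprime: $\gcd(f^kx_1,\ldots,f^kx_n)=f^k$ since $\gcd(x_1,\ldots,x_n)=1$ (using $n\ge 2$), while $\gcd(f,f^kx_0+p)=\gcd(f,p)=1$ because $f$ is irreducible and $f\nmid p$; hence the overall gcd is $1$. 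Therefore this tuple is the reduced representative of $\varphi$ as a rational map $\PP^n\dashrightarrow\PP^n$, which is unique up to a scalar. If $\varphi$ extended to some $\tilde\varphi\in\Aut(\PP^n)$, then $\tilde\varphi$ — being represented by a tuple of linearly independent linear forms, which is automatically coprime — would be another reduced representative of the same rational map; comparing degrees gives $1=dk+1$, a contradiction. So no such $\tilde\varphi$ exists.

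I expect the only genuinely delicate points to be the normalisation of $f$ from the multiplicity hypothesis and the bookkeeping that the displayed tuple is already in lowest terms; once that is in place, non-extendability is formal, using crucially that $\deg f=d\ge 1$ (so $dk+1>1$) and $n\ge 2$ (so that $x_1,\dots,x_n$ are coprime and a suitable $p$ exists).
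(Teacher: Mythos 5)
Your proposal is correct and takes essentially the same route as the paper: both place the cone vertex at a coordinate point so that $f$ omits one variable, and then define an automorphism of $\p^n\setminus X$ by shearing that omitted coordinate by a rational function whose denominator is a power of $f$ (the paper uses the involution $x_n\mapsto -x_n+x_0^{d+1}/f$, you use the translation $x_0\mapsto x_0+p/f^k$). Your explicit gcd-and-degree verification of non-extendability merely spells out what the paper leaves implicit, so the two arguments coincide in substance.
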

\begin{proof}
Changing coordinates, we may assume that $[0:\cdots:0:1]$ is a point of $X$ of multiplicity $d$. Hence, $X=V_{\p^n}(f)$, where $f\in \k[x_0,\ldots,x_{n-1}]$ is irreducible and homogeneous of degree $d$. We define $\varphi\in \Aut(\p^n_f)$ to be the involution given by 
\[ 
	[x_0:\cdots:x_n]\mapsto \left[x_0:\cdots:x_{n-1}:-x_n+ \frac{x_0^{d+1}}{f(x_0,\ldots,x_{n-1})} \right] \, .
\]
\end{proof}

\begin{lemma}\label{Lem:LinSubdm1}
Let $d\ge 1$, $n\ge 3$ and let $X\subseteq \p^n$ be an irreducible hypersurface of degree $d$, being of multiplicity $d-1$ along a linear subspace $L\subseteq \p^n$ of dimension $r\in \{1,\ldots,n-2\}$. Then, there exists an element $\varphi\in \Aut(\p^n\setminus X)$ that does not extend to an element of $\Aut(\p^n)$.
\end{lemma}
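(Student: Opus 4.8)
The plan is to imitate the construction of Lemma~\ref{Lem:Cone}: instead of translating one ``apex'' coordinate, we translate two of the coordinates spanning $L$ against a Koszul syzygy of the coefficients of $f$ along $L$, producing a shear that preserves $X$ but has degree $\geq 2$ on $\PP^n$.

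\emph{Setup.} We may assume $d\ge 2$, since for $d=1$ the complement $\PP^n\setminus X$ is isomorphic to $\AA^n$ and the claim is clear. Choose homogeneous coordinates $u_0,\dots,u_m,v_0,\dots,v_r$ on $\PP^n$ (so $m=n-r-1$, hence $m\ge 1$ because $r\le n-2$) with $L=V_{\PP^n}(u_0,\dots,u_m)$, and write $X=V_{\PP^n}(f)$. If $X$ has a point of multiplicity $d$ then it is a cone (Lemma~\ref{lemm:MultMaxLin}) and Lemma~\ref{Lem:Cone} applies; so we may assume $X$ has multiplicity exactly $d-1$ along $L$, i.e.\ $f\in I_L^{d-1}\setminus I_L^{d}$. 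Grouping monomials by their total degree in $u_0,\dots,u_m$ gives $f=f_{d-1}+f_d$, where $f_d\in\kk[u_0,\dots,u_m]$ is homogeneous of degree $d$ and $f_{d-1}=\sum_{j=0}^r v_j\, g_j(u)$ with the $g_j$ homogeneous of degree $d-1$ in $u_0,\dots,u_m$ and not all zero; after permuting the $v_j$ we assume $g_0\neq 0$. Note also that $L\subseteq X$ (as $d-1\ge 1$), so $L\cap\PP^n_f=\varnothing$.

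\emph{The candidate automorphism.} Fix a nonzero form $q$ of degree $2$ in $u_0,\dots,u_m$, say $q=u_0^2$, and let $\varphi$ be the rational self-map of $\PP^n$ fixing $u_0,\dots,u_m,v_2,\dots,v_r$ and sending
\[
	v_0\ \longmapsto\ v_0+\frac{g_1 q}{f}, \qquad v_1\ \longmapsto\ v_1-\frac{g_0 q}{f};
\]
equivalently, $\varphi$ is given by the $n+1$ forms $u_0 f,\dots,u_m f,\ v_0 f+g_1 q,\ v_1 f-g_0 q,\ v_2 f,\dots,v_r f$, all of degree $d+1$. The verifications are all short: (i) $\varphi$ preserves $X$, because $f$ is affine‑linear in $v_0,\dots,v_r$ with coefficients $g_0,\dots,g_r$, so the above substitution changes $f$ by $\tfrac{q}{f}\bigl(g_1 g_0-g_0 g_1\bigr)=0$ — this is the one place the Koszul syzygy is used; (ii) $\varphi$ is a morphism on $\PP^n_f$, since on $\PP^n_f$ the forms $u_0 f,\dots,u_m f$ cannot all vanish (their common zero locus is $L\subseteq X$); (iii) $\varphi$ restricts to an automorphism of $\PP^n_f$, with inverse the analogous map obtained by replacing $q$ with $-q$ (the two compositions are the identity, using that $\varphi$ fixes $f$); (iv) $\varphi$ does not extend to $\Aut(\PP^n)$: any common factor of the defining forms divides $\gcd(u_0 f,u_1 f)=f$, and it cannot equal $f$, for $f$ is irreducible and $f\mid g_0 q$ with $g_0\neq 0$ of degree $d-1<d$ would force $f\mid q$, impossible since $q$ is a nonzero form of degree $2\le d$ involving no $v_j$ whereas $f$ involves $v_0$. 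Hence $\varphi$ has degree $d+1\ge 2$ as a birational self-map of $\PP^n$, so it is not induced by any automorphism of $\PP^n$ (these being the linear transformations).

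\emph{Main obstacle.} The mathematical content is entirely the syzygy trick of step (i); the only thing requiring care is the bookkeeping around ``dividing by $f$'', namely checking honestly that $\varphi$ and its candidate inverse are morphisms on $\PP^n_f$ and that the common‑factor analysis of step (iv) really excludes a linear extension. Once the normal form $f=f_{d-1}+f_d$ is fixed and $g_0\neq 0$ is arranged, each of these is a one‑line check, so I do not anticipate a serious difficulty; the subtlety is purely in making sure the cone case (all $g_j=0$) has been peeled off via Lemma~\ref{Lem:Cone} beforehand and that $m\ge 1$ (which holds exactly because $r\le n-2$) is used where the gcd is computed.
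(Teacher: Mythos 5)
Your proof is correct and is essentially the paper's own argument: the paper writes $f=x_0a_0+\cdots+x_ra_r+b$ with $a_i,b\in\kk[x_{r+1},\ldots,x_n]$ and uses exactly the same Koszul-syzygy shear, packaged as the $\GG_a$-action $(x_0,x_1)\mapsto(x_0+t\,a_1x_n^2/f,\;x_1-t\,a_0x_n^2/f)$, i.e.\ your map with $q=x_n^2$ in place of $u_0^2$. The only difference is that you spell out the points the paper leaves implicit (peeling off the cone case so some $g_j\neq 0$, the morphism/inverse checks, and the gcd argument showing the degree-$(d+1)$ map cannot be linear), which is fine.
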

\begin{proof}
Changing coordinates, we may assume that $L=V_{\p^n}(x_{r+1},\ldots,x_n)$. Hence, $X=V_{\p^n}(f)$, where $f\in \k[x_0,\ldots,x_{n}]$ is irreducible, homogeneous of degree $d$ and of the form
\[
	f=x_0a_0+\cdots+x_ra_r+b \, , 
\]
where $a_0,\ldots,a_r,b\in \k[x_{r+1},\ldots,x_n]$. We define a $\mathbb{G}_a$-action on $\p^n_f$ by
\[
	\begin{array}{ccc}
		\mathbb{G}_a\times \p^n_f& \to &\p^n_f\\
		(t, [x_0:\cdots:x_n]) &\mapsto& \left[x_0+t \frac{a_1 x_n^2}{f(x_0,\ldots,x_n)}:x_1-t \frac{a_0 x_n^2}{f(x_0,\ldots,x_n)}:x_2:\cdots:x_n\right]
	\end{array}
\]
and choose $\varphi\in \Aut(\p^n_f)$ to be any non-trivial element of $\mathbb{G}_a\subseteq  \Aut(\p^n_f)$.
\end{proof}

The following lemma follows from \cite[Theorem C and Theorem~4.3]{CheDub} in characteristic zero.
We insert a proof (in any characteristic), as the argument is simple.

\begin{lemma}
	\label{Lem.Contains_Cylinder}
	Let $X \subseteq \PP^3$ be a normal rational singular cubic hypersurface. Then 
	$\PP^3 \setminus X$ contains an open affine subset $U$ that is an $\AA^1$-cylinder, i.e.~$U \simeq \AA^1 \times U'$
	for some affine variety $U'$.
\end{lemma}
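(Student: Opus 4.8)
The plan is to build the cylinder completely explicitly, using that the equation of $X$ becomes linear in one variable once we exploit the singular point. First I would reduce, exactly as in the proof of Lemma~\ref{Lem.Normal_cubic_surf}, to the normal form $X = V_{\PP^3}(f)$ with
$f = f_2(x,y,z)\,w + f_3(x,y,z)$, where $f_2,f_3\in\kk[x,y,z]$ are homogeneous of degrees $2$ and $3$ and $f_2\neq 0$: indeed the singular point of $X$ can be moved to $[1:0:0:0]$, which forces $f$ to be linear in $w$, and $f_2=0$ is excluded because then $X$ would be a cone over the irreducible plane cubic $V_{\PP^2}(f_3)$, which is either non‑rational (if $V_{\PP^2}(f_3)$ is smooth, by Lemma~\ref{Lem.Non-rational_iff_cone_over_elliptic_curve}) or singular along a line and hence non‑normal, in both cases a contradiction. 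By the classification of quadratic forms in at most three variables over the algebraically closed field $\kk$ (for the irreducible case, see Proposition~\ref{prop:QuadricProj}), after a further linear change of coordinates in $x,y,z$ and a rescaling of $w$ I may assume $f_2\in\{\,x^2,\ xy,\ xy+z^2\,\}$. Since $f$ is irreducible, none of the planes $V_{\PP^3}(x)$, $V_{\PP^3}(y)$ and none of the irreducible components of $V_{\PP^3}(f_2)$ is contained in $X$.

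The key observation is that in a well chosen affine chart $\AA^3\subseteq\PP^3$ the equation of $X$ is linear in one of the coordinates, with a coefficient that becomes a unit after deleting one further hypersurface; a shear in that coordinate then identifies the complement of $X$ with the product of $\GG_m$ and the locus where this coefficient is nonzero. If $f_2=x^2$, I take $U:=\PP^3\setminus(X\cup V(x))$: on the chart $\{x\neq 0\}$ with coordinates $\bar w=w/x$, $\bar y=y/x$, $\bar z=z/x$, the surface $X$ is $\{\bar w+f_3(1,\bar y,\bar z)=0\}$, so the automorphism $\bar w\mapsto\bar w+f_3(1,\bar y,\bar z)$ of $\AA^3$ identifies $U$ with $\GG_m\times\AA^2$. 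If $f_2=xy$, I take $U:=\PP^3\setminus(X\cup V(x)\cup V(y))$: on $\{x\neq 0\}$ the equation of $X$ is $\bar y\,\bar w+f_3(1,\bar y,\bar z)=0$, and the shear $\bar w\mapsto\bar w+f_3(1,\bar y,\bar z)/\bar y$, an automorphism of $\{\bar y\neq 0\}$, identifies $U$ with $\GG_m\times\GG_m\times\AA^1$. If $f_2=xy+z^2$, I take $U:=\PP^3\setminus(X\cup V(y)\cup V(xy+z^2))$: dehomogenising with respect to $y$, whose coordinate plane is tangent to the conic $V_{\PP^2}(f_2)$, the equation of $X$ reads $(\bar x+\bar z^2)\,\bar w+f_3(\bar x,1,\bar z)=0$ with $\bar x=x/y$, $\bar z=z/y$, and the shear in $\bar w$ followed by the shear $\bar x\mapsto\bar x+\bar z^2$ identifies $U$ with $\GG_m\times\GG_m\times\AA^1$.

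In all three cases $U$ is isomorphic to $\GG_m\times\AA^2$ or to $\GG_m\times\GG_m\times\AA^1$, hence to an $\AA^1$‑cylinder $\AA^1\times U'$ with $U'$ affine; and $U$ is a nonempty open subvariety of $\PP^3\setminus X$ which is affine, being the complement in $\PP^3$ of a hypersurface. This proves the lemma.

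I expect the only genuine difficulty to be in locating this construction, not in checking it. The naive attempt — projecting $\PP^3\setminus X$ away from the singular point — fails, because the general fibre of the resulting morphism onto an open subset of $\PP^2$ is $\GG_m$ rather than $\AA^1$: such a line meets $X$ at the singular point and at one further point, so its complement with $X$ is $\PP^1$ minus two points. The decisive point is instead to pass to an affine chart and use that $f$ is linear in $w$; and the one place where a wrong choice would spoil things is the irreducible–conic case, where one must dehomogenise along a \emph{tangent} direction, so that $f_2$ restricts to a parabola $\bar x+\bar z^2$ (whose complement in $\AA^2$ is a cylinder) and not to a hyperbola (whose complement is not).
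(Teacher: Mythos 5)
Your proposal is correct and takes essentially the same route as the paper's proof: both exploit that $f = f_2\,w + f_3$ is linear in $w$ (with $f_2 \neq 0$ forced by normality and rationality), pass to an affine chart whose plane at infinity is a component of, respectively tangent to, the conic $V_{\PP^2}(f_2)$, and use a shear in $w$ to identify $\PP^3 \setminus (X \cup V(f_2) \cup \text{that plane})$ with $\GG_m$ times the cylinder-complement of the affine conic. The only difference is presentational: the paper writes one uniform isomorphism onto $(\AA^1\setminus\{0\}) \times \AA^2_{f_2(x,y,1)}$, whereas you split into the three normal forms $x^2$, $xy$, $xy+z^2$ — the construction and the key tangency observation are identical.
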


\begin{proof}
	Since $X$ is singular, we may assume (after a coordinate change) that $X$ is given by 
	$f \coloneqq f_2(x, y, z) w + f_3(x, y, z)$
	where $f_i \in \kk[x, y, z]$ is homogeneous of degree $i$ and $[w:x:y:z]$ denote the homogeneous coordinates of $\PP^3$. As $X$ is normal and rational, $f_2 \neq 0$, see Lemma~\ref{Lem.Non-rational_iff_cone_over_elliptic_curve}. 
	Let $C \coloneqq V_{\PP^2}(f_2)$. We may choose the coordinates $(x,y, z)$ 
	in such a way, that $V_{\PP^2}(z)$ is contained in $C$ if $f_2$ is reducible, and
	that $V_{\PP^2}(z)$ is tangent to $C$ if $f_2$ is irreducible. Then
	$\p^2_{zf_2}\simeq \AA^2_{f_2(x, y, 1)} \simeq D \times \AA^1$, where $D \in \{\AA^1, \AA^1 \setminus \{0\} \}$, 
	and we get an isomorphism
	\[
		\begin{array}{cccc}
			\PP^3_{zff_2} \simeq &\AA^3_{f(w, x, y, 1) f_2(x, y, 1)} &  \xrightarrow{\simeq} &
			(\AA^1 \setminus \{0\}) \times \AA^2_{f_2(x, y, 1)} \\
			&(w, x, y) & \mapsto & (f_2(x, y, 1)w + f_3(x, y, 1), x, y) \\
				&\left(\frac{u - f_3(x, y, 1)}{f_2(x, y, 1)} , x, y \right) 
						& \mapsfrom & (u, x, y) \, .
		\end{array}
	\]
\end{proof}

\begin{proposition}\label{Prop:SingCubic}
For each singular irreducible cubic hypersurface $X\subseteq \p^3$, there exists an element $\varphi\in \Aut(\p^3\setminus X)$ that does not extend to an element of $\Aut(\p^3)$.
\end{proposition}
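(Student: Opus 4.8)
The plan is to run a case distinction on the type of the singular irreducible cubic surface $X\subseteq\PP^3$ and to feed each case into one of the lemmas established just above. By Lemmas~\ref{Lem.Non-rational_iff_cone_over_elliptic_curve} and~\ref{Lem.Normal_cubic_surf} together with Proposition~\ref{Prop.Classification_of_non-normal_cubic_surfaces}, such an $X$ is of exactly one of three types: non-normal; normal and non-rational; or normal and rational. I treat these in turn.

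If $X$ is non-normal, then Proposition~\ref{Prop.Classification_of_non-normal_cubic_surfaces} shows that $X$ contains a line $L$ of multiplicity $2=d-1$; since $\dim L=1$ belongs to $\{1,\dots,n-2\}$ for $n=3$, Lemma~\ref{Lem:LinSubdm1} at once produces an element of $\Aut(\PP^3\setminus X)$ that does not extend to $\Aut(\PP^3)$. If $X$ is normal and non-rational, then by Lemma~\ref{Lem.Non-rational_iff_cone_over_elliptic_curve} it is a cone over a smooth plane cubic curve and hence has a point of multiplicity $3=d$, so Lemma~\ref{Lem:Cone} applies. (This is consistent: a cone over a \emph{singular} plane cubic is singular along the whole ruling over the base singularity, hence non-normal, so the normal non-rational case is exactly the cone over a smooth cubic curve.)

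The remaining, and main, case is $X$ normal and rational; as $X$ is singular, Lemma~\ref{Lem.Contains_Cylinder} applies, and I would use the explicit cylinder constructed in its proof rather than merely quote its existence. After a coordinate change $X=V_{\PP^3}(f)$ with $f=f_2(x,y,z)w+f_3(x,y,z)$ and $f_2\neq0$, the variety $V\coloneqq\PP^3\setminus X=\PP^3_f$ contains the open affine subset $U=\PP^3_{zff_2}$, and $U\cong\AA^1_t\times U'$ with $\kk[U]=\kk[V]_\phi=\kk[U'][t]$, where $\phi\coloneqq zf_2/f\in\kk[V]$ is the regular function whose zero locus inside $V$ equals $V\setminus U$. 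The crucial point is that, for the coordinates chosen in the proof of Lemma~\ref{Lem.Contains_Cylinder} (the line $z=0$ being tangent to $V_{\PP^2}(f_2)$ when $f_2$ is irreducible, and a component of $V_{\PP^2}(f_2)$ when $f_2$ is reducible), the function $\phi$ is constant along the $\AA^1_t$-factor, i.e.\ $\phi\in\kk[U']=\ker(\partial_t)$. Granting this, for every $s\in\kk$ the shear $\sigma_s\colon t\mapsto t+s\phi^N$ fixing $\kk[U']$ is an automorphism of $U$, and $s\mapsto\sigma_s$ is a $\GG_a$-action on $U$ (here one uses $\phi\in\ker(\partial_t)$). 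Since $\kk[V]$ is a finitely generated subalgebra of $\kk[V]_\phi=\kk[U]$, a binomial expansion shows that for $N$ large enough each $\sigma_s$ maps $\kk[V]$ into $\kk[V]$ (with $N$ independent of $s$), so the $\GG_a$-action descends to $\PP^3\setminus X$ and any $\sigma_s$ with $s\neq0$ lies in $\Aut(\PP^3\setminus X)$. Finally, writing such a $\sigma_s$ in the homogeneous coordinates of $\PP^3$ and clearing denominators represents it by four forms of degree $3N+1$ whose only possible common factor is a power of $f$, which is impossible since $f$ divides neither $z$ nor $f_2$; hence $\sigma_s$ has degree $3N+1>1$ as a birational self-map of $\PP^3$ and is not the restriction of an element of $\Aut(\PP^3)$. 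This completes the proof.

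The first two cases are immediate from the cited lemmas, so the heart of the argument is the normal rational case, and the one genuinely delicate step there is verifying that the boundary function $\phi=zf_2/f$ cutting out $V\setminus U$ lies in the kernel of the translation derivation $\partial_t$ of the cylinder of Lemma~\ref{Lem.Contains_Cylinder} — this is precisely what makes the family of shears $\sigma_s$ a $\GG_a$-action and thereby legitimates the replica $\phi^N\partial_t$. Once that compatibility of the two descriptions of $U$ in the proof of Lemma~\ref{Lem.Contains_Cylinder} is checked, choosing $N$ large via finite generation of $\kk[V]$, and the degree count ruling out extendability to $\Aut(\PP^3)$, are routine.
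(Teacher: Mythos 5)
Your overall architecture coincides with the paper's: the cone case is handled by Lemma~\ref{Lem:Cone}, the non-normal case by the multiplicity-two line from Proposition~\ref{Prop.Classification_of_non-normal_cubic_surfaces} together with Lemma~\ref{Lem:LinSubdm1}, and the remaining case is the normal rational singular one, attacked through the cylinder of Lemma~\ref{Lem.Contains_Cylinder}. The difference is in that last case: the paper notes that $\Pic(\PP^3\setminus X)\simeq \ZZ/3\ZZ$ is finite and quotes \cite[Proposition~2]{DuKi2015Log-uniruled-affin} to produce a non-trivial $\GG_a$-action on $\PP^3\setminus X$, whereas you re-derive that action by hand, extending the translation along the cylinder $U\simeq \AA^1\times U'$ by multiplying with a power of the boundary function $\phi=zf_2/f$, and then you add an explicit argument that the resulting shears are not restrictions of linear automorphisms. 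Your route is longer but more self-contained, and it has the virtue of making the final non-extendability step completely concrete, where the paper is terse.

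Two of your steps deserve tightening, though both are easily repaired. First, the step you single out as delicate, namely $\phi\in\kk[U']=\ker(\partial_t)$, needs no coordinate verification at all: $\phi$ vanishes exactly on $V\setminus U$, so $\phi|_U$ is a unit of $\kk[U]=\kk[U'][t]$, and since $U'$ is an irreducible affine variety the units of $\kk[U'][t]$ are the units of $\kk[U']$; hence $\phi|_U\in\kk[U']$ automatically. Second, your closing claim that the homogeneous forms representing $\sigma_s$ have degree $3N+1$ and no common factor is asserted rather than proved; as stated it is a genuine soft spot, because a priori a power of $f$ could divide a given representation and the degree bookkeeping is delicate. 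A cleaner finish using exactly your observation that $f$ divides neither $z$ nor $f_2$: the cylinder coordinate is $t=\ell/z$ for a linear form $\ell$, and $\sigma_s^{\ast}(t)-t=s\phi^N=s\,z^Nf_2^N/f^N$ has polar divisor $N\cdot V_{\PP^3}(f)$ of degree $3N\geq 3$ (here irreducibility of $f$ and $f\nmid zf_2$ are used); if $\sigma_s$ extended to some $\psi\in\Aut(\PP^3)=\PGL_4$, then $\psi^{\ast}(\ell/z)-\ell/z$ would be a ratio of forms with denominator of degree at most $2$, so its polar divisor has degree at most $2$, a contradiction. With these two adjustments your proof is complete and characteristic-free.
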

\begin{proof}
If $X$ is a cone, this follows from Lemma~\ref{Lem:Cone}. If $X$ is not normal, it contains a line with multiplicity $2$ (Proposition~\ref{Prop.Classification_of_non-normal_cubic_surfaces}), and the result follows from Lemma~\ref{Lem:LinSubdm1}. In the remaining case, $X$ is rational (Lemma~\ref{Lem.Non-rational_iff_cone_over_elliptic_curve}) and normal. 
As $\Pic(\PP^3 \setminus X) \simeq \ZZ/3\ZZ$ is finite, 
Lemma~\ref{Lem.Contains_Cylinder}
implies that $\PP^3 \setminus X$ admits a non-trivial $\GG_a$-action, see e.g.~\cite[Propostion~2]{DuKi2015Log-uniruled-affin}. This implies the result.
\end{proof}

\bibliographystyle{alpha}
\bibliography{BIB}

\end{document}